\theoremstyle{plain}
\newtheorem{lemma}{Lemma}[section]
\newtheorem{theorem}[lemma]{Theorem}
\newtheorem{proposition}[lemma]{Proposition}
\theoremstyle{definition}
\newtheorem{remark}[lemma]{Remark}
\numberwithin{equation}{section}
\def\var{\text{var}}
\newcommand{\CC}{\mathbb{C}}
\newcommand{\ZZ}{\mathbb{Z}}
\newcommand{\RR}{\mathbb{R}}
\newcommand{\EE}{\mathbb{E}}
\newcommand{\BB}{\mathbb{B}}
\newcommand{\mkB}{\mathfrak{B}}
\newcommand{\mC}{\mathcal{C}}
\newcommand{\mG}{\mathcal{G}}
\newcommand{\mkG}{\mathfrak{G}}
\newcommand{\mI}{\mathcal{I}}
\newcommand{\mM}{\mathcal{M}}
\newcommand{\mO}{\mathcal{O}}
\newcommand{\mS}{\mathcal{S}}
\newcommand{\mT}{\mathcal{T}}
\newcommand{\mZ}{\mathcal{Z}}
\newcommand{\bdn}{\mathbf{n}}
\newcommand{\bdm}{\mathbf{m}}
\newcommand{\bdz}{\mathbf{0}}
\newcommand{\ep}{\epsilon}
\newcommand{\pa}{\partial}
\newcommand{\bs}{\backslash}
\newcommand{\dis}{{\rm dis}}
\newcommand{\Vol}{{\rm Vol}}
\newcommand{\ol}{\overline}
\newcommand{\yt}{y_\theta}
\newcommand{\ytz}{y_{\theta_0}}
\newcommand{\fatdot}{\boldsymbol{\cdot}}
\newcommand{\tth}{\tilde\theta}
\begin{document}

\title[FPP geodesics in general dimension]{Geodesics, bigeodesics, and coalescence in first passage percolation in general dimension}
\author{Kenneth S. Alexander}
\address{Department of Mathematics \\
University of Southern California\\
Los Angeles, CA  90089-2532 USA}
\email{alexandr@usc.edu}

\keywords{first passage percolation, bigeodesic, geodesic ray, coalesce, bundling}
\subjclass[2010]{60K35 Primary 82B43 Secondary}

\begin{abstract} 
We consider geodesics for first passage percolation (FPP) on $\mathbb{Z}^d$ with iid passage times. As has been common in the literature, we assume that the FPP system satisfies certain basic properties conjectured to be true, and derive consequences from these properties.  The assumptions are roughly as follows: (i) the fluctuation scale $\sigma(r)$ of the passage time on scale $r$ grows approximately as a positive power $r^\chi$, in the sense that two natural definitions of $\sigma(r)$ and $\chi$ yield the same value $\chi$, and (ii) the limit shape boundary has curvature uniformly bounded away from 0 and $\infty$ (a requirement we can sometimes limit to a neighborhood of some fixed direction.)   The main a.s.~consequences derived are the following, with $\nu$ denoting a subpolynomial function and $\xi=(1+\chi)/2$ the transverse wandering exponent:  (a) for one-ended geodesic rays with a given asymptotic direction $\theta$, starting in a natural halfspace $H$, for the hyperplane at distance $r$ from $H$, the density of ``entry points'' where some geodesic ray first crosses the hyperplane is at most $\nu(r)/r^{(d-1)\xi}$, (b) the system has no bigeodesics, i.e.~two-ended infinite geodesics, (c) given two sites $x,y$, and a third site $z$ at distance at least $\ell$ from $x$ and $y$, the probability that the geodesic from $x$ to $y$ passes through $z$ is at most $\nu(\ell)/\ell^{(d-1)\xi}$, and (d) in $d=2$, the probability that the geodesic rays in a given direction from two sites have not coalesced after distance $r$ decays like $r^{-\xi}$ to within a subpolynomial factor. Our entry-point density bound compares to a natural conjecture of $c/r^{(d-1)\xi}$, corresponding to a spacing of order $r^\xi$ between entry points, which is the conjectured scale of the transverse wandering.
\end{abstract}

\maketitle

\tableofcontents

\section{Introduction and statement of main results.}\label{intro}
We consider coalescence of geodesic rays, and other related properties of geodesics, in a family of models of first passage percolation (FPP) on $\ZZ^d$ with $d\geq 2$; geodesic rays are semi-infinite paths for which every finite subpath is a geodesic.  A doubly infinite path with the same property is called a \emph{bigeodesic}.  The \emph{asymptotic direction} of a geodesic ray with sites (in order) $v_0,v_1,\dots$ is 
\[
  \lim_n \frac{v_n}{|v_n|} \in S^{d-1}
\]
when this limit exists, where $S^{d-1}$ is the $(d-1)$-sphere and $|\cdot|$ denotes Euclidean length; a $\theta$--\emph{ray} is a geodesic ray with asymptotic direction $\theta$.

We will use percolation ``site/bond'' terminology in the lattice $\ZZ^d$, rather than ``vertex/edge.'' 
By the \emph{limit shape} we mean that of \cite{CD81}, which is the unit ball of the norm $g$ given in \eqref{gdef} below.

For $d=2$ it is known for lattice FPP that under varying hypotheses,
\begin{itemize}
\item[(i)] every geodesic ray has an asymptotic direction \cite{N95};
\item[(ii)] for a fixed direction $\theta$, with probability 1, there exists a unique $\theta$--ray starting from each site (\cite{AH16},\cite{DH14},\cite{DH17},\cite{LN96}, \cite{N95});
\item[(iii)] for a fixed direction $\theta$, with probability 1, for all sites $x,y$, the $\theta$--rays from $x$ and $y$ eventually coalesce (\cite{DH17}, \cite{LN96});
\item[(iv)] for a fixed direction $\theta$, with probability 1 there is no bigeodesic for which either asymptotic direction is $\theta$ (\cite{AH16},\cite{DH17}; a weaker form is in \cite{LN96}.)
\end{itemize}
Note that (iv) does not rule out the existence of all bigeodesics, as it allows a random null set of $\theta$ values for which such bigeodesics exist.  
Given a halfspace $H$ we call a $\theta$--ray a \emph{halfspace $\theta$--ray from} $H$ if its first site, and no other, is contained in $H$.  We may omit the ``from $H$'' if the appropriate halfspace is either apparent or not relevant.

Historically, in a number of works on FPP, unproven properties believed to hold widely have been used as theorem hypotheses.  Newman \cite{N95} assumed unproven curvature of the limit shape boundary to establish the existence of $\theta$--rays ((ii) above.)  In  \cite{LN96}, \cite{LNP96} such curvature was assumed in proving relations between the primary scaling exponents of FPP ($\chi$ governing passage time fluctuations, and $\xi$ governing transverse wandering of geodesics; definitions below.)  In \cite{C13}, to make full sense of the exponent relation $\chi=2\xi-1$ proved there, one effectively must assume an unproven exponential bound for passage time deviations, on the scale of the standard deviation.  We will refer informally  to such ``unproven properties as hypotheses'' as \emph{strong assumptions}, and to others as \emph{weak assumptions}.  Under weak assumptions it is at present quite non-trivial even to prove fundamental things like the existence of $\theta$--rays (\cite{AH16},\cite{DH14},\cite{DH17}.) So there is a tradeoff between the much greater territory explorable under strong assumptions, and the undesirability of having to make such assumptions.  Here we come down on the side of wide exploration, operating in strong--assumptions mode, while keeping the strong assumptions as minimal as possible.

Specifically, we will assume that (i) there is a well--defined fluctuation exponent $\chi$, in the sense that two different natural definitions of the fluctuation scale of passage times both ``grow like $r^\chi$'' for the same exponent $\chi$, and (ii) the boundary of the limit shape has uniform curvature in a certain sense.  See A2, A3 below for details.  Our purpose is to show how certain strong conclusions about geodesic behavior, including aspects of coalescence, flow from essentially just the basic properties.

For $d\geq 3$ most of (i)--(iv) above has not been proved to date---only the existence result for $\theta$--rays in \cite{N95} is known.  Certain relations between particular versions of the exponents $\chi$ and $\xi$ have also been proved in \cite{C13}, \cite{LN96}, \cite{LNP96}.
Under our hypotheses we will, among other things, prove (i), and the existence part of (ii), but it's not clear (though certainly plausible) that (iii) and the uniqueness part of (ii) should be true for general $d$.  Assuming continuous distributions of passage times to prevent ties between paths, \emph{a priori}, for any two geodesic rays $\Gamma,\tilde\Gamma$, any of 3 things may happen:
\begin{itemize}
\item[(1)] $\Gamma,\tilde\Gamma$ are disjoint, i.e.~they have no bonds in common;
\item[(2)] $\Gamma,\tilde\Gamma$ coalesce, that is, there is a site $v\in\Gamma\cap\tilde\Gamma$ such that the segments of $\Gamma,\tilde\Gamma$ up to $v$ are disjoint, and the two rays from $v$ onwards coincide;
\item[(3)] $\Gamma\cap\tilde \Gamma$ is a single segment consisting of finitely many bonds.
\end{itemize}
We refer to the phenomenon in (3) as \emph{temporary touching}; when it occurs, the last site in the segment is called a \emph{branching site}; see Figure \ref{figentry}.
Despite the complications temporary touching and branching create, we can still quantify some aspects of the coalescence of $\theta$--rays in the following way.  Below we will associate to each direction $\theta$ a vector $z_\theta$, chosen so the hyperplane $\{x\in\RR^d: x\cdot z_\theta=1\}$ is tangent to the unit ball of the norm associated to the FPP process at the boundary point of the ball in direction $\theta$.  
We define the hyperplanes and halfspaces \label{Hhyp}
\[
  H_{\theta,s} = \{x\in\RR^d: x\cdot z_\theta=s\},\qquad H_{\theta,s}^+ = \{x\in\RR^d: x\cdot z_\theta\geq s\}, \qquad 
    H_{\theta,s}^- = \{x\in\RR^d: x\cdot z_\theta\leq s\}.
\] 
For $\theta,\theta_0\in S^{d-1}$ (close together), consider the $H_{\theta_0,s}^+$--\emph{entry points} of $\theta$--rays, meaning those lattice sites in $H_{\theta_0,s}^+$ where some $\theta$--ray from $H_{\theta_0,0}^-$ first intersects $H_{\theta_0,s}^+$. 
We may ask, what is the density of such $H_{\theta_0,s}^+$--entry points per unit ($d-1$)--volume near $H_{\theta_0,s}$, and how does it change as $s\to\infty$? Does it approach 0, and if so how fast?  We call this density the $H_{\theta_0,s}$-\emph{crossing density}, postponing a precise definition for later.  Informally we say \emph{bundling} occurs if this density approaches 0 as $s\to\infty$.

Note that $\theta$--rays passing through different $H_{\theta_0,s}$--entry points cannot be assumed disjoint up to those entry points, due to the possibilities of temporary touching and branching.

\begin{figure}
\includegraphics[width=12cm]{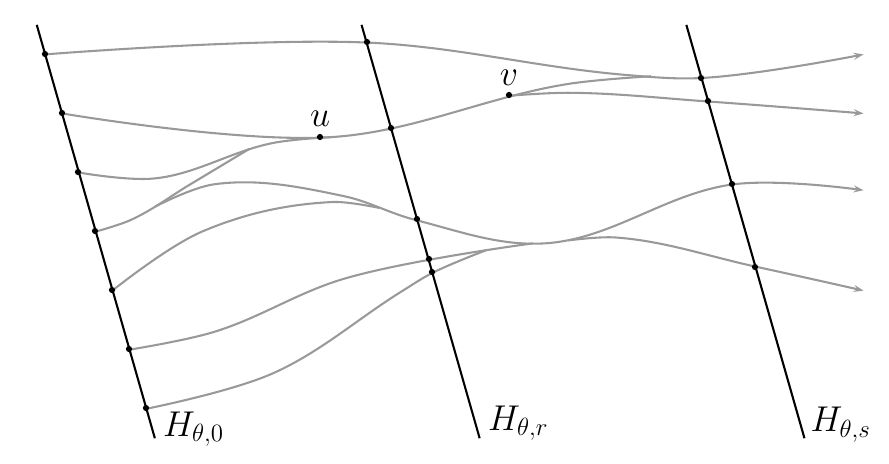}
\caption{ Halfspace $\theta$--rays showing coalescence and possible temporary touching (for example $u$ to $v$) and branching sites (for example $v$.) The dots next to $H_{\theta,r}$ and $H_{\theta,s}$ are $H_{\theta,r}^+$-- and $H_{\theta,s}^+$--entry points. }
\label{figentry}
\end{figure}

If all halfspace $\theta$--rays from $H_{\theta_0,0}^-$ coalesce a.s., then bundling must occur.  The converse is true for $d=2$, but for $d\geq3$ bundling does not in itself guarantee coalescence of all the $\theta$--rays.  We can make equivalence classes of halfspace $\theta$--rays from $H_{\theta_0,0}^-$ by writing $\Gamma\sim\tilde\Gamma$ if $\Gamma,\tilde\Gamma$ eventually coalesce; bundling must occur if the number of equivalence classes is finite, but it is not clear that the converse holds.  One could also ask about the existence of finite equivalence classes; if they exist with positive probability then their starting points must have a positive density near $H_{\theta_0,0}$, but again the possibility of temporary touching means such a positive density of starting points is not immediately ruled out by bundling.  In general it is reasonable to view bundling as a weaker surrogate for coalescence, but as we will see, it is sufficient for proving the absence of bigeodesics.  We will show that bundling occurs in general dimension, with a rate bound that is optimal up to subpolynomial factors.

For $d=2$, we can predict the $H_{\theta_0,s}$--crossing density heuristically from the transverse wandering exponent 
of geodesics, that is, the value $\xi$ such that for a geodesic of length $s$, the maximum distance of the geodesic from the straight line connecting its endpoints is typically of order $s^\xi$.  There must exist halfspace $\theta$--rays, one passing through each $H_{\theta_0,s}^+$--entry point, and any such rays must remain disjoint at least until they cross $H_{\theta_0,s}$, since there is a.s.~no branching or temporary touching for a fixed $\theta$ in two dimensions.
Heuristically, under weak assumptions, to remain disjoint until $H_{\theta_0,s}^+$ the $\theta$--rays should be spaced apart by order $s^\xi$, so the $H_{\theta_0,s}$--crossing density should be $s^{-\xi}$. For $d\geq 3$ this predicts an $H_{\theta_0,s}$--crossing density of at least $s^{-(d-1)\xi}$, but it is not clear \emph{a priori} that the crossing density shouldn't be greater, since the $\theta$--rays can weave around one another without meeting, and we cannot rule out the branching of some $\theta$--rays each into multiple $\theta$--rays.  
We will show that in fact the $H_{\theta_0,s}$--crossing density approaches 0 faster than $s^{-(d-1)\xi+\ep}$ for all $\ep>0$.  Along the way we will need to obtain a variety of results about the regularity of geodesics, and their transverse fluctuations.  

In addition to $\xi$, the other exponent of central interest is the $\chi$ for which the standard deviation of the passage time over distance $r$ ``grows like $r^\chi$.'' Our precise assumptions related to this standard deviation are given in A2 below.  It is known \cite{C13} that under strong assumptions and reasonable definitions of the exponents, $\chi,\xi$ are related by $\chi=2\xi-1$, in all dimensions.

As mentioned above, our assumptions of basic model properties cannot be verified for any specific model of FPP; they would imply an exponential bound on scale $r^\chi$ for the true $\chi$, whereas the best known exponential bounds for $d=2$ are on scale $r^{1/2}$ (\cite{Ke93}, \cite{Ta95}, \cite{DK14}) compared to the conjectured value of $\chi=1/3$.  An exponential bound on scale $r^{1/3}$ is known for certain integrable models of last passage percolation (LPP) for $d=2$, however---in \cite{BSS16} (extracted from \cite{BFP14}) and in \cite{LR10} for LPP on $\ZZ^2$ with exponential passage times, in \cite{LM01}, \cite{LMR02} for LPP based on a Poisson process in the unit square, and in \cite{CLW16} for LPP on $\ZZ^2$ with geometric passage times. 

For $d\geq 3$ there is no generally-agreed-upon value of $\chi$ in the physics literature.  Simulations suggest that $\chi$ should decrease with dimension; simulations in \cite{ROM15} for a model believed to be in the same (KPZ) universality class as FPP show a decrease from $\chi=.33$ to $\chi=.054$ as $d$ increases from 2 to 7.  Some have predicted the existence of a finite upper critical dimension, possibly as low as 3.5, above which $\chi=0$ (\cite{Fo08},\cite{LW05}); others predict that $\chi$ is positive for all $d$ (\cite{AOF14},\cite{MPPR02}), with simulations in \cite{KK14} showing $\chi>0$ all the way to $d=12$, decaying approximately as $1/(d+1)$. Our results here require $\chi>0$ so they only have content below the upper critical dimension, should it be finite. 

In \cite{BHS18}, Basu, Hoffman, and Sly show that there are no bigeodesics for last-passage percolation (LPP) on $\ZZ^2$ when passage times are exponential, essentially by following Newman's heuristic of bounding the density of entry points, which in turn involves bounding the probability of overcrowded parallel geodesics.  (See also the earlier papers \cite{P15}, \cite{GRS17}, and \cite{BBS19} for a proof avoiding results from integrable probability.) The paper \cite{BHS18} exploits key ingredients not available in our general context---planarity, the above--mentioned exponential bound on the scale of the standard deviation, and the fact that the rescaled passage time distributions converge to a limit (Tracy-Widom) which has negative mean.  We need here a completely different heuristic and proof to control overcrowded geodesics, and this is the core of our main proof; see section \ref{outl} for proof sketches.

In two dimensions one can use bounds on the probability of overcrowded geodesics to bound the probability of non-coalescence before traveling distance $cr$, for $\theta$-rays which start at separation $r^\xi$.  For the integrable case of LPP in $d=2$ with exponential passage times, such a bound on non-coalescence probabilities was proved in \cite{BSS19}, with further results in \cite{Zh20}.  But again, strong use is made of planarity and bounds obtained through integrable probability, so the methods do not apply in our context. The results are at the optimal rate, analogous to removing the subpolynomial factors in our Theorems \ref{alldim} and \ref{coalesce}. The reliance on integrable probability was removed in \cite{SS19}, but the results are still restricted to LPP in $d=2$ with exponential passage times.

For Brownian LPP with $d=2$, in \cite{SS21} a.s.~coalescence of all $\theta$--rays, simultaneously in all $\theta$, was established, along with the a.s.~absence of bigeodesics for each fixed $\theta$, but again, the methods do not extend to our context.  Analogs of the absence of bigeodesics in polymer models with $d=2$ are established in \cite{BS20}.

Let us now formalize our definitions.
Let $\EE^d$ \label{EEd} denote the set of all bonds (i.e.~ nearest-neighbor pairs) of $\ZZ^d$.  The \emph{passage times} of bonds are a collection of nonnegative iid variables $\tau = \{\tau_e:e\in\EE^d\}$. For $x,y\in V$, a \emph{(self-avoiding) path} $\Gamma$ from $x$ to $y$ is a finite sequence of alternating sites and bonds of $\ZZ^d$, of the form $\Gamma=(x=x_0, \langle x_0,x_1 \rangle, x_1,\dots,x_{n-1},\langle x_{n-1},x_n \rangle, x_n=y)$, with all $x_i$ distinct; we may identify a path by just the sequence of sites, or view it as a string of line segments, as convenient, when clear from the context.  The \emph{passage time} of $\Gamma$ is 
\[
  T(\Gamma) := \sum_{e\in\Gamma} \tau_e,
\]
and the passage time from $x$ to $y$ is
\begin{equation}\label{Tinf}
  T(x,y) := \inf\{T(\Gamma): \Gamma \text{ is a path from $x$ to $y$ in } \ZZ^d\}.
\end{equation}
A path $\Gamma$ achieving the infimum in \eqref{Tinf} is called a \emph{geodesic} from $x$ to $y$.  For technical convenience we extend \eqref{Tinf} to $x,y\in \RR^d$ as follows.  Define $Z:\RR^d\to\ZZ^d$ by $Z(x)=z$ for all $z\in\ZZ^d$ and $x\in z+[-1/2,1/2)^d$, where $+$ denotes translation, and set
\[
  T(x,y) = T(Z(x),Z(y)), \quad x,y \in \RR^d.
\]

We assume the following.\\

\noindent{\bf A1. $\tau_e$ properties.}
\begin{itemize}
\item[(i)] $\tau_e$ is a continuous random variable.
\item[(ii)] There exists $\lambda>0$ such that $Ee^{\lambda\tau_e}<\infty$.
\end{itemize}
A1 guarantees that there is exactly one geodesic from $x$ to $y$ a.s., for each $x,y$; we denote it $\Gamma_{xy}$.  As is standard, since passage times $T(x,y)$ are subadditive ($T(x,y) \leq T(x,z)+T(z,y)$), A1(ii) guarantees the a.s.~existence (positive and finite for $x\neq 0$) of the limit \label{gx}
\begin{equation}\label{gdef}
  g(x) = \lim_n \frac{T(0,nx)}{n} = \lim_n \frac{ET(0,nx)}{n} = \inf_n \frac{ET(0,nx)}{n}
\end{equation}
for $x\in\ZZ^d$; $g$ extends to $x$ with rational coordinates by considering only $n$ with $nx\in\ZZ^d$, and then to a norm on $\RR^d$ by uniform continuity.  We let $\mkB_g$ denote the unit ball of $g$, and write $y_\theta$ \label{yth} for the positive multiple of $\theta$ which lies in $\partial\mkB_g$ (so $g(y_\theta)=1$.)  The tangent hyperplane to $\partial\mkB_g$ at $y_\theta$ will be unique under our hypotheses, and there is a vector $z_\theta$ such that this tangent hyperplane is $\{x\in\RR^d:x\cdot z_\theta=1\} = H_{\theta,1}$.

An infinite self-avoiding path $\Gamma=(x=x_0, \langle x_0,x_1 \rangle, x_1, \langle x_1,x_2 \rangle,\dots)$ is a \emph{geodesic ray} if every finite segment of $\Gamma$ is a geodesic.  Given $\theta$ in the sphere $S^{d-1}$, we say $\Gamma$ is a $\theta$--\emph{ray} if $\lim_n x_n/|x_n| =\theta$, and a \emph{subsequential $\theta$--ray} if there exists a subsequence $\{x_{n_k}\}$ for which $x_{n_k} /|x_{n_k}|\to \theta$.

Throughout the paper, $c_1,c_2,\dots$ and $C_1,C_2,\dots$, and $\ep_0,\ep_1,\dots$ represent unspecified constants, and $\nu_i(\cdot)$ \label{nui} represent unspecified subpolynomial functions, which depend only on $d$ and the distribution of the passage times $\tau_e$.  We use $C_i$ for constants which occur outside of proofs and may be referenced anywhere; any given $C_i$ has the same value at all occurrences.  We use $c_i$ for those which do not recur and are only needed inside one proof.  For the $c_i$'s we restart the numbering with $c_1$ in each proof, and the values are different in different proofs.

To avoid technical clutter, at various times we will assume (sometimes tacitly) that certain points of $\RR^d$ are lattice sites, and certain (large) real numbers are integers; the modifications to be made when this fails are trivial.

As mentioned above, we cannot formally establish simple hypotheses on the distribution of $\tau_e$ under which our conclusions hold, due to the lack of any results establishing an exponential bound on the scale of the standard deviation.
Instead we will assume certain ``macroscopic'' properties which one expects to be consequences of such hypotheses, as follows. For $\chi\geq0$ we say a function $\lambda: (0,\infty) \to (0,\infty)$ \emph{grows with power} $\chi$ if 
\begin{equation}\label{growpower}
  \lim_{r\to\infty} \frac{\log\lambda(r)}{\log r} = \chi;
\end{equation}
we say $\lambda$ is \emph{subpolynomial} if it grows with power 0. 
We define \emph{upper and lower fluctuation exponents} $\chi^\pm$ by
\[
  \chi^+ = \inf\left\{ \chi>0: Ee^{|T(0,re_1)-ET(0,re_1)|/r^\chi} \text{ is bounded in } r\right\}, \quad
    \chi^- = \liminf_{r\to\infty} \frac{\log\var(T(0,re_1))}{2\log r}.
\]
It is easy to see that $\chi^-\leq \chi^+$; our hypothesis will usually be essentially that a well--defined $\chi$ exists: \label{chis}\\

\noindent{\bf A2. Unique fluctuation exponent.} $\chi^-=\chi^+=\chi\in(0,1)$. \\

\noindent A2 is very close to the assumptions (A1)--(A4) in Chaterjee's (\cite{C13}, Theorem 1.1).
In general, under A2' we can define $\lambda^\pm(r)\in(0,\infty)$ by \label{lams}
\begin{equation}\label{lamdef}
  E\exp\left( \frac{|T(0,re_1)-ET(0,re_1)|}{\lambda^+(r)} \right) = 4,\quad \lambda^-(r) = \var(T(0,re_1))^{1/2}.
\end{equation}
Then $|T(x,y) - ET(x,y)|$ has exponential tails on scale $\lambda^+(|y-x|)$, 
and it follows readily that $\lambda^-(r)/\lambda^+(r)$ is bounded. Therefore
\[
  \chi^- \leq \limsup_{r\to\infty}  \frac{\log\lambda^-(r)}{\log r} \leq \limsup_{r\to\infty} \frac{\log\lambda^+(r)}{\log r} \leq \chi^+,
\]
and the same for lim inf in place of lim sup. Hence under A2 both $\lambda^-$ and $\lambda^+$ grow with power $\chi$. At times in place of A2 we assume the weaker\\

\noindent {\bf A2'. Existence of upper fluctuation exponent.} $\lambda^+$ grows with power $\chi\in(0,1)$.\\

We do not know any particular regularity for $\lambda^\pm$ of \eqref{lamdef} beyond \eqref{growpower}, but we can avoid this problem by working with more regular (in a sense defined in section \ref{prelim}, including continuous and strictly increasing) functions $\sigma_-\leq\lambda^-,\,\lambda^+\leq\sigma$ which, from \cite{Al20a}, necessarily exist and which still grow with power $\chi$; we call such $\sigma$ a \emph{regular upper bound}. It is straightforward that for $\lambda:(0,\infty)\to(0,\infty)$ and $\chi\geq 0$,
\begin{equation}\label{subexist}
  \lambda(r) = O(r^\alpha) \ \text{as $r\to\infty$ for all } \alpha>\chi \implies \lambda(r) = O(r^\chi\nu(r)) \ 
    \text{for some subpolynomial } \nu,
\end{equation}
with an analogous statement for lower bounds on $\lambda(r)$. 

Heuristically we expect $\sigma(r)/\sigma_-(r)$ to be bounded in $r$, meaning all natural ways of measuring the fluctuation scale are the same, to within a constant, and they grow regularly with $r$; in this case all subpolynomial functions $\nu_i(r)$ in the paper become powers of $\log r$.

Our final assumption will be the following.\\

\noindent {\bf A3. Local curvature near $\theta_0$.}
For a specified $\theta_0\in S^{d-1}$, for some $\ep_0>0$ and constants $C_i>0$, for all $\theta\in S^{d-1}$ with $|\theta-\theta_0|<\ep_0$, and all $y\in H_{\theta,1}$ with $|y-y_\theta|<\ep_0$, we have
\begin{equation}\label{curv}
  C_5|y-y_\theta|^2 \leq d(y,\mkB_g) \leq C_6|y-y_\theta|^2.\\
\end{equation}

\noindent Sometimes in place of A3 we assume:\\

\noindent {\bf A3'. Globally uniform curvature.}
For some $\ep_0>0$, for all $\theta\in S^{d-1}$ and all $y\in H_{\theta,1}$ with $|y-y_\theta|<\ep_0$, \eqref{curv} holds.\\

\noindent Note that A3 guarantees that $H_{\theta,1}$ is unique for $\theta$ near $\theta_0$, and A3' guarantees the same thing for all $\theta\in S^{d-1}$.

We now formalize the notion of crossing densities.  

\begin{remark}\label{rational}
Let us call a hyperplane $H\subset \RR^d$ \emph{rationally oriented} if $H\cap\ZZ^d$ spans $H$; then $H\cap\ZZ^d$ is an infinite lattice. When $H_{\theta,0}$ is rationally oriented, we can apply the multidimensional ergodic theorem (see \cite{Ge88}, Appendix 14.A) to help obtain the existence of a (nonrandom) crossing density for $\theta$--rays.  For other $\theta$ an additional argument would be required for this; since we are primarily interested in upper bounds, we will avoid the matter by defining the crossing density as a lim sup.  
\end{remark}

Let $B_r(x)$ denote the closed Euclidean ball of radius $r$ centered at $x$, and recall $z_\theta$ is perpendicular to the tangent plane $H_{\theta,1}$ to $\mkB_g$ at $y_\theta$.  Define the cones \label{jth}
\[
  J(\theta,\ep) := \left\{ u\in\RR^d: u\neq 0, \left| \frac{u}{|u|} - \theta\right| < \ep \right\}.
\]
Let $L_\theta(u)$ \label{lth} denote the line though a point $u$ in direction $\theta$, and write $L_\theta$ for $L_\theta(0)$.  The $\theta$--\emph{projection} of a point $x$ into a hyperplane $H_{\theta,s}$ is the projection along $L_\theta$.

To derive the nonexistence of bigeodesics from bundling, we will need to establish bundling not just for $\theta$--rays with a fixed $\theta$, but also in terms of the combined $H_{\theta,s}^+$--entry points for $\alpha$--rays in a small cone of directions $\alpha$ around some $\theta$.  To state the necessary formalities, for $A\subset H_{\theta,s}$ let $\mC_{\theta,s}(A)$ be the set of all $H_{\theta,s}^+$--entry points $x$ of halfspace $\theta$--rays from $H_{\theta,0}^-$, for which the $\theta$--projection of $x$ into $H_{\theta,s}$ lies in $A$. Similarly let $\mC_{J(\theta,\ep),s}(A)$ be the set of all sites $x$ which are $H_{\theta,s}^+$--entry points of halfspace $\alpha$--rays from $H_{\theta,0}^-$ for some $\alpha\in J(\theta,\ep)$, for which the $\theta$--projection of $x$ into $H_{\theta,s}$ lies in $A$.  Formally, the \emph{mean $H_{\theta,s}$--crossing density} $\ol\rho_\theta(s)$ for $\theta$--rays, and the \emph{mean combined $H_{\theta,s}$--crossing density} $\ol\rho_{J(\theta,\ep)}(s)$, are given by \label{rhos}
\begin{equation}\label{rhobar}
  \ol\rho_\theta(s) = \limsup_{r\to\infty} \frac{E(|\mC_{\theta,s}(B_r(sy_\theta)\cap H_{\theta,s})|)} {\Vol_{d-1}(B_r(sy_\theta)\cap H_{\theta,s})},
\end{equation}
\begin{equation}\label{rhobarJ}
  \ol\rho_{J(\theta,\ep)}(s) 
    = \limsup_{r\to\infty} \frac{E(|\mC_{J(\theta,\ep),s}(B_r(sy_\theta)\cap H_{\theta,s})|)} {\Vol_{d-1}(B_r(sy_\theta)\cap H_{\theta,s})},
\end{equation}
where $\Vol_{d-1}(\cdot)$ denotes $(d-1)$--dimensional volume.  The corresponding almost-sure values are the \emph{$H_{\theta,s}$--crossing density} $ \rho_\theta(s)$ and the \emph{combined $H_{\theta,s}$--crossing density} $\rho_{J(\theta,\ep)}(s)$ given by
\begin{equation}\label{rho}
  \limsup_{r\to\infty} \frac{|\mC_{\theta,s}(B_r(sy_\theta)\cap H_{\theta,s})|} {\Vol_{d-1}(B_r(sy_\theta)\cap H_{\theta,s})} = \rho_\theta(s)\ 
    \text{ a.s.},
\end{equation}
\begin{equation}\label{rhoJ}
  \limsup_{r\to\infty} \frac{|\mC_{J(\theta,\ep),s}(B_r(sy_\theta)\cap H_{\theta,s})|} {\Vol_{d-1}(B_r(sy_\theta)\cap H_{\theta,s})} 
    = \rho_{J(\theta,\ep)}(s)\ \text{ a.s.};
\end{equation}
such nonrandom constants exist because the lim sup is a tail random variable.  

\begin{remark}\label{rational2}
Continuing from Remark \ref{rational}, for rationally oriented $\theta$ we may replace lim sup in \eqref{rhobar}--\eqref{rhoJ} with limit, and we have 
\begin{equation}\label{rhoequal}
  \rho_\theta(s)=\ol\rho_\theta(s),\quad \rho_{J(\theta,\ep)}(s)=\ol\rho_{J(\theta,\ep)}(s).
\end{equation}
Existence of a limit in \eqref{rhobar} and \eqref{rhobarJ} follows here from periodicity in $x$ of $P(x\in \mC_{\theta,s}(\RR^d))$ and $P(x\in \mC_{J(\theta,\ep),s}(\RR^d)$, and the equality with almost sure limits follows from the multidimensional ergodic theorem (see \cite{Ge88}, Appendix 14.A.)
\end{remark}

The following is our main result.

\begin{theorem}\label{alldim}
Suppose for some FPP process on $\ZZ^d$, A1, A2, A3 hold for some $\theta_0,\ep_0$.  There exists $\ep_2$ as follows.
\begin{itemize}
\item[(i)] With probability 1, for all $\theta\in J(\theta_0,\ep_2)$ and $x\in\ZZ^d$, there is at least one $\theta$-ray from $x$.
\item[(ii)] There exist subpolynomial $\nu_i$ such that 
\begin{equation}\label{combdens}
  \ol\rho_{J(\theta_0,\ep_2)}(s) \leq \frac{\nu_1(s)}{s^{(d-1)\chi}} \quad\text{for all $s\geq C_{12}$,}
\end{equation}
and for all $\theta\in J(\theta_0,\ep_2)$,
\begin{equation}\label{crossdens}
  \ol\rho_{\theta}(s) \leq \frac{\nu_2(s)}{s^{(d-1)\xi}} \quad\text{for all $s\geq C_{12}$.}
\end{equation}
\item[(iii)] With probability 1, there exists no bigeodesic containing a subsequential $\theta$--ray with $\theta\in J(\theta_0,\ep_2)$.
\item[(iv)] Suppose also A4 holds.  Then with probability 1, (a) every geodesic ray has an asymptotic direction, (b) for every $\theta\in S^{d-1}$ and every $x\in\ZZ^d$ there is at least one $\theta$--ray from $x$, and (c) there are no bigeodesics.
\end{itemize}
\end{theorem}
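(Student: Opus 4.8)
\emph{Organization and inputs; part (i).} The plan is to concentrate essentially all the work in part (ii) and then read off (i), (iii), (iv). I would lean on two regularity inputs which under A2 should be in place beforehand (the ``results about regularity and transverse fluctuations'' mentioned in the introduction): (a) a transverse fluctuation estimate — a geodesic whose endpoints lie roughly in a direction near $\theta_0$ has every length-$\ell$ sub-portion confined to within $c_0(\log \ell)^{c_1}\Delta_\ell$ of the straight segment joining its endpoints, outside an event of probability $\le c_2\exp(-c_3(\Delta/\Delta_\ell)^{c_4})$ of a deviation of size $\Delta$, and consequently a geodesic ray with a subsequential limiting direction in $J(\theta_0,\ep_0)$ in fact has that asymptotic direction and meets each $H_{\theta_0,t}$ only finitely often; and (b) the quantitative strict convexity of $g$ from \eqref{curv}--\eqref{curv3}, namely that over distance $r$ a path through a point offset by $\Delta\le\ep_0 r$ from the geodesic has $g$-excess of order $\Delta^2/r$, which by $\Delta_r=(r\sigma_r)^{1/2}$ equals $\sigma_r$ exactly when $\Delta=\Delta_r$. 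For (i): take the geodesics $\Gamma_{x,ny_\theta}$, pass by local finiteness of $\ZZ^d$ to a subsequence along which initial segments stabilize, and let $\Gamma$ be their union — an infinite self-avoiding geodesic ray from $x$ (each finite sub-segment is a subpath of some $\Gamma_{x,ny_\theta}$, hence a geodesic, and uniqueness of geodesics makes this unambiguous). If $\alpha$ is a subsequential direction of $\Gamma$, far-out sites $v$ of $\Gamma$ lie on $\Gamma_{x,ny_\theta}$ for arbitrarily large $n$, so $T(x,v)+T(v,ny_\theta)=T(x,ny_\theta)$; letting the points go to infinity and using the shape theorem gives $g(a)+g(b)=g(a+b)$ with $a$ in direction $\alpha$ and $a+b$ in direction $\theta$, and since the $\Gamma_{x,ny_\theta}$ stay in a cone about $\theta_0$ where \eqref{curv} applies, strict convexity forces $\alpha=\theta$.

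\emph{Part (ii), the core.} By translation invariance (Remarks \ref{rational} and \ref{rational2}; the lim sup in the definitions covers non-rational orientations) it suffices to bound, for a fixed Euclidean ball $B\subset H_{\theta_0,s}$ of radius $\Delta_s$, the expectation $E|\mC_{\theta_0,s}(B)|$ by $(\log s)^{c_0}$, which gives \eqref{crossdens}, and $E|\mC_{J(\theta_0,\ep_2),s}(B)|$ by $(\log s)^{c_0}(\Delta_s/\sigma_s)^{d-1}$, which gives \eqref{combdens}. An entry point in $B$ of a $\theta$--ray ($\theta\in J(\theta_0,\ep_2)$) from $H_{\theta_0,0}^-$ supplies a source $u$ at distance $O(1)$ from $H_{\theta_0,0}$ together with a geodesic from $u$ into $B$, which by input (a) lies — outside a summably small event — in a tube of radius $c_1(\log s)^{c_2}\Delta_s$ about the line through $B$ in direction $\theta_0$; so for the single-direction count only $O((\log s)^{c_3(d-1)})$ source blocks $B_0$ of radius $\Delta_s$ are relevant, and the task reduces to bounding the probability that $\mC_{\theta_0,s}(B)$ contains at least $k$ entry points of $\theta_0$--rays whose sources lie in one given $B_0$. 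Such a configuration is $k$ nearly parallel, near-optimal geodesic segments from $B_0$ to $B$ in a thin tube with distinct endpoints, hence not yet coalesced, and the heart of the argument (the content flagged in Remark \ref{mainidea}) is to show that $k$ of these cannot coexist unless a family of order $k$ nearly independent fluctuation events each take an atypical value, which by \eqref{expbound} together with the two-sided lower bounds \eqref{minfluct}--\eqref{minfluct2} has probability $\le c_4 e^{-c_5 k}$; summing over $k$ and over the relevant $B_0$ then yields \eqref{crossdens}. For \eqref{combdens} I would partition $J(\theta_0,\ep_2)$ into $O((s/\Delta_s)^{d-1}(\log s)^{-c_3(d-1)})$ sub-cones of angular width $\sim c_1(\log s)^{c_2}\Delta_s/s$, apply the same packing estimate inside each (within a sub-cone rays are nearly parallel, so again only $O((\log s)^{c_3(d-1)})$ source blocks matter, for a per-sub-cone bound $(\log s)^{c_0}/\Delta_s^{d-1}$), and multiply by the number of sub-cones, which converts $\Delta_s^{-(d-1)}(s/\Delta_s)^{d-1}=s^{d-1}/(s\sigma_s)^{d-1}=\sigma_s^{-(d-1)}$ up to the log factors.

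\emph{Parts (iii) and (iv).} For (iii) I would run Newman's heuristic: a bigeodesic containing a subsequential $\theta$--ray with $\theta\in J(\theta_0,\ep_2)$ has, on that half, an honest asymptotic direction $\theta$ (by input (a)), hence after its finitely many crossings of $H_{\theta_0,0}$ it is a halfspace $\theta$--ray from $H_{\theta_0,0}^-$, so for every $R$ its first crossing of $H_{\theta_0,R}^+$ lies in $\mC_{J(\theta_0,\ep_2),R}(H_{\theta_0,R})$; the stationary point process $\mP_{\theta_0,R}$ of these crossings has density $\le\ol\rho_{J(\theta_0,\ep_2)}(R)\le C_{15}(\log R)^{C_{16}}/\sigma_R^{d-1}\to0$ (since $\chi>0$ forces $\sigma_R\to\infty$), but translating $H_{\theta_0,R}$ onto $H_{\theta_0,0}$ shows $\mP_{\theta_0,R}$ is a distributional translate of $\mP_{\theta_0,0}$ (absorbing the usual $O(1)$ rounding), so its density is $R$-independent, hence $0$, hence $\mP_{\theta_0,0}=\emptyset$ a.s. For (iv), A2(ii') makes A2(ii) hold at every $\theta_0$ with a uniform $\ep_2$ and input (a) uniform over all directions; that uniformity gives (a) of (iv) (two subsequential directions of a geodesic ray must agree, again by strict convexity of $g$), (b) is part (i) applied at every $\theta_0$, and for (c) a bigeodesic has by (iv)(a) antipodal asymptotic directions $\pm\theta$, so choosing $\theta_0$ with $\theta\in J(\theta_0,\ep_2)$ and invoking (iii) rules it out, and a finite cover of $S^{d-1}$ by cones $J(\theta_0,\ep_2)$ finishes.

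\emph{Main obstacle.} The whole difficulty is concentrated in the packing estimate of part (ii): that $k$ non-coalesced, nearly parallel, near-optimal geodesics between two $\Delta_s$-scale blocks have probability $\le c_4 e^{-c_5 k}$. In $d\ge3$ the planar/topological arguments that drive the $\ZZ^2$ LPP proofs (\cite{BHS18},\cite{BSS19}) are unavailable — geodesics can weave past one another without ever meeting, and temporary touching and branching mean that distinct entry points need not come from disjoint geodesics — so one must supply a dimension-free mechanism by which each additional non-coalesced geodesic consumes an independent portion of the randomness of size $\asymp\sigma_s$ in order to overcome the deterministic excess $\Delta_s^2/s\asymp\sigma_s$, and then quantify this through \eqref{expbound} and \eqref{minfluct}--\eqref{minfluct2}. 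Pinning down that mechanism — I would expect a carefully engineered resampling/modification argument — and establishing the near-independence it requires is, I anticipate, the principal hurdle; everything else is bookkeeping around the regularity inputs (cf.\ Remark \ref{mainidea}).
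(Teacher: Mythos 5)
Your organization matches the paper's: part (i) and the directional statements flow from a uniform transverse--fluctuation bound (the content of Proposition \ref{rayfluct}); part (ii) is the core, and reduces, after essentially the bookkeeping you describe (translation invariance, tube confinement, source blocks, sub-cone partition for \eqref{combdens}), to a packing estimate for crowded slab geodesics with distinct entry points; part (iii) is Newman's density argument, read off directly from (ii); and (iv) is a compactness corollary. Your derivation of \eqref{combdens} from the single-direction packing estimate by partitioning $J(\theta_0,\ep_2)$ into sub-cones of width $\asymp\Delta_s/s$ is equivalent to what the paper does (there, the widening is absorbed into a bigger source box rather than a cone partition, but the count $(s/\Delta_s)^{d-1}$ comes out the same). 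The bigeodesic argument for (iii)--(iv) is also the paper's. A minor caveat on (i): your shape-theorem-plus-strict-convexity argument establishes existence for one fixed $\theta$ at a time; the ``for all $\theta\in J(\theta_0,\ep_2)$ simultaneously'' requires the quantitative version (the event $A_t$ in the proof of Proposition \ref{rayfluct}), since one cannot union over a continuum of directions otherwise, but you already posit such a bound as input (a), so this is fixable.

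The genuine gap is the packing estimate, which you correctly flag as the whole difficulty, and here both your conjectured bound and your conjectured mechanism are off. You predict a bound $\le c_4 e^{-c_5 k}$ via a resampling/modification argument. The paper does not obtain, and does not need, a bound that strong: Proposition \ref{jammed1} gives only
\begin{equation*}
  P\Bigl(\text{$n$ slab geodesics from one block to $H_{\theta_0,2R}^+$ with distinct $H_{\theta_0,R}^+$-entry points}\Bigr) \;\le\; \exp\!\left(-C_{61}\,\frac{n^{2\beta_1}}{\log R}\right)
\end{equation*}
for a small exponent $\beta_1$, and \eqref{crossdens} is extracted by taking $n\asymp(\log R)^K$ (not by summing $e^{-ck}$ over $k$); the $(\log s)^{C_{16}}$ arises as $n\cdot M_R$ divided by block volume, not from a convergent series. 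More importantly, the mechanism is an FKG/conditioning argument rather than a resampling one. Each pre-$H_{\theta_0,R}$ geodesic is cut into $n^{\beta_2}$ segments of length $\ell=n^{-\beta_2}R$. Two segments of different geodesics at the same level $i$ are ``low-overlap neighbors'' if they make the same block-to-block transition and overlap by at most $n^{-\beta_4}\ell$. Lemma \ref{badbehavG9} shows that neighbors have passage times within a small multiple of $\sigma_\ell$ of each other; hence if a distinguished (``special'') geodesic $\Gamma$ has a \emph{fast} segment (passage time below the per-segment mean by $\asymp\sigma_\ell$), every low-overlap neighbor of that segment is forced to be nearly fast as well. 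But, conditionally on $\{\tau_e:e\in\Gamma\}$ and on $\Gamma$ being a geodesic, the FKG inequality shows the remaining passage times are stochastically \emph{increased}, so the neighbors being fast is a decreasing event whose conditional probability is \emph{smaller} than unconditional; combining the independence of odd-$i$ segment events with \eqref{minfluct} gives a per-segment cost $(1-C_{10})$, and Lemma \ref{fastseg} guarantees $\Gamma$ has at least $\asymp n^{\gamma_2\beta_2}$ fast segments, giving the $\exp(-cn^{2\beta_1}/\log R)$ bound. The dimension-free ``engine'' is precisely the tension between these two forcings: proximity makes neighbors fast while FKG makes them slow. Finally, the non-disjointness that concerns you in $d\ge3$ (temporary touching, branching) is handled by a case split --- either no site is ``popular'' (shared by $\ge n^{\beta_3-\beta_4}$ geodesics), in which case low-overlap neighbors can still be extracted by a pigeonhole argument on projected overlap intervals, or there is a popular site, in which case the shared site plus distinct $H_{\theta_0,R}^+$-entry points force the \emph{continuations} beyond $H_{\theta_0,R}$ to be genuinely disjoint, and the disjoint-case argument is rerun on the segment from $H_{\theta_0,R}$ to $H_{\theta_0,2R}$ (which is why the proposition's hypothesis tracks the geodesics all the way to $H_{\theta_0,2R}^+$). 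None of this is a modification argument; it would be worth internalizing the statement and proof of Proposition \ref{jammed1}, together with Lemmas \ref{fastseg}--\ref{badbehavG11}, before attempting part (ii).
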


Parts (i), (iv)(a), and (iv)(b) require only A2' in place of A2.
We will prove (i), (iv)(a), and (iv)(b) in Section \ref{rays}, (ii) in Section \ref{cpdens}, and (iii) and (iv)(c) in Section \ref{nobigeo}.

Theorem \ref{alldim}(iv) improves on existing results even for $d=2$ (though under stronger hypotheses), as it rules out bigeodesics in all directions simultaneously, instead of almost surely for a fixed direction as in \cite{AH16}, \cite{DH17}, \cite{LN96}.

As we have noted, one expects the spacing of entry points at distance $R$ to be of the same order as the transverse fluctuation of geodesics at the same distance $R$; in other words, two geodesics which are close enough that their transverse fluctuations allow them to coalesce should generally do so.  This means the bound \eqref{crossdens} should be sharp up to the subpolynomial function in the numerator.  The bound \eqref{combdens} is likely not sharp, though, as we expect the combining of a small sector of directions should not significantly increase the number of entry points; a bound like \eqref{crossdens} should apply to the mean combined crossing density as well.  But for the purpose of banning bigeodesics, bundling, in the sense that \eqref{combdens} approaches 0 as $s\to\infty$, is sufficient.

\begin{remark}\label{branching}
Suppose we fix $\theta$ and consider the collection of all halfspace $\theta$--rays from $H_{\theta,0}^-$.  By the time these reach $H_{\theta,R}$, based on \eqref{crossdens} enough coalescence or temporary touching must occur so that on average at least order $R^{(d-1)\xi}/\nu_2(R)$ $\theta$--rays pass through any given $H_{\theta,R}^+$--entry point $x$.  To the extent this is due to temporary touching rather than coalescence, these $\theta$--rays will later separate again.  But this separating can occur at most only slowly: it can be shown using Proposition \ref{jammed1} that the number of $H_{\theta,2R}^+$--entry points of $\theta$--rays passing through a given $H_{\theta,R}^+$--entry point is with high probability at most of subpolynomial order. One must also consider the additional coalescence and/or temporary touching initiated between $H_{\theta,R}$ and $H_{\theta,2R}$, since the crossing density is lower for $H_{\theta,2R}$, at least asymptotically.  So it is interesting to ask, do the conclusions of Theorem \ref{alldim} already force coalescence of all $\theta$--rays? We do not know the answer.
\end{remark}

For $\sigma$ a regular upper bound for $\lambda^+$, the corresponding transverse wandering function is defined as \label{del}
\[
  \Delta(r) = (r\sigma(r))^{1/2},
\]
which, under A2, grows like $r^\xi$ for $\xi=(1+\chi)/2 \in (\frac 12,1)$. There therefore exists a subpolynomial function $\nu_\Delta$ such that
\begin{equation}\label{subp2}
  \Delta(r) = r^\xi \nu_\Delta(r).
\end{equation}
To motivate the designation ``transverse wandering function,'' consider two sites $x,y$ separated by distance $r$, and a third site $z$ at some distance $\Delta \ll r$ from the line segment connecting $x$ to $y$, somewhere near the middle of this line.  The Euclidean distance via $z$, that is, $|y-z|+|z-x|$, exceeds the straight distance $|y-x|$ by order $\Delta^2/r$, and under the local curvature assumption A3, the same will be true for the distance in the norm $g$.  Heuristically, the geodesic may nonetheless pass through $z$, meaning $T(x,z)+T(z,y) = T(x,y)$, if the fluctuation scale $\sigma(r)$ of the random ``distance'' $T(\cdot,\cdot)$ is larger than the deterministic excess distance, that is, $\sigma(r) \geq \Delta^2/r$ or equivalently $\Delta \leq \Delta(r)$. This is simply the usual heuristic for $\chi=2\xi-1$, but allowing for $\sigma(r)$ possibly not being a pure power.

When $\theta$ is fixed it will be convenient to express a general $u\in\RR^d$ in terms of a basis $\BB_\theta$ in which the first vector is $y_\theta$, and the other $d-1$ form an orthonormal basis for $H_{\theta,0}$. (The particular choice of orthonormal basis does not matter.)  In a mild abuse of notation we will simply write $u= (u_1^\theta,u_2^\theta)_\theta$ \label{coord} for the representation in this basis, with $u_1^\theta = u\cdot z_\theta\in\RR$ and $u_2^\theta\in\RR^{d-1}$; we call these $\theta$--\emph{coordinates}.  The corresponding decomposition of $u$ is 
\begin{equation}\label{thdecomp}
  u = (u_1^\theta,0)_\theta + (0,u_2^\theta)_\theta = u_1^\theta\yt + (u-u_1^\theta\yt)
\end{equation}
(see Figure \ref{figdist}), and we refer to $u_1^\theta\yt$ and $u-u_1^\theta\yt$ as the first and second $\theta$--\emph{components} of $u$.

For $d=2$ it is known (\cite{DH14},\cite{LN96}) that, under hypotheses weaker than A1--A3, for each fixed $\theta\in J(\theta_0,\ep_0)$, with probability one there is a unique $\theta$--ray, which we denote $\Gamma_x^\theta$, starting from each $x\in\ZZ^2$, and any two such $\theta$--rays eventually coalesce; there is no temporary touching or branching.  (We note again, however, that under the curvature assumption A3' there must be a random countable set of directions $\theta$ for which branching does occur, producing multiple $\theta$--rays from a single site.)  So we may ask, how far do two $\theta$--rays go before they coalesce?  To formulate the question more precisely we first make some definitions. Fix $\theta,\tth\in J(\theta_0,\ep_0)$. A $\tth$--\emph{start site} is a site in $H_{\tth,0}^-$ which is adjacent to a site in the interior of $H_{\tth_0,0}^+$. A $\tth$--start site $x$ is a $\theta$--\emph{source} (in a configuration $\tau$) if $\Gamma_x^\theta$ is a halfspace $\theta$--ray from $H_{\tth,0}^-$.  With probability one, for any two $\tth$--start sites $x,y$, there exists a unique coalescence site $U_{xy}^\theta$ such that $\Gamma_x^\theta$ and $\Gamma_y^\theta$ are disjoint up to $U_{xy}^\theta$ and coincide from $U_{xy}^\theta$ onward.  The $\tth$--\emph{coalescence time} of $\Gamma_x^\theta$ and $\Gamma_y^\theta$ is the $\tth$--coordinate $(U_{xy}^\theta)_1^{\tth}$. Throughout the preceding, it would be convenient to take $\theta=\tth$, but we will need $\tth$ to be rationally oriented.

We can bound the tail of the coalescence time as a consequence of Theorem \ref{alldim}(ii), as follows.

\begin{theorem}\label{coalesce}
Suppose for some FPP process on $\ZZ^2$, A1, A2, A3 hold for some $\theta_0,\ep_0$. There exist constants $\ep_3,C_i$ and subpolynomial functions $\nu_i$ as follows. Fix $\theta\in J(\theta_0,\ep_3)$ and $r>0$, and suppose $x,y$ are $\theta$--start sites with $C_{13} \leq |x-y|\leq r^\xi\nu_\Delta(r)$.  Then
\begin{equation}\label{coaltime}
  C_{14}\frac{|x-y|}{r^\xi\nu_3(r)} \leq P\Big( (U_{xy}^\theta)_1^\theta \geq r \Big) 
    \leq \frac{|x-y|}{r^\xi\nu_4(r)}.
\end{equation}
\end{theorem}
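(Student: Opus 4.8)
The plan is to prove both inequalities from the planar ``laminar'' structure of $\theta$--rays together with \eqref{crossdens} (for the upper bound) and our earlier transverse--wandering estimates (for the lower bound). Fix a rationally oriented $\tth\in J(\theta_0,\ep_3)$ very close to $\theta$, work with $\tth$--start sites, and write $q_r(a):=P\big((U_{xy}^\theta)_1^\theta\geq r\big)$ for $|x-y|=a$; since $\tth$ is rationally oriented this depends only on $a$ (along $H_{\tth,0}$). Because $d=2$ and $\theta$ is fixed, a.s.\ there is no branching or temporary touching and each $\theta$--ray is unique, so the rays $\{\Gamma^\theta_u\}$ from lattice $\tth$--start sites are laminar: two of them cannot cross without coalescing, so their $H_{\theta,r}^+$--entry points occur in the same transverse order as their start sites. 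This yields two structural facts. (a) \emph{Monotonicity}: if $w_1$ lies between $w_0$ and $w_2$ on $H_{\tth,0}$ and $\Gamma^\theta_{w_0},\Gamma^\theta_{w_1}$ have distinct entry points, then so do $\Gamma^\theta_{w_0},\Gamma^\theta_{w_2}$, so $q_r$ is non-decreasing in the separation. (b) \emph{Subadditivity}: for equally spaced $w_0,\dots,w_n$, if every consecutive pair coalesces before $H_{\theta,r}$ then (trapping each intermediate ray between its neighbours) so do $w_0,w_n$; hence $q_r(na)\leq n\,q_r(a)$. I will also use, as established earlier in the paper from \eqref{expbound} and A2(ii), the bound that a $\theta$--ray with $\theta$ near $\theta_0$ attains transverse displacement $\geq t\Delta_r$ before its $\tth$--coordinate reaches $r$ with probability at most $Ce^{-ct^2}$ when $t\geq c'\sqrt{\log r}$ (a transverse excursion of size $t\Delta_r$ over distance $\asymp r$ costs an excess of order $t^2\sigma_r$, which must be supplied by passage--time fluctuations).

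For the upper bound, fix $a=|x-y|$, and for a large scale $\rho$ place $M+1\asymp \rho/a$ start sites $u_0,\dots,u_M$ spaced by $a$ along $H_{\tth,0}$, arranged symmetrically so their $\theta$--projections, shifted by $ry_\theta$, cluster near $ry_\theta$. By (a) and translation invariance, the expected number of distinct $H_{\theta,r}^+$--entry points of $\Gamma^\theta_{u_0},\dots,\Gamma^\theta_{u_M}$ equals $1+M q_r(a)$. On an event of probability at least $1-r^{-1}$ none of these $\leq M+1$ rays wanders transversally more than $C\sqrt{\log(rM)}\,\Delta_r$ before crossing $H_{\theta,r}$; on that event every one of their entry points lies in $B_{2\rho}(ry_\theta)\cap H_{\theta,r}$ (once $\rho$ is large compared with $\Delta_r\sqrt{\log(r\rho)}$), and, after replacing each $u_j$ by the last site of $\Gamma^\theta_{u_j}$ in $H_{\tth,0}^-$ (which is again within $C\sqrt{\log(rM)}\,\Delta_r$ of $u_j$ on a further high--probability event), it is the entry point of a halfspace $\theta$--ray from $H_{\tth,0}^-$, hence is counted by $\mC_{\theta,r}(B_{2\rho}(ry_\theta)\cap H_{\theta,r})$. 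Thus $1+Mq_r(a)\leq E\big(|\mC_{\theta,r}(B_{2\rho}(ry_\theta)\cap H_{\theta,r})|\big)+(M+1)r^{-1}$. By \eqref{rhobar}, for all large $\rho$ the first term is at most $\Vol_{d-1}(B_{2\rho}(ry_\theta)\cap H_{\theta,r})\,(\ol\rho_\theta(r)+\ep)$; dividing by $M\asymp\rho/a$ and letting $\rho\to\infty$ then $\ep\to0$ gives $q_r(a)\leq C a\,\ol\rho_\theta(r)$, and \eqref{crossdens} with $d=2$ gives $q_r(a)\leq C_{20}(\log r)^{C_{21}}|x-y|/\Delta(r)$.

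For the lower bound, I first claim $q_r(c_0\Delta_r)\geq c_1/\sqrt{\log r}$ for large $r$, where $c_0$ is a large constant to be fixed and $c_0\Delta_r$ is taken to be a lattice vector along $H_{\tth,0}$. If this failed, set $k:=\lfloor 1/(2q_r(c_0\Delta_r))\rfloor\gtrsim\sqrt{\log r}$; by subadditivity $q_r(kc_0\Delta_r)\leq k\,q_r(c_0\Delta_r)\leq\tfrac12$, so $\theta$--rays from two start sites at separation $kc_0\Delta_r$ coalesce before $H_{\theta,r}$ with probability at least $\tfrac12$. But such coalescence forces one of the two rays to attain transverse displacement at least $\tfrac12 kc_0\Delta_r\gtrsim\sqrt{\log r}\,\Delta_r$ before its $\tth$--coordinate reaches $r$, which by the wandering estimate has probability at most $2Ce^{-c(kc_0/2)^2}\leq 2Cr^{-cc_0^2/(16c_1^2)}$; picking $c_1$ small relative to $c_0$ makes this $<\tfrac12$ for large $r$, a contradiction. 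Given the claim, subadditivity in the form $q_r(a)\geq q_r(c_0\Delta_r)/\lceil c_0\Delta_r/a\rceil$ (for $a\leq c_0\Delta_r$; the range $c_0\Delta_r<a\leq\Delta_r$ is covered by monotonicity (a)) yields $q_r(a)\geq c_2|x-y|/(\Delta_r\sqrt{\log r})\geq C_{19}|x-y|/(\Delta_r\log r)$. The finitely many ``small'' values of $r$, i.e.\ $\Delta^{-1}(|x-y|)\leq r\leq R_0$, are handled separately: there $q_r(a)$ is bounded below by the positive constant $q_{R_0}(C_{18})$ (using that $q_r$ is non-increasing in $r$ and non-decreasing in $a$), and one chooses $C_{19}$ small enough for the bound to hold on that range.

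\textbf{Main obstacle.} Given the transverse--wandering estimate, the lower bound is short. The substantive work is in the upper bound: turning \eqref{crossdens} into an honest bound on $q_r(a)$ requires the bookkeeping that the distinct entry points of the finitely many rays $\Gamma^\theta_{u_j}$ are genuine halfspace--$\theta$--ray entry points counted by $\mC_{\theta,r}$ (the main annoyance being start sites whose rays dip back into $H_{\tth,0}^-$), that they are confined to $B_{2\rho}(ry_\theta)$, and that the $\limsup$ in \eqref{rhobar} delivers an upper bound on $E|\mC_{\theta,r}(\cdot)|$ for the balls used --- which is exactly why $\tth$ is taken rationally oriented. If the transverse--wandering estimate were not already available, establishing it from \eqref{expbound} and A2(ii) would be the principal extra ingredient.
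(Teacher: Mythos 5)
Your proposal takes a genuinely different route from the paper's proof, and is a cleaner combinatorial framework in several respects, but there are two substantive gaps that need to be closed.

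The paper's proof organizes the problem around the ``gap / entry-interval'' decomposition of $H_{\tth,0}$: between consecutive $(\tth,\theta,r)$-gaps, all halfspace $\theta$-rays share an entry point, and the expected number of short gaps per unit length is controlled via Theorem~\ref{alldim}(ii) and the multidimensional ergodic theorem (with explicit side events $A_{uw}$, $F_k$, $M_k$, and the distinction between $\tth$-start sites and $\theta$-sources $V_z$, to control backtracking and the fact that the ``owners'' of a gap may lie inside it). Your proposal replaces all of this with monotonicity and subadditivity of $q_r(a)$ in the separation $a$, plus a direct translation-invariance averaging of the expected number of distinct entry points among $M+1$ equispaced start sites. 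This is a more elementary and transparent bookkeeping. Indeed, your subadditivity $q_r(na)\leq nq_r(a)$ does hold exactly, because by planarity the coalescence site $U_{w_0,w_n}$ of the extreme pair coincides (as a site, hence with the same $\tth$-coordinate) with $U_{w_j,w_{j+1}}$ for one of the consecutive pairs---the root of the planar coalescing tree is always a consecutive-pair coalescence point. And your lower bound via ``if $q_r(c_0\Delta_r)$ were too small, subadditivity would force probable coalescence at separation $\gtrsim\Delta_r\sqrt{\log r}$, which a wandering bound forbids'' actually yields $q_r(a)\gtrsim a/(\Delta_r\sqrt{\log r})$, slightly sharper than the paper's $a/(\Delta_r\log r)$.

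The gaps. First, you repeatedly conflate the event $\{(U_{xy}^\theta)_1^\theta\geq r\}$, which the theorem is about, with the event that $\Gamma_x^\theta,\Gamma_y^\theta$ have distinct $H_{\theta,r}^+$-entry points. These differ in both directions in the presence of backtracking: rays can coalesce below $H_{\theta,r}$ after one or both have already entered $H_{\theta,r}^+$ and backtracked (producing distinct entry points despite $(U)_1^\theta<r$), and rays with $(U)_1^\theta\geq r$ can in principle first enter $H_{\theta,r}^+$ at the same site $U$. Your monotonicity statement (a) (as opposed to subadditivity (b)) genuinely can fail without ruling out backtracking. The paper handles exactly this with the event $A_{uw}$ (coalescence followed by a backtrack into $H_{\tth,0}^-$) and by passing from start sites $u$ to sources $V_u$; you mention replacing $u_j$ by the last site in $H_{\tth,0}^-$, but the upper-bound counting ``$E[\text{distinct entry points}]=1+Mq_r(a)$'' and the lower-bound subadditivity are stated for $q_r$ itself rather than the entry-point version, so the $O(e^{-c\Phi(r)})$ error from backtracking needs to be explicitly isolated and absorbed. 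Second, the wandering estimate you invoke---$P(\text{transverse displacement}\geq t\Delta_r\text{ before crossing }H_{\tth,r})\leq Ce^{-ct^2}$ for $t\geq c'\sqrt{\log r}$---is not what Proposition~\ref{rayfluct}(ii) establishes. That proposition, translated into displacement scale, gives roughly $C\exp\bigl(-c\,\tfrac{t^2}{\log r}\log\tfrac{t^2}{\log r}\bigr)$, which is a fixed small \emph{constant} at $t\asymp\sqrt{\log r}$ rather than a power of $r$. As it happens, a fixed small constant is all you actually use (you only need the probability to be $<\tfrac12$ to close the contradiction), so the weaker bound from Proposition~\ref{rayfluct}(ii) suffices and the $\sqrt{\log r}$ conclusion survives; but the $Ce^{-ct^2}$ estimate as stated would require a separate proof (a direct union bound over target sites $u$ with $|u_2^\theta|\geq t\Delta_r$, $u_1^{\tth}\leq r$, using A2(ii), \eqref{expbound}, and Proposition~\ref{hmu}) and is not ``established earlier in the paper.''

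A smaller but real point: \eqref{rhobar}--\eqref{rho} count entry points of halfspace $\theta$-rays from $H_{\theta,0}^-$ into $H_{\theta,s}^+$, whereas you (and the paper, in its proof of this theorem) use start sites in $H_{\tth,0}^-$ and entry into $H_{\tth,r}^+$. To apply Theorem~\ref{alldim}(ii) rigorously one should either restate it for the halfspaces $H_{\tth,\cdot}$ (Proposition~\ref{jammed1} applies verbatim with $\tth$ in the role of $\theta_0$ since $\psi_{\tth\theta_0}$ is small) or carry the change-of-hyperplane error through; your writeup, like the paper's, elides this.
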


The proof will show that we can take $\nu_3(r)= \nu_\Delta(r)\log r$.
One expects that the probability in \eqref{coaltime} is actually of order $|x-y|/r^\xi$, uniformly in $|x-y|$, without any subpolynomial corrections involved; such a result is proved for $2d$ LPP in \cite{BSS19}, with related bounds in \cite{P15} and \cite{SS19}.

Let $e_j$ denote the $j$th unit coordinate vector.  Under assumptions much milder than ours, it is proved in \cite{AH16} that for all sequences $v_k$ in $\ZZ^2$ with $|v_k|\to\infty$ we have $P(0 \in \Gamma_{-v_k,v_k}) \to 0$ as $k\to\infty$.  In particular, taking $v_k=ke_1$ solves a conjecture made in \cite{BKS03}.  Here, under stronger hypotheses, in general dimension we can establish a rate at which this probability converges to 0; as with \eqref{crossdens} we expect this rate to be optimal up to the subpolynomial factor in the numerator.  The statement is as follows.

\begin{theorem}\label{hitpoint}
Suppose for some FPP process on $\ZZ^d$, A1, A2, A3 hold for some $\theta_0,\ep_0$. There exist constants $\ep_4,C_i$ and a subpolynomial $\nu_5$ as follows. Suppose $u,v\in\ZZ^d$ with $|v_1^{\theta_0}|\geq |u_1^{\theta_0}|\geq C_{15}$, and $(v-u)/|v-u| \in J(\theta_0,\ep_4)$.  Then
\begin{equation}\label{hitpoint2}
  P(0\in\Gamma_{uv}) \leq \frac{ \nu_5(|u|) }{ |u|^{(d-1)\xi} }.
\end{equation}
If we replace assumption A3 with A3', then the same is true without the assumption $(v-u)/|v-u| \in J(\theta_0,\ep_4)$.
\end{theorem}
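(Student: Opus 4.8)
The plan is to deduce Theorem~\ref{hitpoint} from the single--direction crossing--density bound \eqref{crossdens} of Theorem~\ref{alldim}(ii). Set $\theta=(v-u)/|v-u|\in J(\theta_0,\ep_4)$, choosing $\ep_4\le\ep_2$ so that \eqref{crossdens} applies to $\theta$, and write $m=|u_1^{\theta_0}|$, so that $|u|\ge m/|z_{\theta_0}|$. First I would reduce to the ``aligned'' configuration. Since $v-u$ is nearly parallel to $\theta_0$, the point of the segment $\overline{uv}$ lying on $H_{\theta_0,0}$ has second $\theta_0$--component equal to that of $u$ up to an error $O(\ep_4 m)$, so $d(0,\overline{uv})$ is of order $|u-u_1^{\theta_0}\ytz|$. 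If this distance is at least $c_0\Delta_m(\log m)^{c_1}$, or if $u_1^{\theta_0}$ and $v_1^{\theta_0}$ have the same sign (so that $0$ is at $\theta_0$--distance $\ge m$ from $\overline{uv}$), then by \eqref{expbound} and the curvature lower bound \eqref{curv3} --- via the standard estimate that a geodesic does not deviate from the segment joining its endpoints by more than a polylogarithmic multiple of the relevant transverse scale, localized here to the scale $\Delta_m$ near $0$ --- the probability $P(0\in\Gamma_{uv})$ is smaller than any power of $1/m$, far below the asserted bound. In the remaining case $|u|=(1+o(1))\,|\ytz|\,m$, hence $\Delta(|u|)\asymp\Delta_m$ and $\log|u|\asymp\log m$, and it suffices to prove $P(0\in\Gamma_{uv})\le c_4(\log m)^{c_5}\,\Delta_m^{-(d-1)}$; note that here $|v_1^{\theta_0}|\ge m\gg\Delta_m$, i.e.\ $v$ lies far from $H_{\theta_0,0}$ on the scale $\Delta_m$ that governs geodesics near $H_{\theta_0,0}$.

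In the main case I would run a box argument. Let $\delta=\Delta_m/(\log m)^{\beta}$ for a suitable $\beta$, and let $\mathcal{B}$ consist of $\asymp\delta^{\,d-1}$ lattice sites forming a transverse $\delta$--block around $u$ at $\theta_0$--level $\approx-m$. For $x\in\mathcal{B}$ let $q(x)$ be the site at which $\Gamma_{xv}$ makes its last crossing into $H_{\theta_0,0}^+$ (well defined since $v\in H_{\theta_0,0}^+$). A first, robustness step is to show that $P(q(x)=0)$ lies within a bounded factor of $P(q(u)=0)$ for every $x\in\mathcal{B}$, and that $P(q(u)=0)\ge c\,P(0\in\Gamma_{uv})$ up to a negligible error --- the latter because, given $0\in\Gamma_{uv}$, with probability bounded below the segment $\Gamma_{0v}$ does not re-enter $H_{\theta_0,0}^-$, so that $0=q(u)$. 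Granting this,
\[
  c\,|\mathcal{B}|\,P(0\in\Gamma_{uv})\ \le\ \sum_{x\in\mathcal{B}}P\big(q(x)=0\big)\ =\ E\big[\#\{x\in\mathcal{B}:q(x)=0\}\big].
\]
To bound the right side I would use that, because $v$ lies at $\theta_0$--distance $\gg\Delta_m$ beyond $H_{\theta_0,0}$, the family $\{\Gamma_{xv}:x\in\mathcal{B}\}$ agrees near $H_{\theta_0,0}$, outside an event of probability smaller than any power of $\Delta_m$, with a family of halfspace $\theta$--rays started near $H_{\theta_0,-m}$; the sites $q(x)$ then all lie in the set of $H_{\theta_0,0}^+$--entry points of halfspace $\theta$--rays from $H_{\theta_0,-s}^-$ for some $s=m+O(\Delta_m\log m)\asymp m$. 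Thus $E[\#\{x\in\mathcal{B}:q(x)=0\}]$ is at most $|\mathcal{B}|$ times a bounded multiple of the probability that $0$ is such an entry point, which by stationarity --- via the periodicity of Remark~\ref{rational2}, after replacing $\theta$ by a rationally oriented direction within, say, $\Delta_m^{-10}$ of it --- is at most a bounded multiple of $\ol\rho_\theta(s)$. Combining with \eqref{crossdens} and $\Delta_s\asymp\Delta_m$ gives $P(0\in\Gamma_{uv})\le c_4(\log m)^{C_{16}}\Delta_m^{-(d-1)}$, as needed. Finally, under A2(ii') the direction $\theta$ may be arbitrary: one uses Theorem~\ref{alldim}(iv)(a),(b) and the form of \eqref{crossdens} valid for all $\theta$ in place of Theorem~\ref{alldim}(i),(ii), and the hypothesis $(v-u)/|v-u|\in J(\theta_0,\ep_4)$ is dropped.

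I expect the main obstacle to be the robustness and ray--approximation inputs above: showing that the event $\{0\in\Gamma_{uv}\}$ and the crossing site $q(\cdot)$ depend, up to bounded factors, only on the passage--time field in a bounded--aspect--ratio neighborhood of $\overline{uv}$ at scale $\Delta_m$. This is what makes perturbing $u$ within $\mathcal{B}$, and passing from the finite geodesics to the far--endpoint geodesic tree, cost only a constant, and what makes that tree comparable near $H_{\theta_0,0}$ to a tree of $\theta$--rays, so that Theorem~\ref{alldim}(ii) applies. Equivalently, the crux is to rule out cheaply the possibility that $\Gamma_{uv}$ (or $\Gamma_{xv}$) diverges from the corresponding $\theta$--ray ``prematurely,'' i.e.\ before reaching $\theta_0$--level $0$; by contrast the remaining points --- the $O(\log)$--scale fuzz in the definition of $q(\cdot)$, and the rational approximation of $\theta$ --- should be routine.
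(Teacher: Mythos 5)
Your proposal takes a genuinely different route from the paper, and it has a gap that I do not see how to fill with the tools available.

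The paper's proof does not pass through Theorem~\ref{alldim}(ii) at all. Instead it applies Proposition~\ref{jammed1} directly, to \emph{slab geodesics}, and it translates \emph{both} endpoints: for $x\in H_{\theta_0,0}^{\rm fat}$ it considers the event $F_x$ built from $\{x\in\Gamma_{x+u,x+v}\}$, which is exactly periodic in $x$, and it relates $F_x$ to entry points in a set $\tilde W_u$ defined via $\theta_0$--slab geodesics. No conversion to rays and no stability of the geodesic tree under perturbation of one endpoint is ever needed: the finite geodesic $\Gamma_{y,x+v}$ \emph{is} a slab geodesic as soon as it crosses the slab, so Proposition~\ref{jammed1} applies verbatim, and the final comparison $\sup_x P(F_x)\le q^{-1}E(|X_{uv}\cap\hat B^{\rm fat}|)$ is forced by exact translation invariance of the iid field.

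Your version keeps $v$ fixed, moves the other endpoint $x$ in a block $\mathcal B$ near $u$, and then invokes \eqref{crossdens} for $\theta$--rays. Two steps in this chain are not available. First, the ``ray--approximation'' step: you claim that $\{\Gamma_{xv}:x\in\mathcal B\}$ agrees near $H_{\theta_0,0}$ with a family of halfspace $\theta$--rays, outside an event of probability smaller than any power of $\Delta_m$. That is a quantitative stability/coalescence statement about the geodesic tree rooted at $v$, and nothing in the paper gives it; Proposition~\ref{rayfluct}(i) is only a soft compactness argument with no rate. Moreover, even if $\Gamma_{xv}$ did agree with \emph{some} geodesic ray near $H_{\theta_0,0}$, that ray's asymptotic direction would be some $\theta'$ close to $\theta$ but not equal to it, which would put you in the setting of the \emph{combined} density \eqref{combdens}, not \eqref{crossdens}; but \eqref{combdens} decays like $\sigma_s^{-(d-1)}$ rather than $\Delta_s^{-(d-1)}$, which is weaker than the theorem you are trying to prove. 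Second, the ``robustness'' step $P(q(x)=0)\asymp P(q(u)=0)$: this too is a coalescence-flavoured claim, and translation invariance does not give it because translating $x\mapsto u$ moves $v$ as well. (As it happens, if you inspect your chain of inequalities, the factor $|\mathcal B|$ cancels, so you only ever use $x=u$; the block construction buys you nothing and the argument reduces to the single chain $P(0\in\Gamma_{uv})\lesssim P(q(u)=0)\lesssim P(0\text{ is an entry point of a $\theta$--ray})\lesssim\ol\rho_\theta(s)$, whose middle inequality is precisely the unproved ray--approximation claim.) The paper's choice to (a) translate both $u$ and $v$ so that periodicity is exact, and (b) work with $\theta_0$--slab geodesics via Proposition~\ref{jammed1} rather than $\theta$--rays via Theorem~\ref{alldim}(ii), is what sidesteps both of these obstructions; without those two moves the argument does not close.
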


Appendix \ref{tableapp} provides a table of notation.

\section{Outline of the paper and proof sketches} \label{outl}

In this section some details and definitions are altered to reduce technicalities.  As discussed in the introduction, one expects the spacing of $H_{\theta_0,r}^+$--entry points to be of the same order $\Delta(r)$ as geodesic transverse fluctuations, meaning one entry point per volume $\Delta(r)^{d-1}$ of $H_{\theta_0,r}$, so the density of such entry points (after projection into $H_{\theta_0,r}$) should be of order $1/\Delta(r)^{d-1}$. All our main results make central use of control of the crossing density. To obtain such control, we show in section \ref{crowd} that for some $c$, a density greater than a certain subpolynomial order $(\pi(r)\log r)^c$ (see \eqref{zetadef}) entails that, modulo certain low--probability events, there must be large numbers of ``crowded'' disjoint geodesics, spaced much more tightly than the natural spacing $\Delta(r)$, and then we show that such crowded geodesics are unlikely; more on that below.

Once control of the crossing density is established in sections \ref{crowd} and \ref{alldimpf}, we
use it in section \ref{hitpointpf} to prove Theorem \ref{hitpoint}, as follows.  When $v-u$ has direction near $\theta_0$, $P(0\in\Gamma_{uv})$ can be interpreted as the density of sites $x$ next to $H_{\theta_0,0}$ for which $x\in\Gamma_{x+u,x+v}$. But for such $x$, again modulo certain low--probability events, $x\in\Gamma_{x+u,x+v}$ entails that $x$ is near the $H_{\theta_0,0}^+$--entry point of $\Gamma_{x+u,x+v}$. Therefore control of the crossing--point density translates into control of the density of sites $x\in\Gamma_{x+u,x+v}$.

We next use control of the crossing density, or more specifically the bundling property, in section \ref{nobigeo} to rule out bigeodesics, proving Theorem \ref{alldim}(iii),(iv).  In this we roughly follow a heuristic of Newman, presented at the AIM 2015 workshop ``First-passage percolation and related models.''  We first show that any bigeodesic must be nearly straight, with some asymptotic direction $\theta$ one way and $-\theta$ the other.  If bigeodesics exist with direction $\theta$ near some $\theta_0$, then so do $H_{\theta_0,0}$--entry points of such bigeodesics, so there must be a positive density of such entry points among sites next to $H_{\theta_0,0}$.  But for every $r>0$ and $\theta$ near $\theta_0$, each $\theta$--bigeodesic contains a $\theta$--ray from $H_{\theta_0,-r}^-$, so bigeodesic entry points are a subset of $H_{\theta_0,0}^+$--entry points of $\theta$--rays from $H_{\theta_0,-r}^-$. But bundling, valid for the combined ray directions $\theta$ near $\theta_0$, means that letting $r\to\infty$ sends the combined $\theta$--ray crossing density to 0, so the bigeodesic crossing density must be 0, contradicting the existence of $\theta$--bigeodesics simultaneously over $\theta$ in a neighborhood of $\theta_0$. Compactness ensures that we need only consider finitely many $\theta_0$ to simultaneously rule out bigeodesics in all directions.

Lastly in section \ref{coal} we use control of the crossing density to bound coalescence probabilities. In $d=2$ there is a unique $\theta$--ray from each site, a.s.  For the moment we restrict attention to those $\theta$--rays from $H_{\theta_0,0}^-$ originating at $\theta$--sources, meaning those $\theta$--rays which have only their first site in $H_{\theta_0,0}^-$.  We can group together the $\theta$--sources for which the corresponding $\theta$--rays have the same $H_{\theta_0,r}^+$--entry point. Due to planarity these groups (after projection into $H_{\theta_0,0}$) correspond to intervals in the line $H_{\theta_0,0}$, with the intervals separated by gaps; see Figure \ref{fig1-7A}.  Thus failure of geodesics from $\theta$--sources $x,y$ to coalesce before reaching $H_{\theta_0,r}^+$ is equivalent to $x,y$ being in different groups, i.e.~to there being a gap between $x$ and $y$.  For general $x,y$ next to $H_{\theta_0,0}$ which are not necessarily $\theta$--sources, modulo certain low--probability events, a gap must be at least close to $x$ or $y$, if not between.  Either way, the density of pairs $u,v$ of sites next to $H_{\theta_0,0}$, having the same separation $v-u=y-x$, for which the $\theta$--rays from $u,v$ do not coalesce before reaching $H_{\theta_0,r}^+$ can be bounded relative to the density of gaps, which in turn can be bounded relative to the crossing density in $H_{\theta_0,r}$.

Since control of the crossing density is so central, we describe next how it is established in sections \ref{crowd} and \ref{alldimpf}.  To start we consider slab geodesics from $H_{\theta_0,0}^-$ to $H_{\theta_0,2R}^+$. Each splits into a ``primary'' geodesic from $H_{\theta_0,0}^-$ to $H_{\theta_0,2R}$ and a ``secondary'' geodesic from $H_{\theta_0,R}$ to $H_{\theta_0,2R}^+$.
Since the geodesic between an two sites is unique, two such geodesics with different ``midway'' $H_{\theta_0,R}^+$--entry points cannot intersect in both their primary and secondary halves.  Using this and a combinatorial argument we show that if we have $n^3$ such slab geodesics all with diferent midway entry points, then, restricting attention to either the primary or secondary halves, 
there is a subset of size roughly $n$ within which the number of geodesics passing through any given site is limited.  This entails a degree of disjointness, which we quantify, between sub--segments of the $n$ geodesics.  
Thus it is enough to consider collections of geodesics from $H_{\theta_0,0}^-$ to $H_{\theta_0,R}^+$, having a certain type of local near--disjointness.  In what follows, $\beta_i\in (0,1)$ are various exponent values, all small, and $n$ is at least a certain subpolynomial function of $R$.   We divide $H_{\theta_0,0}$ and $H_{\theta_0,R}$ each into boxes of side $n^{-\beta_0}\Delta(R)$ (smaller than the natural spaceing $\Delta(R)$ of geodesics) and consider such a box $B_0$ in $H_{\theta_0,0}$ and a box $B_R$ in $H_{\theta_0,R}$, such that the direction from $B_0$ to $B_R$ is near $\theta_0$.
We end up with the need to bound the probability of crowded geodesics,
\begin{align}\label{keyprob2}
  P&\Big( \text{there exist $n$ $\theta_0$--slab geodesics from $B_0$ to $B_R$ with ``local near--disjointness''}\notag\\
  &\qquad \text{ and distinct $H_{\theta_0,R}^+$--entry points } \Big).
\end{align}
To do this, divide each slab geodesic into $n^{\beta_2}$ segments of equal length $\ell = n^{-\beta_2}R$, cutting at the $H_{\theta_0,i\ell}^+$--entry points for each $i\leq n^{\beta_2}$, and divide each $H_{\theta_0,i\ell}$ into boxes as above.  The natural transverse spacing of these segments is $\Delta(\ell)=\Delta(n^{-\beta_2}R)$; for given $i$ let us call the $i$th segments of two slab geodesics $\Gamma^{(1)},\Gamma^{(2)}$ \emph{neighbors} if their endpoints at each end lie in the same box, ensuring the endpoint separation is $\ll \Delta(\ell)$. We show that with very high probability, no pair of neighbors (for any $i$) have passage times that differ by more than a small multiple of $\sigma(\ell)$. The neighbors have close passage times because, except near their endpoints, they are ``geodesics chosen from the same set of possible paths.''  The close passage times mean that, modulo a small-probability event, if we fix any one slab geodesic $\Gamma$ and the passage times of its segments, those segment passage times in $\Gamma$ effectively nearly determine the passage times of all neighbor segments of other slab geodesics.  In particular, we say a segment is \emph{fast} if its passage time is at most $n^{-\beta_2}ET(0,R\ytz) + \frac \eta 8 \sigma(\ell)$; up to a small error, for every fast segment of our fixed slab geodesic, all neighbor segments in other slab geodesics must be fast.  We then show that a fixed one of our $n$ slab geodesics (let's call it ``special'') likely has at least of order $n^{\chi_2\beta_3}$ fast segments before crossing $H_{\theta_0,R}$, and every one of those fast segments has one or more nearly--disjoint neighbors, all of which are also fast--somewhat as in Figure \ref{fig4-10B}.  However, by conditioning on the special geodesic and the bond passage times thereon, we can use the FKG inequality to say that the presence of the special slab geodesic (because it's a geodesic) stochastically increases the passage times of bonds not in the geodesic; hence the probability is small that all the neighbors will be fast.

In summary, the fast segments of length $\ell$ in the special geodesic force all their neighbor segments to be fast (with high probability), while simultaneously reducing the neighbor segments' probability to be fast, in the FKG sense.  These contradictory aspects can only coexist if the probability of crowded geodesics as in \eqref{keyprob2} is very small.  The lack of need for planarity in this argument is what enables extension to general dimension.

At various points in the description so far, we have referenced small--probability events that must be avoided, for the above heuristic to be realized in a proof. Most of these events involve some kind of irregularity in some geodesic having some direction $\theta$ from starting point to its end---it wanders away from the straight line connecting its endpoints, it has a segment that is unusually fast or slow or not near direction $\theta$, it has too few segments that are at least moderately fast, and so on.  Much of our (unfortunately extensive) technical work involves verifying that these truly are small-probability events; this occupies sections \ref{cost} and \ref{densely}.  All this requires knowing that the norm $g$ of \eqref{gdef} is ``like the Euclidean norm'' in ways related to the boundary curvature assumption A3; such results appear in section \ref{geom}.  The key result on irregularity of geodesics, which underlies most of the others, is Proposition \ref{transfluct2}. For geodesics from 0 to $r\yt$ for some $r,\theta$, we can consider the probablity cost of $\Gamma_{0,r\yt}$ passing through a given point $u$ unusually far from the straight line from 0 to $r\yt$.  To within a log factor, this cost is given by the function $D_{\theta,r}(u)$ of \eqref{Dtr}, which has two formulas, depending on whether the deviation creates an angle $u/|u|$ far from $\theta$, or close to $\theta$.  ``Most'' level sets of $D_{\theta,r}$ look like the bottom row of Figure \ref{fig2-4}; the cylinder--like regions at the two ends correspond to deviation directions far from $\theta$. The essential aspect of Proposition \ref{transfluct2} is that it does not require a fixed $u$; it bounds the probability that there exists \emph{any} $u$ in $\Gamma_{0,r\yt}$ with large $D_{\theta,r}(u)$.  To establish this, we consider a point $U$, necessarily on the level set boundary, where the maximum of $D_{\theta,r}(u)$ is achieved, as (in one case) in the bottom row of Figure \ref{fig2-4}.  We show that there must be a third point $W\in\Gamma_{0,r\yt}$, equal to $r\yt$ in the bottom--row case, such that $0,U,W$ form the corners of what we call a $\delta$--fat triangle, meaning that the distance from $U$ to the line $\overline{0W}$ is at least $\delta$ fraction of $|W|$.  Analogous considerations apply to the other cases in Figure \ref{fig2-4}, and in each case we can sum over possible values of $U,W$ to obtain the uniform bound.

\section{Geometry and the cost of bad geodesic behavior}

\subsection{Some definitions and properties}\label{prelim}
We begin with regularity assumptions on the bounding functions $\sigma,\sigma_-$. 
We say $\lambda:(0,\infty)\to(0,\infty)$ is \emph{monotone sublinearly powerlike (with exponent $\chi$)}, abbreviated \emph{$\lambda\in$ MSL($\chi$)}, if $\lambda$ grows with power $\chi\in(0,1)$, $\lambda(r)$ is strictly increasing, $\lambda(r)/r^{1-\ep}$ is strictly decreasing for some $\ep>0$, and for every $0<\chi_1<\chi<\chi_2<1$ there exist constants $C_i$ such that \label{chii}
\begin{equation}\label{powerlike}
  \text{for all } s\geq r\geq C_1, \quad 
    C_2\left( \frac sr \right)^{\chi_1} \leq \frac{\lambda(s)}{\lambda(r)}
     \leq C_3 \left( \frac sr \right)^{\chi_2}.
\end{equation}
As proved in \cite{Al20a}, under A2 there exist $\sigma,\sigma_-\in$ MSL($\chi$) \label{sigs} satisfying $\sigma_-\leq\lambda^-,\,\lambda^+\leq\sigma$, and $\sigma_-\leq\sigma$. The function
\begin{equation}\label{zetadef}
  \pi(r) = \frac{\sigma(r)}{\sigma_-(r)} \in (1,\infty)
\end{equation}
then grows with power 0. We fix some such $\sigma,\sigma_-$ to remain the same from this point forward.  Note that \eqref{powerlike} implies that for all $r,s\geq C_1$ (including $r>s$),
\begin{equation}\label{powerlike2}
  \frac{\sigma(s)}{\sigma(r)} \leq \frac{1}{C_2} + C_3\frac sr.
\end{equation}
From \eqref{lamdef}, under A2' we have
\begin{equation}\label{expbd}
  P\Big( |T(x,y) - ET(x,y)| \geq t\sigma(|y-x|) \Big) \leq 4e^{-t} \quad \text{for all $t>0$ and } x,y\in\RR^2.
\end{equation}
From \eqref{expbd} and (\cite{Al20a} Lemma 1.2) it follows that there exist constants $\eta<1/2,C_7>0$ such that for all $x, y \in \RR^d$ with $|x-y|\geq C_4$ the following both hold:
\begin{equation}\label{minfluct}
  P\Big( T(x,y) - ET(x,y) > \eta\sigma_-(|x-y|) \Big) \geq \frac{C_7}{\pi(|x-y|)^2(1+\log\pi(|x-y|))}, 
\end{equation}
\begin{equation}\label{minfluct2}
  P\Big( T(x,y) - ET(x,y) < -\eta\sigma_-(|x-y|) \Big) \geq \frac{C_7}{\pi(|x-y|)^2(1+\log\pi(|x-y|))}.
\end{equation}
The essential aspects are that this gives a lower bound for one-sided probabilities, based on the two-sided assumption \eqref{expbd}, and that the scale of deviations is $\sigma_-(\cdot)$ rather than $\sigma(\cdot)$.

\subsection{Mean passage time vs.~its asymptotic approximation}
Let \label{hofx}
\[
  h(x) = ET(0,x), \quad x\in\ZZ^d.
\]
Then $h(x)-g(x)$ is nonnegative by subadditivity of $h$.  A variant of the following bound was proved in \cite{Al97}, with error term $C_{16}|x|^{1/2}\log |x|$; in \cite{Te18} this was improved to $C_{16}|x|^{1/2}(\log |x|)^{1/2}$.  The proof in \cite{Al97} adapts readily to the present situation and we don't need improvement to $(\log |x|)^{1/2}$, so we will work here without that improvement.

\begin{proposition}\label{hmu}
Assume A1 and A2'. There exists $C_{16}$ such that for all $|x|\geq 2$,
\begin{equation}\label{hmu2}
  g(x) \leq h(x) \leq g(x) + C_{16}\sigma(|x|) \log |x|.
\end{equation}
\end{proposition}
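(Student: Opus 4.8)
The plan is to follow the approach of \cite{Al97}, which establishes the upper bound in \eqref{hmu2} via a subadditivity-type argument combined with the exponential concentration in \eqref{expbound}. The left inequality $g(x)\leq h(x)$ is just subadditivity of $h$ (Fekete's lemma gives $g(x)=\inf_n h(nx)/n\leq h(x)$), so the content is entirely in the right inequality. First I would fix $x$ with $|x|\geq 2$, set $N=|x|$ (or a comparable integer), and consider subdividing the segment from $0$ to $Nx$ into $N$ pieces, or more precisely consider the nested sequence of points $0, x, 2x, \dots$; the standard trick is to pick $k$ roughly of order $\log N / \log 2$ or, better, to use a dyadic/concatenation scheme comparing $T(0,2^jx)$ across scales. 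The key algebraic identity is that $h(x) = ET(0,x)$ while $g(x)=\lim_m ET(0,mx)/m$, so $h(x)-g(x) = \sum_{j\geq 0}\big(2^{-j}ET(0,2^jx) - 2^{-(j+1)}ET(0,2^{j+1}x)\big)$, a telescoping sum of the ``per-unit-length'' increments.

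The main step is then to bound each telescoping increment. Using subadditivity, $T(0,2^{j+1}x)\leq T(0,2^jx)+T(2^jx,2^{j+1}x)$, so $2^{-(j+1)}ET(0,2^{j+1}x)\leq 2^{-j}ET(0,2^jx)$, confirming monotonicity; the quantitative gain comes from the fact that the two halves of a geodesic across scale $2^{j+1}$ are typically ``not much worse'' than independent geodesics, so the difference $2^{-j}ET(0,2^jx)-2^{-(j+1)}ET(0,2^{j+1}x)$ is controlled by the fluctuation scale $\sigma(2^j|x|)$. Concretely, one writes $ET(0,2^{j+1}x) \geq ET(0,2^jx) + ET(2^jx,2^{j+1}x) - (\text{correlation correction})$, or dually one uses that $T(0,2^{j+1}x)$ is at least the max-type quantity built from the two halves, and invokes \eqref{expbound} to say that with overwhelming probability each half deviates from its mean by at most $O(\sigma(2^j N)\cdot t)$; integrating the exponential tail gives an $L^1$ bound of the same order. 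Thus each increment is $O(2^{-j}\sigma(2^j N))$. Summing over $0\leq j \lesssim \log N$: since $\sigma$ is sublinearly powerlike, $\sigma(2^jN)\leq C 2^{j\gamma_2}\sigma(N)$ with $\gamma_2<1$... but that makes $\sum 2^{-j}\sigma(2^jN)$ converge to $O(\sigma(N))$ only if we are careful — actually with $\gamma_2<1$ we get $\sum_j 2^{j(\gamma_2-1)}\sigma(N)\lesssim \sigma(N)$, which would give the bound \emph{without} the $\log N$ factor. The $\log N$ in \eqref{hmu2} arises because the correction term at each scale is not simply $\sigma(2^jN)$ but carries an additional logarithmic factor from the number of ``candidate'' subdivision points one must union-bound over (the geodesic from $0$ to $2^{j+1}x$ can be cut at any of $\sim 2^jN$ lattice points near the midpoint, and controlling the worst-case deviation over all of them costs a $\log$), so the per-scale bound is $O(2^{-j}\sigma(2^jN)\log(2^jN))$ and the sum is $O(\sigma(N)\log N)$. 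I would track this union bound carefully, which is really the only delicate point.

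The main obstacle — and the reason \cite{Al97} is cited rather than the result being re-derived — is the careful bookkeeping in the per-scale estimate: one must compare the mean of $T(0,2^{j+1}x)$ to the sum of the means over the two halves while accounting for the fact that the optimal subdivision point is random and chosen adversarially by the geodesic, which forces a supremum over $\sim 2^jN$ points and hence the extra logarithm. Handling this requires the exponential tail \eqref{expbound} uniformly over all pairs of endpoints at the relevant scale (which A2(i) provides, since \eqref{expbound} holds for all $x,y\in\RR^d$ with $|x-y|\geq C_4$), together with the regularity of $\sigma$ from \eqref{powerlike} to sum the geometric-type series. Since the paper explicitly states ``the proof in \cite{Al97} adapts readily to the present situation and we don't need improvement to $(\log|x|)^{1/2}$,'' I would present the argument at the level of the scale-by-scale decomposition above, cite \cite{Al97} for the per-scale concentration lemma, and verify only that the hypotheses there are implied by A1 and A2(i) and that the summation over scales closes using \eqref{powerlike} — in particular that $\sigma$ sublinearly powerlike makes $\sum_{j=0}^{\lceil \log_2 N\rceil} 2^{-j}\sigma(2^jN) = O(\sigma(N))$, so the final bound is $O(\sigma(N)\log N) = O(\sigma(|x|)\log|x|)$ as claimed.
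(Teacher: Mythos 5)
Your telescoping identity $h(x)-g(x)=\sum_{j\geq 0}\bigl(2^{-j}ET(0,2^jx)-2^{-(j+1)}ET(0,2^{j+1}x)\bigr)$ is correct, but the per--scale bound it requires --- that the subadditivity defect $2h(2^jx)-h(2^{j+1}x)$ be $O\bigl(\sigma(2^j|x|)\log(2^j|x|)\bigr)$ --- is exactly the hard content of the proposition, and your sketch does not establish it. Decomposing at an intermediate crossing site $V$ gives, after the union bound you describe, $T(0,2^{j+1}x)\geq\min_V[h(V)+h(2^{j+1}x-V)]-C\sigma\log$ with high probability; but to conclude you would then need $\min_V[h(V)+h(2^{j+1}x-V)]\geq 2h(2^jx)-C\sigma\log$, which is not justified. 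The natural lower bound available is $h(V)+h(2^{j+1}x-V)\geq g(V)+g(2^{j+1}x-V)\geq g(2^{j+1}x)$, which compares to $g$ rather than to $2h(2^jx)$ and so only reproduces the trivial inequality $h\geq g$. To compare instead to $2h(2^jx)$ one would need to control $|V-2^jx|$, i.e.~the transverse wandering of the geodesic; that is precisely what is \emph{not} available under A2(i) alone (the proposition does not assume the curvature hypothesis A2(ii), and in its absence transverse fluctuations need not be sublinear). So the ``per--scale concentration lemma'' you propose to cite from \cite{Al97} is not what that reference contains: Alexander's argument is not a dyadic superadditivity telescope but a different multi--scale skeleton/bootstrap (his GAP), organized specifically to avoid needing transverse control.

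Separately, the paper's own proof is essentially a hypothesis check against \cite{Al97}, and the step you have omitted is the one the paper emphasizes: $\sigma$ is first replaced by $\tilde\sigma(r)=r^{\gamma_1}\sup_{s\leq r}\sigma(s)/s^{\gamma_1}$, which is comparable to $\sigma$ and makes $\tilde\sigma(r)/r^{\gamma_1}$ nondecreasing. This monotonicity is exactly what is needed so that the error term can be written as $|x|^{\gamma_1}\varphi(|x|)$ with $\varphi$ nondecreasing, the precise form required for Alexander's Theorem~3.2 to close. If you want to present a self-contained proof rather than cite \cite{Al97}, you would need to reproduce that bootstrap; the dyadic telescope as written is circular in the way described above.
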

\begin{proof}
Defining
\[
   \tilde\sigma(r) = r^\chi\sup_{s\leq r}\frac{\sigma(s)}{s^\chi}
\]
we have from \eqref{powerlike} that $\sigma(r)\leq\tilde\sigma(r)\leq \sigma(r)/C_2$, while $\tilde\sigma(r)/r^\chi$ is nondecreasing.  Therefore \eqref{expbd} is valid for $\tilde\sigma$ in place of $\sigma$.

The proof of (\cite{Al97}, Proposition 3.4) only uses the existence of an exponential bound on scale $|x|^{1/2}$ from \cite{Ke93}, so under A2' it is valid for $\tilde\sigma(|x|)$ in place of $|x|^{1/2}$. To obtain (\cite{Al97}, Theorem 3.2) from (\cite{Al97}, Proposition 3.4), both with the replacement error term $C_{16}\tilde\sigma(|x|) \log |x|$, we need only the fact that this error term can be expressed as $C_{16}|x|^\chi\varphi(|x|)$ with $\chi>0$ and $\varphi(r) = \tilde\sigma(r)(\log r)/r^\chi$ nondecreasing.  
\end{proof}

\subsection{Geometric observations} \label{geom}
Recall that $\Gamma_{xy}$ denotes the geodesic between $x$ and $y$.
In general we view $\Gamma_{xy}$ as an undirected path, but at times we will refer to, for example, the first point of $\Gamma_{xy}$ with some property.  Hence when appropriate, and clear from the context, we view $\Gamma_{xy}$ as a path from $x$ to $y$.

Turning to curvature--related matters, when A3 holds, for $y\in H_{\theta,1}$ with $|y-y_\theta|\geq\ep_0$ we have by \eqref{curv} and local strict convexity that
\begin{equation}\label{curv2}
  d(y,\mkB_g) \geq C_5\ep_0 |y-y_\theta|,
\end{equation}
so for all $y\in H_{\theta,1}$,
\begin{equation}\label{curv3}
  d(y,\mkB_g) \geq C_5\ep_0^2\left( \frac{|y-y_\theta|}{\ep_0} \wedge \left( \frac{|y-y_\theta|}{\ep_0} \right)^2 \right)
\end{equation}

\begin{remark}\label{spheres}
An equivalent way to state the local curvature condition A3 is as follows.  Let $B_r(x)$ denote the closed Euclidean ball of radius $r$ centered at $x$, and recall $z_\theta$ is perpendicular to the tangent plane $H_{\theta,1}$ to $\mkB_g$ at $y_\theta$.  Recall also the cones $J(\theta,\ep)$. There exist contants $C_8 < C_9$ as follows.  For all $\theta \in J(\theta_0,\ep_0)\cap S^{d-1}$ we have
\begin{equation}\label{spheres2}
  B_{C_8|z_\theta|}(y_\theta - C_8z_\theta) \cap J(\theta,\ep) \subset \mkB_g \cap J(\theta,\ep)
    \subset B_{C_9|z_\theta|}(y_\theta - C_9z_\theta) \cap J(\theta,\ep).
\end{equation}
Equation \eqref{spheres2} says that near $y_\theta$, $\mkB_g$ is sandwiched between two balls which also have tangent plane $H_{\theta,1}$ at $y_\theta$.  From this we see that for some $\ep_1$ determined by $\ep_0$, the angle $\psi_{z_\theta,z_\alpha}$ between $z_\theta$ and $z_\alpha$ (equivalently between hyperplanes $H_{\theta,0}$ and $H_{\alpha,0}$) grows at most linearly in $|\theta-\alpha|$:
\begin{equation}\label{zangle}
  |\theta-\theta_0|<\ep_1, |\alpha-\theta| < \ep_1 \implies \psi_{z_\theta,z_\alpha} \leq C_{10}|\alpha-\theta|,\quad
    \big| y_\alpha - \yt \big| \leq C_{11}|\alpha-\theta|.
\end{equation}
The bound on $|y_\alpha - \yt|$ actually requires \eqref{yzangle} below, but we include it here for convenience.
\end{remark}

Note that $|u|$ always refers to the Euclidean length of $u$, not to the length of the vector of $\theta$--coordinates.  For a norm based on $\theta$--coordinates, we use \label{tnorm}
\[
  |u|_{\theta,\infty} := \max(|u_1^\theta|,|u_2^\theta|),
\]
which satisfies
\begin{equation}\label{thetanorm}
  (1+|\yt|) |u|_{\theta,\infty} \geq |u_1^\theta| |\yt| + |u_2^\theta| \geq |u|.
\end{equation}
We define ``distance via hyperplane'': noting that $u\in H_{\theta,u_1^\theta}$, for $A\subset\RR^d$ with $A\cap H_{\theta,u_1^\theta}\neq\emptyset$ let \label{hyp}
\[
  d_\theta(u,A) = d(u,A\cap H_{\theta,u_1^\theta}),
\]
and note that $d_\theta(u,L_\theta)= |u_2^\theta|$. Finally define the $\theta$-\emph{ratio} of $0\neq u\in\RR^e$ to be $|u_2^\theta|/|u_1^\theta|\in [0,\infty]$, and note that if $H_{\theta,0}\perp\theta$ then this is just the tangent of the angle between $u$ and $L_\theta$; in general the $\theta$--ratio is a surrogate for this tangent.

A number of our arguments involve the following theme: for points $x,y\in \RR^2$, if $y$ is far enough from the straight line from 0 to $x$, then $\Gamma_{0x}$ is unlikely to pass through $y$, because $g(y) + g(x-y)$ significantly exceeds $g(x)$.  Since \eqref{expbd} involves centering at the expectation, to express this theme in more than a crude way we also need to quantify how increases in $g(x)$ relate to increases in $h(x)$, but for the moment we consider just $g$.  For Euclidean distance the following is useful: 
\begin{equation}\label{triangle}
  \ell + \frac{m^2}{2\ell} \geq (\ell^2+m^2)^{1/2} \geq \ell + \min\left(\frac m3,\frac{m^2}{3\ell} \right) \qquad\text{for all } \ell,m> 0.
\end{equation}
For an analog under the local curvature assumption A3, we first note that the angle between $\theta$ and $H_{\theta,0}$ is always at least $\arcsin 1/\sqrt{d}$; see the discussion around \eqref{yzangle}.
The following analog of \eqref{triangle} for $g$-distance is straightforward, proved similarly to Lemma \ref{gtri} below (see also Remark \ref{spheres}), so we omit details.
There exist constants $\ep_1>0$ and $C_i>0$ such that for $|\theta-\theta_0|<\ep_1$ and $u= (u_1^\theta,u_2^\theta)\in\RR^d$ satisfying $u_1^\theta>0$ and $|\theta - u/|u||<\ep_1$, we have
\begin{equation}\label{triangleg}
  u_1^\theta + C_{17} \min\left( |u_2^\theta|, \frac{|u_2^\theta|^2}{u_1^\theta} \right) \leq g(u) \leq 
    u_1^\theta + C_{18} \min\left( |u_2^\theta|, \frac{|u_2^\theta|^2}{u_1^\theta} \right).
\end{equation}
This is really the significance of the local curvature assumption for the boundary of $\mkB_g$: it says that in dealing with vectors with direction near $\theta_0$, the norm $g$ is ``Euclidean-like'' in that \eqref{triangleg} holds, and, most importantly, there is consequently a discrepancy, as in \eqref{gtri1} below, in the triangle inequality.

\begin{figure}
\includegraphics[width=9cm]{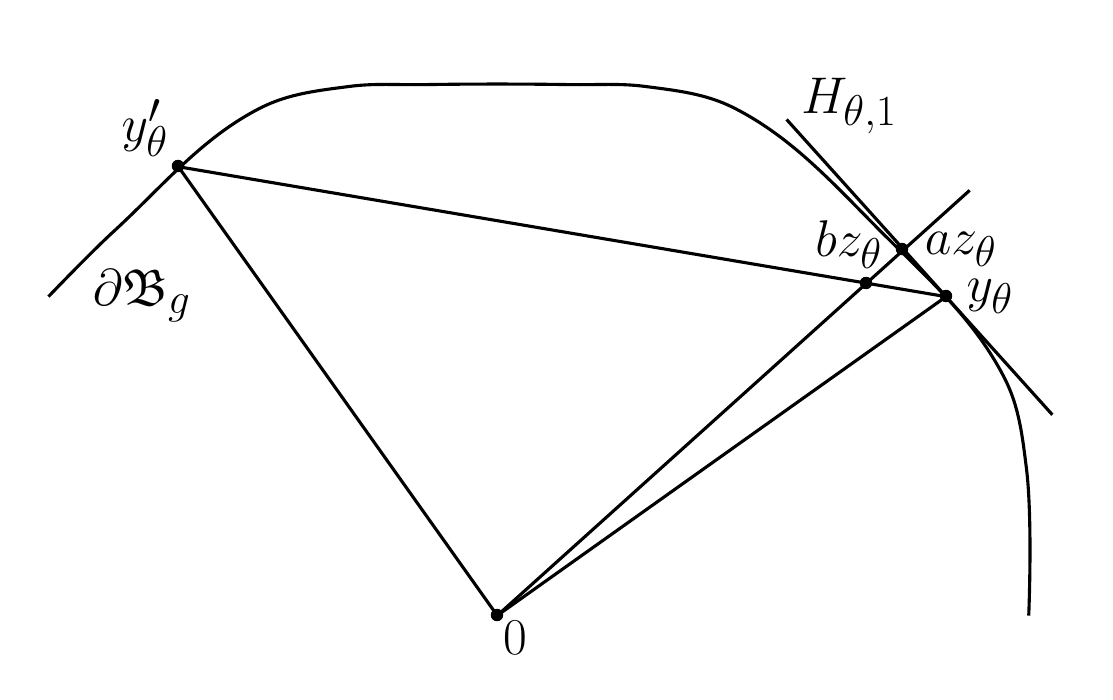}
\caption{ Illustration for \eqref{yzangle}. $az_\theta$ lies in the tangent plane $H_{\theta,1}$ at $\yt$ and is perpendicular to $H_{\theta,1}$. $\{\yt,\yt'\}$ form an orthonormal basis of equal--length vectors. }
\label{fig220}
\end{figure}

Let $\psi_{ab}$ \label{ang} denote the angle, taken in $[0,\pi]$, between nonzero vectors $a$ and $b$.  
The vector $\theta$ (or its multiple $y_\theta$) and the vector $z_\theta \perp H_{\theta,0}$ need not be parallel, but we can bound the angle between them as follows. Let $a>0$ be such that $az_\theta\in H_{\theta,1}$; then $1 = az_\theta\cdot z_\theta$ so $a=1/|z_\theta|^2$.  See Figure \ref{fig220}. Also, $g(az_\theta)\geq 1$, and $az_\theta$ is the orthogonal projection of $y_\theta$ onto the line through 0 and $z_\theta$ so
\[
  y_\theta \cdot \frac{z_\theta}{|z_\theta|} = |az_\theta|.
\]
From lattice symmetry, there exists an othonormal basis for $\RR^d$ containing $y_\theta$ and consisting of vectors in $\partial\mkB_g$ having the same Euclidean length; by inverting basis vectors we may assume $z_\theta$ has all nonnegative coefficients in this basis.  Then the boundary of the convex hull of the basis vectors includes a multiple $bz_\theta$ with $b>0$; the convex hull 
is contained in $\mkB_g$ so we must have $b\leq a$.  The minimum Euclidean length of vectors in this convex hull boundary is $|y_\theta|/\sqrt{d}$, and hence
\begin{equation}\label{yzangle}
  \frac{y_\theta}{|y_\theta|} \cdot \frac{z_\theta}{|z_\theta|} = \frac{|az_\theta|}{|y_\theta|} \geq \frac{|bz_\theta|}{|y_\theta|} \geq \frac{1}{\sqrt{d}}
    \quad\text{so}\quad \psi_{y_\theta,z_\theta}=\psi_{\theta,z_\theta}\leq\arccos \frac{1}{\sqrt{d}}.
\end{equation}
Therefore $\psi_{y_\theta,z_\theta}=\psi_{\theta,z_\theta}\leq\arccos 1/\sqrt{d}$; alternatively we can say the angle between $\theta$ and $H_{\theta,0}$ is at least $\arcsin 1/\sqrt{d}$. This has several consequences.  First, for all $u\in\RR^d$,
\begin{equation}\label{dvsdth}
   \frac{|u_2^\theta|}{\sqrt{d}} = \frac{d_\theta(u,L_\theta)}{\sqrt{d}} \leq d(u,L_\theta) \leq |u_2^\theta|, \hskip .2cm
     |u_1^\theta| |\yt| \leq \sqrt{d}|u|, \hskip .2cm |u_2^\theta| \leq \sqrt{d-1}|u|, \hskip .2cm
     \Big|u\cdot\theta - u_1^\theta|y_\theta| \Big| \leq \sqrt{\frac{d-1}{d}}|u_2^\theta|
\end{equation}
(see Figure \ref{figdist}.) Second, there exists $\ep_5,\ep_6>0$ and $C_{21}$ as follows.  Suppose $\alpha,\theta\in S^{d-1}$ and the angle $\psi_{z_\alpha,z_\theta}$ between $H_{\alpha,0}$ and $H_{\theta,0}$ is at most $\ep_5$.  Suppose also that  for some $v$, $L_\theta(v)$ intersects $H_{\alpha,0}$ and $H_{\theta,0}$ in points $x_\alpha$ and $x_\theta$ respectively.  Then (see Figure \ref{figdist})
\begin{equation*}
  |x_\alpha - x_\theta| \leq C_{21}\psi_{z_\alpha,z_\theta}|x_\theta|,
\end{equation*}
which by \eqref{zangle} means that 
\begin{equation}\label{linegap2}
  \psi_{\alpha\theta}<\ep_6 \implies |x_\alpha - x_\theta| \leq C_{22}\psi_{\alpha\theta}|x_\theta|.
\end{equation}
In addition, for $\alpha'\in S^{d-1}$,
letting $w_\alpha=L_\alpha(v)\cap H_{\theta,0},w_{\alpha'}=L_{\alpha'}(v)\cap H_{\theta,0}$ we have  
\begin{equation}\label{Vgap}
  \psi_{\alpha\theta}<\ep_6, \psi_{\alpha'\theta}<\ep_6 \implies |w_\alpha - w_{\alpha'}| \leq C_{22}\psi_{\alpha\alpha'}|v - x_\theta|.
\end{equation}
Here and in what follows, for a line $L$, a hyperplane $H$, and a point $w$, we write $w = L\cap H$ as a shorthand for $\{w\} = L\cap H$.

\begin{figure}
\includegraphics[width=12cm]{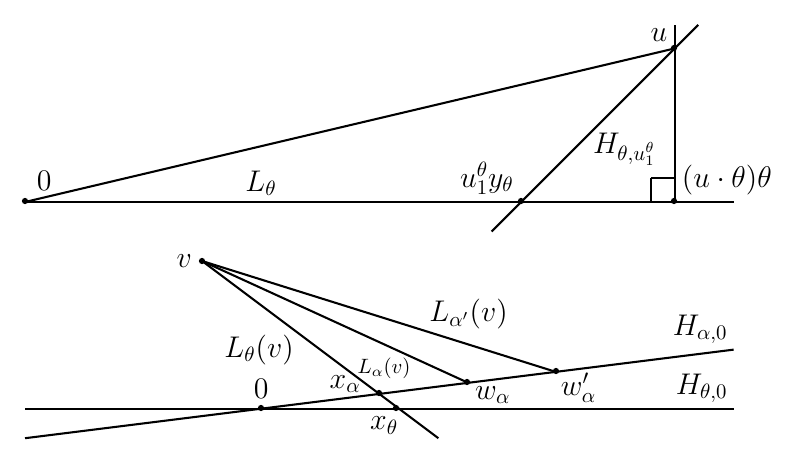}
\caption{ {\it Top:} Illlustration of the relationships in \eqref{dvsdth}. $u_1^\theta\yt$ and $u-u_1^\theta\yt$ are the $\theta$--components of $u$. $u_1^\theta\yt$ may instead lie on the opposite side of $(u\cdot\theta)\theta$.  {\it Bottom:} Illustration of \eqref{linegap2} and \eqref{Vgap}. The angle between $L_\theta(v)$ and $H_{\theta,0}$ is bounded below by $\arcsin 1/\sqrt{d}$. }
\label{figdist}
\end{figure}

Note that in both \eqref{linegap2} and \eqref{Vgap}, we can view the context as starting with the line $L_\theta(v)$ through $v$ that intersects the hyperplane $H_{\theta,0}$ at an angle of at least $\arcsin 1/\sqrt{d}$.  Equation \eqref{linegap2} bounds the change in the intersection point if we rotate the hyperplane around 0 from $H_{\theta,0}$ to $H_{\alpha,0}$, keeping the line fixed.  Equation \eqref{Vgap} bounds the change in the intersection point if we instead rotate the line through $v$ from direction $\alpha$ to $\alpha'$ (both near $\theta$), keeping the hyperplane fixed.

Another consequence of \eqref{linegap2} and \eqref{Vgap} is the following, relating change in $\theta$--coordinate values to change in the angle $\theta$.  When we change from $\theta$ to $\alpha$, in comparing $x_2^\theta$ to $x_2^\alpha$ it is not appropriate to simply consider $|x_2^\theta-x_2^\alpha|$, as these are coordinate vectors under different bases, used in different spaces ($H_{\theta,\fatdot}$ vs $H_{\alpha,\fatdot}$).  Instead we compare them in $\RR^d$ by considering $|(0,x_2^\theta)_\theta-(0,x_2^\alpha)_\alpha|$.

\begin{lemma}\label{coordchg}
Suppose A3 holds for some $\theta_0$ and $\ep_0>0$, and let $\ep_6$ be as in \eqref{linegap2}, \eqref{Vgap}.  There exist $C_i$ as follows. Suppose $\alpha,\theta\in S^{d-1}$ with $\psi_{\alpha\theta}\leq \ep_6$, and $0\neq x\in\RR^d$.  Then
\begin{equation}\label{coordchg1}
  \max\left( |x_1^\theta-x_1^\alpha|, |(0,x_2^\theta)_\theta-(0,x_2^\alpha)_\alpha| \right) \leq C_{23}\psi_{\alpha\theta}|x|.
\end{equation}
\end{lemma}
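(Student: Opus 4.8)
The plan is to deduce the estimate from the two geometric facts \eqref{linegap2} and \eqref{Vgap} by a two-step rotation, together with elementary control of the norm $g$. Throughout, recall that $g$ is a norm on $\RR^d$ equivalent to the Euclidean norm, so there are constants $c_0\le c_1$ with $c_0|u|\le g(u)\le c_1|u|$ for all $u$; hence $|y_\theta|=1/g(\theta)\in[1/c_1,1/c_0]$ for every $\theta\in S^{d-1}$, and, since $y_\theta=\theta/g(\theta)$ with $g$ globally Lipschitz, the map $\theta\mapsto y_\theta$ is Lipschitz on $S^{d-1}$, so $|y_\alpha-y_\theta|\le c_2|\alpha-\theta|\le c_2\psi_{\alpha\theta}$. (As with \eqref{linegap2} and \eqref{Vgap} themselves, one should also keep $\theta$, hence $\alpha$, near $\theta_0$ so that $z_\theta$, $z_\alpha$ and the corresponding $\theta$- and $\alpha$-coordinate systems are well defined and \eqref{zangle} applies; shrinking $\ep_6$ if necessary, we assume this.)

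The geometric input is the observation that $(0,x_2^\theta)_\theta=x-x_1^\theta y_\theta$ is exactly the point $L_\theta(x)\cap H_{\theta,0}$: it lies on $L_\theta(x)$ because it differs from $x$ by the multiple $-x_1^\theta y_\theta$ of $\theta$, and it lies on $H_{\theta,0}$ because $(x-x_1^\theta y_\theta)\cdot z_\theta=x_1^\theta-x_1^\theta(y_\theta\cdot z_\theta)=0$; likewise $(0,x_2^\alpha)_\alpha=x-x_1^\alpha y_\alpha=L_\alpha(x)\cap H_{\alpha,0}$. I would then compare these two points in two stages. \emph{Stage 1 (rotate the hyperplane):} keeping $L_\theta(x)$ fixed, set $w:=L_\theta(x)\cap H_{\alpha,0}$; then \eqref{linegap2} with $v=x$ gives $|w-(0,x_2^\theta)_\theta|\le C_{31}\psi_{\alpha\theta}|(0,x_2^\theta)_\theta|=C_{31}\psi_{\alpha\theta}|x_2^\theta|$. \emph{Stage 2 (rotate the line):} keeping $H_{\alpha,0}$ fixed, apply \eqref{Vgap} with its base direction $\theta$ replaced by $\alpha$ (so its hyperplane becomes $H_{\alpha,0}$ and its point $x_\theta$ becomes $L_\alpha(x)\cap H_{\alpha,0}=(0,x_2^\alpha)_\alpha$), its two perturbed line directions taken to be $\theta$ and $\alpha$, and $v=x$; then the factor $|v-x_\theta|$ appearing there is $|x-(0,x_2^\alpha)_\alpha|=|x_1^\alpha||y_\alpha|$, and \eqref{Vgap} gives $|w-(0,x_2^\alpha)_\alpha|\le C_{31}\psi_{\alpha\theta}|x_1^\alpha||y_\alpha|$. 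Both applications are legitimate once $\ep_6$ is small enough that $L_\theta(x)$ and $L_\alpha(x)$ genuinely meet the relevant hyperplanes, which holds since the angle between $\theta$ and $H_{\theta,0}$ is at least $\arcsin 1/\sqrt d$ by \eqref{yzangle} and is perturbed by only $O(\psi_{\alpha\theta})$.

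Adding the two stages and invoking \eqref{dvsdth} (which gives $|x_2^\theta|\le\sqrt{d-1}\,|x|$ and $|x_1^\alpha||y_\alpha|\le\sqrt d\,|x|$) yields
\[
  \big|(0,x_2^\theta)_\theta-(0,x_2^\alpha)_\alpha\big|\le C_{31}\big(\sqrt{d-1}+\sqrt d\big)\psi_{\alpha\theta}|x|,
\]
the required bound on the second entry of the maximum. For the first entry, subtract the two decompositions $x=x_1^\theta y_\theta+(0,x_2^\theta)_\theta=x_1^\alpha y_\alpha+(0,x_2^\alpha)_\alpha$ to get $x_1^\theta y_\theta-x_1^\alpha y_\alpha=(0,x_2^\alpha)_\alpha-(0,x_2^\theta)_\theta$, whose norm we have just bounded. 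Writing $x_1^\theta y_\theta-x_1^\alpha y_\alpha=(x_1^\theta-x_1^\alpha)y_\theta+x_1^\alpha(y_\theta-y_\alpha)$, and using $|x_1^\alpha|\le\sqrt d\,|x|/|y_\alpha|\le\sqrt d\,c_1|x|$, the bound $|y_\theta-y_\alpha|\le c_2\psi_{\alpha\theta}$ from the first paragraph, and $|y_\theta|\ge 1/c_1$, we obtain $|x_1^\theta-x_1^\alpha|\le c_3\psi_{\alpha\theta}|x|$ for a constant $c_3$ depending only on $d$ and the passage-time distribution. Taking $C_{32}$ to be the larger of $C_{31}(\sqrt{d-1}+\sqrt d)$ and $c_3$ completes the proof.

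The only genuine difficulty I anticipate is bookkeeping: correctly matching the roles of the symbols ``$\theta$'', ``$\alpha$'', ``$\alpha'$'', ``$v$'' in \eqref{linegap2} and \eqref{Vgap} to the present configuration --- especially in Stage 2, where the hyperplane held fixed is $H_{\alpha,0}$ rather than $H_{\theta,0}$, so that the ``base'' direction of \eqref{Vgap} must be taken to be $\alpha$ --- and checking the mild smallness of $\ep_6$ needed so that each line meets each hyperplane transversally. Everything else is routine norm estimation.
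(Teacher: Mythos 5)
Your proof is correct and follows essentially the same two-step rotation strategy as the paper's, using \eqref{linegap2} then \eqref{Vgap} together with \eqref{dvsdth} and the Lipschitz continuity of $\theta\mapsto y_\theta$; the only (cosmetic) difference is that you route through the intermediate point $L_\theta(x)\cap H_{\alpha,0}$ while the paper uses the point-reflected analogue $L_\theta\cap H_{\alpha,x_1^\alpha}$, and you derive the first-coordinate bound from the second rather than estimating it directly.
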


\begin{proof}
Let $0\neq x\in\RR^d$ and $\alpha,\theta\in S^{d-1}$ with $\psi_{\alpha\theta}\leq \ep_6$.  Let $w_\theta= x_1^\theta\yt$ be the first $\theta$--component of $x$, so that
\begin{equation}\label{compsize}
  |x-w_\theta| = |x_2^\theta|,\quad |w_\theta| = |\yt| |x_1^\theta|.
\end{equation}
See Figure \ref{fig2-3}. 
Similarly let $w_\alpha=x_1^\alpha y_\alpha$, and let $q=L_\theta\cap H_{\alpha,x_1^\alpha}$ (noting $x\in H_{\theta,x_1^\theta}$.)  Then from \eqref{zangle}, \eqref{dvsdth}, \eqref{linegap2}, and \eqref{Vgap},
\begin{align}\label{coord1}
  |\yt| |x_1^\theta - x_1^\alpha| &= |w_\theta - x_1^\alpha \yt| \notag\\
  &\leq |w_\theta - q| + |q-w_\alpha| + |x_1^\alpha|\ |y_\alpha - \yt| \notag\\
  &\leq C_{22}\psi_{\alpha\theta}|w_\theta| + C_{22}\psi_{\alpha\theta}|w_\alpha| + c_1\psi_{\alpha\theta}|x_1^\alpha| \notag\\
  &\leq c_2\psi_{\alpha\theta}|x|, 
\end{align}
and using the last two inequalities in \eqref{coord1},
\begin{align} \label{coord2}
  |(0,x_2^\theta)_\theta-(0,x_2^\alpha)_\alpha| &= |(x-w_\theta) - (x-w_\alpha)| \leq |w_\theta - q| + |q-w_\alpha| \leq c_2\psi_{\alpha\theta}|x|.
\end{align}

\begin{figure}
\includegraphics[width=10cm]{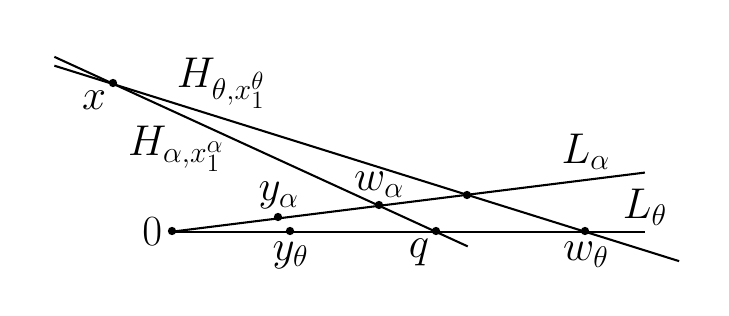}
\caption{ Illustration for the proof of Lemma \ref{coordchg}. $y_\alpha,\yt$ lie in $\partial \mkB_g$. }
\label{fig2-3}
\end{figure}

\end{proof}

We can use \eqref{dvsdth} to relate the $\theta$-ratio of $u$ to the tangent of $\psi_{u\theta}$:
\begin{equation}\label{tanvsratio}
  u_1^\theta>0,\ \frac{|u_2^\theta|}{u_1^\theta} \leq \frac{|y_\theta|}{2} \implies \tan\psi_{u\theta} = \frac{d(u,L_\theta)}{u\cdot\theta} \leq 
    \frac{|u_2^\theta|}{u_1^\theta|y_\theta| - |u_2^\theta|} \leq \frac{2}{|y_\theta|}\ \frac{|u_2^\theta|}{u_1^\theta}.
\end{equation}
In the other direction, by \eqref{yzangle}, the tangent of angle between $\theta$ and $u-u_1^\theta\yt$ has magnitude at least $1/\sqrt{d-1}$ so letting $w$ be the closest point to $u$ in $L_\theta$, satisfying $|w|=u\cdot\theta$, we have
\[
  |u_1^\theta\yt - w| \leq \sqrt{d-1}|u-w| = \sqrt{d-1}|w|\tan \psi_{u\theta}
\]
so (see Figure \ref{figdist}, top diagram)
\begin{align}\label{tanvsratio2}
  u\cdot\theta>0,\ \tan\psi_{u\theta} = \frac{d(u,L_\theta)}{u\cdot\theta} \leq \frac{1}{2\sqrt{d-1}} &\implies \frac{|u_2^\theta|}{|y_\theta|u_1^\theta}
    \leq \frac{\sqrt{d}|u-w|}{|w| - |u_1^\theta\yt - w|} \leq \frac{2\sqrt{d}d(u,L_\theta)}{u\cdot\theta} \notag\\
  &\implies \frac{|u_2^\theta|}{u_1^\theta} \leq 2\sqrt{d}|y_\theta| \tan \psi_{u\theta}.
\end{align}

Let $\mu=g(e_1)$. \label{mug} Convexity and lattice symmetry yield that $\mu\mkB_g$ contains the $\ell^1$--unit ball in $\RR^d$ and is contained in the $\ell^\infty$--unit ball, so
\begin{equation}\label{gsize}
   \frac{\mu}{\sqrt{d}}|x| \leq g(x) \leq \mu\sqrt{d}|x| \quad\text{for all } x\in\RR^d.
\end{equation}
In addition, from the triangle inequality,
\begin{equation}\label{magsize}
  |x_1^\theta|\ |y_\theta| - |x_2^\theta| \leq |x| \leq |x_1^\theta|\ |y_\theta| + |x_2^\theta|.
\end{equation}
Finally, given $\lambda\geq 1$, by \eqref{sigmaineq}, for some $C_{24}(\lambda)$, for all $s\geq C_{24}$,
\begin{equation}\label{XiPhi2}
  \Phi(s) \geq \lambda^{(1-\chi)/2} \Phi\left( \frac s\lambda \right).
\end{equation}
Hence suppose $c$ is large and $x$ lies in the tube portion of $E_{\theta,r,c}$, that is, $|x_2^\theta|\leq c^{1/2}\Xi(x_1^\theta)$, and suppose $\lambda \Phi^{-1}(c) \leq x_1^\theta \leq r/2$ for some $\lambda\geq 1$. Then using \eqref{XiPhi} and \eqref{XiPhi2}, the $\theta$--ratio of $x$ satisfies
\begin{equation}\label{tuberatio}
  \frac{|x_2^\theta|}{x_1^\theta} \leq \frac{c^{1/2}\Xi(x_1^\theta)}{x_1^\theta} \leq \frac{c^{1/2}}{\Phi(\lambda\Phi^{-1}(c))^{1/2}}
    \leq \lambda^{-(1-\chi)/4}.
\end{equation}

Define the slabs \label{oth}
\[
  \Omega_\theta(s,t) = \{x\in\RR^d: s\leq x_1^\theta \leq t\}.
\]
Of particular interest are $H_{\theta,s}^{\rm fat} := \Omega_\theta(s,s+\mu\sqrt{d})$ and $H_{\theta,s}^{\rm rfat} := \Omega_\theta(s-\mu\sqrt{d},s)$, \label{fat} which we call the \emph{ fattened} and \emph{backwards-fattened} $H_{\theta,s}$, respectively; generically we call any such slab of thickness $\mu\sqrt{d}$ a \emph{fattened hyperplane}.  This thickness is chosen so that, by \eqref{gsize}, any lattice path crossing a fattened hyperplane must have at least one site in it.  If $x\in H_{\theta,s}^{\rm rfat} \cup H_{\theta,s}^{\rm fat}$ then by \eqref{gsize} the $\theta$-projection $\hat x$ of $x$ into $H_{\theta,s}$ satisfies 
\begin{equation}\label{closeproj}
  |x-\hat x| \leq \frac{\sqrt{d}}{\mu}g(x-\hat x) \leq d.
  \end{equation} 
More generally, for a set $B$ contained in some $H_{\theta,s}$ we write $B^{\rm rfat}$ and $B^{\rm fat}$ for $[s-\mu\sqrt{d},s]\times B$ and $[s,s+\mu\sqrt{d}]\times B$ (in $\theta$--coordinates), respectively.  The values $\theta,s$ will be uniquely determined by $B$ in all instances here.

Given $\delta>0$, a geodesic $\Gamma_{xy}$, and a site $u$ preceding a site $v$ in $\Gamma_{xy}$, we say that $x,u,v$ form a $\delta$-\emph{fat triangle} in $\Gamma_{xy}$ if $d(u,\Pi_{xv}) \geq \delta |v-x|$. For $0<\delta<1/2$ it follows straightforwardly from \eqref{triangle} that 
\begin{equation}\label{extradist}
  |u-x| + |v-u| - |v-x| \geq (\delta^2\wedge\delta) |v-x|,
\end{equation}
that is, the extra distance associated with this triangle is at least $(\delta^2\wedge\delta) |v-x|$.  An analog for $g$ is the following variant of \eqref{triangleg}.

\begin{lemma}\label{gtri}
Suppose A3 holds for some $\theta_0$ and $\ep_0>0$.  There exists $C_{25}$ as follows. For all $\delta>0$, all $\theta\in S^{d-1}$ with $\psi_{\theta\theta_0}<\ep_0$, all $u,v\in\RR^d$ with $v/|v|=\theta$ and $d(u,\Pi_{0v})\geq \delta |v|$, we have
\begin{equation}\label{gtri1}
  g(u) + g(v-u) - g(v) \geq C_{25}(\delta^2\wedge\delta)|v|.
\end{equation}
\end{lemma}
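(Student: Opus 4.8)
The plan is to write the excess $g(u)+g(v-u)-g(v)$ as a sum of two nonnegative ``angular deficits'' for the supporting functional $z_\theta$, bound each deficit, and split into cases according to whether $u$ (equivalently $v-u$) points in a direction close to $\theta$. Put $m=d(u,\Pi_{0v})\geq\delta|v|$; since $t\mapsto t^2\wedge t$ is nondecreasing and $m/|v|\geq\delta$, it suffices to bound $g(u)+g(v-u)-g(v)$ below by a constant times $\min(m^2/|v|,m)$. Because $v=|v|\theta$ and $y_\theta=|y_\theta|\theta$ with $g(y_\theta)=1$, we have $\theta\cdot z_\theta=g(\theta)$, hence $v\cdot z_\theta=g(v)$, so
\[
g(u)+g(v-u)-g(v)=\big(g(u)-u\cdot z_\theta\big)+\big(g(v-u)-(v-u)\cdot z_\theta\big),
\]
and both summands are $\geq0$ since $x\cdot z_\theta\leq g(x)$ for all $x$ ($H_{\theta,1}$ being a supporting hyperplane of $\mkB_g$).

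I will use two deficit estimates. \textbf{(a)} If $x\neq0$ has direction within a fixed small $\ep$ of $\theta$, then the outer ball inclusion in \eqref{spheres2} (the ball $B_{C_{12}|z_\theta|}(y_\theta-C_{12}z_\theta)$ contains $x/g(x)$ and passes through $y_\theta$ with normal direction $z_\theta$ there) gives, on expanding $|x/g(x)-y_\theta|^2\leq(C_{12}|z_\theta|)^2$ and noting that the component of $x/g(x)-y_\theta$ orthogonal to $\theta$ has length $d(x,L_\theta)/g(x)$, the bound $g(x)-x\cdot z_\theta\geq d(x,L_\theta)^2/(2C_{12}g(x))$. \textbf{(b)} If $\psi_{x\theta}\geq\ep$, then, since \eqref{curv} and \eqref{curv2} force $H_{\theta,1}$ to touch $\mkB_g$ only at $y_\theta$ and $z_\theta$ depends continuously on $\theta$, a compactness argument on $\{x'\in\partial\mkB_g:\psi_{x'\theta}\geq\ep\}$, uniform over $\psi_{\theta\theta_0}<\ep_0$, yields a constant $c_0>0$ with $g(x)-x\cdot z_\theta\geq c_0|x|$ (using \eqref{gsize} to compare $g$ with $|\cdot|$).

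Now I split on $u$. \emph{Case 1:} either $\psi_{u\theta}\geq\ep$, whence by (b) the excess is $\geq c_0|u|\geq c_0m$ (as $0\in\Pi_{0v}$); or $\psi_{u\theta}<\ep$ but $u\cdot\theta>|v|$, whence the point of $\Pi_{0v}$ nearest $u$ is the endpoint $v$, so $m=|v-u|$, while $(v-u)\cdot\theta=|v|-u\cdot\theta<0$ gives $\psi_{(v-u)\theta}\geq\pi/2>\ep$, and (b) applied to $v-u$ gives excess $\geq c_0m$. In either subcase excess $\geq c_0\min(m^2/|v|,m)$. \emph{Case 2:} if $\psi_{u\theta}<\ep$ and $u\cdot\theta\leq|v|$, the point of $\Pi_{0v}$ nearest $u$ is interior, so $m=d(u,L_\theta)$, and $|u|^2=(u\cdot\theta)^2+m^2$ gives $|u|\leq|v|+m\leq2\max(|v|,m)$; then (a) gives excess $\geq d(u,L_\theta)^2/(2C_{12}g(u))\geq m^2/(2C_{12}\mu\sqrt d\,|u|)$, which is a fixed multiple of $\min(m^2/|v|,m)$. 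Taking $C_{34}$ to be the smaller of the two resulting constants finishes the proof.

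I expect item (b) to be the crux: directions of $u$ or $v-u$ far from $\theta$ are exactly those on which the local curvature hypothesis A2(ii) is silent, so one must instead extract a uniform positive lower bound for $1-x'\cdot z_\theta$ over points $x'\in\partial\mkB_g$ at angle $\geq\ep$ from $\theta$ by compactness, with care that it is uniform in $\theta$ over the neighborhood of $\theta_0$ (this is where \eqref{curv} and \eqref{curv2} enter). A secondary nuisance is that the hypothesis bounds the \emph{segment} distance $d(u,\Pi_{0v})$, not the line distance $d(u,L_\theta)$, so the position on $\Pi_{0v}$ of the point nearest $u$ (interior vs.\ endpoint) must drive the case split.
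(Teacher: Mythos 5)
Your decomposition of the excess as
\[
g(u)+g(v-u)-g(v)=\bigl(g(u)-u\cdot z_\theta\bigr)+\bigl(g(v-u)-(v-u)\cdot z_\theta\bigr),
\]
with both summands nonnegative because $H_{\theta,1}$ supports $\mkB_g$, is correct and is a genuinely different organizing idea from the paper's. The paper splits into three cases: (i) $g(u)\geq g(v)$, (ii) $u\notin\Omega_\theta(0,g(v))$, reducing to (i) by symmetry, and (iii) the remaining ``slab'' case. In (i)--(ii) it uses only the cheap bound $g\geq(\mu/\sqrt d)\,|\cdot|$ on $g(v-u)$ and needs no curvature at all; only in (iii) does it invoke \eqref{curv3}, and there, via the point $\hat w=u_1^\theta y_\theta$, the paper is implicitly bounding exactly your first deficit ($g(\hat w)=u\cdot z_\theta$ and $g(v-\hat w)=(v-u)\cdot z_\theta$). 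So your proof makes the deficit decomposition explicit everywhere and uses curvature in every case, whereas the paper isolates curvature to a single case; your case split is driven by which point of $\Pi_{0v}$ is nearest $u$ (interior vs.\ endpoint), which is also different from the paper's slab split by $u\cdot z_\theta$.

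There is one point that needs tightening. Your deficit estimate (b) is proved by a compactness argument ``uniform over $\psi_{\theta\theta_0}<\ep_0$,'' but this set of $\theta$'s is open and A2(ii) gives no information at $\psi_{\theta\theta_0}=\ep_0$, so a plain compactness appeal does not deliver a constant $c_0$ valid for all such $\theta$. The clean route is the one the paper's Case~(iii) takes via the quantitative inequality \eqref{curv3}, which \emph{is} uniform over the open set: normalize to $x\in\partial\mkB_g$; if $x\cdot z_\theta\leq\tfrac12$ the deficit is $\geq\tfrac12$; otherwise $y:=x/(x\cdot z_\theta)\in H_{\theta,1}$ satisfies $|y-y_\theta|\geq c(\ep)$ (two vectors of bounded norm, both bounded away from $0$ by \eqref{gsize}, at angle $\geq\ep$), $d(y,\mkB_g)\leq|y-x|\leq 2(1-x\cdot z_\theta)|x|$, and \eqref{curv3} gives the required positive lower bound on $1-x\cdot z_\theta$. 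With that replacement your estimate (a) (from the outer-ball inclusion in \eqref{spheres2}) and (b) give a complete and correct proof, though you spend the curvature hypothesis in situations where the paper gets away with the trivial $g\geq(\mu/\sqrt d)\,|\cdot|$ bound.
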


\begin{proof}
Let $u,v$ be as in the lemma statement, so $g(v)=v_1^\theta$. Consider first the case of $g(u) \geq g(v)$; we then have from \eqref{gsize} that
\begin{equation}\label{gugv}
  g(u) + g(v-u) \geq g(v) + \frac{\mu}{\sqrt{d}}|v-u| \geq g(v) + \frac{\delta\mu}{\sqrt{d}}|v|.
\end{equation}

Consider next $u\notin \Omega_\theta(0,g(v))$, noting that this slab has $0,v$ in its boundary. From symmetry we may assume $u\in H_{\theta,g(v)}^+$.  But then $g(u)\geq g(v)$ so \eqref{gugv} applies.

\begin{figure}
\includegraphics[width=10cm]{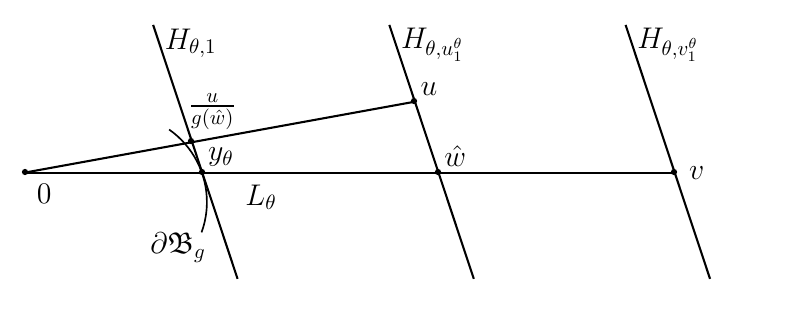}
\caption{ Illustration for the proof of Lemma \ref{gtri}. }
\label{fig3-4}
\end{figure}

Finally consider $g(u)<g(v)$ with $u\in \Omega_\theta(0,g(v))$, so $u_1^\theta \in [0,g(v)]$.  See Figure \ref{fig3-4}. We let $\hat w = u_1^\theta y_\theta$ be the first $\theta$--component of $u$. 
Then $\hat w \in \Pi_{0v}$ so from symmetry we may assume $|\hat w|\geq |v|/2$, so that using \eqref{gsize},
\begin{equation}\label{offtheta}
  \left| \frac{u}{g(\hat w)} - y_\theta \right| = \frac{|u-\hat w|}{g(\hat w)} 
    \geq \frac{d(u,\Pi_{0v})}{g(v)} \geq \frac{\delta}{\mu\sqrt{d}}.
\end{equation}
It therefore follows from \eqref{offtheta} and \eqref{curv3} that
\[
  d\left( \frac{u}{g(\hat w)},\mkB_g \right) \geq C_5\ep_0^2\left( \frac{c_1\delta}{\ep_0} \wedge \left( \frac{c_1\delta}{\ep_0} \right)^2
    \right) \geq c_2 (\delta\wedge\delta^2),
\]
and then using \eqref{gsize}, since $g(\hat w) \geq g(v)/2$,
\begin{equation}\label{guv}
  g(u) \geq g(\hat w)\left( 1 + d_g\left( \frac{u}{g(\hat w)},\mkB_g \right) \right) 
    \geq g(\hat w)\left( 1 + \frac{c_2\mu}{\sqrt{d}}(\delta\wedge\delta^2) \right) \geq g(\hat w) + c_3(\delta\wedge\delta^2)|v|.
\end{equation}
Now $H_{\theta,u_1^\theta}$ contains $u$ and is tangent to the boundary of the translate $v+g(\hat w-v)\mkB_g$ at $\hat w$.  It follows that $g(u-v) \geq g(\hat w - v)$.  
Therefore using \eqref{guv}, 
\[
  g(u) + g(v-u) \geq g(\hat w) + g(\hat w -v) + c_3(\delta\wedge\delta^2)|v|\geq g(v) + c_3(\delta\wedge\delta^2)|v|,
\]
as desired.
\end{proof}

The following is a purely deterministic result about norms on $\ZZ^d$ when the local curvature assumption is satisfied.

\begin{lemma}\label{gdiff1}
Suppose the norm $g$ satisfies the local curvature assumption A3' for some $\theta_0,\ep_0$.  There exist constants $\ep_7$ and $C_i$ as follows.  Suppose $\ell>C_{28}$, $\psi_{\theta\theta_0}<\ep_7$, and $u,v \in H_{\theta,\ell}$ with 
\begin{equation}\label{ep1bound}
  \frac{|u-\ell y_\theta|}{\ell} < \ep_7, \quad  \frac{|v-u|}{\ell} < \ep_7.
\end{equation}
Then
\begin{equation}\label{gdiff2}
  |g(v)-g(u)|\leq C_{29}\left( \frac{|v-u|\,|u-\ell y_\theta|}{\ell} + \frac{|v-u|^2}{\ell} \right).
\end{equation}
\end{lemma}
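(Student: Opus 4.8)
The plan is to use the exact homogeneity $g(\lambda x)=\lambda g(x)$ to pass to the unit scale, and then to exploit convexity of $g$ together with only the \emph{upper} half of the curvature bound A2(ii). I will take $\ep_7=\ep_0/4$; since $|\theta-\theta_0|\le\psi_{\theta\theta_0}<\ep_7\le\ep_0$, assumption A2(ii) applies to the chosen $\theta$, and the lower bound $\ell>C_{37}$ plays no role since the argument is scale-exact, so one may take e.g.\ $C_{37}=1$. Write $\tilde u=u/\ell$ and $\tilde v=v/\ell$; these lie in $H_{\theta,1}$ because $u,v\in H_{\theta,\ell}$, and $|\tilde u-y_\theta|=|u-\ell y_\theta|/\ell<\ep_7$, $|\tilde v-\tilde u|=|v-u|/\ell<\ep_7$. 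Since $g(v)-g(u)=\ell\bigl(g(\tilde v)-g(\tilde u)\bigr)$, it suffices to prove $|g(\tilde v)-g(\tilde u)|\le c_0(|\tilde u-y_\theta|+|\tilde v-\tilde u|)\,|\tilde v-\tilde u|$ for some constant $c_0$; multiplying by $\ell$ and substituting back the definitions of $\tilde u,\tilde v$ yields \eqref{gdiff2} with $C_{38}=c_0$.

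Let $f$ denote the restriction of $g$ to $H_{\theta,1}$, a finite convex function whose minimum on $H_{\theta,1}$ equals $1$ and is attained at $y_\theta$ (because $H_{\theta,1}$ supports $\mkB_g$ at $y_\theta$). For $y\in H_{\theta,1}$, if $x^\ast$ is the nearest point of the closed bounded set $\mkB_g$ to $y$, then $g(y)\le g(x^\ast)+g(y-x^\ast)\le 1+\mu\sqrt d\,|y-x^\ast|=1+\mu\sqrt d\,d(y,\mkB_g)$ by \eqref{gsize}, so the upper bound in \eqref{curv} gives the one-sided quadratic estimate $f(y)-1\le \mu\sqrt d\,C_9\,|y-y_\theta|^2$ whenever $|y-y_\theta|<\ep_0$. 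The key step is to upgrade this to a \emph{linear} bound on the subgradients of $f$ near $y_\theta$: for $w\in H_{\theta,1}$ with $r:=|w-y_\theta|<\ep_0/2$ and any $p\in\partial f(w)$ (the case $p=0$ being trivial), apply the subgradient inequality at $z=w+r\,p/|p|\in H_{\theta,1}$, which satisfies $|z-y_\theta|\le 2r<\ep_0$, to get $r|p|=p\cdot(z-w)\le f(z)-f(w)\le f(z)-1\le \mu\sqrt d\,C_9(2r)^2$, using $f(w)\ge 1$. Hence $|p|\le 4\mu\sqrt d\,C_9\,|w-y_\theta|$ for every subgradient at every such $w$.

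Finally, every point $w$ on the segment $[\tilde u,\tilde v]$ satisfies $|w-y_\theta|\le |\tilde u-y_\theta|+|\tilde v-\tilde u|<2\ep_7=\ep_0/2$, so by the previous step $f$ is Lipschitz along this segment with constant at most $4\mu\sqrt d\,C_9(|\tilde u-y_\theta|+|\tilde v-\tilde u|)$; concretely, the convex function $t\mapsto f(\tilde u+t(\tilde v-\tilde u))$ on $[0,1]$ has one-sided derivatives bounded in absolute value by that constant times $|\tilde v-\tilde u|$, and integrating gives $|f(\tilde v)-f(\tilde u)|\le 4\mu\sqrt d\,C_9(|\tilde u-y_\theta|+|\tilde v-\tilde u|)\,|\tilde v-\tilde u|$. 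Together with the reduction in the first paragraph this yields \eqref{gdiff2} with $C_{38}=4\mu\sqrt d\,C_9$.

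The argument is short and the only real subtlety — the ``main obstacle'' — is conceptual rather than computational: one cannot simply bound $g(\tilde v)-g(\tilde u)$ by subtracting the two-sided quadratic estimates coming from \eqref{curv}, because the curvature constants $C_8,C_9$ need not match and one would be left with a spurious term of order $|\tilde u-y_\theta|^2$ rather than the wanted $|\tilde u-y_\theta|\,|\tilde v-\tilde u|$. Exploiting convexity to extract the subgradient bound (which uses crucially that $y_\theta$ is the \emph{minimizer} of $f$ on $H_{\theta,1}$) is what repairs this; everything else is the standard convex-analysis fact that bounded subgradients force a Lipschitz bound, plus bookkeeping to keep all auxiliary points within the $\ep_0$-neighborhood of $y_\theta$ where A2(ii) is available.
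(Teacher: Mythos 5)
Your proof is correct, and it takes a genuinely different route from the paper's. The paper works at the original scale, changes direction to $\alpha=u/|u|$, projects $v-u$ orthogonally into $H_{\alpha,0}$ to get a vector $a$ tangent to $g(u)\mkB_g$ at $u$, bounds $g(u+a)-g(u)$ by the upper curvature bound, and then absorbs the short remainder $(v-u)-a$ via $g((v-u)-a)\le\mu\sqrt d\,|(v-u)-a|$ combined with the angle estimate \eqref{zangle} between $H_{\theta,0}$ and $H_{\alpha,0}$. You instead pass to unit scale by homogeneity, restrict $g$ to $H_{\theta,1}$ to get a convex function $f$ minimized at $y_\theta$, and deduce a \emph{linear} bound $|p|\lesssim|w-y_\theta|$ on subgradients of $f$ near $y_\theta$ from the one-sided quadratic bound $f-1\lesssim|\cdot-y_\theta|^2$, then integrate along $[\tilde u,\tilde v]$. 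Both use only the upper half of \eqref{curv}, but yours avoids the change of direction and the attendant angle bookkeeping entirely; the price is the (mild) need for subgradient machinery, though this is all finite-dimensional convex analysis. Your observation that the subgradient bound requires $y_\theta$ to be the minimizer of $f$ on $H_{\theta,1}$ is exactly right and is what rescues the argument from the naive subtraction of two-sided quadratic estimates, which would leave a spurious $|\tilde u-y_\theta|^2$ term. One cosmetic remark: your $C_{38}=4\mu\sqrt d\,C_9$ is stated as if exact, but the Lipschitz-to-increment step also needs the Cauchy--Schwarz inequality $|p\cdot(\tilde v-\tilde u)|\le|p|\,|\tilde v-\tilde u|$, which you do invoke implicitly; the constant is fine.
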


\begin{proof}
We bound $g(v)-g(u)$ as the opposite bound is nearly symmetric.  Let $\alpha = u/|u|$.  
By A3 and \eqref{zangle}, provided we take $\ep_7$ small, the first inequality in \eqref{ep1bound} guarantees that the angle between $H_{\theta,0}$ and $H_{\alpha,0}$ is at most $c_1|u-\ell\yt|/\ell$. Since $v-u\in H_{\theta,0}$, it follows that the orthogonal projection $a$ of $v-u$ into $H_{\alpha,0}$ satisfies $|a| \leq |v-u|$ and 
\begin{align}\label{adist1}
  |(v-u)-a| \leq |v-u|\sin\left( \frac{c_1|u-\ell\yt|}{\ell} \right) \leq c_1\frac{|v-u|\,|u-\ell y_\theta|}{\ell}
\end{align} 
(see Figure \ref{fig2-5}.) We have
\[
  \sin \psi_{\theta,u} \leq \frac{|u-\ell\yt|}{\ell |\yt|} < \frac{\ep_7}{|\yt|},
\]
so provided $\ep_7$ is small enough,
\begin{equation}\label{uangle}
  \psi_{u,\theta_0} \leq \psi_{u\theta} + \psi_{\theta\theta_0} \leq \left(\frac{2}{|\yt|} + 1 \right) \ep_7 < \ep_0.
\end{equation}
Now $u+a$ lies in the tangent plane $H_{\alpha,u_1^\alpha}$ to $g(u)\mkB_g$ at $u$, with
\[
  \frac{|a|}{g(u)} \leq  \frac{|v-u|}{\ell} < \ep_7,
\]
so by A3', \eqref{gsize}, and \eqref{uangle},
\[
  g(u+a) - g(u) = d_g(u+a,g(u)\mkB_g) \leq \mu\sqrt{d}\,d(u+a,g(u)\mkB_g) \leq c_2 \frac{|a|^2}{g(u)} \leq c_2\frac{|v-u|^2}{\ell}.
\]
\begin{figure}
\includegraphics[width=10cm]{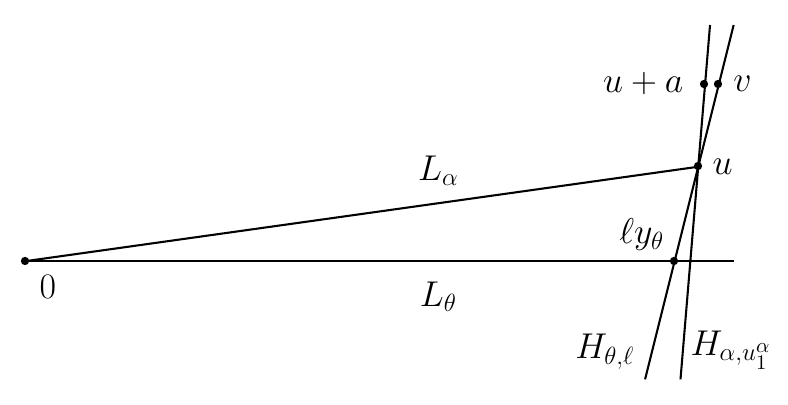}
\caption{ Illustration for the proof of Lemma \ref{gdiff1}. }
\label{fig2-5}
\end{figure}
Combining this with \eqref{gsize} and \eqref{adist1} yields
\[
  g(v) - g(u) \leq g(u+a) + g((v-u) -a) - g(u)\leq c_2\frac{|v-u|^2}{\ell} + c_3\frac{|v-u|\,|u-\ell y_\theta|}{\ell}.
\]
\end{proof}

\subsection{Cost of basic types of bad geodesic behavior} \label{cost}
In \eqref{triangleg} we can view the ``min'' term as a lower bound for the cost, in extra distance, of deviating by $|u_2^\theta|$ from a target point at $g$-distance $u_1^\theta$ in direction $\theta$.  To obtain a probability cost of such deviation by a geodesic, in view of \eqref{expbd} roughly we can divide the extra distance by $\sigma(u_1^\theta)$; we will incorporate an extra log factor to handle the entropy that arises when handling many scales of $|u|$ with a single bound.  Keeping this in mind, we define first $\sigma^*(s)$ and $\Phi(s)$ by \label{cphi}
\begin{equation}\label{Phidef}
    \Phi(s) = \frac{s}{C_3\sigma^*(s)\log(2+s)} 
    = \frac{s^{\chi_2}}{C_3} \sup_{t\leq s} \frac{t^{1-\chi_2}}{\sigma(t)\log(2+t)}.
\end{equation}
Here factoring out a power of $s$ ensures that $\Phi$ is strictly increasing, and $C_3,\chi_2$ are from \eqref{powerlike}.  Note that by \eqref{powerlike} we have
\begin{equation}\label{sigmaineq}
  C_3^{-1}\sigma(s) \leq \sigma^*(s) \leq \sigma(s),
\end{equation}
the first inequality being valid for $s\geq C_{19}$ for some $C_{19}$.  We then define \label{cxi}
\begin{equation}\label{Ddef}
  \Xi(s) = (s\sigma(s) \log(2+s))^{1/2}, \quad
  D_\theta(u) = \begin{cases} \min\left( \frac{|u_2^\theta|^2}{\Xi(u_1^\theta)^2}, \Phi(|u|_{\theta,\infty}) \right) 
    &\text{if } u_1^\theta\geq 0,\\\Phi(|u|_{\theta,\infty}) &\text{if } u_1^\theta<0. \end{cases}
\end{equation}
Roughly speaking, if we ignore the above-mentioned entropy-controlling log factors in $\Phi$ and $\Xi$, then for a $\theta$--ray or a finite geodesic with ultimate direction $\theta$, $D_\theta(u)$ represents the cost of that geodesic deviating from direction $\theta$ to pass through $u$.  When the direction of $u$ is far from $\theta$, this cost has form $\Phi(|u|_{\theta,\infty})$, and when it is close to $\theta$ the cost has form $|u_2^\theta|^2/\Xi(u_1^\theta)^2$, as formalized in Proposition \ref{transfluct2}.
Note that by \eqref{powerlike} and \eqref{sigmaineq}, for some $C_{20}$, for all $s\geq C_{20}$,
\begin{equation}\label{XiPhi}
  \frac{1}{C_3\Phi(s)} \leq \left( \frac{\Xi(s)}{s} \right)^2 \leq \frac{1}{\Phi(s)}.
\end{equation}
This tells us in part which term in the ``min'' in \eqref{Ddef} is smaller: we have for $|u|\geq 2C_{20}$ that
\begin{equation}\label{minwhich}
  D_\theta(u) = \begin{cases} \Phi(|u|_{\theta,\infty}) &\text{if } |u_2^\theta|\geq u_1^\theta,\\ \frac{|u_2^\theta|^2}{\Xi(u_1^\theta)^2} &\text{if } |u_2^\theta|\leq C_3^{-1/2}u_1^\theta. \end{cases}
\end{equation}
Here we used the fact that $0\leq u_1^\theta\leq |u_2^\theta|$ implies $|u|_{\theta,\infty}= |u_2^\theta|$, and $|u_2^\theta|\leq u_1^\theta$ implies $|u|_{\theta,\infty}=u_1^\theta$.

Let $\Pi_{xy}$ denote the line segment from $x$ to $y$. \label{pil} To deal with paths from 0 to some $ry_\theta$ it is useful to have the following symmetric version of $D_\theta$:
\begin{equation}\label{Dtr}
  D_{\theta,r}(u) = \begin{cases} D_\theta(u) &\text{if } u_1^\theta \leq \frac r2\\ D_\theta(r\yt-u) &\text{if } u_1^\theta > \frac r2. \end{cases}
\end{equation}
This makes the right half of the level set \label{tcyl}
\[
  E_{\theta,r,c} := \{u:D_{\theta,r}(u)\leq c\}
\]
symmetric with the left half; this region is a ``tube'' surrounding $\Pi_{0,ry_\theta}$ bounded by the rounded--cylindrical shell $\{u:|u_2^\theta|=c^{1/2}\Xi(u_1^\theta)\}$, augmented by a ``tilted cylinder'' around each endpoint; see Figure \ref{fig2-4}, excluding Case 2b.  (In a mild abuse of terminology, we will simply call a tilted cylinder a cylinder.)  The cylinder around 0 is \label{zcyl}
\[
  \CC_{\theta,c} = \{(u_1^\theta,u_2^\theta): |u_1^\theta| \leq \Phi^{-1}(c),  |u_2^\theta| \leq \Phi^{-1}(c)\}.
\]
We call this the \emph{0--cylinder of} $E_{\theta,r,c}$; it has one \emph{inside end} in the hyperplane $H_{\theta,\Phi^{-1}(c)}$, and an \emph{outside end} in $H_{\theta,-\Phi^{-1}(c)}$.  We thus call $E_{\theta,r,c}$ a \emph{tube-and-cylinders region}. By monotonicity of $\Phi$ and $\Xi$, $\{u:D_{\theta,r}(u)=c\}$ is the boundary of the tube-and-cylinders region.  

By \eqref{minwhich}, on $\{u: 0<u_1^\theta=|u_2^\theta|\}$ (which is a cone boundary), $\Phi(|u|_{\theta,\infty})$ is the ``min'' in \eqref{Ddef}; this uses the fact that $u_1^\theta=|u|_{\theta,\infty}$ on that cone boundary. This means that the boundary of the 0--cylinder meets the cylindrical shell in the inside end of the 0--cylinder, the intersection being the $(d-2)$-sphere of radius $c^{1/2}\Xi(\Phi^{-1}(c))$ around $\Phi^{-1}(c)y_\theta$ in $H_{\theta,\Phi^{-1}(c)}$; we call this $(d-2)$-sphere $\mathbb{S}_\theta(c)$.  In Figure \ref{fig2-4}, $\mathbb{S}_\theta(c)$ is just the two points where the shell meets the 0--cylinder.

The bound \eqref{yzangle} on the angle between $y_\theta$ and $z_\theta$ also gives information about the 0--cylinder of $E_{\theta,r,c}$.  If $u$ lies in either end of $\CC_{\theta,c}$ then $|u| \geq |y_\theta|\Phi^{-1}(c)/\sqrt{d}$, while if $u$ lies in the side of $\CC_{\theta,c}$ then $|u| \geq \Phi^{-1}(c)/\sqrt{d}$.  Thus
\begin{equation}\label{cylbdry}
 u \in \partial \CC_{\theta,c} \implies \frac{|y_\theta| \wedge 1}{\sqrt{d}} \Phi^{-1}(c) \leq |u| \leq (|y_\theta|+1)\Phi^{-1}(c).
\end{equation}

The proof of the next proposition is based on the fact that if a path $\gamma$ from 0 to some site $r\yt$ contains a site $u$ with $D_{\theta,r}(u) \geq t$, then there are necessarily 3 sites in $\gamma$ (one of which is an endpoint, 0 or $r\yt$) which form a $\delta$--fat triangle for some appropriate $\delta$, and Lemma \ref{gtri} can be used to help show that the probablity of this is small.  Analogous results based on the same general principle appear in \cite{BHS18} for an integrable last passage percolation model in $d=2$, and in \cite{Ga19} for FPP in $d=2$ under hypotheses similar to ours here.

The next two results are proved in Appendix \ref{basic}.  The first underlies all our results on regularity of geodesics.

\begin{proposition}\label{transfluct2}
Suppose A1, A2', A3 hold for some $\theta_0$ and $\ep_0>0$.  There exist constants $C_i$ as follows. For all $r,\theta$ with $\psi_{\theta\theta_0}<\ep_0$ and $0\neq r\yt\in\ZZ^d$, and all $t>0$, 
\begin{equation}\label{transfluct1}
  P\left( \max_{u\in\Gamma_{0,ry_\theta}} D_{\theta,r}(u) \geq t \right) 
    \leq C_{26}e^{-C_{27}t\log t}.
\end{equation}
\end{proposition}

We next consider the transverse increments of $T(0,u)$, that is, we bound $|T(0,u) - T(0,v)|$ when $|g(u)-g(v)|$ is small.  (We can't require it be exactly 0 since $u,v$ are lattice points.) 
Heuristically, assuming $|u-v| \ll \Delta(|u|)$, $\Delta^{-1}(|u-v|)$ may be viewed as the typical distance traveled by $\Gamma_{u0}$ and $\Gamma_{v0}$ before they can get close enough to coalesce.  Then $\sigma(\Delta^{-1}(|u-v|))$ becomes the scale of ``fluctuations before coalescing,'' and we show that $|T(0,u) - T(0,v)|$ is unlikely to be much larger than this scale.  A variant of the following proposition, for $d=2$, appears in \cite{Ga19}.

\begin{proposition}\label{transTincr}
Suppose A1, A2', A3 hold for some $\theta_0,\ep_0$, and let $\ep_7$ be as in Lemma \ref{gdiff1}.  There exist constants $C_i$ as follows. For all $u,v\in \ZZ^d$ with 
\begin{equation}\label{uvassump}
  |u|\geq C_{30}, \quad \left|\theta_0 - \frac{u}{|u|}\right|<\ep_7, \quad |g(u)-g(v)| \leq 4\mu d, 
    \quad\text{and}\quad |u-v|\leq C_{31}\Delta(|u|),
\end{equation}
and all $\lambda\geq C_{32}$, we have
\begin{equation}\label{Tchange}
  P\Big(T(v,0) - T(u,0) \geq \lambda\sigma(\Delta^{-1}(|u-v|))\log |u-v| \Big) \leq C_{33}e^{-C_{34}\lambda\log |u-v|}.
\end{equation}
\end{proposition}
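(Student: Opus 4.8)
\emph{Setup and reduction.} Write $m=|u-v|$, $\theta=u/|u|$, and let $r$ be such that $u=r\yt$, so $\psi_{\theta\theta_0}<\ep_7\le\ep_0$. Put $\ell_0=\Delta^{-1}(m)$, the conjectured coalescence scale, and note that by definition $\ell_0\sigma(\ell_0)=\Delta(\ell_0)^2=m^2$, i.e.\ $m^2/\ell_0=\sigma(\ell_0)$. Since $\Delta$ is sublinearly powerlike, taking $C_{40}$ small forces $\ell_0\le\frac14 r$, leaving macroscopic room between $u$ and $0$. We may assume $m$ is large: for $m$ bounded the bound follows from a crude Chernoff estimate for $T(u,v)$ along a short path and a suitably small $C_{43}$. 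We first dispose of the case $\lambda\sigma(\ell_0)\log m\ge 2\mu\sqrt d\,m$: here $T(v,0)-T(u,0)\le T(v,u)$, $ET(v,u)\le g(v-u)\le\mu\sqrt d\,m$ by \eqref{gsize}, and $\sigma(\ell_0)\ge\sigma(m)$ (monotonicity, since $m\le\ell_0$), so \eqref{expbound} gives
\[
  P\big(T(v,0)-T(u,0)\ge\lambda\sigma(\ell_0)\log m\big)\le P\big(T(v,u)-ET(v,u)\ge\tfrac12\lambda\sigma(m)\log m\big)\le C_5e^{-C_6\lambda\log m/2}.
\]
So assume from now on $\lambda\sigma(\ell_0)\log m<2\mu\sqrt d\,m$; this bounds $\lambda$ by a fixed power of $\ell_0$, and together with $\sigma(\ell_0)(\log\ell_0)^2\ll\ell_0$ it ensures that $\rho:=(\lambda\log m)^{1/2}\,\Xi(\ell_0)$ satisfies $\rho\ll\ell_0$.

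\emph{Locating a crossing site and reducing to a $g$-estimate.} Let $w$ be the first site of $\Gamma_{0u}$ in the backwards-fattened hyperplane $H_{\theta,r-\ell_0}^{\rm rfat}$. Apply Proposition \ref{transfluct2} to $\Gamma_{0,r\yt}$ with $t=\lambda\log m$; since $t\log t\ge t=\lambda\log m$, with probability at least $1-C_{35}e^{-C_{36}\lambda\log m}$ we have $\max_{x\in\Gamma_{0u}}D_{\theta,r}(x)<t$, and in the present regime (where $\ell_0\ge\Phi^{-1}(t)$) this forces the transverse coordinate of $w$ to satisfy $|w_2^\theta|\le\rho$. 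Thus $w$ lies in the set $\mW$ of lattice sites of $H_{\theta,r-\ell_0}^{\rm rfat}$ with $|\,\cdot\,_2^\theta|\le\rho+d$, and $|\mW|\le C\rho^{d-1}\le(\lambda\log m)^{(d-1)/2}m^{C}$. On the event $w\in\Gamma_{0u}$ we have $T(u,0)=T(u,w)+T(w,0)$, hence $T(v,0)-T(u,0)\le T(v,w)-T(u,w)$, so it suffices to bound, for each fixed $w'\in\mW$ (for which $|u-w'|\asymp|v-w'|\asymp\ell_0$), the probability $P\big(T(v,w')-T(u,w')\ge\lambda\sigma(\ell_0)\log m\big)$, and then union over the $|\mW|$ sites. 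Decompose $T(v,w')-T(u,w')=[T(v,w')-ET(v,w')]+[ET(u,w')-T(u,w')]+[ET(v,w')-ET(u,w')]$; by \eqref{expbound} each of the first two terms exceeds $\tfrac14\lambda\sigma(\ell_0)\log m$ with probability $\le Ce^{-c\lambda\log m}$, and by Proposition \ref{hmu} the third differs from $g(v-w')-g(u-w')$ by at most $2C_{25}\sigma(\ell_0)\log\ell_0\le\tfrac18\lambda\sigma(\ell_0)\log m$ for $\lambda\ge C_{41}$ large. Everything thus reduces to the deterministic inequality $g(v-w')-g(u-w')\le\tfrac18\lambda\sigma(\ell_0)\log m$.

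\emph{The deterministic estimate.} Set $\delta:=(u-v)\cdot z_\theta$. Since $v\cdot z_\theta\le g(v)\le g(u)+4\mu d=r+4\mu d$ by convexity and the hypothesis $|g(u)-g(v)|\le 4\mu d$, we get $\delta\ge-4\mu d$. Put $\hat u:=(u-w')-\delta\yt\in H_{\theta,(v-w')_1^\theta}$; because $(u-w')_2^\theta=-w'^\theta_2$ has norm $\le\rho$, the point $\hat u$ is within transverse distance $O(\rho)$ of $(v-w')_1^\theta\yt$, and its $\theta$-coordinate is $\ell_0-\delta\asymp\ell_0$. Now: (i) $g(v-w')-g(\hat u)=O\!\big(\tfrac{m\rho}{\ell_0}+\tfrac{m^2}{\ell_0}\big)$ by Lemma \ref{gdiff1} applied inside $H_{\theta,(v-w')_1^\theta}$, using $|(v-w')-\hat u|\lesssim m$ (the second $\theta$-component of $u-v$) and $|(v-w')-(v-w')_1^\theta\yt|\lesssim m+\rho$; (ii) $g(\hat u)-g(u-w')=-\delta+O\!\big(\tfrac{|\delta|\rho}{\ell_0}+\tfrac{|\delta|^2}{\ell_0}\big)$, since $u-w'$ is within $\rho\ll\ell_0$ of $\ell_0\yt$, so translating it by $-\delta\yt$ changes $g$ by $-\delta$ up to a first-order error controlled by the angle of $\nabla g(u-w')$ away from $z_\theta$ (bounded via \eqref{zangle}) and a second-order error from the curvature bound \eqref{curv}. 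Adding, and using $\delta\ge-4\mu d$, $|\delta|\lesssim m$, and $m^2/\ell_0=\sigma(\ell_0)$,
\[
  g(v-w')-g(u-w')=-\delta+O\!\Big(\tfrac{m\rho}{\ell_0}+\sigma(\ell_0)\Big)\le 4\mu d+C\Big(\tfrac{m\rho}{\ell_0}+\sigma(\ell_0)\Big).
\]
Finally $\tfrac{m\rho}{\ell_0}=(\lambda\log m)^{1/2}(\log\ell_0)^{1/2}\,\tfrac{m^2}{\ell_0}\asymp\lambda^{1/2}(\log m)\,\sigma(\ell_0)$, which is $\le\tfrac1{16}\lambda\sigma(\ell_0)\log m$ once $\lambda\ge C_{41}$, and $4\mu d+C\sigma(\ell_0)\le\tfrac1{16}\lambda\sigma(\ell_0)\log m$ for $m$ (hence $\sigma(\ell_0)$) large and $\lambda$ large. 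This yields the required deterministic bound; combining with the previous paragraph and absorbing $|\mW|\le(\lambda\log m)^{(d-1)/2}m^{C}$ into $e^{-c\lambda\log m}$ (possible since $\lambda\ge C_{41}$ is large) gives $P\big(T(v,0)-T(u,0)\ge\lambda\sigma(\ell_0)\log m\big)\le C_{42}e^{-C_{43}\lambda\log m}$.

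\emph{Main obstacle.} The crux is the deterministic $g$-estimate in the third paragraph: a priori $g(v-w')-g(u-w')$ looks to be of order $|u-v|$, which is far larger than the target scale $\sigma(\ell_0)=|u-v|^2/\Delta^{-1}(|u-v|)$. The cancellation down to order $\sigma(\ell_0)$ uses simultaneously (a) that $u$ and $v$ lie on nearly the same level set of $g$ (from $|g(u)-g(v)|\le 4\mu d$, giving $\delta\ge -4\mu d$), (b) that the crossing site $w'$ is essentially a radial contraction of $u$ toward $0$, so $u-w'$ is nearly parallel to $\yt$, and (c) the local curvature hypothesis A2(ii). Getting the error terms to come out at scale $\sigma(\ell_0)$ (and not, e.g., $\sigma(\ell_0)\log m$) — which is exactly what makes the final exponent $\lambda\log m$ rather than a weaker rate — is the delicate part, and is where the precise choice of the transverse-wandering parameter $t\asymp\lambda\log m$ in Proposition \ref{transfluct2} is used.
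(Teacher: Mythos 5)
Your proof is correct and takes essentially the same route as the paper's: confine $\Gamma_{0u}$ to a tube-and-cylinders region via Proposition \ref{transfluct2}, union over crossing sites in a fattened hyperplane at distance $\asymp\Delta^{-1}(|u-v|)$ from $u$, and reduce the comparison of $T(v,w')$ and $T(u,w')$ to a deterministic bound on $g(v-w')-g(u-w')$ obtained from Lemma \ref{gdiff1} together with the curvature hypothesis A2(ii) (the paper reaches that $g$-bound by projecting $v$ onto $H_{\theta,u_1^\theta}$ and shifting to the level set $\{g=g(u)\}$, whereas you shift $u-w'$ by $\delta\yt$, but these are minor variants of the same cancellation). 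One small slip in the large-$\lambda$ disposal: the inequality $ET(v,u)\le g(v-u)$ is backwards (subadditivity gives $h\ge g$); instead invoke Proposition \ref{hmu} to get $ET(v,u)\le g(v-u)+C_{25}\sigma(m)\log m\le 2\mu\sqrt{d}\,m$ for $m$ large, after which the argument goes through unchanged.
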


\section{Existence and transverse fluctuations of $\theta$--rays}\label{rays}

For sites $u,v$ in a path $\gamma$, let $\gamma[u,v]$ \label{gab} denote the segment of $\gamma$ between $u$ and $v$.
For a geodesic ray $\Gamma = (v_0,v_1,\dots)$ (as a sequence of sites), we say a sequence $\{\Gamma_n\}$ of geodesics or geodesic rays from $v_0$ \emph{converges to} $\Gamma$ if for each $j\geq 1$, for all sufficiently large $n$, $\Gamma[v_0,v_j]$ is an initial segment of $\Gamma_n$. If $\{\Gamma_n\}$ is a sequence of geodesics or geodesic rays from a fixed $v_0$ with length $|\Gamma_n|\to\infty$, then $\{\Gamma_n\}$ has a converging subsequence.

In Proposition \ref{transfluct2}, the bound on the probability is uniform in $r$.  This enables us to turn that lemma (or more precisely, its proof) into a result about $\theta$--rays, which is part (ii) of the next proposition.  The proposition also includes parts (i), (iv)(a), and (iv)(b) of Theorem \ref{alldim}.  

\begin{proposition}\label{rayfluct} Suppose A1, A2', A3 hold for some $\theta_0,\ep_0$.

(i)
\begin{align}\label{rayexist}
  P\big(&\text{for all $v\in \ZZ^d$ and $\theta\in S^{d-1}$ with $\psi_{\theta\theta_0}<\ep_0$, a $\theta$-ray from $v$ exists}) = 1.
\end{align}
If also A3' holds then this is true without the condition $\psi_{\theta\theta_0}<\ep_0$.

(ii) There exist constants $C_i$ as follows.  For $t>1$,
\begin{align}\label{rayfluct1}
  P\Big(&\text{for some $\theta\in S^{d-1}$ with $\psi_{\theta\theta_0}<\ep_0$, there exists a $\theta$-ray $\Gamma$ from 0 with }  \notag\\
  &\qquad \sup_{u\in\Gamma} D_\theta(u) > t \Big) \leq C_{35}e^{-C_{36}t\log t}.
\end{align}
If also A3' holds then this is true without the condition $\psi_{\theta\theta_0}<\ep_0$.

(iii) 
\begin{align}\label{onedirec}
  P\big(&\text{for every $v\in \ZZ^d$ and $\theta\in S^{d-1}$ with $\psi_{\theta\theta_0}<\ep_0$, every subsequential $\theta$-ray} \notag\\
  &\qquad \text{ from $v$ is a $\theta$--ray}) = 1.
\end{align}
If also A3' holds then
\begin{align}\label{nodirec}
  P\big(&\text{there exists a geodesic ray with no asymptotic direction} \big) = 0.
\end{align}
\end{proposition}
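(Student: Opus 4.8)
The plan is to prove part (ii) first, by rerunning the proof of Proposition~\ref{transfluct2} with the fixed far endpoint $ry_\theta$ replaced by a site chosen far out along the ray, and then to obtain (i) and (iii) from that proof — not merely its statement — together with the compactness remark preceding the proposition. For (ii): suppose $\Gamma$ is a $\theta$-ray from $0$ and some site $U\in\Gamma$ has $D_\theta(U)>t$. Because $\Gamma$ has asymptotic direction $\theta$, its sites eventually have $\theta$-coordinate tending to $\infty$ and $\theta$-ratio tending to $0$, so we may pick a site $W$ of $\Gamma$ past $U$, far enough along that $W_1^\theta$ exceeds $2\kappa$ times $U_1^\theta\vee\Phi^{-1}(D_\theta(U))$ and $D_\theta(W)<t$; here $\kappa$ is as in the proof of Proposition~\ref{transfluct2}, and this choice of $W$ puts $U$ in the position of $U$ in Case~1 or Case~3 of that proof, ruling out the analogues of Cases~2 and~4. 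Since $\Gamma[0,W]$ is a geodesic, $T(0,U)+T(U,W)=T(0,W)$, and constructing the auxiliary point $\ol W$ on $\Gamma[U,W]$ at $\theta$-coordinate $\kappa\Phi^{-1}(D_\theta(U))$ (Case~1) or $\kappa U_1^\theta$ (Case~3) exactly as there yields a $\delta$-fat triangle $0,U,\ol W$ with the same excess-distance estimate. The crucial point is that the resulting bad events — the analogues of $A_k$, $F_k$, and the rest — assert only the existence of sites $u,w$ with $|w|$ in a dyadic block, $|u|\le|w|$, a prescribed lower bound on $g(u)+g(w-u)-g(w)$, and $T(0,u)+T(u,w)=T(0,w)$, and refer neither to $\theta$ nor to any designated endpoint. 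Hence the union over all $\theta$ with $\psi_{\theta\theta_0}<\ep_0$ (and, under A2(ii'), over all $\theta\in S^{d-1}$) is free, while the union over the admissible scales of $|\ol W|$ — whose minimum is still of order $\Phi^{-1}(t)$ — is summed precisely as in \eqref{Aksum} and \eqref{sumkc3} to give $C_{44}e^{-C_{45}t\log t}$.

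For part (i), fix $\theta$ with $\psi_{\theta\theta_0}<\ep_0$, let $\Gamma_n=\Gamma_{0,ny_\theta}$, and set $M_n=\max_{u\in\Gamma_n}D_{\theta,n}(u)$. Proposition~\ref{transfluct2} gives $P(M_n\geq t)\leq C_{35}e^{-C_{36}t\log t}$ uniformly in $n$, so $\sup_n EM_n<\infty$ and thus $\liminf_nM_n<\infty$ a.s.; fix (a.s.) a subsequence along which $M_n$ stays bounded, and thin it, using the compactness remark before the proposition, to a subsequence $n_k$ with $\Gamma_{n_k}\to\Gamma$ for a geodesic ray $\Gamma$ from $0$. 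For any fixed $u\in\Gamma$ we have $u\in\Gamma_{n_k}$ for all large $k$, and once $n_k>2u_1^\theta$ we have $D_{\theta,n_k}(u)=D_\theta(u)$, so $D_\theta(u)\leq M_{n_k}$ stays bounded; hence $\sup_{u\in\Gamma}D_\theta(u)<\infty$. Since the region $\{u:D_\theta(u)\leq c\}$ has $\theta$-ratio at $\theta$-coordinate $s$ bounded by $c^{1/2}\Xi(s)/s\to0$, and $\Gamma$ is an infinite self-avoiding path, the sites of $\Gamma$ have $\theta$-coordinate $\to\infty$ and $\theta$-ratio $\to0$, so by \eqref{tanvsratio} $\Gamma$ is a $\theta$-ray. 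This gives existence for each fixed direction, hence, a.s., simultaneously for a countable dense set of directions; a general $\theta$ (with $\psi_{\theta\theta_0}<\ep_0$, or arbitrary under A2(ii')) is handled by taking $\theta_i\to\theta$ from this dense set, choosing $\theta_i$-rays $\Gamma^{(i)}$ from $0$, and repeating the $\liminf$ argument with part (ii) in place of Proposition~\ref{transfluct2}: part (ii) gives $\liminf_i\sup_{u\in\Gamma^{(i)}}D_{\theta_i}(u)<\infty$ a.s., and the continuity of $\theta\mapsto D_\theta(u)$ for fixed $u\neq0$ yields a subsequential limit $\Gamma$ with $\sup_{u\in\Gamma}D_\theta(u)<\infty$, hence a $\theta$-ray.

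For part (iii), if a geodesic ray $\Gamma$ from $v$ were a subsequential $\theta$-ray with $\psi_{\theta\theta_0}<\ep_0$ but not a $\theta$-ray, it would have a second subsequential direction $\theta''\neq\theta$, so there would be infinitely many sites $U$ of $\Gamma$ with $|U-v|\to\infty$ and $\psi_{U-v,\theta}$ bounded below, each followed on $\Gamma$ by a site $W$ with $W/|W|$ arbitrarily close to $\theta$ and $W_1^\theta$ as large as we wish. After translating $v$ to the origin (the model is translation invariant), $\Gamma[v,W]$ is a geodesic to a point in direction $\approx\theta$ passing through $U$, and $D_\theta(U)$ is then of order $|U-v|/(\sigma(|U-v|)\log|U-v|)\to\infty$; running the fat-triangle construction of part (ii) bounds the probability of such a configuration with $|U-v|$ of dyadic order $2^\ell$ by $Ce^{-c\,\ell\,2^\ell/(\sigma(2^\ell)\log 2^\ell)}$, which is summable in $\ell$. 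Borel--Cantelli then forbids such configurations for all large scales, contradicting $|U-v|\to\infty$. Taking a countable union over direction pairs from a dense set, over shrinking angular thresholds, and over the countably many starting sites (using translation invariance) gives \eqref{onedirec}; under A2(ii') the same argument applies with no restriction on the directions, and since a geodesic ray fails to have an asymptotic direction exactly when it is a subsequential $\theta$-ray for two distinct $\theta$, this is \eqref{nodirec}.

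The main obstacle is the ``ray version'' of Proposition~\ref{transfluct2} behind parts (ii) and (iii): one must verify that, with the far vertex chosen on the ray rather than fixed, the case analysis of that proof still produces the required $\delta$-fat triangles and the required $\theta$-free bad events in every case, and that the sum over the admissible scales of that vertex still collapses to an $e^{-ct\log t}$ bound — in particular that the lower cutoff on those scales remains of order $\Phi^{-1}(t)$. Once (ii) is available, the compactness and $\liminf$ arguments for (i), the net refinement handling all $\theta$, and the Borel--Cantelli step for (iii) are routine.
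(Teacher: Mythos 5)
Your approach is essentially the paper's, and you correctly identify the crucial point: the bad events appearing in the proof of Proposition~\ref{transfluct2} (existence of sites $u$, $w$ in dyadic annuli with a prescribed excess $g$-distance and $T(0,u)+T(u,w)=T(0,w)$) reference neither $\theta$ nor the far endpoint, so the union over directions and endpoints costs nothing. The paper packages this into a single auxiliary event $A_t$ (existence of $z_n$ with $|z_n|\to\infty$, $\theta_n=z_n/|z_n|\to\theta$, and $\sup_{u\in\Gamma_{0,z_n}}D_{\theta_n}(u)>t$), checks that the events in (ii) and (iii) are contained in $A_t$ and $\bigcap_t A_t$ respectively, and observes that the argument of Proposition~\ref{transfluct2} bounds $P(A_t)$ verbatim; part (i) then follows at once since on $\bigcup_t A_t^c$ every subsequential limit of $\Gamma_{0z_n}$ with $\theta_n\to\theta$ is a $\theta$-ray. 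Your Fatou/$\liminf$ argument for (i) and your explicit dyadic Borel--Cantelli computation for (iii) are more elaborate than the paper's but amount to the same estimates.

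One step in your (ii) deserves scrutiny: you take $U\in\Gamma$ with $D_\theta(U)>t$, pick $W$ far enough along the ray, and claim this forces $U$ into Case~1 or Case~3 of Proposition~\ref{transfluct2}, ruling out Cases~2 and~4. But the case analysis there is keyed to the \emph{maximizer} of $D_{\theta,r}(\cdot)$ over the finite geodesic: the construction of $\ol W$ uses that $C$ is the maximum to guarantee $\ol W$ lies inside the tube $E_{\theta,r,C}$, which is what makes the $\delta$-fat-triangle estimate go through. Your chosen $U$ need not be the maximizer, and if one runs the construction with $C=D_\theta(U)$ the point $\ol W$ may exit the tube, in which case the triangle estimate is not justified. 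This is fixable — one can iterate (if $\ol W$ exits the tube then $D_\theta(\ol W)>C$ and one restarts from $\ol W$), or, as the paper implicitly does, keep all four cases and reformulate the Case~2 and Case~4 bad events with a free far endpoint $w$, summing over $w$ in dyadic shells (the extra polynomial entropy is absorbed by the exponential bound) — but as written your reduction to Cases~1 and~3 skips this.
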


\begin{proof}
Observe that event in \eqref{rayfluct1} is contained in the event
\begin{align*}
  A_t: &\text{ there exist $\theta\in S^{d-1}$ with $\psi_{\theta\theta_0}<\ep_0$ and $z_n \in \ZZ^d,$ with } \\
  &\qquad |z_n|\to\infty, \quad \theta_n = z_n/|z_n| \to \theta, \text{ and } \sup_{u\in\Gamma_{0,z_n}} D_{\theta_n}(u) > t.
\end{align*}
This can be seen by fixing $\Gamma= (v_0,v_1,\dots)$ (as a sequence of sites) and $\theta$ as in \eqref{rayfluct1}, and a site $u=v_m\in \Gamma$ with $D_\theta(u) > t$, taking $z_n=v_{m+n}$ for all $n$, and noting that $D_{\theta_n}(u) \to D_\theta(u)$ as $n\to\infty$.

Further, if $\Gamma= (v_0,v_1,\dots)$ is a subsequential $\theta$--ray which is not a $\theta$--ray, then there are subsequences $v_{n(k)}/|v_{n(k)}| \to \theta$ and $\zeta(j)=v_{n'(j)}/|v_{n'(j)}| \to \zeta$ for some $\theta\neq\zeta$.  But this means that given $t>0$, fixing $k$ sufficiently large we have $D_{\zeta(j)}(v_{n(k)}) > t$ for all sufficiently large $j$ (depending on $k$), which in turn means that $A_t$ occurs with $z_j = v_{n'(j)}$.  It follows that the complement of the event in \eqref{onedirec} is contained in $\cap_{t>0} A_t$.

Therefore to prove both \eqref{rayfluct1} and \eqref{onedirec} it is enough to show  
\begin{equation}\label{Atbound}
  P(A_t) \leq C_{35}e^{-C_{36}t\log t}.
\end{equation}

Note that for fixed $z_n$ we can use Proposition \ref{transfluct2}, but we cannot sum over possible $z_n$ as the entropy is too large.
However, in the proof of that lemma, all that we use is (after converting the notation for our present context) the existence of a $\delta$-fat triangle of sites $0,u,w$ in $\Gamma_{x,z_n}$ for sufficiently large $\delta$ (depending on $|u|,|w|$.) The bounds in that proof do not involve $z_n$, so in the 4 cases there it is only necessary to sum over ranges of possible values of $|u|$ or $|w|$.  Thus the proof of Proposition \ref{transfluct2} also proves \eqref{Atbound}.

The last sentence in (ii) follows from \eqref{rayfluct1} and the compactness of $S^{d-1}$, and similarly in (iii).

Turning to (i), it is enough to consider $v=0$. Given $\theta\in S^{d-1}$ with $\psi_{\theta\theta_0}<\ep_0$, let $z_n\in\ZZ^d$ with $|z_n|\to\infty$ and $\theta_n=z_n/|z_n|\to\theta$. Then
some subsequence of $\{\Gamma_{0z_n}\}$ converges to a geodesic ray $\Gamma_\infty=(0=w_0,w_1,\dots)$ from 0.  If $\Gamma_\infty$ is not a $\theta$--ray, it is a subsequential $\zeta$--ray for some $\zeta\in S^{d-1}, \zeta\neq\theta$.  It follows readily that 
\[
  \limsup_{j\to\infty} \limsup_{n\to\infty} D_{\theta_n}(w_j) = \infty,
\]
which means $A_t$ occurs for all $t>0$.  Thus $\Gamma_\infty$ is a $\theta$--ray a.s.  Equation \eqref{rayexist} then follows from \eqref{Atbound}, and the last sentence of (i) again follows from compactness of $S^{d-1}$.
\end{proof}

\section{Special types of bad geodesic behavior}\label{densely}

In this section we use the general ``bad geodesic behavior'' bounds of section \ref{cost} to control special types of bad geodesic behavior. Most proofs are deferred to Appendix \ref{spec}.

We call a path or geodesic from a set $A\subset \RR^d$ to $B\subset \RR^d$ \emph{nonreturning} if only
the first bond $\langle x_0x_1 \rangle$ (viewed as a line segment) intersects $A$ and only the last bond $\langle x_{m-1}x_m \rangle$ intersects $B$.  A nonreturning path from $H_{\theta,s_1}^-$ to $H_{\theta,s_2}^+$ for some $s_1<s_2$ is called a $\theta$--\emph{slab path}, and a $\theta$--\emph{slab geodesic} is a geodesic which is also a $\theta$--slab path.

In Theorem \ref{alldim}(ii), one can view the density of $H_{\theta_0,R}^+$--entry points as bounding the maximum possible longitudinal density of any set of halfspace $\theta_0$--rays with distinct $H_{\theta_0,R}^+$--entry points.  Given an upper bound for this density larger than its heuristically-suggested order of $\Delta(R)^{-(d-1)}$, we may express the bound as $n^{1+(d-1)\beta_0}/\Delta(R)^{d-1}$ for some $\beta_0>0$ and $n\geq 1$, which we may in turn read as $n\ \theta$--rays crossing per volume $(n^{-\beta_0}\Delta(R))^{d-1}$ in $H_{\theta,0}$.  Thus to bound the mean $H_{\theta_0,R}$--crossing density our main task is roughly to bound 
\begin{align}\label{keyprob}
  P&\Big( \text{there exist $n$ halfspace $\theta_0$--rays with distinct $H_{\theta_0,R}^+$--entry points} \notag\\
  &\qquad \text{originating from }\{0\}\times[-n^{-\beta_0} \Delta(R),n^{-\beta_0} \Delta(R)]^{d-1} \Big),
\end{align}
where the product is in $\theta_0$--coordinates. 
Obtaining such a bound with $n=(\pi(R)^2\log R)^K$ for some (large) $K$--as we will do in Proposition \ref{jammed1}---is similar, modulo a few technicalities, to bounding the mean $H_{\theta_0,R}$-crossing density by $(\pi(R)^2\log R)^{K'}/\Delta(R)^{d-1}$, with $K' = (1+(d-1)\beta_0)K$.  
Similarly, we can bound the mean combined $H_{\theta_0,R}$--crossing density of $\theta$--rays over $\theta\in J(\theta_0,\ep)$ for some $\ep$, mainly by bounding
\begin{align}\label{keyprob2}
  P&\Big( \text{there exist $n$ $\theta_0$--slab geodesics from $H_{\theta_0,0}^-$ to $H_{\theta_0,2R}^+$ with distinct 
    $H_{\theta_0,R}^+$--entry points } \notag\\
  &\qquad \text{originating from }\{0\}\times[-n^{-\beta_0} \Delta(R),n^{-\beta_0} \Delta(R)]^{d-1} 
    \text{ with initial orientation in } J(\theta_0,2\ep) \Big),
\end{align}
where by the initial orientation of a geodesic we mean its direction from its starting point to its $H_{\theta_0,R}^+$--entry point.

To bound something like \eqref{keyprob} or \eqref{keyprob2}, we need control of the probabilities of a number of special types of bad geodesic behavior, which we obtain in the rest of this section.

Given a geodesic $\Gamma$ from $H_{\theta,0}^-$ to $H_{\theta,R}^+$ for some $R$ and given $0\leq s\leq R$, let $x_{\theta,s}''(\Gamma)$ \label{xpri} denote the $H_{\theta,s}^+$--entry point of $\Gamma$, and let $x_{\theta,s}'(\Gamma)$ be the last site in $\Gamma$ before $x_{\theta,s}''(\Gamma)$. An $\ell$-\emph{segment} of $\Gamma$ is the segment $S_{\theta,i}(\Gamma) := \Gamma[x_{\theta,(i-1)\ell}''(\Gamma),x_{\theta,i\ell}''(\Gamma)]$ \label{seg} for some $i\geq 1$.

We first show that with high probability, a geodesic contains at least a certain minimum number of fast $\ell$--segments.  

\begin{lemma}\label{fastseg}
Suppose A1, A2', A3 hold for some $\theta_0,\ep_0$, and let $\eta\in(0,1)$.  There exist constants $C_i$ as follows.  Let 
\begin{equation}\label{Rell}
  R\geq C_{37}, \quad 0<\lambda\leq 1-\chi_2, \quad C_{38}(\log R)^{1/\lambda} \leq k \leq C_{39}R^{1/2}, \quad \ell=R/k,
\end{equation}
with $\chi_2$ from \eqref{powerlike} and $k$ an integer.  For $\psi_{\theta\theta_0}<\ep_0$ and $\Gamma$ a geodesic from $H_{\theta,0}^-$ to $H_{\theta,R}^+$, let \label{nth}
\[
  N_\theta(\Gamma) = \left|\left\{1\leq i\leq k: T(x_{\theta,(i-1)\ell}''(\Gamma),x_{\theta,i\ell}''(\Gamma)) 
    \leq \frac 1k ET(0,R \yt) + \frac{\eta\sigma(\ell)}{8} \right\}\right|.
\]
Then for all $v\in H_{\theta,0}^{\rm rfat}$ and all $w \in H_{\theta,R}^{\rm fat}$ with 
\begin{equation}\label{wclose}
  \max(|v|,|w-R\yt|) \leq C_{40}\eta^{1/2} k^{(1-\chi_2)/2}\Delta(R),
\end{equation}
we have
\begin{equation}\label{fastseg2}
  P\left(N_\theta(\Gamma_{vw}) \leq \frac{\eta}{32} k^{1-\lambda} \right) \leq C_{41}e^{-C_{42}k^\lambda\eta}.
\end{equation}
\end{lemma}

Note that \eqref{wclose} allows the angle $\psi_{\theta,w-v}$ to be nonzero but not too large.  Such a bound is necessary for \eqref{curvgap} in the proof.

\begin{remark}\label{betapowers} The results to follow involve several different length scales, and other quantities, expressed using $R$ and small (less than 1) powers of the number $n$ of geodesics we are dealing with.  We summarize them here for ready reference, with precise definitions to follow: \label{betas}
\begin{itemize}
\item[(i)] a length scale $n^{-\beta_0}\Delta(R)$ for cubic blocks in each hyperplane $H_{\theta,s}$, with geodesics considered close when they start and end with separation $n^{-\beta_0}\Delta(R)$ or less; 
\item[(ii)] a width $n^{\beta_1}\Delta(R)$ for the ``target box'' to which length--$R$ geodesics are confined, with high probability;
\item[(iii)] a length scale $\ell = n^{-\beta_2}R$ for segments of length-$R$ geodesics;
\item[(iv)] for a transition (i.e.~ choice of starting and ending blocks) made by a length--$\ell$ segment of a geodesic, a maximum number $n^{\beta_3}$ of other geodesics which can make the same transition, for that transition to be considered ``sparse'';
\item[(v)] a maximum overlap length $n^{-\beta_4}\ell$ for two length--$\ell$ geodesic segments to be considered ``low overlap''; this length also serves as the minimum significant backtrack in a geodesic.
\end{itemize}
Other quantities are defined in terms of these powers, such as a minimum number $n^{\beta_3-\beta_4}$ of geodesics passing through a site for it to qualify as what we will call a ``popular site.''
\end{remark}

We start with some formal definitions related to the items in Remark \ref{betapowers}.  

The following definitions are for fixed $R,n,\theta$, which don't necessarily appear in the notation.  For $0\leq s\leq R$, the \emph{home $\theta$--block} in a hyperplane $H_{\theta,s}$ is $\{s\}\times [-n^{-\beta_0}\Delta(R),n^{-\beta_0}\Delta(R)]^{d-1}$ (in $\theta$--coordinates.) \label{hblk} The home $\theta$--block in $H_{\theta,0}$ is denoted $B_{\theta,{\rm home}}$.
A $\theta$--\emph{block} in $H_{\theta,s}$ is a translate of the home $\theta$--block in each $\BB_{\theta}$--coordinate direction $i$ ($2\leq i\leq d$) by an integer multiple of $2n^{-\beta_0}\Delta(R)$ (so such $\theta$--blocks tile $H_{\theta,s}$.)  The center point of any $\theta$--block is called a $\theta$--\emph{block center}. An \emph{enlarged $\theta$--block} has the same center as a block but larger linear dimensions by a factor $2\sqrt{d}$; more precisely it has the form (in $\theta$--coordinates)
\[
  y + \left( \{s\}\times [-2\sqrt{d}n^{-\beta_0}\Delta(R),2\sqrt{d}n^{-\beta_0}\Delta(R)]^{d-1} \right),
\]
with $y$ a $\theta$--block center in $H_{\theta,s}$, and the $+$ denoting translation.  The factor $2\sqrt{d}$ ensures that if $u$ lies in a $\theta$--block with center $a$, and $v$ lies outside an enlarged $\theta$--block with center $b$, then $|v-b|\geq 2|u-a|$.

For $\Gamma$ a geodesic which starts in $H_{\theta,0}^-$ and crosses $H_{\theta,R}$, the \emph{pre-$H_{\theta,R}$ segment} of $\Gamma$ is the segment of $\Gamma$ from its starting point to $x_R''(\Gamma)$.  In what is to follow, as in Lemma \ref{fastseg}, we will divide the pre-$H_{\theta,R}$ segment of a $\theta$--slab geodesic  into sub-segments of some length $\ell$.  In this context, a geodesic $\Phi$ is an $(\ell,\theta)$-\emph{interval geodesic} if for some $i$, the initial site of $\Phi$ is in $H_{\theta,(i-1)\ell}^{\rm fat}$, and the last bond of $\Phi$ is the first bond of $\Phi$ to cross $H_{\theta,i\ell}$ (so the last site of $\Phi$ must lie in $H_{\theta,i\ell}^{\rm fat}$.) An $\ell$-segment of a $\theta$--ray (defined before Lemma \ref{fastseg}) is thus one example of an $(\ell,\theta)$--interval geodesic.  

The \emph{target $\theta$--box} is a tube around $L_\theta$: \label{tgt}
\[
  Q_{R,n,\theta} = \RR \times [-2n^{\beta_1}\Delta(R),2n^{\beta_1}\Delta(R)]^{d-1},
\]
in $\theta$--coordinates.  
A geodesic is $\theta$--\emph{target-directed} if it is contained in the target $\theta$--box.
A geodesic from $H_{\theta,0}^-$ to $H_{\theta,s}^+$ for some $s>0$ is $\theta$--\emph{target-directed up to $s$} if its pre-$H_{\theta,s}$ segment is target-directed.
A \emph{target $\theta$--block} is a $\theta$--block which intersects $Q_{R,n,\theta}$.

For $\theta\in S^{d-1}$, $\Gamma$ a geodesic with a designated direction, and sites $x$ preceding $y$ in $\Gamma$, we say $\Gamma$ has a $\theta$--\emph{backtrack of size} $r$ from $x$ to $y$ if $x_1^\theta - y_1^\theta \geq r$.

We may omit the parameters in the preceding terminology when it is clearly understood, e.g.~referring to a block rather than a $\theta$--block.

Lemma \ref{fastseg} controls one type bad geodesic behavior (not enough fast segments); we now proceed to other types.  The next lemmas involve the quantities $n^{\pm\beta_i}$ as in Remark \ref{betapowers}, so we now specify the relations that we assume to hold among these exponents.\\

\noindent{\bf A4. Exponent relations, parameters, and definitions.}  $\beta_i\in (0,1)$ $(i=0,1,2,3,4)$ satisfy
\begin{align}
  &\frac{2}{1+\chi_2}\beta_0 + \frac{3+\chi_2}{1+\chi_2}\beta_1 < \beta_1+\beta_2+\beta_4 < \frac{1-\chi_2}{(1+\chi_2)(d-1)}, \quad 
    \chi_1\beta_4 > \min(\chi_2,1-\chi_2)\beta_2 > 8\beta_1, \label{betas1} \\
  &(1-\chi_2)\beta_4 > (2+\chi_2)\beta_2,\quad \frac{2(1+\chi_2)}{\chi_1} \beta_1 + (1+\chi_2)\beta_2 < \beta_0
    <  \frac{\chi_1}{(1+\chi_2)(d-1)} \label{betas2} \\ 
  &3\beta_3 + 3\beta_2 + 7(d-1)(\beta_0+\beta_1) < 1, \label{betas3}
\end{align}
where $\chi_1<\chi<\chi_2$ are from \eqref{powerlike}. We assume tacitly throughout that $\chi_1,\chi_2$ have been chosen sufficiently close to $\chi$. Recalling the subpolynomial function $\pi$ from \eqref{zetadef},
$R,n,\ell$ satisfy 
\begin{equation}\label{Rncond}
  R\geq C_{43}, \quad C_{44}(\pi(R)^2\log R)^{C_{45}}\leq n\leq C_{46}\Delta(R)^{d-1}, \quad\text{and}\quad \ell = n^{-\beta_2}R, \text{ with } 
    C_{45}\geq \frac{2}{\beta_1}.
\end{equation}
Let $\ep_{\min}=\min\{\ep_0,\ep_1,\ep_5,\ep_6,\ep_7\}$, with $\ep_7$ from Lemma \ref{gdiff1} and the other $\ep_j$ from Section \ref{intro}.
$\ol y$ denotes a $\theta_0$--block center in $H_{\theta_0,R}$ for which $\theta = \ol y/|\ol y|$ satisfies $\psi_{\theta\theta_0}<\ep_{\min}/2$, and 
$B_{\theta_0,{\rm cross}}$ \label{blkc} is the $\theta_0$--block in $H_{\theta_0,R}$ with center $\ol y$.  $[s,r]$ with $0\leq s\leq r$ is the largest interval for which
\begin{equation}\label{rsdef}
  Q_{R,n,\theta} \cap \Omega_\theta(s,r) \subset Q_{R,n,\theta} \cap \Omega_{\theta_0}(0,R);
\end{equation}
see Figure \ref{fig4-10A}. $B_{r,\theta,{\rm home},+}$ and $B_{s,\theta,{\rm home},+}$ \label{blkr} are the enlarged home $\theta$--blocks in $H_{\theta,r}$ and $H_{\theta,s}$, respectively.  \\

The conditions \eqref{betas1}--\eqref{betas3} can be satisfied by choosing $\beta_3<1$, then temporarily setting $\beta_0=\beta_1=\beta_2=0$ and choosing $\beta_4>0$ to satisfy the inequalities where $\beta_4$ appears, then temporarily setting $\beta_1=\beta_2=0$ and choosing $\beta_0>0$ to satisfy the inequalities where $\beta_0$ appears, and finally choosing $\beta_2$ then $\beta_1$ similarly.

\begin{remark}
The conditions \eqref{Rncond} yield certain relative scales which we summarize here.  Assuming $R$ is large we have using \eqref{Rncond} and the last inequality in \eqref{betas2} that
\begin{equation}\label{nvsR}
  R \geq n^{2/(1+\chi_2)(d-1)}, \quad \frac{R}{\Delta(R)} = \left( \frac{R}{\sigma(R)} \right)^{1/2} 
    \geq R^{(1-\chi_2)/2} \geq n^{(1-\chi_2)/(1+\chi_2)(d-1)}, \quad \sigma(R) \geq R^{\chi_1} \geq n^{2\beta_0}.
\end{equation}
Putting together \eqref{nvsR} and the first inequality in \eqref{betas1} we obtain
\begin{equation}\label{nvsR2}
  R^{(1-\chi_2)/2} \geq n^{\beta_1+\beta_2+\beta_4} \geq n^{2\beta_1}.
\end{equation}
From \eqref{nvsR} and the second inequality in \eqref{betas1} we also get
\begin{equation}\label{nRell}
  n^{\beta_1+\beta_2+\beta_4} \leq \frac{R}{\Delta(R)} \quad\text{so}\quad \frac{n^{\beta_1}\Delta(R)}{\ell} \leq n ^{-\beta_4}.
\end{equation}
By \eqref{powerlike},
\begin{equation}\label{Deltaratio}
  \frac{\Delta(R)}{\Delta(\ell)} \leq C_{47} n^{\beta_2(1+\chi_2)/2}.
\end{equation}
From \eqref{betas1} and the second inequality in \eqref{betas2} we have
\[
  \beta_4 > \frac{8}{\chi_1}\beta_1 > \frac{2-\chi_1}{\chi_1}\beta_1 \quad\text{so}\quad 
    \frac{2\chi_1(1-\chi_2)}{(1+\chi_2)(d-1)} > 2\chi_1(\beta_1 + \beta_2 + \beta_4) > 4\beta_1 + \chi_1(1-\chi_2)\beta_2
\]
which with \eqref{nvsR} yields
\begin{equation}\label{ell-lower}
  \ell^{\chi_1(1-\chi_2)} = R^{\chi_1(1-\chi_2)}n^{-\chi_1(1-\chi_2)\beta_2} 
    \geq n^{2\chi_1(1-\chi_2)/(1+\chi_2)(d-1)} n^{-\chi_1(1-\chi_2)\beta_2} \geq n^{4\beta_1}.
\end{equation}
Also from \eqref{powerlike}, \eqref{nvsR}, and the second inequality in \eqref{betas1},
\begin{equation}\label{ellsigma}
  \frac{\ell}{\sigma(\ell)} \geq C_2n^{-\beta_2(1-\chi_1)} \frac{R}{\sigma(R)} = C_2n^{-\beta_2(1-\chi_1)} \frac{R^2}{\Delta(R)^2}
    \geq C_2n^{-\beta_2(1-\chi_1)} n^{2(1-\chi_2)/(1+\chi_2)(d-1)} \geq n^{2(\beta_1+\beta_4)}.
\end{equation}

Considering now $r,s,\ep_{\min}$ and $\ol y$ from A4 (see Figure \ref{fig4-10A}), observe that by \eqref{Vgap}, $|\ol y - R\ytz| \leq C_{22}\ep_{\min}R|\ytz|$. Further, by \eqref{zangle}, the angle between $H_{\theta_0,0}$ and $H_{\theta,s}$ is at most $C_{10}\ep_{\min}$; it follows using \eqref{linegap2} that $s\yt=L_\theta\cap H_{\theta,s}$ satisfies $|s\yt| \leq C_{48}\ep_{\min}n^{\beta_1}\Delta(R)$, and similarly $|\ol y - r\yt| \leq C_{48}\ep_{\min}n^{\beta_1}\Delta(R)$.  Then using \eqref{zangle},
\begin{align}\label{rslower}
  |r-s||\yt| &\geq |\ol y| - 2C_{48}\ep_{\min}n^{\beta_1}\Delta(R) 
    \geq (1-C_{22}\ep_{\min})R|\ytz| - 2C_{48}\ep_{\min}n^{\beta_1}\Delta(R) \notag\\
  &\text{and hence}\ \  r-s \geq (1 - C_{49}\ep_{\min})R.
\end{align}
\end{remark}

Write $\Pi_{ab}^\infty$ for the infinite line through $a$ and $b$. \label{piline}

\begin{lemma}\label{badbehavG5}
There exist constants $C_i>0$ as follows.  Suppose A1, A2', A3 hold for some $\theta_0,\ep_0$, and A4 holds for some $R,n,\theta,\ell,\ep_{\min}$.  For the event
\begin{align*}
  G_1: &\text{ ({\it large transverse fluctuation in a geodesic}) there exists a $\theta_0$--slab geodesic $\Gamma$ from 
     $B_{\theta_0,{\rm home}}^{\rm rfat}$ } \\
   &\qquad \text{to $H_{\theta_0,2R}^+$ with $H_{\theta_0,R}^+$--entry point in $B_{\theta_0,{\rm cross}}^{\rm fat}$ and with } 
     \Gamma \not\subset Q_{R,n,\theta}, 
\end{align*}
we have
\begin{equation}\label{allbound}
  P(G_1) \leq \exp\left( -C_{50}\frac{n^{2\beta_1}}{\log R} \right).
\end{equation}
\end{lemma}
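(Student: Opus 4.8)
The plan is to split the geodesic at its $H^+_{\theta_0,R}$--entry point and control the transverse fluctuation of each half using the machinery of Section \ref{rays} and Proposition \ref{transfluct2}. The $1/\log R$ in the exponent is intrinsic: the half--width $2n^{\beta_1}\Delta_R$ of the target box equals $(c\,n^{2\beta_1}/\log R)^{1/2}\,\Xi(R)$ up to constants, so leaving $Q_{R,n,\theta}$ is a transverse deviation ``at level'' $t\asymp n^{2\beta_1}/\log R$, and Proposition \ref{transfluct2} then contributes $C_{35}e^{-C_{36}t\log t}\le e^{-ct}$.

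On $G_1$, fix a geodesic $\Gamma=\Gamma_{vw'}$ with $v\in B^{\rm rfat}_{\theta_0,{\rm home}}$, $w'\in H^{\rm fat}_{\theta_0,2R}$, and $H^+_{\theta_0,R}$--entry point $w:=x''_{\theta_0,R}(\Gamma)\in B^{\rm fat}_{\theta_0,{\rm cross}}$. Since the home block sits near $0\in L_\theta$ and the cross block is centred at $\ol y\in L_\theta$, both $v$ and $w$ lie within $c_1n^{-\beta_0}\Delta_R$ of $L_\theta$, and $w_1^\theta\asymp R$ by \eqref{rslower}, \eqref{ychange} and the remark following A3; because $\Gamma$ is nonreturning between $H^-_{\theta_0,0}$ and $H^+_{\theta_0,2R}$, every site of $\Gamma$ has $\theta_0$--coordinate in $[0,2R]$. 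As $\Gamma\not\subset Q_{R,n,\theta}$, some site $u$ of $\Gamma$ has $\|u_2^\theta\|_\infty>2n^{\beta_1}\Delta_R$, and $u$ lies either in $\Gamma[v,w]$ or in $\Gamma[w,w']$.

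If $u\in\Gamma[v,w]$, the endpoints $v,w$ of this geodesic lie within $c_1n^{-\beta_0}\Delta_R\ll n^{\beta_1}\Delta_R$ of $L_\theta$ at $\theta$--coordinates $\asymp 0$ and $\asymp R$; since $x_1^\theta\lesssim R$ along $\Gamma[v,w]$ and $\Xi(cR)^2\asymp\Delta_R^2\log R$, the presence of $u$ forces $\sup_{x\in\Gamma_{vw}}D_{\theta,\cdot}(x)\ge c_2 n^{2\beta_1}/\log R$, so, adjusting Proposition \ref{transfluct2} for the harmless $O(n^{-\beta_0}\Delta_R)$ endpoint displacements (or rerunning its fat--triangle proof along the chord $\Pi_{vw}$),
\[
  P\bigl(u\in\Gamma[v,w]\ \text{for some such }u\bigr)\le C_{35}\exp\!\bigl(-C_{36}(c_2 n^{2\beta_1}/\log R)\log(c_2 n^{2\beta_1}/\log R)\bigr)\le e^{-c_3 n^{2\beta_1}/\log R}.
\]
If $u\in\Gamma[w,w']$, then $u$ lies in the geodesic tree issuing from $w$, restricted to $\{x:x_1^{\theta_0}\le 2R\}$; running the argument of Proposition \ref{rayfluct}(ii) (the fat--triangle bound of Proposition \ref{transfluct2}, summed over scales of $|x|$ only, which does not require a fixed far endpoint), the probability that this tree contains a site at transverse distance $K$ from $L_\theta$ with $\theta_0$--coordinate in $[R,2R]$ is at most $Ce^{-cK^2/\Xi(R)^2}$ for $2n^{\beta_1}\Delta_R<K\le\ep_0 R$ and at most $Ce^{-cK^{1-\gamma_2}}$ for $K>\ep_0 R$; summing over $K$ and over the $\lesssim K^{d-2}$ sites at each level, and using $\Xi(R)^2=\Delta_R^2\log R$ together with $R^{1-\gamma_2}\gg n^{2\beta_1}/\log R$ from \eqref{nvsR}, gives $P(u\in\Gamma[w,w']\ \text{for some such }u)\le e^{-c_4 n^{2\beta_1}/\log R}$. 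A union over the $\le cR^{C}$ choices of $v\in B^{\rm rfat}_{\theta_0,{\rm home}}\cap\ZZ^d$ and $w\in B^{\rm fat}_{\theta_0,{\rm cross}}\cap\ZZ^d$, absorbing the polynomial factor into the exponential via $n^{2\beta_1}\ge C_{53}^{2\beta_1}(\log R)^{2\beta_1C_{54}}\gg\log R$ (taking $C_{54}$ large in \eqref{Rncond}), yields $P(G_1)\le\exp(-C_{60}n^{2\beta_1}/\log R)$.

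The step I expect to be most delicate is the passage from the fixed--endpoint statements of Propositions \ref{transfluct2} and \ref{rayfluct} to the versions used here. For the first half one must accommodate the small displacements of $v,w$ from $0,R\yt$ — negligible against $n^{\beta_1}\Delta_R$, but requiring either that $D_\theta$ be recentred on the chord $\Pi_{vw}$ or a short supplementary fat--triangle estimate; a naive direct application of Lemma \ref{gtri} to a single offending site only yields $g$--excess of order $n^{2\beta_1}\sigma_R$ and, for sites $u$ very far out in $H_{\theta_0,2R}$, the $h$--versus--$g$ gap $\sigma(|u|)\log|u|$ would swamp it, which is precisely why the second half should instead be handled by bounding the \emph{expected number} of offending sites in the forward tree from $w$ — and this is exactly where the factor $\Xi(R)^2\propto\log R$ enters and produces the $1/\log R$ in the exponent. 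The remaining ingredients (convexity of $d(\cdot,L_\theta)$ to control $\Pi_{vw}$; the scale inequalities of A3 guaranteeing $n^{-\beta_0}\Delta_R\ll n^{\beta_1}\Delta_R\ll R$ and $n^{2\beta_1}\gg\log R$) are routine.
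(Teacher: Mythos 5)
Your split into pre-entry and post-entry segments is the right skeleton, and your Part 1 (the segment $\Gamma[v,w]$) is essentially the paper's Case 1 of this lemma, modulo the recentring bookkeeping. However there are two substantive problems.

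\textbf{Part 2 is not a valid bound as stated.} You propose to bound the probability that ``the geodesic tree issuing from $w$, restricted to $\{x:x_1^{\theta_0}\le 2R\}$, contains a site at transverse distance $K$ from $L_\theta$.'' But the tree of all finite geodesics from $w$ always contains such sites: for every $z$ at transverse distance $K$ in the slab, $z\in\Gamma_{wz}$ belongs to the tree. What makes the event unlikely is not a transverse-fluctuation property of the tree from $w$ in isolation, but the joint constraint that $w$ itself lies on a geodesic $\Gamma_{vu}$ with $v\in B^{\rm rfat}_{\theta_0,\rm home}$; equivalently, $T(v,w)+T(w,u)=T(v,u)$. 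The paper's Cases 2--4 build their $\delta$--fat triangles precisely on the triple $(x,y,u)=(v,w,u)$ (or $(v,w,\tilde u)$ in the backtrack case), so that the starting point $v$ forces $w$ to lie near $\Pi_{vu}$ while the curvature makes this expensive when $u$ is off $L_\theta$. If you want a ``tree from $w$'' formulation, you must incorporate the constraint through $v$ --- the argument of Proposition \ref{rayfluct}(ii) that you invoke is a union over far endpoints in a \emph{fixed cone of directions near $\theta_0$}, and you have not shown that $w'$ (or the chosen far point on the geodesic through $u$) lies in such a cone; in fact that is circular.

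\textbf{The backtracking cases are missing.} After $w$, a $\theta_0$--slab geodesic from $H^-_{\theta_0,0}$ to $H^+_{\theta_0,2R}$ is only forbidden from re-entering $H^-_{\theta_0,0}$; it may well re-enter $H^-_{\theta_0,R}$. So the offending site $u\in\Gamma[w,w']$ need not have $\theta_0$--coordinate in $[R,2R]$, and your restriction of Part 2 to that slab leaves a genuine gap. The paper handles this separately: Case 3 (small backtrack, $u\in\Omega_{\theta_0}(R-n^{\beta_1}\Delta_R,R)$) still uses the fat triangle $(x,y,u)$, while Case 4 (large backtrack) replaces $u$ by the first site $\tilde u$ past $y$ in $H^-_{\theta_0,R-n^{\beta_1}\Delta_R}$ and builds the triangle $(x,y,\tilde u)$.

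A smaller point: the $1/\log R$ in the exponent is \emph{not} intrinsic. Because one works with fixed endpoints $x\in B^{\rm rfat}_{\theta_0,\rm home}$, $y\in B^{\rm fat}_{\theta_0,\rm cross}$ and the first offending site $u$ on $\partial Q_{R,n,\theta}$, all at scale $R$, one can bypass the scale-uniform $D$-function bound entirely: Lemma \ref{gtri} directly gives a $g$--excess of order $n^{2\beta_1}\Delta_R^2/R\asymp n^{2\beta_1}\sigma_R$, and \eqref{expbound} then yields $P(G_1)\le c R^{2d}e^{-c'n^{2\beta_1}}\le e^{-c''n^{2\beta_1}}$, with no $\log R$ lost. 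The $\log R$ in \eqref{allbound} is there only for uniformity with the other lemmas in A3. Running your Part 1 through Proposition \ref{transfluct2} costs you the $\log R$ needlessly; running Part 2 through it, plus the missing constraint, is where the proof actually breaks.
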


The next lemma is illustrated by Figure \ref{fig4-6}.

\begin{lemma}\label{badbehavG15}
There exist constants $C_i>0$ as follows.  Suppose A1, A2', A3 hold for some $\theta_0,\ep_0$, and A4 holds for some $R,n,\theta,\ell,\ep_{\min}$.  For the event
\begin{align*}
   G_2: &\text{ ({\it geodesic evading enlarged $\theta$--block}) There exists a geodesic $\Gamma$ from 
     $B_{\theta_0,{\rm home}}^{\rm rfat}$ }\\
  &\qquad \text{to $B_{\theta_0,{\rm cross}}^{\rm rfat}$ which contains a site in }  
    (H_{\theta,s}^{\rm rfat} \bs B_{s,\theta,{\rm home},+}^{\rm rfat})
    \cup (H_{\theta,r}^{\rm fat} \bs B_{r,\theta,{\rm home},+}^{\rm fat}),
    \end{align*}
we have
\begin{equation}\label{G15bound}
  P(G_2) \leq \exp\left( -C_{50}\frac{n^{2\beta_1}}{\log R} \right).
\end{equation}
\end{lemma}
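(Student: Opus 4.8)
The plan is to follow the scheme of the proof of Lemma \ref{badbehavG5}, the essential new point being that the site witnessing $G_2$ now lies close to one of the endpoints of the geodesic, so the excess it forces must be measured against a local length scale of size at most $C\ep_{\min}n^{\beta_1}\Delta_R$ rather than against $R$. First I would fix a configuration in $G_2$, endpoints $x\in B_{\theta_0,\home}^{\rm rfat}\cap\ZZ^d$ and $y\in B_{\theta_0,{\rm cross}}^{\rm rfat}\cap\ZZ^d$, and a site $u$ of $\Gamma_{xy}$ in the bad set. Since $x$ is within $c_1 n^{-\beta_0}\Delta_R$ of the origin and $y$ within $c_1 n^{-\beta_0}\Delta_R$ of $\ol y\in L_\theta$, the segment $\Pi_{xy}$ stays within $c_2 n^{-\beta_0}\Delta_R$ of $L_\theta$; combining this with \eqref{dvsdth}, \eqref{linegap2}, the bound $s\leq C_{57}\ep_{\min}n^{\beta_1}\Delta_R$ and its counterpart for $r$ (which follows from \eqref{rslower}, \eqref{ychange} and the location of $\ol y$ relative to $L_\theta$), I get that $u$ has $|u_2^\theta|>2\sqrt d\,n^{-\beta_0}\Delta_R$ while sitting at $\theta$--level within $C\ep_{\min}n^{\beta_1}\Delta_R$ of the nearer endpoint. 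The two alternatives in the definition of $G_2$ are interchanged by reversing $\Gamma_{xy}$, so I would treat only $u\in H_{\theta,s}^{\rm rfat}\bs B_{s,\theta,\home,+}^{\rm rfat}$, where $u_1^\theta\leq s$.

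The crux is to extract from $u$ a $\delta$--fat triangle in $\Gamma_{xy}$ of large excess $g$--distance. The triangle $x,u,y$ with base $|y-x|\asymp R$ only gives excess of order $(n^{-\beta_0}\Delta_R)^2/R=n^{-2\beta_0}\sigma_R$, which is far too small; instead, as in Cases 1 and 3 of Proposition \ref{transfluct2}, I would localize: let $\ol W$ be the first site of $\Gamma_{xy}$ after $u$ with $\ol W_1^\theta=\kappa s$ ($\kappa$ as in \eqref{kappa}), let $W$ be the next site, and work with the triangle $x,u,W$, whose base $|W-x|$ is $O(n^{\beta_1}\Delta_R)$ as long as $W\in Q_{R,n,\theta}$. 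If instead $u$ or $W$ already lies outside $Q_{R,n,\theta}$, then that site $p$ has $|p_2^\theta|>2n^{\beta_1}\Delta_R$ while $p_1^\theta=O(\ep_{\min}n^{\beta_1}\Delta_R)$, so \eqref{triangleg} and \eqref{gsize} give directly $g(p-x)+g(y-p)-g(y-x)\geq c_3 n^{\beta_1}\Delta_R$ (the term $-(p-x)_1^\theta$ being negligible against $|p_2^\theta|$ once $\ep_{\min}$ is small). Otherwise $u,W\in Q_{R,n,\theta}$ and I would estimate $d(u,\Pi_{xW})\geq c_4 n^{-\beta_0}\Delta_R$ exactly as in Case 1 of Proposition \ref{transfluct2}, the slope of $\Pi_{xW}$ near $\theta$--level $s$ being bounded via \eqref{tanvsratio} applied to $\ol W$; then Lemma \ref{gtri} with $\delta=c_4 n^{-\beta_0}\Delta_R/|W-x|$ yields $g(u-x)+g(W-u)-g(W-x)\geq c_5(n^{-\beta_0}\Delta_R)^2/(n^{\beta_1}\Delta_R)=c_5 n^{-2\beta_0-\beta_1}\Delta_R$.

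In either case I will have found sites $a$ preceding $b$ in $\Gamma_{xy}$, at Euclidean distance $O(R)$ from $x$, with $g(a-x)+g(b-a)-g(b-x)\geq 2c_6 n^{-2\beta_0-\beta_1}\Delta_R$. By Proposition \ref{hmu} the same lower bound, halved, holds with $h$ in place of $g$, provided $n^{-2\beta_0-\beta_1}\Delta_R$ dominates $C_{25}\sigma_R\log R$; this, together with the requirement $n^{-2\beta_0-\beta_1}R/\Delta_R\geq n^{2\beta_1}$, is exactly what the small choices of $\beta_0,\beta_1,\beta_2$ permitted by \eqref{betas1}, \eqref{betas2} and the scale bounds \eqref{nvsR} buy. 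Finally, as in Case 1 of Proposition \ref{transfluct2} and in \eqref{pg5a}, the identity $T(x,a)+T(a,b)+T(b,y)=T(x,y)$ forces one of the three centered passage--time discrepancies on scales $\sigma(|a-x|),\sigma(|b-a|),\sigma(|b-y|)$ (all $O(\sigma_R)$) to be at least a third of the $h$--excess; by \eqref{expbound} each such event has probability at most $C_5\exp(-c_7 n^{-2\beta_0-\beta_1}R/\Delta_R)$, and summing over the polynomially--many choices of $x,y$ and of $a,b$, and using $n\geq C_{53}(\log R)^{C_{54}}$, gives $P(G_2)\leq\exp(-C_{60}n^{2\beta_1}/\log R)$.

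The main obstacle is the middle step. Because the transverse deviation being detected here, $n^{-\beta_0}\Delta_R$, is much smaller than the scale $n^{\beta_1}\Delta_R$ detected in Lemma \ref{badbehavG5}, one is forced to work at the local scale $\kappa s$, to handle the possibility that the localized segment itself escapes $Q_{R,n,\theta}$, and above all to verify that the resulting excess $n^{-2\beta_0-\beta_1}\Delta_R$ still beats $\sigma_R\log R$; it is precisely this last check that pins down why the particular inequalities among the $\beta_i$ recorded in A3 are needed.
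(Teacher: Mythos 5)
Your plan is to re-derive a localized fat-triangle bound from scratch, rather than, as the paper does, lower-bound $D_{\alpha,|(y-x)_1^\alpha|}(u-x)$ for $\alpha=(y-x)/|y-x|$ and then simply sum \eqref{transfluct1} over $x,y$. The paper's approach is the tidier one, because $D_\theta$ already has the local scale built in via $\Xi(u_1^\theta)^2=u_1^\theta\sigma(u_1^\theta)\log(2+u_1^\theta)$, so there is no need to introduce the auxiliary site $W$, no separate ``escape from $Q_{R,n,\theta}$'' case, and no fresh union bound over triangle vertices. Your route can in principle be made to work, but as written it has a genuine scale mistake at the last step.

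In the case $u,W\in Q_{R,n,\theta}$ you correctly extract a $g$--excess $\asymp n^{-2\beta_0-\beta_1}\Delta_R$ from the triangle $x,u,W$, whose vertices all lie within $O(n^{\beta_1}\Delta_R)$ of $x$. The right passage-time identity is therefore $T(x,u)+T(u,W)=T(x,W)$, putting the three centered discrepancies on scales $\sigma(|u-x|),\sigma(|W-u|),\sigma(|W-x|)$, all $O(\sigma(n^{\beta_1}\Delta_R))$; the corresponding exponent $n^{-2\beta_0-\beta_1}\Delta_R/\sigma(n^{\beta_1}\Delta_R)$ is, up to $\log R$, exactly the quantity $n^{-2\beta_0}\Delta_R^2/\Delta(n^{\beta_1}\Delta_R)^2$ that the paper bounds below by $n^{2\beta_1}$ via $\Delta_R^{1-\chi_2}\geq n^{(3+\gamma_2)\beta_1+2\beta_0}$. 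Likewise, the $g$--to--$h$ error from Proposition \ref{hmu} is $C_{25}\sigma(|W-x|)\log|W-x|$, not $C_{25}\sigma_R\log R$. You instead bring $y$ into the decomposition (``$T(x,a)+T(a,b)+T(b,y)=T(x,y)$'') and compare against $\sigma_R$, which is larger than $\sigma(n^{\beta_1}\Delta_R)$ by roughly $(R/n^{\beta_1}\Delta_R)^{\gamma_1}$. Your resulting requirement $n^{-2\beta_0-\beta_1}(R/\sigma_R)^{1/2}\geq n^{2\beta_1}$, i.e.\ $(R/\sigma_R)^{1/2}\geq n^{2\beta_0+3\beta_1}$, is not a consequence of A3: with $\beta_0$ near its permitted upper bound $\chi_1/((1+\chi_2)(d-1))$ from \eqref{betas4}, and with $(R/\sigma_R)^{1/2}\geq n^{(1-\chi_2)/((1+\chi_2)(d-1))}$ from \eqref{nvsR} being sharp, one would need $1-\chi_2>2\chi_1$, which fails as soon as $\chi$ exceeds roughly $1/3$. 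So the estimate does not close over the full range of exponents that A2 and A3 allow. Dropping $y$ from the split and keeping every comparison at the local scale $n^{\beta_1}\Delta_R$ (either directly, or by invoking Proposition \ref{transfluct2} as the paper does) is what makes the argument go through.
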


For the event $G_2$, the idea is that $B_{s,\theta,{\rm home},+}^{\rm rfat}$ lies in front of, and nearly parallel to, the much smaller block $B_{\theta_0,{\rm home}}^{\rm rfat}$, so any geodesic from $B_{\theta_0,{\rm home}}^{\rm rfat}$ to $B_{\theta_0,{\rm cross}}$ will very likely cross $H_{\theta,s}^{\rm rfat}$ by passing through $B_{s,\theta,{\rm home},+}^{\rm rfat}$, whereas $G_2$ says to the contrary. The picture with the larger $B_{r,\theta,{\rm home},+}^{\rm fat}$ just behind the smaller $B_{\theta_0,{\rm cross}}^{\rm rfat}$ is analogous.

\begin{lemma}\label{badbehavG2G7}
There exist constants $C_i>0$ as follows.  Suppose A1, A2, A3 hold for some $\theta_0,\ep_0$, and A4 holds with $\theta=\theta_0$ for some $R,n,\ell,\ep_{\min}$, and let $\eta$ be as in \eqref{minfluct}.  For the events
\begin{align*}
  G_3:  &\text{ (backtrack) some $\theta_0$--target-directed $(\ell,\theta_0)$-interval geodesic contains a $\theta_0$--backtrack of } 
    \frac 12 n^{-\beta_4}\ell,\\
  G_4: &\text{ ({\it quick sidestep in a direction--$\theta_0$ geodesic}) There exists an $(\ell,\theta_0)$--interval geodesic 
    $\Gamma\subset Q_{R,n,\theta_0}$} \\
  &\qquad \text{and sites $u,v\in \Gamma$  with $|(u-v)_1^{\theta_0}| \leq n^{-\beta_4}\ell$ and } 
    h(u-v) - h(|(u-v)_1^{\theta_0}|\yt) \geq \frac \eta 8 \sigma(\ell),
\end{align*}
we have 
\begin{equation}\label{allbound2}
  P(G_3 \cup G_4) \leq \exp\left( -C_{50}\frac{n^{2\beta_1}}{\log R} \right).
\end{equation}
\end{lemma}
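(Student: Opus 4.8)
The plan is to bound $P(G_3)$ and $P(G_4)$ separately and combine by a union bound. Both follow the excess-distance template of Proposition~\ref{transfluct2} and Lemmas~\ref{badbehavG5}--\ref{badbehavG15}: one exhibits in the offending geodesic two or three marked sites whose pairwise $g$--distances overshoot the corresponding straight-line $g$--distance by a definite amount, upgrades this to an $h$--overshoot via Proposition~\ref{hmu}, and then, splitting the associated passage-time identity as in \eqref{split3} and invoking \eqref{expbound}, sums over the polynomially many choices of marked sites and interval index; the polynomial factor is absorbed using $n\ge C_{53}(\log R)^{C_{54}}$.

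For $G_3$, let $\Gamma=\Gamma_{ab}$ be a target-directed $(\ell,\theta_0)$--interval geodesic (so $a\in H_{\theta_0,(i-1)\ell}^{\rm fat}$, $b\in H_{\theta_0,i\ell}^{\rm fat}$, $\Gamma\subset Q_{R,n,\theta_0}$) containing a backtrack of size $r_0:=\tfrac12 n^{-\beta_4}\ell$ from $x$ to $y$, with $a,x,y,b$ in order along $\Gamma$. Using $g(w)\ge|w_1^{\theta_0}|$ for $w\in\{x-a,\,y-x,\,b-y\}$ (valid since $H_{\theta_0,1}$ supports $\mkB_g$), additivity of first $\theta_0$--coordinates, and the curvature estimate \eqref{triangleg} applied to $b-a$ (whose direction is near $\theta_0$ because $\Gamma\subset Q$), one gets
\[
  g(x-a)+g(y-x)+g(b-y)-g(b-a)\ \ge\ 2r_0-c_1 n^{2\beta_1+\beta_2}\sigma_R\ \ge\ r_0,
\]
the last step because $r_0/\sigma_R$ is a large power of $n$ by \eqref{nvsR} and the exponent relations \eqref{betas1}, \eqref{betas4}. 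Proposition~\ref{hmu} turns this into an $h$--overshoot of at least $\tfrac12 r_0$ (the error $O(\sigma_R\log R)$ is negligible since $r_0/\sigma_R\gg\log R$), and since $r_0/\sigma_\ell\ge\tfrac12 n^{2\beta_1+\beta_4}$ by \eqref{ellsigma}, the split \eqref{split3} together with \eqref{expbound} at scale $O(\ell)$ bounds the contribution of each $(i,a,x,y)$ by $3C_5 e^{-c_2 n^{2\beta_1+\beta_4}}$; summing gives $P(G_3)\le\exp(-C_{60}n^{2\beta_1}/\log R)$. (If $x$ precedes $a$ or $y$ follows $b$ longitudinally, or the backtrack is much longer than $\ell$, the overshoot is only larger relative to the relevant scale, and these cases are handled identically.)

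For $G_4$, suppose the $(\ell,\theta_0)$--interval geodesic $\Gamma=\Gamma_{pq}\subset Q_{R,n,\theta_0}$ and $u,v\in\Gamma$ satisfy $|(u-v)_1^{\theta_0}|\le n^{-\beta_4}\ell$ and the stated $h$--inequality. Since $\Gamma\subset Q$, the transverse part of $u-v$ has length at most $4\sqrt{d-1}\,n^{\beta_1}\Delta_R\le 4\sqrt{d-1}\,n^{-\beta_4}\ell$ by \eqref{nRell}, so $|u-v|\le C n^{-\beta_4}\ell$ and hence $\sigma(|u-v|)\le C_2^{-1}n^{-\gamma_1\beta_4}\sigma_\ell$ by \eqref{powerlike}; Proposition~\ref{hmu} then shows the two $g$--$h$ discrepancies in the $h$--inequality are each below $\tfrac\eta{16}\sigma_\ell$ (for $C_{54}$ large), so $g(u-v)-|(u-v)_1^{\theta_0}|\ge\tfrac\eta{16}\sigma_\ell$. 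Feeding this into \eqref{triangleg}, \eqref{minwhich}, \eqref{Ddef} forces $D_{\theta_0}(u-v)\ge c_3\sigma_\ell/(\sigma(|u-v|)\log|u-v|)$, a power of $n$ exceeding $n^{2\beta_1}$ divided by $\log R$ (one checks the $\Phi$--term is even larger using \eqref{ell-lower}; when $(u-v)_1^{\theta_0}<0$ the same holds more easily, $D_{\theta_0}(u-v)=\Phi(|u-v|_{\theta_0,\infty})$ being then directly a large power of $n$). One then applies Proposition~\ref{transfluct2} to $\Gamma_{vq}$ recentered at $v$: taking WLOG $(q-v)_1^{\theta_0}\ge\ell/2$ (else use $\Gamma_{pu}$), its direction $\varphi=(q-v)/|q-v|$ has $\psi_{\varphi\theta_0}\le c n^{-\beta_4}$, and $u\in\Gamma_{vq}$ with $(u-v)_1^\varphi\le|q-v|/2$, so by Lemma~\ref{coordchg} and \eqref{zangle} the value $D_{\varphi,|q-v|}(u-v)$ is still a power of $n$ exceeding $n^{2\beta_1}$ over $\log R$; \eqref{transfluct1} then bounds each $(i,u,v)$--contribution by $\exp(-c_5 n^{2\beta_1}/\log R)$, and summing gives $P(G_4)\le\exp(-C_{60}n^{2\beta_1}/\log R)$.

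The main obstacle is the $G_4$ step: the threshold $\tfrac\eta8\sigma_\ell$ is far too small to survive the logarithmic losses in Propositions~\ref{hmu} and \ref{transfluct2} if one worked at scale $\ell$, so the whole argument must be run at the genuinely shorter scale $n^{-\beta_4}\ell$, and one must verify that the target-box constraint $\Gamma\subset Q$ — which forces the change of coordinates from $\theta_0$ to the geodesic's actual direction $\varphi$ — does not wash out the gain. This is exactly what the exponent relations in A3 (notably $\gamma_1\beta_4>2\beta_1$ and the bounds tying $\beta_1,\beta_2,\beta_4$ to $\chi_1,\chi_2,d$) are arranged to guarantee.
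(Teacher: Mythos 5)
Your decomposition of $G_3\cup G_4$ into separate bounds is more verbose than the paper's: the paper introduces a single event $G_5$ (a version of $G_4$ with an indicator $1_{\{(v-u)_1^\theta\geq0\}}$ in the $h$--inequality) and observes $G_3\subset G_5$ almost for free, since a backtrack $u_1^\theta-v_1^\theta\in[\tfrac12 n^{-\beta_4}\ell,n^{-\beta_4}\ell]$ gives $h(v-u)\geq g(v-u)\geq u_1^\theta-v_1^\theta\geq\tfrac12 n^{-\beta_4}\ell\geq\tfrac\eta8\sigma_\ell$ with the indicator killing the subtracted term. Your four--vertex excess argument for $G_3$ is a valid alternative and the arithmetic checks out, so that half is fine.

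The real issue is in $G_4$, at the step where you assert ``by Lemma~\ref{coordchg} and \eqref{zangle} the value $D_{\varphi,|q-v|}(u-v)$ is still a power of $n$ exceeding $n^{2\beta_1}$ over $\log R$.'' This is exactly where the paper has to work hard, and the conversion is not automatic. Lemma~\ref{coordchg} gives an error $C_{32}\psi_{\varphi\theta_0}|u-v|$ in passing from $\theta_0$-- to $\varphi$--coordinates, and with $\psi_{\varphi\theta_0}\le c n^{\beta_1}\Delta_R/\ell$ and $|u-v|$ a priori only bounded by $Cn^{-\beta_4}\ell$, this error can be of the same order as, or larger than, the signal $|(u-v)_2^{\theta_0}|$ when $t:=(u-v)_1^{\theta_0}$ is near its allowed maximum $n^{-\beta_4}\ell$ and $|(u-v)_2^{\theta_0}|$ near its lower bound $\sqrt{ct\sigma_\ell}$; the comparison works out only under the additional relation $\beta_4>2\beta_1+(1+\gamma_2)\beta_2$, which is implied by A3 (via a small convexity argument combining $\gamma_1\beta_4>2\beta_1$ and $(1-\gamma_2)\beta_4>(2+\gamma_2)\beta_2$ using $\gamma_1<\gamma_2$) but is not immediate and is not verified in your write-up. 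The paper sidesteps this entirely: in its Case~3 it first reduces (via the dichotomy $|v_2^\alpha-u_2^\alpha|\ge v_1^\alpha-u_1^\alpha$ or not, i.e.~\eqref{bigwu}) to the situation $|v-u|\le c t$, so the coordinate-change error is only $O\big(n^{\beta_1}(\Delta_R/\ell)\,t\big)$, and it uses the sharper signal $n^{\beta_1}\Delta_t$ coming from Claim~\eqref{vqclaim} rather than $\sqrt{t\sigma_\ell}$; the ratio $(t/\ell)\Delta_R/\Delta_t\le n^{-\beta_2/2}$ then follows directly from the first inequality of \eqref{betas2} alone. So while your route can likely be repaired, as written it glosses over the single hardest estimate in the lemma, and you should either establish the auxiliary exponent inequality explicitly or replicate the paper's bound $|v-u|\le ct$ before invoking Lemma~\ref{coordchg}.
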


The last inequality in the definition of $G4$ is a way of quantifying that the orientation of the segment $\Gamma[u,v]$ is different from $\theta_0$.  

Lemma \ref{badbehavG2G7} is actually valid for arbitrary $\eta>0$, if we allow the constants $C_i$ in \eqref{Rncond} to depend on $\eta$, and then we may assume A2' in place of A2.  But such dependence in \eqref{Rncond} adds unnecessary complication, so we assume A2 and state the lemma only for the particular $\eta$ of interest, which exists under A2. The same consideration applies to the rest of the lemmas in this section.

The next lemma may be viewed as an extension of Proposition \ref{transTincr}. The idea, illustrated in Figure \ref{fig4-8}, is similar: if two geodesics have close--together endpoints at both ends, then the top and bottom geodesics there are effectively choosing a geodesic from the same set of paths, except near the endpoints, so their passage times will likely be very similar.  Here ``close--together'' means ``much less separation than the typical transverse wandering of the geodesics.''

\begin{lemma}\label{badbehavG9}
There exist constants $C_i>0$ as follows.  Suppose A1, A2, A3 hold for some $\theta_0,\ep_0$, and A4 holds with $\theta=\theta_0$ for some $R,n,\ell,\ep_{\min}$.  Let $\eta$ be as in \eqref{minfluct}. For the event
\begin{align*}
  G_5: &\text{ ({\it there are close $(\ell,\theta)$--interval geodesics with dissimilar passage times}) There exist $i\leq R/\ell$ } \\
  &\qquad \text{and $u,w\in H_{\theta,(i-1)\ell}^{\rm fat} \cap Q_{R,n,\theta}\cap\ZZ^d$ and } 
    \text{ $v,x \in H_{\theta,i\ell}^{\rm fat} \cap Q_{R,n,\theta}\cap\ZZ^d$ with } \\
  &\qquad |u-w| \leq 2\sqrt{d}n^{-\beta_0}\Delta(R), \quad |v-x| \leq 2\sqrt{d}n^{-\beta_0}\Delta(R), \quad |T(u,v) - T(w,x)| \geq \frac \eta 8 \sigma(\ell), \end{align*}
we have
\begin{equation}\label{allbound3}
  P(G_5) \leq \exp\left( -C_{50}\frac{n^{2\beta_1}}{\log R} \right).
\end{equation}
\end{lemma}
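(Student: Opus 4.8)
The plan is to follow the proof of Proposition~\ref{transTincr}: we show $T(u,v)$ and $T(w,x)$ must be close because $\Gamma_{wx}$ can serve as a skeleton for a path from $u$ to $v$, the two resulting ``endpoint corrections'' being small transverse increments. Since $\theta=\theta_0$ we write $\theta$. By a union bound over the $O(R)$ relevant values of $i$ and the polynomially-many (in $R$) admissible quadruples $u,w\in H_{\theta,(i-1)\ell}^{\rm fat}\cap Q_{R,n,\theta}\cap\ZZ^d$, $v,x\in H_{\theta,i\ell}^{\rm fat}\cap Q_{R,n,\theta}\cap\ZZ^d$ with $|u-w|,|v-x|\le 2\sqrt d\,n^{-\beta_0}\Delta_R$, it is enough to bound $P\bigl(T(u,v)-T(w,x)\ge\tfrac\eta8\sigma_\ell\bigr)$ for one such quadruple; the reverse inequality is handled identically after interchanging $(u,v)\leftrightarrow(w,x)$, and the polynomial prefactor is absorbed at the end using $n\ge C_{53}(\log R)^{C_{54}}$ with $C_{54}$ large. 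Put $\ell_1:=\lceil\Delta^{-1}(4\sqrt d\,n^{-\beta_0}\Delta_R)\rceil$, so $\Delta_{\ell_1}\asymp n^{-\beta_0}\Delta_R$ and $\sigma_{\ell_1}\asymp(n^{-\beta_0}\Delta_R)^2/\ell_1$; by the first inequality of \eqref{betas2} together with \eqref{Deltaratio} (using $\gamma_2>\chi$) one gets $\ell_1\ll\ell$, indeed $(\ell/\ell_1)^{\gamma_1}$ exceeds $n^{2\beta_1}(\log R)^2$ for $R$ large.

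Next I would set $\varphi:=(x-w)/|x-w|$, so that $\psi_{\varphi\theta}$ is tiny (as in Lemma~\ref{badbehavG15}), and apply Proposition~\ref{transfluct2} to $\Gamma_{wx}$, translated to start at $w$, with axis direction $\varphi$ and level $t:=n^{2\beta_1}$: with probability at least $1-C_{35}e^{-C_{36}t\log t}$ the geodesic $\Gamma_{wx}$ lies in the tube-and-cylinders region $E:=E_{\varphi,g(x-w),t}$ about $\Pi_{wx}$. On that event $\Gamma_{wx}$ meets $H_{\theta,(i-1)\ell+\ell_1}$ at a site $X\in\Upsilon_1:=H_{\theta,(i-1)\ell+\ell_1}^{\rm rfat}\cap E$ and meets $H_{\theta,i\ell-\ell_1}$ at a site $Y\in\Upsilon_2:=H_{\theta,i\ell-\ell_1}^{\rm fat}\cap E$, each $\Upsilon_j$ containing $O(R^{d-1})$ lattice points. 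Using $X,Y$ as waypoints, $T(w,x)=T(w,X)+T(X,Y)+T(Y,x)$ while $T(u,v)\le T(u,X)+T(X,Y)+T(Y,v)$, so
\[
  T(u,v)-T(w,x)\ \le\ \bigl[T(u,X)-T(w,X)\bigr]+\bigl[T(Y,v)-T(Y,x)\bigr],
\]
and it remains to bound $P\bigl(T(u,X)-T(w,X)\ge\tfrac\eta{16}\sigma_\ell\bigr)$ uniformly over $X\in\Upsilon_1\cap\ZZ^d$ (and the symmetric quantity over $Y\in\Upsilon_2$), then union over $X,Y$.

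Then I would fix $X\in\Upsilon_1\cap\ZZ^d$: it lies $\theta$-distance $\approx\ell_1$ behind $u$ and $w$, and the transverse displacement between $X$ and $u$ is at most $\delta_0:=2t^{1/2}\Xi(\ell_1)+C(\ell_1/\ell)\,n^{\beta_1}\Delta_R+2\sqrt d\,n^{-\beta_0}\Delta_R$, the three terms being the tube half-width at distance $\ell_1$, the transverse drift of $\Pi_{wx}$ over distance $\ell_1$ (which is $O((\ell_1/\ell)n^{\beta_1}\Delta_R)$ since $w,x\in Q_{R,n,\theta}$), and $|u-w|$. Since $\ell_1$ is large and $\delta_0,|u-w|\ll\ell_1$, Lemma~\ref{gdiff1} with the origin shifted to $X$, together with Proposition~\ref{hmu}, gives
\[
  \bigl|h(u-X)-h(w-X)\bigr|\ \le\ C_{38}\,\frac{|u-w|\,\delta_0+|u-w|^2}{\ell_1}+2C_{25}\,\sigma_{\ell_1}\log\ell_1\ \le\ \tfrac\eta{32}\sigma_\ell ,
\]
the last bound being the crux of the length-scale bookkeeping: $|u-w|^2/\ell_1\asymp\sigma_{\ell_1}$ by the choice of $\ell_1$; $\sigma_{\ell_1}\log\ell_1$ and $|u-w|\,t^{1/2}\Xi(\ell_1)/\ell_1\asymp t^{1/2}(\log\ell_1)^{1/2}\sigma_{\ell_1}$ are negligible against $\sigma_\ell$ because $\sigma_{\ell_1}/\sigma_\ell\le C_2^{-1}(\ell_1/\ell)^{\gamma_1}$ is a negative power of $n$ while $n\ge C_{53}(\log R)^{C_{54}}$; and $|u-w|\,(\ell_1/\ell)\,n^{\beta_1}\Delta_R/\ell_1\lesssim n^{\beta_1-\beta_0}\Delta_R^2/\ell=n^{\beta_1-\beta_0+\beta_2}\sigma_R\le\tfrac\eta{256}\sigma_\ell$ by \eqref{betas1}--\eqref{betas2}. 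As $|u-X|,|w-X|\asymp\ell_1$, \eqref{expbound} then yields
\[
  P\bigl(T(u,X)-T(w,X)\ge\tfrac\eta{16}\sigma_\ell\bigr)\ \le\ 2C_5\exp\!\bigl(-c_1\eta\,\sigma_\ell/\sigma_{\ell_1}\bigr)\ \le\ 2C_5\exp\!\bigl(-c_2\eta\,(\ell/\ell_1)^{\gamma_1}\bigr).
\]

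Finally, summing this over $X\in\Upsilon_1$, $Y\in\Upsilon_2$, over the admissible quadruples, and adding the tube-escape probability $C_{35}e^{-C_{36}t\log t}$ (over all $(w,x)$), and using that $(\ell/\ell_1)^{\gamma_1}$ and $t\log t$ both exceed $n^{2\beta_1}(\log R)^2$, the polynomial-in-$R$ factors are absorbed via $n\ge C_{53}(\log R)^{C_{54}}$ and the total is at most $\exp(-C_{60}n^{2\beta_1}/\log R)$, as required. The main obstacle is the deterministic estimate of the third paragraph: beyond $|u-w|$ and the tube width $\asymp t^{1/2}\Xi(\ell_1)$, the cross-section site $X$ on $\Gamma_{wx}$ is displaced from $u$ by the ``drift'' $(\ell_1/\ell)\,n^{\beta_1}\Delta_R$ stemming from the large transverse extent of the target box $Q_{R,n,\theta}$, and forcing the resulting contribution of order $n^{\beta_1-\beta_0+\beta_2}\sigma_R$ — together with the $\sigma_{\ell_1}\log\ell_1$ and $t^{1/2}(\log\ell_1)^{1/2}\sigma_{\ell_1}$ terms — to stay strictly below a fixed small multiple of $\sigma_\ell$ is precisely what the exponent relations \eqref{betas1} and \eqref{betas2} are arranged to guarantee.
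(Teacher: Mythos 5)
Your decomposition $T(u,v)-T(w,x)\le [T(u,X)-T(w,X)]+[T(Y,v)-T(Y,x)]$ with two waypoints $X,Y$ on $\Gamma_{wx}$ is a clean variant of the paper's argument: the paper instead writes $|T(u,v)-T(w,x)|\le|T(u,v)-T(u,x)|+|T(u,x)-T(w,x)|$ and invokes Proposition~\ref{transTincr} twice as a black box, which forces it to detour through an auxiliary point $z$ (chosen so that $g(z-u)$ matches $g(x-u)$, since the hypothesis $|g(u)-g(v)|\le4\mu d$ of Proposition~\ref{transTincr} is not satisfied by the raw data) and to bound $T(z,v)$ separately. Your scheme avoids that $g$-matching step because the shared middle segment $T(X,Y)$ cancels exactly. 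So far so good, and the verification that $(\ell/\ell_1)^{\gamma_1}\gtrsim n^{4\beta_1}$ is correct.

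However, there is a gap in the step where you bound $|h(u-X)-h(w-X)|$. Because you place your waypoint $X$ on a $\theta$-hyperplane while the skeleton axis $\Pi_{wx}$ has slope of order $n^{\beta_1}\Delta_R/\ell$ relative to $L_\theta$ (both $w,x$ range over the full width of $Q_{R,n,\theta}$, not a home block), your $\delta_0$ correctly contains the drift term $(\ell_1/\ell)n^{\beta_1}\Delta_R$, and Lemma~\ref{gdiff1} then produces the contribution $|u-w|\,(\ell_1/\ell)n^{\beta_1}\Delta_R/\ell_1\asymp n^{\beta_1-\beta_0+\beta_2}\sigma_R$. For this to be $\lesssim\sigma_\ell$ you need, after using $\sigma_R/\sigma_\ell\le C_3n^{\gamma_2\beta_2}$, the inequality
\[
  \beta_0\ >\ \beta_1+(1+\gamma_2)\beta_2,
\]
but this is not implied by \eqref{betas1}--\eqref{betas2}: \eqref{betas2} gives only $\beta_0>\tfrac{2(1+\gamma_2)}{\gamma_1}\beta_1+\tfrac{1+\gamma_2}{2}\beta_2$, and since \eqref{betas1} forces $\beta_1$ small relative to $\beta_2$, one can have (for example) $\beta_0$ between $\tfrac{1+\gamma_2}{2}\beta_2$ and $(1+\gamma_2)\beta_2$ with $\beta_1$ tiny, which satisfies \eqref{betas1}--\eqref{betas3} but violates your needed inequality. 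This drift term simply does not arise in the paper's proof, because Proposition~\ref{transTincr} builds its tube with axis direction $(x-u)/|x-u|$, so the moving endpoint sits on the axis and the waypoint's transverse offset is only the tube width. The gap is fixable -- either add $\beta_0>\beta_1+(1+\gamma_2)\beta_2$ as a harmless extra constraint in A3 (it is compatible with the paper's ``choose $\beta_0$ then $\beta_2$ then $\beta_1$ each small'' procedure), or choose your waypoints on hyperplanes perpendicular to $\varphi=(x-w)/|x-w|$ rather than $\theta$-hyperplanes, which eliminates the drift -- but as written the claim ``by \eqref{betas1}--\eqref{betas2}'' is unsubstantiated.
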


\begin{lemma}\label{badbehavG11}
There exist constants $C_i>0$ as follows.  Suppose A1, A2, A3 hold for some $\theta_0,\ep_0$, and A4 holds with $\theta=\theta_0$ for some $R,n,\ell,\ep_{\min}$.  Let $\eta$ be as in \eqref{minfluct}. For the event
\begin{align*}
  G_6: &\text{ ({\it unusual--speed short segment}) There exist $u,v\in Q_{R,n,\theta}\cap\ZZ^d$ with } \\
  &\qquad |u-v|\leq 3n^{-\beta_4}\ell, \quad |T(u,v) - h(v-u)| \geq \frac \eta 8 \sigma(\ell |\yt|/2), \\
\end{align*}
we have
\begin{equation}\label{allbound4}
  P(G_6) \leq \exp\left( -C_{50}\frac{n^{2\beta_1}}{\log R} \right).
\end{equation}
\end{lemma}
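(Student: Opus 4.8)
The plan is to prove Lemma~\ref{badbehavG11} by a direct union bound over pairs $u,v$, using only the exponential concentration bound \eqref{expbound}. Unlike Lemma~\ref{badbehavG9}, no analogue of Proposition~\ref{transTincr} is needed here, since $G_7$ compares $T(u,v)$ with its own mean rather than with the passage time of a different geodesic. The key input is that a segment of length $|u-v|\le 3n^{-\beta_4}\ell$ has fluctuations on the scale $\sigma(|u-v|)$, which by the powerlike property is smaller than $\sigma_\ell$ by a negative power of $n$, while the threshold in $G_7$ is of order $\sigma_\ell$; thus $n^{\gamma_1\beta_4}$ will appear in the exponent, and \eqref{betas1} forces $\gamma_1\beta_4>2\beta_1$.

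First I would record that $h(v-u)=ET(0,v-u)=ET(u,v)$ by translation invariance, so for a fixed pair the event in $G_7$ is precisely $\{|T(u,v)-ET(u,v)|\ge \tfrac\eta8\sigma(\ell|\yt|/2)\}$. Since $|\yt|$ is bounded above and below by constants (by \eqref{gsize}), \eqref{powerlike} gives $\sigma(\ell|\yt|/2)\ge c_1\sigma_\ell$. For $R$ (hence $\ell$) large, $3n^{-\beta_4}\ell\ge n^{-(\beta_1+\beta_2+\beta_4)}R\ge R^{1/2}$ by \eqref{nvsR2}, so both $\ell$ and $3n^{-\beta_4}\ell$ exceed $C_1$, and \eqref{powerlike} applied to these two scales yields $\sigma(|u-v|)\le\sigma(3n^{-\beta_4}\ell)\le c_2 n^{-\gamma_1\beta_4}\sigma_\ell$. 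Hence for any pair with $C_4\le|u-v|\le 3n^{-\beta_4}\ell$, inserting $t=\tfrac{\eta\sigma(\ell|\yt|/2)}{8\sigma(|u-v|)}\ge c_3 n^{\gamma_1\beta_4}$ into \eqref{expbound} gives
\[
  P\Big(|T(u,v)-ET(u,v)|\ge \tfrac\eta8\sigma(\ell|\yt|/2)\Big)\le C_5 e^{-C_6 c_3 n^{\gamma_1\beta_4}}.
\]
For the finitely many residual pairs with $|u-v|<C_4$ I would argue more crudely: $T(u,v)$ is dominated by the sum of the $O(1)$ passage times along the $\ell^1$--geodesic joining $u$ and $v$, so A1(ii), Markov's inequality, and the fact that for large $\ell$ the threshold exceeds twice the bounded quantity $h(v-u)$ give a bound of the form $c_4 e^{-c_5\sigma_\ell}$, which is smaller still (by \eqref{nvsR}, \eqref{betas1}, \eqref{betas2}, $\sigma_\ell$ is at least a positive power of $n$ exceeding $n^{2\beta_1}$).

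Finally I would union over pairs. As in the other lemmas of this section only pairs with both $\theta$--coordinates in the bounded slab $\Omega_\theta(-c_6R,c_6R)$ containing the geodesics under consideration are relevant, giving $O(R^d)$ choices for $u$ and, for each, at most $O\big((n^{-\beta_4}\ell)^d\big)=O(R^d)$ choices for $v$, hence $O(R^{2d})$ pairs; so $P(G_7)\le c_7 R^{2d}e^{-C_6 c_3 n^{\gamma_1\beta_4}}$. Since $n\ge C_{53}(\log R)^{C_{54}}$, taking $C_{54}$ large (depending only on $d,\gamma_1$ and the $\beta_i$) absorbs the $R^{2d}$ prefactor, and since $\gamma_1\beta_4>\min(\gamma_2,1-\gamma_2)\beta_2>2\beta_1$ by \eqref{betas1} we obtain $P(G_7)\le e^{-c_8 n^{\gamma_1\beta_4}}\le e^{-c_8 n^{2\beta_1}}\le\exp(-C_{60}n^{2\beta_1}/\log R)$ for $R$ large. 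The one place needing care is the scale comparison in the second paragraph: one must use the genuine powerlike bound \eqref{powerlike}, not the weaker \eqref{powerlike2} (which only bounds $\sigma(|u-v|)/\sigma_\ell$ by an $O(1)$ constant), in order to extract the saving negative power of $n$; the rest is routine bookkeeping with the exponent relations of A3.
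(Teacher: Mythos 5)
Your argument is correct and follows essentially the same route as the paper: a union bound over the $O(R^{2d})$ pairs $(u,v)$, the powerlike property \eqref{powerlike} together with the last inequality of \eqref{betas1} to show $\sigma(|u-v|)\leq c\, n^{-\gamma_1\beta_4}\sigma_\ell$ with $\gamma_1\beta_4>2\beta_1$, and then \eqref{expbound} with the resulting large value of $t$. You are slightly more careful than the paper on two routine points the paper glosses over, namely the comparison $\sigma(\ell|\yt|/2)\asymp\sigma_\ell$ and the finitely many pairs with $|u-v|<C_4$ where \eqref{expbound} does not directly apply; both are handled appropriately.
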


\begin{proof}
When $|u-v|\leq 3n^{-\beta_4}\ell$ we have using \eqref{powerlike} and the last inequality in \eqref{betas1} that
\[
  \sigma(|u-v|) \leq 3^{\chi_1}C_2^{-1}n^{-\beta_4\chi_1} \sigma(\ell) \leq \frac \eta 8 n^{-2\beta_1} \sigma(\ell |\yt|/2),
\]
so \eqref{expbd} says
\[
  P\left( |T(u,v) - h(|u-v|)| \geq \frac \eta 8 \sigma(\ell |\yt|/2) \right) \leq 4\exp\left( -n^{2\beta_1} \right).
\]
As in \eqref{pg5a}, summing over the $O(R^{2d})$ possible values of $(u,v)$ gives
\begin{equation}\label{PG11}
  P(G_6) \leq c_1\exp\left( -\frac 12 n^{2\beta_1} \right).
\end{equation}
\end{proof}

\section{Crowded geodesics} \label{crowd}
We are now ready for the core of our main proof, given by the next proposition. The core ideas were described in section \ref{outl}. Recall A4 containing \eqref{betas1}---\eqref{rsdef}.

\begin{proposition}\label{jammed1}
Suppose A1, A2, A3 hold for some $\theta_0,\ep_0$.  There exist constants $\beta_j\in (0,1)$ and $C_i$ as follows.  Let $R,n,\ep_{\min}$ be as in A4, and 
let $B_{\theta_0,{\rm cross}}$ be a $\theta_0$--block in $H_{\theta_0,R}$ with center point $\ol y$. \label{yb} Let $\theta=\ol y/|\ol y|$, and suppose $\psi_{\theta\theta_0}<\ep_{\min}/2$.
Then
\begin{align}\label{nrays}
  P\Big( &\text{there exist $n$ $\theta_0$--slab geodesics from 
    $B_{\theta_0,{\rm home}}^{\rm rfat}$ to $H_{\theta_0,2R}^+$ } \notag\\
  &\qquad \text{with distinct $H_{\theta_0,R}^+$--entry points in $B_{\theta_0,{\rm cross}}^{\rm fat}$} \Big) 
     \leq \exp\left( -C_{51}\frac{n^{2\beta_1}}{\log R} \right).
\end{align}
\end{proposition}

The upper bound on $n$ in \eqref{Rncond} can be written
\[
  \frac{n}{(n^{-\beta_0}\Delta(R))^{d-1}} \leq C_{46}n^{\beta_0(d-1)},
\]
which is not really a restriction at all, since the density of $H_{\theta_0,R}^+$--entry points is bounded.  It is only there for technical use in Lemmas \ref{badbehavG5}---\ref{badbehavG11}.

\begin{proof}[Proof of Proposition \ref{jammed1}]
Let $s,r$, and $\beta_j\in (0,1)$ be as in A4.  For reference we recall the following events:
\begin{itemize}
\item[(i)] $G_1$, the transverse--fluctuation event of Lemma \ref{badbehavG5},
\item[(ii)] $G_2$, the block--evasion event of Lemma \ref{badbehavG15},
\item[(iii)] $G_3$, the backtrack event of Lemma \ref{badbehavG2G7},
\item[(iv)] $G_4$, the sidestep event of Lemma \ref{badbehavG2G7},
\item[(v)] $G_5$, the event of close geodesics with dissimilar passage times, from Lemma \ref{badbehavG9},
\item[(vi)] $G_6$, the ``unusual--speed short segment'' event of Lemma \ref{badbehavG11}
\end{itemize}

As a shorthand, a $\theta_0$--slab geodesic from $B_{\theta_0,{\rm home}}^{\rm rfat}$ to $H_{\theta_0,2R}^+$ will be called a $2R$--\emph{geodesic}; $H_{\theta_0,R}$ and $H_{\theta_0,2R}$ are the \emph{midway} and \emph{ending hyperplanes}. Given a $2R$--geodesic $\Gamma$, we can decompose the pre-$H_{\theta_0,R}$ segment of $\Gamma$ into an initial bond and $n^{\beta_2}\ \ell$-segments; every such $\ell$--segment is an $(\ell,\theta_0)$--interval geodesic. An $(\ell,\theta)$-interval geodesic $\Psi$ is \emph{good} if 
\begin{itemize}
\item[(1)] $\Psi$ is contained in $Q_{R,n,\theta}$,
\item[(2)] $\Psi$ contains no backtrack of $\frac 12 n^{-\beta_4}\ell$.
\end{itemize}

Let $G_7$ be the event in \eqref{nrays}, and define the event
\begin{align*}
  G_8: &\text{ there exist $n$ $\theta$--target-directed $2R$--geodesics with distinct $H_{\theta_0,R}^+$--entry points 
     in $B_{\theta_0,{\rm cross}}$},
\end{align*}
so
\begin{equation}\label{G015}
  G_7 \subset G_8\cup G_1.
\end{equation}
Note that $\tau\in G_8\bs G_3$ says that every $\ell$-segment of every $\theta$--target-directed $2R$--geodesic is a good $(\ell,\theta)$-interval geodesic.

Our main task is to bound $P(G_8)$.  There are at most $c_1n^{(\beta_0+\beta_1)(d-1)}$ $\theta_0$--blocks intersecting $H_{\theta_0,2R}\cap Q_{R,n,\theta}$, and we denote the $j$th one (in some arbitrary order) as $B_{2R,j}$. \label{jblk}
When $\tau\in G_8$, there exists a subcollection $\mathfrak{G}$ of size at least
\[
  g_n = c_2 n^{1-(\beta_0+\beta_1)(d-1)}
\]
out of the $n\ 2R$--geodesics, which for some $m$ all have $H_{\theta_0,2R}^+$--entry point in block $B_{2R,m}^{\rm fat}$.  We call such a subcollection $\mathfrak{G}$ a \emph{crowded set} \label{crs} (via $B_{\rm cross}$ and $B_{2R,m}$), and fix such a $\mathfrak{G}$ and $m$.  Let $y^*$ \label{yst} be the center of $B_{2R,m}$, let $\theta^* = (y^*-\ol y)/|y^*-\ol y|$, and let 
\[
  Q_{R,n,\theta^*}^* = \ol y + \left( \RR \times [-4\sqrt{d-1}n^{\beta_1}\Delta(R),4\sqrt{d-1}n^{\beta_1}\Delta(R)]^{d-1} \right),
    \quad\text{in $\theta^*$--coordinates}.
\]
This is a tube around $L_{\theta^*}(\ol y) = \Pi_{\ol y,y^*}^\infty$  with cross section larger than that of $Q_{R,n,\theta}$ in each dimension by a factor $2\sqrt{d-1}$; see Figure \ref{fig4-10A}. It is straightforward to show that due to this larger cross section we have \label{qst}
\[
  Q_{R,n,\theta^*}^* \cap \Omega_{\theta_0}(R,2R) \supset Q_{R,n,\theta} \cap \Omega_{\theta_0}(R,2R),
\] 
that is, between the midway and ending hyperplanes, $Q_{R,n,\theta^*}^*$ contains $Q_{R,n,\theta}$.
Analogously to $[s,r]$, there is a largest interval $[s^*,r^*]$ for which
\[
  Q_{R,n,\theta} \cap \Omega_{\theta^*}(s^*,r^*) \subset Q_{R,n,\theta} \cap \Omega_{\theta_0}(R,2R);
\]
see again Figure \ref{fig4-10A}.
Since the $\theta_0$--block with center $y^*$ intersects $Q_{R,n,\theta}$ it is easily checked that the $\theta$--ratio of $y^*-\ol y$ is bounded by $c_3n^{\beta_1}\Delta(R)/R$, and hence 
\begin{equation}\label{turnangle}
  \psi_{\theta\theta^*} \leq c_4\frac{n^{\beta_1}\Delta(R)}{R} \leq \frac{\ep_{\min}}{4} 
    \quad\text{and hence}\quad \psi_{\theta^*\theta_0} \leq \frac{\ep_{\min}}{2}.
\end{equation} 

\begin{figure}
\includegraphics[width=15cm]{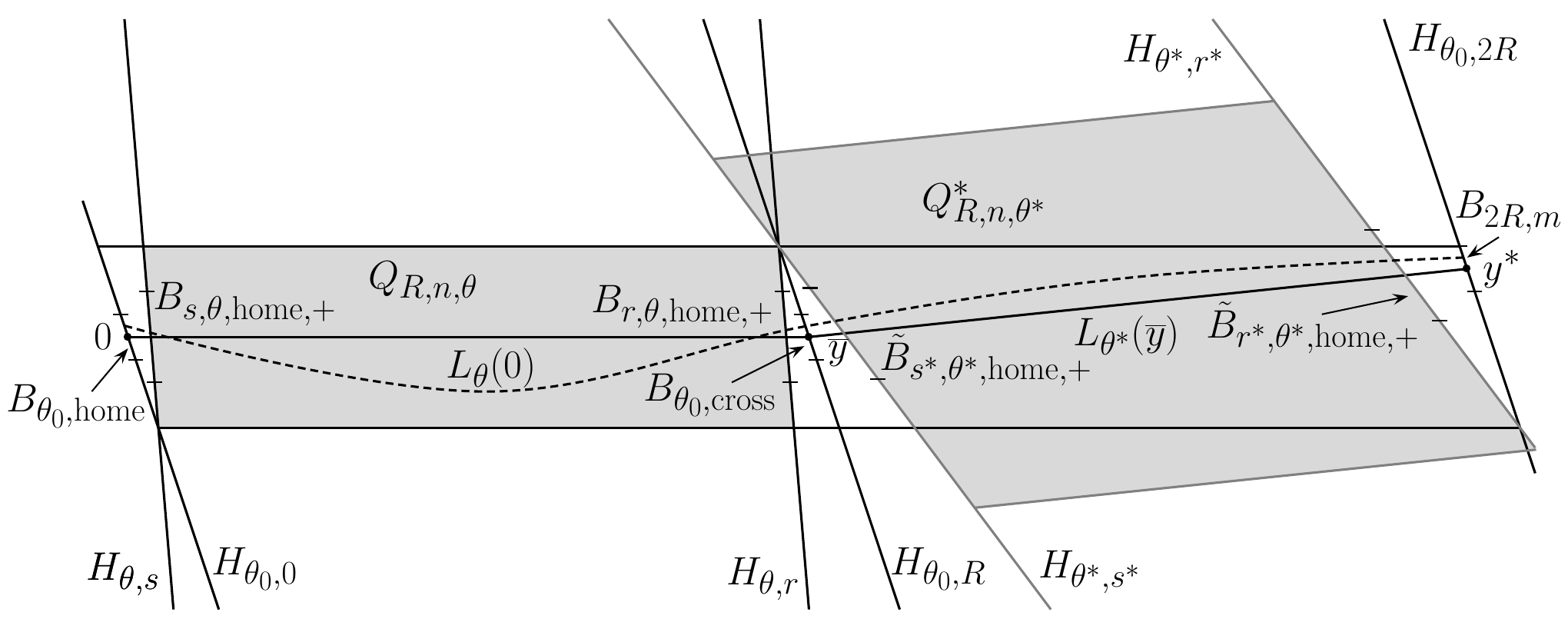}
\caption{ Illustration for the proof of Proposition \ref{jammed1}. The dashed line is a typical geodesic from the crowded set $\mkG$. The primary $\theta$--slab geodesic crosses the left gray box, which is part of the square tube $Q_{R,n,\theta}$ surrounding $L_\theta$.  The secondary $\theta^*$--slab geodesic crosses the right gray box, which is part of the square tube $Q_{R,n,\theta^*}^*$ surrounding $L_{\theta^*}(\ol y)$. The hash marks bound the named blocks (such as $B_{\theta_0,{\rm home}}$) in the hyperplanes as shown. }
\label{fig4-10A}
\end{figure}

There is a ``nuisance possibility'' we must deal with here: by assumption the $g_n$ (or more) $2R$--geodesics in the crowded set $\mathfrak{G}$ have distinct $H_{\theta_0,R}^+$--entry points, but this does not guarantee they have distinct entry points for the hyperplanes $H_{\theta,r}$ (behind $H_{\theta_0,R}$) and $H_{\theta^*,s^*}$ (ahead of $H_{\theta_0,R}$.)  See Figure \ref{fig4-10A}.  However, any set of geodesics in $\mkG$ which all have the same $H_{\theta,r}$--entry point must all have different $H_{\theta^*,s^*}$--entry points.
It follows that at least one of the following options must be true: \label{Gi}
\begin{itemize}
\item[(I)] there is a subset $\mkG_1\subset \mkG$ with $|\mkG_1|\geq g_n^{1/3}$ which all have the same $H_{\theta,r}^+$--entry point,
\item[(II)] there is a subset $\mkG_2\subset \mkG$ with $|\mkG_2|\geq g_n^{1/3}$ which all have the same $H_{\theta^*,s^*}^+$--entry point,
\item[(III)] there is a subset $\mkG_3\subset \mkG$ with $|\mkG_3|\geq g_n^{1/3}$ which all have distinct $H_{\theta,r}^+$--entry points, and which all have distinct $H_{\theta^*,s^*}^+$--entry points.
\end{itemize}
If (I) occurs in some $\tau$, then since the geodesics in $\mkG_1$ have distinct $H_{\theta_0,R}^+$--entry points, uniqueness of finite geodesics means these geodesics must be ``disjoint from $H_{\theta_0,R}$ to $H_{\theta_0,2R}^+$'', or more precisely, the geodesics $\Gamma[x_R'(\Gamma),x_{2R}''(\Gamma)], \Gamma \in \mkG_1,$ are disjoint, except for possibly sharing a starting point $x_R'(\Gamma)$. Similarly, if (II) occurs then the geodesics in $\mkG_2$ must all have disjoint pre--$H_{\theta_0,R}$ segments, except for possibly sharing the same $x_R''(\Gamma)$. In (III) we cannot conclude any such disjointness.

In $H_{\theta,s}$ and $H_{\theta,r}$ we have the enlarged home $\theta$--blocks $B_{s,\theta,{\rm home},+}$ and $B_{r,\theta,{\rm home},+}$, respectively, centered on the line $L_\theta$.  In $H_{\theta^*,s^*}$ and $H_{\theta^*,r*}$ we can define analogous shifted enlarged home $\theta^*$--blocks by translating an enlarged $\theta^*$--block within the hyperplane so that it is centered on $L_{\theta^*}(\ol y)$; we denote these shifted enlarged home $\theta^*$--blocks by $\tilde B_{s^*,\theta^*,{\rm home},+}$ and $\tilde B_{r^*,\theta^*,{\rm home},+}$, \label{tilb} respectively. The analog of the block--evasion event $G_2$ for the ``second half'' geodesic segments from $H_{\theta_0,R}$ to $H_{\theta_0,2R}$ is the following.
\begin{align*}
   G_9: &\text{ ({\it geodesic after $H_{\theta_0,R}$ evading enlarged $\theta$--blocks}) 
     There exists a geodesic $\Gamma$ from}\\
  &\qquad \text{$B_{\theta_0,{\rm cross}}^{\rm rfat}$ to $B_{2R,m}^{\rm fat}$ which contains a site in }  
    (H_{\theta^*,s^*}^{\rm rfat} \bs \tilde B_{s^*,\theta^*,{\rm home},+}^{\rm rfat})
    \cup (H_{\theta,r}^{\rm fat} \bs \tilde B_{r^*,\theta^*,{\rm home},+}^{\rm fat}).
\end{align*}   
In view of \eqref{turnangle} we essentially can apply Lemma \ref{badbehavG15} to bound $P(G_9)$, the only thing being different for $G_9$ is that the tube $Q_{R,n,\theta^*}^*$ is fatter by a constant factor $2\sqrt{d-1}$.  This makes no material difference so we have
\begin{equation}\label{PG16}
  P(G_9) \leq \exp\left( -C_{50}\frac{n^{2\beta_1}}{\log R} \right).
\end{equation}

Consider now $\tau \in G_8\bs (G_3\cup G_2\cup G_9)$ and suppose option (III) occurs. We have the following situation: each $\Gamma\in\mkG_3$ contains a unique $\theta$--slab geodesic from $B_{s,\theta,{\rm home},+}^{\rm rfat}$ to $B_{r,\theta,{\rm home},+}^{\rm fat}$, which we call the \emph{primary $\theta$--slab geodesic of $\Gamma$} and denote $\Gamma^{\rm pri}$. \label{gpr} $\Gamma$ also contains a unique $\theta^*$--slab geodesic from $\tilde B_{s^*,\theta^*,{\rm home},+}^{\rm rfat}$ to $\tilde B_{r^*,\theta^*,{\rm home},+}^{\rm fat}$, which we call the \emph{secondary $\theta^*$--slab geodesic of $\Gamma$} and denote $\Gamma^{\rm sec}$.  A site $x\in H_{\theta,r}^-$ is called \emph{popular for} $\mkG_3$ (in $\tau$) if there exist $n^{\beta_3-\beta_4}/12$ $2R$--geodesics $\Gamma\in\mkG_3$ for which $x$ lies in the primary $\theta$--slab geodesic of $\Gamma$.  A key observation is that if such a popular site $x$ exists, then since $\{\Gamma\in \mkG_3: x \in\Gamma\}$ have distinct $H_{\theta,r}^+$--entry points, the $n^{\beta_3-\beta_4}/12$ (or more) geodesics $\{\Gamma^{\rm sec}: \Gamma\in \mkG_3,x \in\Gamma\}$ must be disjoint.  Based on the preceding discussion we can now restate the options as follows, with option (III) split into two suboptions.
\begin{itemize}
\item[(I)] there is a subset $\mkG_1\subset \mkG$ with $|\mkG_1|\geq g_n^{1/3}$ for which $\{\Gamma^{\rm sec}: \Gamma\in \mkG_1\}$ are disjoint,
\item[(II)] there is a subset $\mkG_2\subset \mkG$ with $|\mkG_2|\geq g_n^{1/3}$ for which $\{\Gamma^{\rm pri}: \Gamma\in \mkG_1\}$ are disjoint,
\item[(IIIa)] there is a subset $\mkG_{3a}\subset \mkG$ with $|\mkG_{3a}|\geq n^{\beta_3-\beta_4}$ for which $\{\Gamma^{\rm sec}: \Gamma\in \mkG_{3a}\}$ are disjoint,
\item[(IIIb)] there is a subset $\mkG_{3b}\subset \mkG$ with $|\mkG_{3b}|\geq g_n^{1/3}$ for which no popular site exists for $\mkG_{3b}\}$.
\end{itemize}

In bounding the probabilities for options (I)--(IIIb) we only use the geodesics $\Gamma^{\rm pri}$ or $\Gamma^{\rm sec}$.  This means the original angle $\theta_0$ is no longer involved, except that we have effectively replaced $R$ with $r-s$ (for geodesics $\Gamma^{\rm pri}$) or $r^*-s^*$ (for geodesics $\Gamma^{\rm sec}$.) In view of \eqref{rslower} this has negligible effect. In the interest of expositional and notational clarity, we can therefore henceforth assume $\theta=\theta_0$ and $[r,s] = [0,R]$ for options (II) and (IIIb), where we deal with geodesics $\Gamma^{\rm pri}$.

The most difficult of the options to control is (IIIb) where we must deal with the lack of disjointness; in fact our proof of a probability bound for that case will essentially subsume the simpler proofs for the other 3 cases. Hence we consider the events
\begin{align*}
   G_{\rm bad} &= G_1\cup G_2\cup G_3\cup G_4\cup G_5\cup G_6\cup G_9\\
   G_{10}: &\ \tau \in G_8\bs G_{\rm bad} \text{ and option (IIIb) occurs},
\end{align*}
and we call $\mkG_{3b}$ a \emph{crowded subset}.

Recall that $S_{\theta,i}(\Gamma)$ denotes the $i$th $\ell$-segment of $\Gamma$.  We arbitrarily number the target $\theta$--blocks 1 through $(2\sqrt{d-1})^{d-1}n^{(\beta_0+\beta_1)(d-1)}$ in each $H_{i\ell}, i\leq n^{\beta_2}$.  For $\Gamma\in\mkG_{3b}$ we say $S_{\theta,i}(\Gamma)$ (or just $\Gamma$) makes a \emph{transition from $j$ to} $k$ if $x_{(i-1)\ell}''(\Gamma)$ is in fattened target $\theta$--block $j$ in $H_{(i-1)\ell}^{\rm fat}$ and $x_{i\ell}''(\Gamma)$ is in fattened target $\theta_0$--block $k$ in $H_{i\ell}^{\rm fat}$; we call this transition the $(i,j,k)$ \emph{transition} and write $S_{\theta,i}(\Gamma) \in \mT_i(j,k)$. \label{Tijk} For $\Gamma^{(1)},\Gamma^{(2)} \in \mkG_{3b}$ and $i\leq n^{\beta_2}$, $S_{\theta,i}(\Gamma^{(1)})$ and $S_{\theta,i}(\Gamma^{(2)})$ are called \emph{neighboring} if they make the same transition.    A given transition $(i,j,k)$ is called \emph{sparse} if the number of $2R$--geodesics in $\mkG_{3b}$ making that transition is at most $n^{\beta_3}$. 

The definitions of ``transition'' and ``neighboring,'' among others here, also make sense for general $(\ell,\theta)$-interval geodesics that are not part of a $2R$--geodesic.

When the $i$th $\ell$-segments of some $\Gamma,\hat\Gamma \in \mkG_{3b}$ intersect, $S_{\theta,i}(\Gamma) \cap S_{\theta,i}(\hat\Gamma)$ is necessarily a sub-segment of both $S_{\theta,i}(\Gamma)$ and $S_{\theta,i}(\hat\Gamma)$, with some endpoints $v=(v_1^{\theta},v_2^{\theta})_{\theta}$ and $w=(w_1^{\theta},w_2^{\theta})_{\theta}$, labeled so $v$ precedes $w$ in $\Gamma$.  We call $\Gamma[v,w] = \hat\Gamma[v,w]$ the \emph{overlap segment}.  The projection of the overlap segment onto the first $\theta$--coordinate is an interval in $\RR$ containing $v_1^\theta,w_1^\theta$ which we call the \emph{projected overlap interval}; we denote it $\mO(S_{\theta,i}(\Gamma),S_{\theta,i}(\hat\Gamma))$. \label{oss} The \emph{$\theta$--overlap} of $S_{\theta,i}(\Gamma)$ and $S_{\theta,i}(\hat\Gamma)$ is $|\mO(S_{\theta,i}(\Gamma),S_{\theta,i}(\hat\Gamma))|$.
If two neighboring $i$th $\ell$-segments have $\theta$--overlap at most $n^{-\beta_4}\ell$, we say they are \emph{low-overlap neighbors}; see Figure \ref{fig4-10B}.  We claim the following.

{\it Claim 1.} If $\tau\in G_{10}$ with crowded subset $\mkG_{3b}$, then every $\ell$-segment of every $\Gamma\in\mkG_{3b}$ making a non-sparse transition has a low-overlap neighbor.  In other words, the absence of a popular site forces the existence of low--overlap neighbors, except in sparse transitions.

\begin{figure}
\includegraphics[width=12cm]{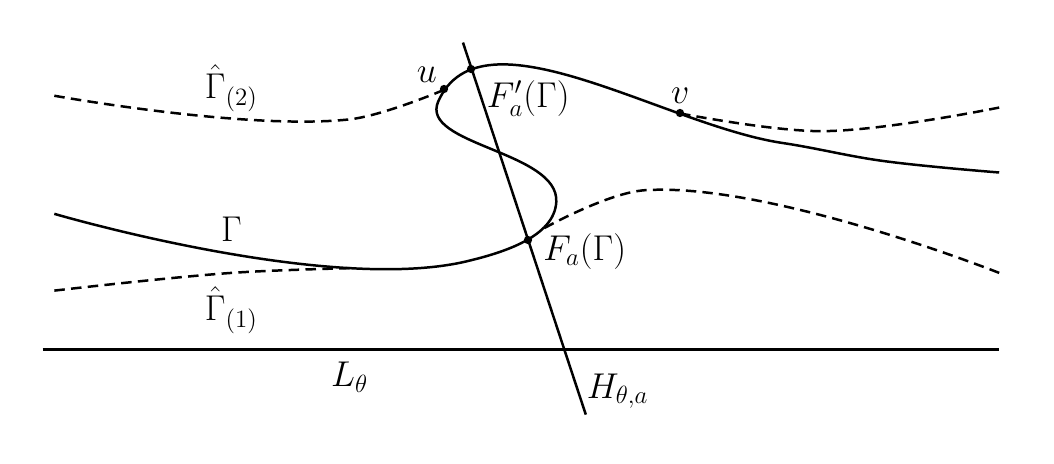}
\caption{ Diagram for the proof of Claim 1. Geodesic $\Gamma$ (solid curve) and neighbors $\hat\Gamma_{(1)},\hat\Gamma_{(2)}$. The projected overlap intervals $\mO(S_{\theta,i}(\Gamma),S_{\theta,i}(\hat\Gamma_{(1)}))$ and $\mO(S_{\theta,i}(\Gamma),S_{\theta,i}(\hat\Gamma_{(2)}))$ intersect, but the corresponding overlap segments do not. Each overlap segment whose projection contains $a$, for example $\Gamma[u,v]$, must contain $F_a(\Gamma)$ or $F_a'(\Gamma)$, because the overlap segment is longer, in the $\theta$ direction, than the maximum backtrack for $\tau\in G_3^c$. }
\label{fig5-1A}
\end{figure}

To prove Claim 1, fix $\tau\in G_{10}$. For fixed $i,j,k$ and $\Gamma\in\mathfrak{G}$ with $S_{\theta,i}(\Gamma)$ making a non-sparse $(i,j,k)$ transition, suppose $S_{\theta,i}(\Gamma)$ has no low-overlap neighbors. Then
\begin{equation}\label{oversum}
  \sum_{\hat\Gamma\in\mT_i(j,k),\hat\Gamma\neq\Gamma} \big| \mO(S_{\theta,i}(\hat\Gamma),S_{\theta,i}(\Gamma)) \big|
    \geq (|\mT_i(j,k)|-1)n^{-\beta_4}\ell \geq \frac 12 n^{\beta_3-\beta_4}\ell.
\end{equation}
We must deal with the technical complication that there may be small backtracks, meaning that 
\begin{itemize}
\item[(i)] not all projected overlap intervals, for $S_{\theta,i}(\Gamma)$ and some $S_{\theta,i}(\hat\Gamma)$, are necessarily contained in $[(i-1)\ell,i\ell]$, and \item[(ii)] having two projected overlap intervals (for different $\hat\Gamma$'s) intersect in an interval of positive length need not mean that the corresponding overlap segments have nonempty intersection.  
\end{itemize}
Issue (i) is readily dealt with: since we are assuming $\tau\in G_3^c$, so backtracks are small, every $\mO(S_{\theta,i}(\hat\Gamma),S_{\theta,i}(\Gamma))$ in \eqref{oversum} is contained in $[(i-2)\ell,i\ell]$. It then follows from \eqref{oversum} that some point $a\in [(i-2)\ell,i\ell]$ must be in at least $\frac 16 n^{\beta_3-\beta_4}$ of the intervals $\mO(S_{\theta,i}(\hat\Gamma),S_{\theta,i}(\Gamma))$.
For issue (ii), let $F_a(\Gamma)$ and $F_a'(\Gamma)$ be the first and last points, respectively, of $\Gamma$ in $H_{\theta,a}$.  See Figure \ref{fig5-1A}. Because we are assuming $\tau\in G_3^c$ and $\Gamma$ has no low-overlap neighbors, if $a$ lies in some $\mO(S_{\theta,i}(\hat\Gamma),S_{\theta,i}(\Gamma))$, then the corresponding overlap segment $S_{\theta,i}(\hat\Gamma) \cap S_{\theta,i}(\Gamma)$ must contain either $F_a(\Gamma)$ or $F_a'(\Gamma)$, because no segment of $\Gamma$ lying entirely between $F_a(\Gamma)$ and $F_a'(\Gamma)$ can have projected length more than $n^{-\beta_4}\ell/2$.  It follows that either $F_a(\Gamma)$ is contained in $S_{\theta,i}(\Gamma) \cap S_{\theta,i}(\hat\Gamma)$ for at least $\frac{1}{12} n^{\beta_3-\beta_4}$ of the neighbors $\hat\Gamma$ in \eqref{oversum}, or the same is true for $F_a'(\Gamma)$.  But this makes $F_a(\Gamma)$ or $F_a'(\Gamma)$ a popular site for $\mkG_{3b}$, a contradiction to $\tau\in G_{10}$.  This proves Claim 1.

{\it Claim 2.}  If $\tau\in G_{10}$ with crowded subset $\mkG_{3b}$, then there exists $\Gamma\in\mkG_{3b}$ such that every $\ell$--segment of $\Gamma$ has a low-overlap neighbor.  

To prove Claim 2, note that for each $i$, the number of possible transitions by the $i$th $\ell$--segment of a target-directed $2R$--geodesic is at most the square of the number of target $\theta$--blocks in each $H_{\theta,i\ell}$, so the number of $(i,j,k)$ such that $\ell$--segment $i$ can transition from block $j$ to block $k$ is at most $n^{\beta_2+2(\beta_0+\beta_1)(d-1)}$.  It follows that the total number of sparse transitions made by all $\Gamma\in\mkG_{3b}$ (summed over $\mkG_{3b}$) is at most $n^{\beta_3+\beta_2+2(\beta_0+\beta_1)(d-1)}$, which is less than $g_n^{1/3}$ by \eqref{betas3}. Hence there exists some $\Gamma\in\mkG_{3b}$ making no sparse transitions, and Claim 2 follows from Claim 1.

Our definition of low-overlap neighbor requires that such a neighbor be an $\ell$-segment of another $2R$--goedesic in our specified $\mkG_{3b}$.  We now loosen this restriction, and say for $\Gamma$ a $2R$--geodesic, any $(\ell,\theta)$--interval geodesic $\Psi$ is a \emph{low-overlap partner} of the $i$th $\ell$--segment of $\Gamma$ if the following hold:
\begin{itemize}
\item[(a)] $\Psi$ is a good $(\ell,\theta)$-interval geodesic, 
\item[(b)] $\Psi$ and $S_{\theta,i}(\Gamma)$ make the same transition, and
\item[(c)] the $\theta$--overlap of $\Psi$ and $S_{\theta,i}(\Gamma)$ is at most $n^{-\beta_4}\ell$.
\end{itemize}
It follows that every low-overlap neighbor is a low-overlap partner, for $\tau\in G_{10}$.
Define the event
\begin{align*}
  G_{11}: &\text{ there exist a target-directed $2R$--geodesic from $B_{\theta,{\rm home}}^{\rm rfat}$ to $H_{\theta,2R}^+$} \\
    &\qquad \text{with $H_{\theta,R}^+$--entry point in $B_{\theta,{\rm cross}}^{\rm fat}$ for which every $\ell$-segment in the pre-$H_{\theta,R}$ }\\ 
  &\qquad \text{segment has a low-overlap partner; } \tau\notin G_{\rm bad}. \\
\end{align*}
We then conclude from Claim 2 that
\begin{equation}\label{G1234}
  G_{10}\subset G_{11} \cup G_{\rm bad}.
\end{equation}

\begin{figure}
\includegraphics[width=14cm]{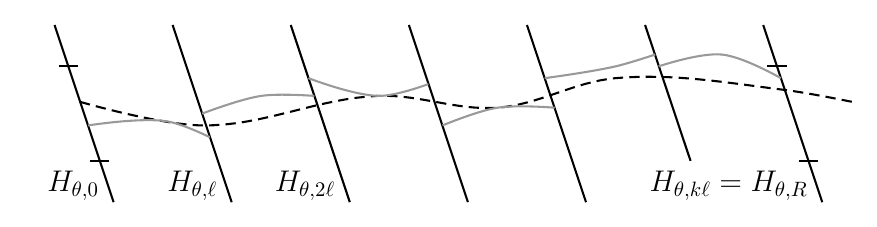}
\caption{ Geodesic $\Gamma$ (dashed curve) for which every $\ell$--segment of the pre--$H_{\theta,R}$ segment has a low-overlap neighbor (gray curves.) The low-overlap neighbors are $\ell$--segments of other geodesics in the crowded subset $\mkG_{3b}$ of the crowded group $\mkG$. Later in the proof we allow similar but more general ``low-overlap partners,'' which are still geodesics but which do not have to be $\ell$--segments of geodesics in the crowded subset. Of primary interest are the fast $\ell$--segments of $\Gamma$, for which the corresponding partners are (modulo a small-probability event) ``forced'' by Lemma \ref{badbehavG9} to be disjointly semi-fast. }
\label{fig4-10B}
\end{figure}

We now bound $P(G_{11})$. Define \label{nlnr}
\[
  N_L = |B_{\theta,{\rm home}}^{\rm rfat} \cap \ZZ^d|,\quad N_R = |B_{\theta,{\rm cross}}^{\rm fat} \cap \ZZ^d|,
    \quad \Omega_i = \Omega_{\theta}((i-1/6)\ell,(i+7/6)\ell).
\]
Let $\gamma_0$ be a $\theta$--slab path from $B_{\theta,{\rm home}}^{\rm rfat}$ to $B_{\theta,{\rm cross}}^{\rm fat}$.  Given $x,y\in \Omega_i\cap V$ we can define the set of low-overlap constrained paths 
\begin{align}\label{lowover}
  \mathfrak{P}_i&(x,y,\gamma_0) = \Big\{ \gamma: \gamma 
    \text{ is a path from $x$ to $y$ in $\ZZ^d$ with $\gamma\subset \Omega_i$, and either 
    $\gamma\cap\gamma_0=\emptyset$ or for some } \notag\\
  &\qquad\qquad\qquad \text{sites $u$ preceding $v$ in $\gamma_0$ with $|v_1^{\theta} - u_1^{\theta}| \leq n^{-\beta_4}\ell$ we have } 
    \gamma\cap\gamma_0 = \gamma_0[u,v] \Big\}
\end{align}
and for $\gamma \in \mathfrak{P}_i(x,y,\gamma_0)$ with $\gamma\cap\gamma_0 = \gamma_0[u,v]$, define \label{Td}
\[
  T^{\dis,i}(\gamma,\gamma_0) = T(\gamma) - T(\gamma\cap\gamma_0) + h((v_1^{\theta} - u_1^{\theta})\yt).
\]
Note that part of the passage time $T(\gamma)$ comes from the overlap segment $\gamma[u,v]$; in defining $T^{\dis,i}(\gamma,\gamma_0)$ we replace this part of the passage time with the approximation $h((v_1^{\theta} - u_1^{\theta})\yt)$, so that it does not depend on the passage times of bonds in the overlap segment.
Next define the disjoint passage times
\begin{align}\label{Tdis}
  T^{\dis,i}(x,y,\gamma_0) := \inf\{ T^{\dis,i}(\gamma,\gamma_0): \gamma \in \mathfrak{P}_i(x,y,\gamma_0) \},
\end{align}
so that $T^{\dis,i}(x,y,\gamma_0)$ is not affected by the passage times of the bonds in $\gamma_0$.  

The case of interest is the following: given $\tau\notin G_{\rm bad}$, if $\Gamma$ is a $\theta$--slab geodesic from $B_{\theta,{\rm home}}^{\rm rfat}$ to $B_{\theta,{\rm cross}}^{\rm fat}$, and $\Psi=\Psi[x,y]$ is a low-overlap partner of $S_{\theta,i}(\Gamma)$, then, due to the bound on backtracks in low-overlap partners we have $\Psi\subset\Omega_i$, so $\Psi\in \mathfrak{P}_i(x,y,\Gamma)$.  Suppose $\gamma \in \mathfrak{P}_i(x,y,\Gamma)$ with $\gamma\cap\Gamma = \Gamma[u,v]$ for some $u,v$. Since $v$ lies in the hyperplane $H_{\theta,v_1^\theta}$ tangent to $u +  |(v-u)_1^\theta|\mkB_g$ at $u+(v-u)_1^\theta\yt$, since $|v_1^{\theta} - u_1^{\theta}| \leq n^{-\beta_4}\ell$ we have using Lemma \ref{hmu}, \eqref{powerlike}, and \eqref{Rncond} (taking $C_{45}$ there sufficiently large) that
\begin{align*}
  h(v-u) &\geq g(v-u) \geq g((v-u)_1^\theta)\yt) \geq h((v-u)_1^\theta\yt) - C_{16}\sigma(|(v-u)_1^\theta\yt|) \log |(v-u)_1^\theta)\yt| \\
  &\geq h((v-u)_1^\theta\yt) - c_5n^{-\chi_1\beta_4}\sigma(\ell |\yt|/2) \log \ell \geq h((v-u)_1^\theta\yt) - \frac \eta 8 \sigma(\ell |\yt|/2).
\end{align*}
Since $\tau\notin G_6$, this yields that for all $\gamma \in \mathfrak{P}_i(x,y,\Gamma)$
\begin{equation}\label{discompare}
  T(\gamma) - T^{\dis,i}(\gamma,\Gamma) = T(u,v) - h(|u_1^{\theta} - v_1^{\theta}|\yt) 
    \geq T(u,v) - h(v-u) - \frac \eta 8 \sigma(\ell |\yt|/2) \geq  -\frac \eta 4 \sigma(\ell |\yt|/2).
\end{equation}
Since also $\tau\notin G_5$,
\begin{equation}\label{closetime}
  |T(\Psi) - T(S_{\theta,i}(\Gamma))| < \frac \eta 8 \sigma(\ell).
\end{equation}
For $\Gamma$ a $\theta$--slab geodesic from $B_{\theta,{\rm home}}^{\rm rfat}$ to $B_{\theta,{\rm cross}}^{\rm fat}$, we say $\Gamma$ is \emph{clean} if $\Gamma$ contains no $\theta$--backtrack of $\frac 12 n^{-\beta_4}\ell$, and no pair $u,v$ as in the event $G_4$; a $2R$--geodesic is called clean if its pre-$H_{\theta,R}$ segment is clean.
For $2\leq i\leq n^{\beta_2}-1$, as in Lemma \ref{fastseg} let us call $S_{\theta,i}(\Gamma)$ a \emph{fast segment} if 
\[
  T\big( S_{\theta,i}(\Gamma) \big) \leq \frac 1k ET(0,R\yt) + \frac \eta 8 \sigma(\ell |\yt|/2),
\]
so $N_{\theta}(\Gamma)$ (from Lemma \ref{fastseg}) is the number of fast $\ell$--segments in $\Gamma$.
We say an $(\ell,\theta)$--interval geodesic $\Psi=\Psi[x,y]$ 
is \emph{semi-fast} if its passage time satisfies
\[
  T(\Psi) < \frac 1k ET(0,R\yt) + \frac 34 \eta \sigma(\ell |\yt|/2),
\]
and \emph{disjointly semi-fast} if 
\[
  T^{\dis,i}(\Psi,\Gamma) < \frac 1k ET(0,R\yt) + \frac \eta 2 \sigma(\ell |\yt|/2).
\]
Thus for $\tau\in G_{11}$ we have the following:  there exists a clean $\theta$--target-directed geodesic $\Gamma$ from $B_{\theta,{\rm home}}^{\rm rfat}$ to $B_{\theta,{\rm cross}}^{\rm fat}$ for which every $S_{\theta,i}(\Gamma)$ has a low-overlap partner $\Psi_i = \Psi_i[y_i,z_i]$, and for each fast $\ell$--segment $S_{\theta,i}(\Gamma)$,
\[
   T^{\dis,i}(\Psi_i,\Gamma) \leq T(\Psi_i) + \frac \eta 4 \sigma(\ell |\yt|/2) \leq T(S_{\theta,i}(\Gamma)) +  \frac{3\eta}{8} \sigma(\ell |\yt|/2)
     \leq \frac 1k ET(0,R\yt) + \frac \eta 2 \sigma(\ell |\yt|/2),
\]
meaning $\Psi_i$ is disjointly semi-fast.  Here the first two inequalities use \eqref{discompare} and \eqref{closetime}. Thus for the event
\begin{align*}
  G_{12}: &\text{ there exists a clean $\theta$--target-directed $\theta$--slab geodesic $\Gamma$ from $B_{\theta,{\rm home}}^{\rm rfat}$ 
    to $B_{\theta,{\rm cross}}^{\rm fat}$} \\
  &\qquad \text{for which every fast $\ell$--segment $S_{\theta,i}(\Gamma)$ has a disjointly semi-fast low-overlap partner},
\end{align*}
we have
\begin{equation}\label{G3G12}
  G_{11} \subset G_{12},
\end{equation}
so we now want to bound $P(G_{12})$.  Let $a_1,\dots,a_{N_L}$ and $b_1,\dots,b_{N_R}$ \label{ajb} be the sites of $\ZZ^d$ in $B_{\theta,{\rm home}}^{\rm rfat}$ and $B_{\theta,{\rm cross}}^{\rm fat}$ respectively, and let
\[
  n_0=\eta n^{\chi_2\beta_2}/8.
\]
Defining events \label{Hjk}
\begin{align*}
  H_{jk}:\  &\Gamma_{a_jb_k} \text{ is a clean $\theta$--target-directed $\theta$--slab geodesic, $N_{\theta}(\Gamma_{a_jb_k}) \geq n_0$, 
    and every fast }  \notag\\
  &\quad \text{$\ell$--segment $S_{\theta,i}(\Gamma_{a_jb_k})$ with } 2\leq i\leq n^{\beta_2}-1 \text{ has a disjointly semi-fast low-overlap partner},
\end{align*}
we thus have
\begin{align}\label{closetimes}
  P(G_{12}) \leq \sum_{j=1}^{N_L} \sum_{k=1}^{N_R} 
    \Big( P(N_{\theta}(\Gamma_{a_jb_k}) < n_0) + P(H_{jk}) \Big).
\end{align}
(We note here that when $\tau\in H_{jk}$, the corresponding $\Gamma_{a_jb_k}$ can serve as the special geodesic in the heuristic in section \ref{outl}.)
Using the last  inequality in \eqref{betas1}, Lemma \ref{fastseg} yields that 
\begin{equation}\label{Nbound}
  P\left( N_{\theta}(\Gamma_{a_jb_k})<n_0 \right) \leq c_6R^{2(d-1)} \exp\left(-c_7 n^{(1-\chi_2)\beta_2} \right) 
    \leq c_8 \exp\left( -c_9 n^{2\beta_1} \right).
\end{equation}

We next bound $P(H_{jk})$. Suppose we fix both of the following: 
\begin{equation}\label{fixed}
  \text{a clean $\theta$--target-directed $\theta$--slab path $\gamma$ from $a_j$ to $b_k$,\ and the times } 
    \tau_\gamma = \{\tau_e: e\in\gamma\}. 
\end{equation}
These determine the set \label{Iset}
\[
  \mI(\gamma,\tau_\gamma) = \{ 2\leq i \leq n^{\beta_2}-1: S_{\theta,i}(\gamma) \text{ is a fast $\ell$--segment} \}.
\]
For each such $\gamma$ we have the event \label{Hev}
\[
  H_\gamma: \text{ every fast $\ell$--segment $S_{\theta,i}(\gamma)$, $2\leq i\leq n^{\beta_2}-1$, has a disjointly semi-fast low-overlap partner}.
\]
Conditionally on the fixed objects in \eqref{fixed}, we may view the events $H_\gamma$, as well as the event $\{\Gamma_{u_jv_k}=\gamma\}$, as determined by the unconditioned passage times $\{\tau_e:e\notin\gamma\}$.  In this context, $\{\Gamma_{u_jv_k}=\gamma\}$ is an increasing event (that is, its indicator is an increasing function of $\{\tau_e:e\notin\gamma\}$), whereas $H_\gamma$
is a decreasing event.  
It follows from the FKG inequality that
\begin{align}\label{mainrel}
  P(H_{jk} \mid \tau_\gamma, \Gamma_{u_jv_k}=\gamma) 
    &= P(H_\gamma \mid \tau_\gamma, \Gamma_{u_jv_k}=\gamma) \notag\\
  &\leq P(H_\gamma \mid  \tau_\gamma) \notag\\
  &= P(H_\gamma \mid \mI(\gamma,\tau_\gamma)).
\end{align}
For $I\subset \{1,\dots,n^{\beta_2}-2\}$, the events
\[
  H_{\gamma,I}: \text{ for all $i\in I$, $S_{\theta,i}(\gamma)$ has a disjointly semi-fast low-overlap partner}
\]
satisfy
\begin{equation}\label{HgI}
  P(H_\gamma \mid \mI(\gamma,\tau_\gamma)=I) 
    = P(H_{\gamma,I} \mid \mI(\gamma,\tau_\gamma)=I) = P(H_{\gamma,I}),
\end{equation}
since the event $H_{\gamma,I}$ is independent of $\{\tau_e,e\in\gamma\}$.  
For a clean $\theta$--target-directed $\theta$--slab path $\gamma$ from $B_{\theta,{\rm home}}^{\rm rfat}$ 
    to $B_{\theta,{\rm cross}}^{\rm fat}$, let \label{Mga}
\[
  \mM(\gamma) = \{(i,j,k): \gamma \text{ makes transition } (i,j,k)\}.
\]
and let $\mathfrak{M}$ be the set of all possible values of $\mM(\gamma)$ as $\gamma$ varies over all such paths. For $M\in\mathfrak{M}$ and $I\subset \{2,\dots,n^{\beta_2}-1\}$ define events \label{fmi}
\begin{align*}
  F_{M,I}: &\text{ for every $(i,j,k)\in M$ with $i\in I$, there exists a semi-fast good $(\ell,\theta)$--interval geodesic } \\
  &\qquad\text{making transition } (i,j,k).
\end{align*}

We claim that
\begin{equation}\label{}
  H_{\gamma,I} \bs (G_3\cup G_5\cup G_6) \subset F_{\mM(\gamma),I}\bs (G_3\cup G_5).
\end{equation}
In fact, suppose $\tau\in H_{\gamma,I} \bs (G_3\cup G_5\cup G_6),\, i\in I$, and $(i,j,k)\in \mM(\gamma)$. Then there are sites $u$ preceding $v$ in $S_{\theta,i}(\gamma)$ and a disjointly semi-fast $(\ell,\theta)$--interval geodesic $\Psi\subset Q_{R,n,\theta}$ for which
\[
  \Psi \text{ makes transition } (i,j,k), \quad \Psi\cap\gamma = \gamma[u,v], \quad |u_1^{\theta} - v_1^{\theta}| \leq n^{-\beta_4}\ell.
\]
Since $\tau\notin G_4\cup G_6$ and $\gamma$ is clean,
\begin{align}
  T(\Psi) &= T^{\dis,i}(\Psi,\gamma) + T(\Psi[u,v]) - h(|u_1^{\theta} - v_1^{\theta}|\yt) \notag\\
  &\leq T^{\dis,i}(\Psi,\gamma) + |T(\Psi[u,v]) - h(u-v)| + h(u-v) - h(|u_1^{\theta} - v_1^{\theta}|\yt) \notag\\
  &\leq \frac 1k ET(0,R\yt) + \frac 34 \eta \sigma(\ell |\yt|/2).
\end{align}
Thus $\Psi$ is semi-fast. Since $\tau\notin G_3, \Psi$ is also good, 
so $\tau \in F_{\mM(\gamma),I}$, proving the claim. This shows that
\begin{equation}\label{HgammaM}
  P(H_{\gamma,I}) \leq P( F_{\mM(\gamma),I}\bs (G_3\cup G_5)) + P(G_3\cup G_5\cup G_6).
\end{equation}

Let $\ol u_{ij}$ denote the center point of the $j$th block in $H_{i\ell}$, and define the event
\begin{align*}
  \ol F_{M,I}: &\text{ for every $(i,j,k)\in M$ with $i\in I$, we have } T(\ol u_{(i-1),j}, \ol u_{ik}) \leq \frac 1k ET(0,R\yt) + \frac 78 \eta \sigma(\ell |\yt|/2).
\end{align*}
Then $F_{\mM(\gamma),I}\bs G_5 \subset \ol F_{\mM(\gamma),I}$ so by \eqref{HgammaM},
\[
  P(H_{\gamma,I}) \leq P( \ol F_{\mM(\gamma),I}\bs (G_3\cup G_5)) + P(G_3\cup G_5\cup G_6).
\]
With \eqref{mainrel} and \eqref{HgI} we obtain from this that for the functions
\[
  f_0(\gamma,I) = P(H_{\gamma,I}), \quad f_1(M,I) = P( \ol F_{M,I}\bs (G_3\cup G_5)) + P(G_3\cup G_5\cup G_6),
\]
we have
\begin{align}\label{Hjkf1}
  P(H_{jk}) 
    &= E\bigg( P\Big(H_{jk}\ \Big|\  \{\tau_e,e\in\Gamma_{u_jv_k}\}, \Gamma_{u_jv_k}\Big) \bigg) \notag\\
    &\leq E\Big( f_0\Big(\Gamma_{u_jv_k},\mI(\Gamma_{u_jv_k},\{\tau_e:e\in \Gamma_{a_jb_k}\}) \Big) 
      1_{\{|\mI(\Gamma_{u_jv_k},\{\tau_e:e\in \Gamma_{a_jb_k}\})|\geq n_0-2\}} \Big) \notag\\
    &\leq E\Big( f_1\Big(\mM(\Gamma_{u_jv_k}),\mI(\Gamma_{u_jv_k},\{\tau_e:e\in \Gamma_{a_jb_k}\}) \Big) 
      1_{\{|\mI(\Gamma_{u_jv_k},\{\tau_e:e\in \Gamma_{a_jb_k}\})|\geq n_0-2\}} \Big).
\end{align}
As a comment on the last two expressions, we can view $\gamma,I$ as parameters in the probability $P(H_{\gamma,I})$ expressed by the function $f_0$, with this probability for a given $(\gamma,I)$ calculated for a random configuration $\tau$.  When we calculate the second expectation in \eqref{Hjkf1}, we can view it as choosing the parameters $\gamma,I$ randomly using a completely separate independent passage time configuration $\tau'$. Our ability to separate the choice of $\tau$ from the choice of parameters (functions of $\tau'$) is a consequence of the FKG property in \eqref{mainrel} and of \eqref{HgI}.  When we next replace $f_0$ with $f_1$ in the last line, the parametrization no longer uses the full path $\gamma$ but rather only the transitions of $\gamma$.  This formulation means that to bound $P(H_{jk})$, instead of summing over $M$ and $I$ in $f_1(M,I)$ (which involves too much entropy), we are averaging over $\Gamma_{u_jv_k}$.

We split $I$ into odd and even values, $I=I_{\rm odd}\cup I_{\rm even}$ and define for $u,v\in\Omega_i$:
\[
  T_{\Omega_i}(u,v) = \inf \{T(\gamma): \gamma \text{ is a path from $u$ to $v$}, \gamma\subset\Omega_i\}.
\]
Fix a possible value $M$ of $\mM(\gamma)$; for each $i\leq n^{\beta_2}$ there exist unique $j_i,k_i$ for which $(i,j_i,k_i)\in M$.
The events \label{Fst}
\[
  F_{M,i}^*: T_{\Omega_i}(\ol u_{(i-1),j_i}, \ol u_{ik_i}) \leq \frac 1k ET(0,R \yt) + \frac 78 \eta \sigma(\ell |\yt|/2) 
\]
with $i$ odd are independent, since the slabs $\Omega_i$ are disjoint.  Therefore
\begin{align}\label{prodbound}
  P( \ol F_{M,I}\bs G_3) &\leq P\left( \cap_{i\in I_{\rm odd}} F_{M,i}^* \right) \notag\\
  &= \prod_{i\in I_{\rm odd}} P(F_{M,i}^*) \notag\\
  &\leq \prod_{i\in I_{\rm odd}} P\left( T(\ol u_{(i-1),j_i}, \ol u_{i,k_i}) \leq \frac 1k ET(0,R\yt) + \frac 78 \eta \sigma(\ell |\yt|/2) \right).
\end{align}
With \eqref{powerlike} and Proposition \ref{hmu}, using $\ol u_{i,k_i} - \ol u_{(i-1),j_i} \in H_{\theta,\ell}$, we obtain
\begin{align}\label{uuvsRk}
  \frac 1k ET(0,R\yt) 
  &\leq g(\ell\yt) + \frac{C_{16}}{k}\sigma(R |\yt|) \log(R |\yt|) \notag\\
  &< g(\ol u_{i,k_i} - \ol u_{(i-1),j_i}) + \frac{1}{8} \eta \sigma(\ell |\yt|/2) \notag\\
  &\leq h(\ol u_{i,k_i} - \ol u_{(i-1),j_i}) + \frac{1}{8} \eta \sigma(\ell |\yt|/2).
\end{align}
Since $\ol u_{i,k_i},\ol u_{(i-1),j_i}$ lie in $Q_{R,n,\theta}$, it follows readily from \eqref{nRell} that 
\[
  |\ol u_{i,k_i} - \ol u_{(i-1),j_i}| \geq |(\ol u_{i,k_i} - \ol u_{(i-1),j_i})_1^{\theta}| |\yt| - |(\ol u_{i,k_i} - \ol u_{(i-1),j_i})_2^{\theta}|
    \geq \frac{\ell |\yt|}{2}.
\]

Combining this with \eqref{minfluct} and \eqref{uuvsRk} yields
\begin{align}\label{FMIbound}
  P&\left( T(\ol u_{(i-1),j_i}, \ol u_{i,k_i}) \leq \frac 1k ET(0,R\yt) + \frac 78 \eta \sigma(\ell |\yt|/2) \right) \notag\\
  &\qquad \leq P\Big( T(\ol u_{(i-1),j_i}, \ol u_{i,k_i}) < h(\ol u_{i,k_i} - \ol u_{(i-1),j_i}) 
    + \eta \sigma(|\ol u_{i,k_i} - \ol u_{(i-1),j_i}|) \Big) \notag\\
  &\qquad \leq 1-\frac{c_{10}}{\pi(\ell)^2\log\ell}.
\end{align}
We can take $\chi_1,\chi_2$ satisfying $2\chi_1-\chi_2 \geq \chi_2/2$, and then using \eqref{powerlike}, \eqref{Rncond}, and the last inequality in \eqref{betas1} we have 
\[
  \pi(\ell)\leq c_{11}n^{(\chi_2-\chi_1)\beta_2}\pi(R) \quad\text{so}\quad \pi(\ell)^2\log\ell \leq c_{12}n^{2(\chi_2-\chi_1)\beta_2}\pi(R)^2\log  R
    \leq c_{13}n^{2(\chi_2-\chi_1)\beta_2+\beta_1/2}.
\]
Since \eqref{prodbound} also holds for $I_{\rm even}$, and $\max(I_{\rm odd},I_{\rm even}) \geq |I|/2$, this together with the last inequality in \eqref{betas1} shows that for $|I| \geq n_0-2$,
\begin{align}
  P( \ol F_{M,I}\bs G_3) &\leq \left(1-\frac{1}{\pi(\ell)^2\log\pi(\ell)} \right)^{|I|/2} 
    \leq \left(1-\frac{1}{\pi(\ell)^2\log\pi(\ell)} \right)^{n_0/4} \notag \\
  &\leq \exp\left( -c_{14}n^{\beta_2\chi_2-2(\chi_2-\chi_1)\beta_2-\beta_1/2} \right) \leq \exp\left( -c_{15}n^{\beta_2\chi_2/2 - \beta_1/2} \right)
    \leq \exp\left(  -c_{16}n^{2\beta_1} \right).
\end{align}
Combining this with Lemmas \ref{badbehavG2G7}, \ref{badbehavG9}, and \ref{badbehavG11} we see that for $|I|\geq n_0-2$ and all $M$,
\[
  f_1(M,I) \leq \exp\left( -c_{16}n^{2\beta_1} \right) + \exp\left( -C_{50}\frac{n^{2\beta_1}}{\log R} \right).
\]
With \eqref{G1234}, \eqref{G3G12}, \eqref{closetimes}, \eqref{Nbound}, and \eqref{Hjkf1} this shows
\begin{align}\label{combine}
  P(\tau \in &G_8\bs G_{\rm bad} \text{ and option (IIIb) occurs}) \notag\\
  &=P(G_{10}) \notag\\
  &\leq P(G_{12}) + P(G_{\rm bad}) \notag\\ 
  &\leq N_L N_R \left( c_8 \exp\left( -c_9 n^{2\beta_1} \right) 
    + \exp\left( -c_{16}n^{2\beta_1} \right) + \exp\left( -C_{50}\frac{n^{2\beta_1}}{\log R} \right) \right) + P(G_{\rm bad}) \notag\\
  &\leq c_{17}R^{2(d-1)}\exp\left( -C_{50}\frac{n^{2\beta_1}}{\log R} \right) + P(G_{\rm bad}) \notag\\
  &\leq \exp\left( -c_{18}\frac{n^{2\beta_1}}{\log R} \right) + P(G_{\rm bad}),
\end{align}
where we have used $n^{\beta_1} \geq (\log R)^2$, from \eqref{Rncond}.

The preceding proof of \eqref{combine} applies also to options (I) and (II); the proof could even be substantially simplified for these cases since there is no overlap in the relevant geodesics.  To deal with option (IIIa) where the number of geodesics in the crowded subset is smaller, we can simply define $m$ by $n^{\beta_3-\beta_4} = g_m^{1/3}$, so $m=c_{19}n^\lambda$ with $\lambda<1$, and then repeat the entire proof with $n$ replaced by $m$.  (This has the effect of replacing each $\beta_j$ with $\lambda\beta_j$.) We thus conclude using \eqref{G015} and Lemmas \ref{badbehavG5}---\ref{badbehavG11} that
\[
  P(G_7) \leq P(G_8\bs G_{\rm bad}) + P(G_{\rm bad}) \leq 4\exp\left( -c_{18}\frac{n^{2\beta_1}}{\log R} \right) + 5P(G_{\rm bad})
    \leq \exp\left( -c_{20}\frac{n^{2\beta_1}}{\log R} \right),
\]
which completes the proof of Proposition \ref{jammed1}.
\end{proof}

\section{Crossing density bound}\label{cpdens}

In this section we prove Theorem \ref{alldim}(ii) and Theorem \ref{hitpoint}.

\subsection{Proof of Theorem \ref{alldim}(ii).} \label{alldimpf}
The \emph{orientation} of a finite geodesic from $x$ to $y$ is $(y-x)/|y-x|$.  With $\theta_0,R$ fixed, for $\Gamma$ a $\theta_0$--slab geodesic from $H_{\theta_0,-R}^-$ to $H_{\theta_0,R}^+$, the \emph{initial orientation} of $\Gamma$ is the orientation of its pre-$H_{\theta_0,0}$ segment.  The term ``initial orientation'' also applies to $\theta$--rays from $H_{\theta_0,-R}^-$ which cross $H_{\theta_0,0}$. 
Let $\ep_1$ be as in Remark \ref{spheres} and $\ep_{\min}$ as in A4, let $\ep(R) = (\log R)^2\Delta(R)/R$, and for $\theta\in J(\theta_0,\ep_{\min})$ define
\begin{align*}
  W_R := W_R(\theta_0,\theta,\tau) &:= \Big\{ x \in H_{\theta_0,0}^{\rm fat}\cap\ZZ^d: \text{ there exists a $\theta_0$--slab geodesic from 
    $H_{\theta_0,-R}^-$ to $H_{\theta_0,R}^+$} \\
  &\qquad\qquad \text{ with $H_{\theta_0,0}^+$--entry point $x$ and initial orientation in } J(\theta,\ep(R)) \Big\},\\
  Y_R := Y_R(\theta_0,\theta,\tau) &:= \Big\{ x \in H_{\theta_0,0}^{\rm fat}\cap\ZZ^d: 
    \text{ for some $\theta\in J(\theta_0,\ep_{\min})$ there exists a $\theta$--ray from $H_{\theta_0,-R}^-$ } \\
  &\qquad\qquad \text{ with $H_{\theta_0,0}^+$--entry point $x$ and initial orientation not in } J(\theta,\ep(R)) \Big\},\\
  Z_R := Z_R(\theta_0,\theta,\tau) &:= \Big\{ x \in H_{\theta_0,0}^{\rm fat}\cap\ZZ^d: \text{ there exists a 
    $\theta$--ray from $H_{\theta_0,-R}^-$ with} \\
  &\qquad\qquad \text{ $H_{\theta_0,0}^+$--entry point $x$} \Big\},
\end{align*}
so
\begin{equation}\label{union}
  Z_R \subset W_R \cup Y_R.
\end{equation}
We may think of $W_R$ and $Y_R$ as the sets of entry points of ``normal'' and ``crooked'' $\theta$--rays, respectively, though formally $W_R$ is defined in terms of finite geodesics.

Let us first bound the density of $Y_R$. Suppose $x\in Y_R,\theta\in J(\theta_0,\ep_{\min})$, and there exists a $\theta$--ray from $H_{\theta_0,-R}^-$ with starting point $a\in H_{\theta_0,-R}^{\rm rfat}$ and $H_{\theta_0,0}^+$--entry point $x$ for which the initial orientation $\alpha=(x-a)/|x-a| \notin J(\theta,\ep(R))$.  This means $\psi_{\alpha\theta} \geq \ep(R)$, and it is easily checked that this implies 
\[
  D_\theta(x-a) \geq c_1\left( \frac{\ep(R)^2|x-a|^2}{\Xi(|x-a|)^2} \wedge \Phi(|x-a|) \right)
    \geq c_2\ep(R)^2 \frac{|x-a|}{\sigma(|x-a|)\log |x-a|}.
\]
But then, defining events
\begin{align*}
  F_{x,j}: &\text{ for some $a\in \ZZ^d$ with $2^{j-1}<|x-a|\leq 2^j$ and some $\theta\in J(\theta_0,\ep_{\min})$ with } \\
  &\qquad\text{ $D_\theta(x-a)\geq c_2\ep(R)^2 \frac{|x-a|}{\sigma(|x-a|)\log |x-a|}$, there is a $\theta$--ray from $a$ containing $x$,  }
\end{align*}
we get from Proposition \ref{rayfluct}(ii) that 
\begin{equation}\label{nobend}
  P(x\in Y_R) \leq \sum_{j\geq \log_2 R} P(F_{x,j}) \leq 
    \sum_{j\geq \log_2 R} c_32^{dj} e^{-c_4\ep(R)^2 2^j/j\sigma(2^j)} 
    \leq c_5e^{-c_6(\log R)^2}.
\end{equation}

\begin{figure}
\includegraphics[width=14cm]{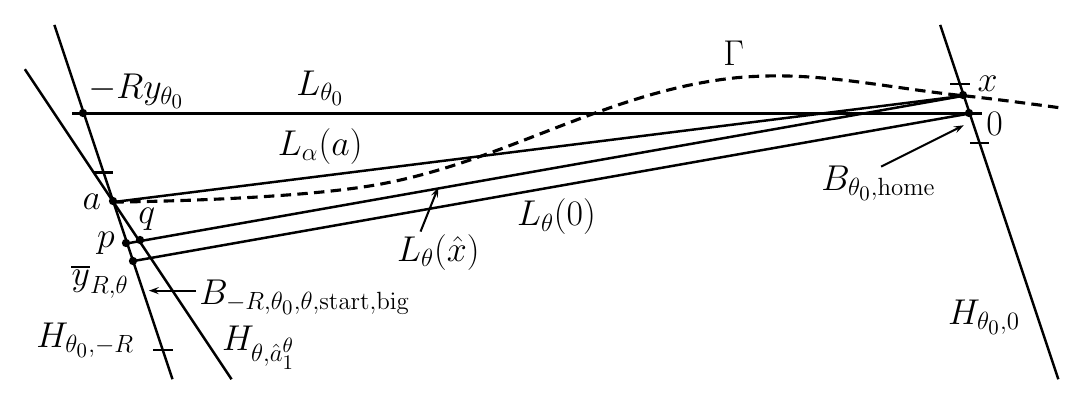}
\caption{ Illustration for the proof of Theorem \ref{alldim}(ii), showing the event $x\in W_R$.  The corresponding $\theta_0$--slab geodesic $\Gamma$ starts at some $a\in B_{-R,\theta_0,\theta,{\rm start,big}}^{\rm rfat}$ (the block centered at $\ol y_{R,\theta}$, delimited by hash marks in the figure), and enters $H_{\theta_0,0}^+$ at $x$.  The event $x\in Y_R$ is similar but $\Gamma$ is a $\theta$-ray and $a$ is far from $\ol y_{R,\theta}$, meaning the initial orientation of $\Gamma$ is not close to $\theta$. }
\label{fig1-5ii}
\end{figure}

We now turn to our main task, which is bounding the density of $W_R$; see Figure \ref{fig1-5ii}. We keep $\theta_0,\theta$ fixed throughout, with $\psi_{\theta\theta_0}<\ep_{\min}$. Let $R>0$ and $n=(\pi(R)\log R)^{K_1}$, with $K_1\geq C_{45}$ to be specified (see \eqref{Rncond}.) Define 
\[
  \ol y_{R,\theta} = L_\theta \cap H_{\theta_0,-R},
\]
the point in $H_{\theta_0,-R}$ from which a $\theta$--ray ``aims at 0.'' Let $B_{\theta_0,{\rm home}}\subset H_{\theta_0,0}$ be the home $\theta_0$--block as in Section \ref{densely}, and define the ``big block'' centered at $\ol y_{R,\theta}$:
\[
    B_{-R,\theta_0,\theta,{\rm start,big}} = \ol y_{R,\theta} + \left( \{-R\} \times \big[-8\sqrt{d}|\ytz|\ep(R)R,8\sqrt{d}|\ytz|\ep(R)R\big]^{d-1} \right)
\]
expressed in $\theta_0$--coordinates. As after Remark \ref{betapowers}, we can define fattened versions $B_{\theta_0,{\rm home}}^{\rm fat}$ and $B_{-R,\theta_0,\theta,{\rm start,big}}^{\rm rfat}$.  We want to show that the $\theta_0$--slab geodesic in the definition of $W_R$ must have starting point in $B_{-R,\theta_0,\theta,{\rm start,big}}^{\rm rfat}$.  To that end we take $a\in H_{\theta_0,-R}^{\rm rfat}\bs B_{-R,\theta_0,\theta,{\rm start,big}}^{\rm rfat}$ and $x\in  B_{\theta_0,{\rm home}}^{\rm fat}$ and let $\alpha = (x-a)/|x-a|$. We will show that $\psi_{\alpha\theta}>\ep(R)$, so that indeed $a$ is not the starting point in question.

Let $\hat a,\hat x$ be the $\theta_0$--projections of $a,x$ into $H_{\theta_0,-R}$ and $H_{\theta_0,0}$, respectively, and let $\hat \alpha = (\hat x - \hat a)/|\hat x-\hat a|$, so $\psi_{\alpha\hat\alpha}\leq c_7/R$. Define intersection points
\[
  p = L_\theta(\hat x)\cap H_{\theta_0,-R}, \quad q = L_\theta(\hat x)\cap H_{\theta,\hat a_1^\theta} \quad\text{so}\quad 
    p-\ol y_{R,\theta} = \hat x,
\]
and note that both the hyperplanes here contain $\hat a$.  By \eqref{linegap2} we have $|p-q|\leq C_{22}\psi_{\theta\theta_0}|\hat a-p|\leq C_{31}\psi_{\theta\theta_0}(|\hat a-\ol y_{R,\theta}| + |y_{R,\theta} - p|)$. Hence the $\theta$--ratio of $\hat x-\hat a$ satisfies
\begin{align}\label{tratio}
  \frac{|(\hat x-\hat a)_2^\theta|}{|\yt| |(\hat x-\hat a)_1^\theta|} &= \frac{|q-\hat a|}{|\hat x-q|} 
    \geq \frac{|\hat a-\ol y_{R,\theta}| - |\ol y_{R,\theta}-p| - |p-q|}{|\hat x-p|+|p-q|} \notag\\
  &\geq \frac{(1-C_{22}\psi_{\theta\theta_0})|\hat a-\ol y_{R,\theta}| - (1+C_{22}\psi_{\theta\theta_0})|\hat x|}{|y_{R,\theta}| +
    C_{22}\psi_{\theta\theta_0}(|\hat a-\ol y_{R,\theta}| + |\hat x|)}.
\end{align}
Since $a\notin B_{-R,\theta_0,\theta,{\rm start,big}}^{\rm rfat}$ we have
\[
  |\hat a-\ol y_{R,\theta}| \geq 8\sqrt{d}|\ytz|\ep(R)R,\quad |\hat x| \leq \sqrt{d-1}n^{-\beta_0}\Delta(R)
\]
and from \eqref{Vgap}
\[
  |\ol y_{R,\theta}| \leq |-R\ytz| + |\ol y_{R,\theta} - (-R\ytz)| \leq R|\ytz|(1 + C_{22}\psi_{\theta\theta_0}).
\]
With \eqref{tratio} these yield
\[
  \frac{|(\hat x-\hat a)_2^\theta|}{|\yt| |(\hat x-\hat a)_1^\theta|} \geq 7\sqrt{d}\ep(R).
\]
so from \eqref{tanvsratio2} (after reducing $\ep_{\min}$ if necessary),
\[
 \tan \psi_{\hat\alpha\theta} \geq \frac 72 \ep(R) \quad\text{and hence}\quad 
   \psi_{\alpha\theta}>\psi_{\hat\alpha\theta} - \psi_{\alpha\hat\alpha} > \ep(R),
\]
as desired, showing the starting point corresponding to $x\in  B_{\theta_0,{\rm home}}^{\rm fat}$ cannot be $a\notin B_{-R,\theta_0,\theta,{\rm start,big}}^{\rm rfat}$.

In the other direction, if $y^*$ is the center of one of the blocks $B_{-R,m}$ intersecting $B_{-R,\theta_0,\theta,{\rm start,big}}$, then
from \eqref{tanvsratio}, the $\theta_0$--ratio of $y^*$ satisfies
\begin{equation}\label{OKangle}
  \frac{|(y^*)_2^{\theta_0}|}{(y^*)_1^{\theta_0}} \leq \frac{8d|\ytz|\ep(R)R + \sqrt{d-1}n^{-\beta_0}\Delta(R)}{R}
    \leq 9d|\ytz|\ep(R) \quad\text{so}\quad \psi_{\theta_0y^*} \leq 36d\ep(R) < \frac{\ep_{\min}}{2},
\end{equation}
which will enable us later to apply Proposition \ref{jammed1}.

Now the number (which we call $M_R$) of $\theta_0$--blocks in $H_{\theta_0,-R}$ which intersect $B_{-R,\theta_0,\theta,{\rm start,big}}$ satisfies
\begin{equation}\label{MR}
  M_R \leq \left( \frac{ 9\sqrt{d}|\ytz|\ep(R)R }{ n^{-\beta_0}\Delta(R) } \right)^{d-1} \leq c_8n^{\beta_0(d-1)}(\log R)^{2(d-1)}.
\end{equation}
We denote these $\theta_0$--blocks as $\{B_{-R,m},m\leq M_R\}$, and define events
\begin{align*}
  A_m: &\text{ there exist $n$ $\theta_0$--slab geodesics from $B_{-R,m}^{\rm rfat}$ to $H_{\theta_0,R}^+$ with}\\
  &\qquad \text{distinct $H_{\theta_0,0}^+$--entry points, all in $B_{\theta_0,{\rm home}}^{\rm fat}$}.
\end{align*}
Then from Proposition \ref{jammed1} and \eqref{MR}, assuming $K_1$ is large, using \eqref{Rncond},
\begin{align}\label{toomany}
  P\Big( |W_R(\theta_0,\theta,\ep_{\min}) &\cap B_{\theta_0,{\rm home}}^{\rm fat}| \geq nM_R\Big) \leq \sum_{m=1}^{M_R} P(A_m) \notag\\
  &\leq M_R \exp\left( -C_{51}\frac{n^{2\beta_1}}{\log R} \right) \leq \exp\left( -\frac{C_{51}}{2}\frac{n^{2\beta_1}}{\log R} \right)
\end{align}
so in view of \eqref{union} and \eqref{nobend},
\begin{align}\label{toomanyE}
  E\Big( |Z_R&(\theta_0,\theta,\tau) \cap B_{\theta_0,{\rm home}}^{\rm fat}| \Big) \notag\\
    &\leq nM_R + |\ZZ^d \cap B_{\theta_0,{\rm home}}^{\rm fat}| 
      \left[ \exp\left( -\frac{C_{51}}{2}\frac{n^{2\beta_1}}{\log R} \right) 
      + \sup_{x \in H_{\theta_0,0}^{\rm fat}\cap\ZZ^d} P(x\in Y_R(\theta_0,\theta,\ep_1)) \right]\notag\\
  &\leq 2nM_R \leq 2c_8(\pi(R)\log R)^{(1+\beta_0(d-1))K_1+2(d-1)}.
\end{align}
Letting $\hat B_{\theta_0,0,t} = \{0\}\times [-t,t]^{d-1}$ (in $\theta_0$--coordinates), and noting that \eqref{toomanyE} is also valid for any translate of $B_{\theta_0,{\rm home}}^{\rm fat}$ in $H_{\theta_0,0}^{\rm fat}$, we can sum \eqref{toomanyE} over such translates to obtain
\begin{equation}\label{supE}
  \limsup_{t\to\infty} \frac{ E\big( |Z_R(\theta_0,\theta,\tau) \cap \hat B_{\theta_0,0,t}^{\rm fat}| \big) }{(2t)^{d-1}} \leq 
    \frac{ E( |Z_R(\theta_0,\theta,\ep_{\min}) \cap B_{\theta_0,{\rm home}}^{\rm fat}| ) }{ (2n^{-\beta_0}\Delta(R))^{d-1} } \leq
    \frac{ c_8(\pi(R)\log R)^{K_2} }{2^{d-2}\Delta(R)^{d-1}},
\end{equation}
where $K_2=(1+2\beta_0(d-1))K_1+2(d-1)$, proving \eqref{crossdens}.

To prove \eqref{combdens}, we proceed similarly but in the definition of $B_{-R,\theta_0,\theta,{\rm start,big}}$ we replace $\ep(R)$ with $\ep_{\min}/72d$, and in place of $W_R,Z_R$ we use
\begin{align*}
  \hat W_R := \hat W_R(\theta_0,\theta,\tau) &:= \Big\{ x \in H_{\theta_0,0}^{\rm fat}\cap\ZZ^d: 
    \text{ for some $\theta\in J(\theta_0,\ep_{\min}/72d)$ there exists a $\theta_0$--slab} \\
  &\qquad\qquad \text{ geodesic from $H_{\theta_0,-R}^-$ to $H_{\theta_0,R}^+$ with $H_{\theta_0,0}^+$--entry point $x$ and } \\
  &\qquad\qquad \text{ initial orientation in } J(\theta,\ep_{\min}/72d) \Big\},\\
  \hat Z_R := \hat Z_R(\theta_0,\theta,\tau) &:= \Big\{ x \in H_{\theta_0,0}^{\rm fat}\cap\ZZ^d: 
    \text{  for some $\theta\in J(\theta_0,\ep_{\min}/72d)$ there exists a 
    $\theta$--ray from} \\
  &\qquad\qquad \text{ $H_{\theta_0,-R}^-$ with $H_{\theta_0,0}^+$--entry point $x$} \Big\},
\end{align*}
which satisfy $\hat Z_R\subset \hat W_R\cup Y_R$.  Then we must also replace $\ep(R)$ with $\ep_{\min}/72d$ in \eqref{MR}.
The modified \eqref{OKangle} still gives $\psi_{\theta_0y^*} \leq \ep_{\min}/2$ so Proposition \ref{jammed1} still applies.
Hence in place of \eqref{toomanyE} we have
\begin{align}\label{MR2}
  E\Big( |\hat Z_R(\theta_0,\theta,\tau) \cap &B_{\theta_0,{\rm home}}^{\rm fat}| \Big) 
    \leq 2nM_R \leq c_9n^{1+\beta_0(d-1)}\left( \frac{R}{\Delta(R)} \right)^{d-1} \notag\\
  &\leq c_{10}(\pi(R)\log R)^{(1+\beta_0(d-1))K_1} \left( \frac{R}{\Delta(R)}\right)^{d-1},
\end{align}
and then in place of \eqref{supE},
\begin{equation}\label{supE2}
  \limsup_{t\to\infty} \frac{ E\big( |\hat Z_R(\theta_0,\theta,\tau) \cap \hat B_{\theta_0,0,t}^{\rm fat}| \big) }{(2t)^{d-1}} \leq 
    \frac{ E( |\hat Z_R(\theta_0,\theta,\tau) \cap B_{\theta_0,{\rm home}}^{\rm fat}| ) }{ (2n^{-\beta_0}\Delta(R))^{d-1} } \leq
    \frac{ c_8(\log R)^{(1+2\beta_0(d-1))K_1} }{2^{d-2}\sigma(R)^{d-1}},
\end{equation}
proving \eqref{combdens} and completing the proof of Theorem \ref{alldim}(ii).

\subsection{Proof of Theorem \ref{hitpoint}.} \label{hitpointpf}
Suppose A1, A2, A3 hold for some $\theta_0,\ep_0$. By changing $\theta_0$ slightly we may assume $\theta_0$ is rationally oriented.  This means there exist vectors $b_j\in H_{\theta_0,0}\cap\ZZ^d,j\leq d-1$, which form a basis for $H_{\theta_0,0}$.  The sites of form $z_{\bdn} =\sum_{j=1}^{d-1} n_jb_j$ with $\bdn=(n_1,\dots,n_{d-1})\in\ZZ^{d-1}$ form a lattice in $H_{\theta_0,0}$ which divides $H_{\theta_0,0}$ into parallelapideds.  Let $\Lambda_{\bdn}$ denote the parallelapided with opposite corners $z_{\bdn},z_{\bdm}$ with $\bdm=(n_1+1,\dots,n_{d-1}+1)$; these parallelapideds tile $H_{\theta_0,0}$.

For $a\neq b$ in $\RR^d$ we write $\alpha_{ab}$ for $(b-a)/|b-a|$.  
Let $u,v$ be as in the theorem statement.
If $d_{\theta_0}(0,\Pi_{uv}) \geq c_1\Delta(|u|)(\log |u|)^2$ then it is easily checked that for $r = |v-u|/|y_{\alpha_{uv}}|$ translating the picture by $u$ we have $D_{\alpha_{uv},r}(-u) \geq c_2\Delta(|u|)^2(\log |u|)^4/\Xi(|u|)^2 \geq c_3(\log |u|)^3$, so from Proposition \ref{transfluct2},
\begin{equation}\label{crookedcost}
  P(0\in\Gamma_{uv}) \leq C_{26}e^{-c_4(\log |u|)^3}, 
\end{equation}
and \eqref{hitpoint2} follows.  Further, if $|u|$ is large and $\psi_{\alpha_{uv},\alpha_{u0}} > \ep_{\min}/8$ then similarly to \eqref{minwhich} we have 
$D_{\alpha_{uv},r}(-u) \geq c_5 \Phi(|u|) \geq c_6(\log |u|)^3$ so again \eqref{crookedcost} holds.
Therefore we need only consider the case where passage of $\Gamma_{uv}$ through 0 involves a ``small transverse fluctuation'' in the sense that
\begin{equation}\label{closeline}
  d_{\theta_0}(0,\Pi_{uv}) < c_1(\log |u|)^2\Delta(|u|) \quad\text{and}\quad 
    \psi_{\alpha_{uv},\alpha_{u0}} \leq \frac{\ep_{\min}}{8}.
  \end{equation}
Let $R = -(u_1^{\theta_0} + \mu\sqrt{d})$, so $R,|u|$ are of the same order.
Similarly to \eqref{crookedcost} we have for $\chi_2>\chi$ and $|u|$ large
\begin{equation}\label{cylcost}
  P\big(y \in\Gamma_{uv} \text{ for some $y\in H_{\theta_0,R}^-$ with $|y-u| \geq c_7(\log |u|)^{2/(1-\chi_2)}$} \big)
    \leq C_{26}e^{-c_6(\log |u|)^2},
\end{equation}
since any such $y$ satisfies $D_{\alpha,r}(y-u)\geq c_9\Phi((\log |u|)^{2/(1-\chi_2)}) \geq c_{10}(\log |u|)^2$, with $\chi_2$ from A3.
Again similarly, we get
\begin{equation}\label{cylcost2}
  P\big(z \in\Gamma_{u0} \text{ for some $z\in H_{\theta_0,0}^+$ with $|z| \geq c_7(\log |u|)^{2/(1-\chi_2)}$} \big)
    \leq C_{26}e^{-c_6(\log |u|)^2}.
\end{equation}

We will bound $P(0\in\Gamma_{uv})$ essentially by considering the expected number of sites $x$ in certain fattened blocks for which the translated event $x\in\Gamma_{x+u,x+v}$ occurs, and relating this expected number to the expected number of certain entry points, which can be bounded using Proposition \ref{jammed1}.   
Note that $x+u \in H_{\theta_0,-R}^{\rm rfat}$ for all $x\in H_{\theta_0,0}^{\rm fat}$.  For $x,z\in H_{\theta_0,0}^{\rm fat}\cap\ZZ^d$ and $y\in H_{\theta_0,-R}^{\rm rfat}$ define events
\begin{align*}
  C_{x,y,z}:\ &x\in\Gamma_{x+u,x+v},\, y \text{ is the last site of $\Gamma_{x+u,x+v}$ in } H_{\theta_0,-R}^-, \\
  &\qquad z \text{ is the first site of $\Gamma_{y,x+v}$ in } H_{\theta_0,0}^+,\\
\end{align*}
\begin{align*}
  F_x = \bigcup &\bigg\{ C_{x,y,z}: y\in H_{\theta_0,-R}^{\rm rfat}\cap \ZZ^d,\, |y-(x+u)| \leq c_7(\log |u|)^{2/(1-\chi_2)},\\
  &\qquad z\in H_{\theta_0,0}^{\rm fat}\cap \ZZ^d, |z-x| \leq c_7(\log |u|)^{2/(1-\chi_2)} \bigg\}, 
\end{align*}
and let
\[
  X_{uv} = \{x\in H_{\theta_0,0}^{\rm fat}\cap\ZZ^d:\tau\in F_x\};
\]
see Figure \ref{fig-thm1-8}.
For a given configuration $\tau$ with $0\in\Gamma_{uv}$, we either have $\tau\in F_0$ (meaning the sites $y,z$ in the event $C_{0,y,z}$ are close to $u,0$ respectively) or one of the events in \eqref{cylcost} or \eqref{cylcost2} occurs.  Therefore
\begin{equation}\label{zeroin}
  P(0\in\Gamma_{uv}) \leq P(F_0) + 2C_{26}e^{-c_6(\log |u|)^2}.
\end{equation}
Define next the random set of entry points
\begin{align*}
  \tilde W_u &:= \Big\{ z\in H_{\theta_0,0}^{\rm fat}\cap \ZZ^d: \text{ for some $x\in H_{\theta_0,0}^{\rm fat}$ and $y\in 
    H_{\theta_0,-R}^{\rm rfat}$ with $|z-x|\leq c_7(\log |u|)^{2/(1-\chi_2)}$} \\
  &\qquad\qquad\text{and $|y-(x+u)|\leq c_7(\log |u|)^{2/(1-\chi_2)}$, there is a $\theta_0$--slab geodesic from $y$ to $H_{\theta_0,R}^+$}\\
  &\qquad\qquad\text{with $H_{\theta_0,0}^+$--entry point $z$} \Big\}.  
\end{align*}
(a variant of $W_R$ in the previous proof.)
If $\tau\in C_{x,y,z}$ for some $C_{x,y,z}\subset F_x$, then $z\in \tilde W_u$.  Let $n=(\pi(u)\log |u|)^K$ with $K$ to be specified, let $\beta_j$ be as in A4, and as in section \ref{densely}, subdivide $H_{\theta_0,-R}$ and $H_{\theta_0,0}$ each into blocks of side $2n^{-\beta_0}\Delta(R)$. Fix a block $\hat B$ of $H_{\theta_0,0}$ and let $\ol z$ be its center; then $\ol z+u\in H_{\theta_0,-R}^{\rm rfat}$. Let $\hat y$ be the $\theta_0$--projection of $\ol z+u$ into $H_{\theta_0,-R}$, let $Q_{\hat B}$ be the block of $H_{\theta_0,-R}$ containing $\hat y$, and let $\ol y$ be the center of $Q_{\hat B}$.  Then $\ol z+u\in Q_{\hat B}^{\rm rfat}$.

\begin{figure}
\includegraphics[width=14cm]{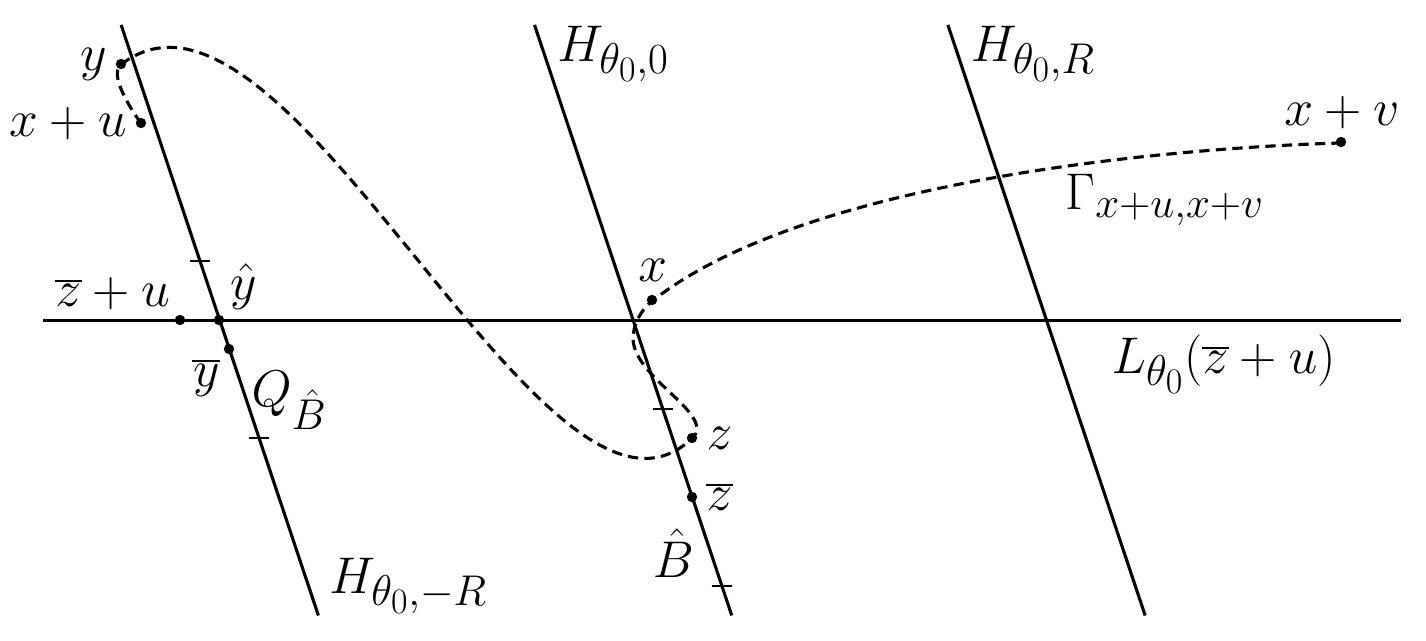}
\caption{ To bound the probability of $0\in\Gamma_{uv}$ we consider sites $x\in H_{\theta_0,0}^{\rm fat}$ for which $x\in\Gamma_{x+u,x+v}$. For such $x$, outside of a small-probability event, there exist $y$ near $x+u$ and $z$ near $x$ for which $z$ is the $H_{\theta_0,0}^+$--entry point of a slab geodesic from $y$ to $H_{\theta_0,R}^+$, as shown. In this case, for the block $\hat B$ with $z\in\hat B^{\rm fat}$, we can translate $\hat B$ by approximately $u$ to obtain the block $Q_{\hat B}$, and the slab geodesic start point
$y$ must lie in a fattened block $B_{-R,z,j}^{\rm rfat}$ close to $Q_{\hat B}$. If there are too many $x$ with $x\in\Gamma_{x+u,x+v}$, then for some block $B_{-R,z,j}^{\rm rfat}$ close to $Q_{\hat B}$, there are many such $H_{\theta_0,0}^+$--entry points $z$ which have some $y\in B_{-R,z,j}^{\rm rfat}$ as corresponding start point, which has small probability by Proposition \ref{jammed1}.}
\label{fig-thm1-8}
\end{figure}

From the definition of $\tilde W_u$, given $z\in \tilde W_u\cap\hat B^{\rm fat}$ there exist $x\in H_{\theta_0,0}^{\rm fat}$ and $y\in H_{\theta_0,-R}^{\rm rfat}$ with $|z-x|\leq c_7(\log |u|)^{2/(1-\chi_2)}$ and $|y-(x+u)|\leq c_7(\log |u|)^{2/(1-\chi_2)}$ such that $\Gamma_{yz}$ is a slab geodesic. Since by \eqref{closeproj} $d_{\theta_0}(z,H_{\theta_0,0}) \leq d$, we have $|z-\ol z| \leq d + \sqrt{d-1}n^{-\beta_0}\Delta(R)$. It follows that, provided $|u|$ (and hence $R$) is large,
\begin{align}\label{yloc}
  d(y,Q_{\hat B}^{\rm rfat}) &\leq |y-(\ol z+u)| \notag\\
  &\leq |y-(x+u)| + |(x+u)-(z+u)| + |(z+u)-(\ol z+u)| \notag \\
  &\leq 2c_7(\log |u|)^{2/(1-\chi_2)} + d + \sqrt{d-1}n^{-\beta_0}\Delta(R) \notag\\
  &\leq 2\sqrt{d-1}n^{-\beta_0}\Delta(R).
\end{align}
Let $\{B_{-R,z,j},j\leq N_d\}$ be those blocks $B$ of $H_{\theta_0,-R}$ satisfying $d(Q_{\hat B}^{\rm rfat},B)\leq 2\sqrt{d-1}n^{-\beta_0}\Delta(R)$; the number $N_d$ of such blocks depends only on $d$, and by \eqref{yloc}, $y$ must be in one of these blocks (backwards-fattened.)  Suppose now that $|\tilde W_u\cap\hat B^{\rm fat}| \geq N_dn$; then some $B_{-R,z,j}^{\rm rfat}$ with $j\leq N_d$ contains the starting points $y$ of at least $n$ of the corresponding $\theta_0$--slab geodesics from the definition of $\tilde W_u$. Let $\ol y_j$ be the center of $B_{-R,z,j}^{\rm rfat}$ and $\zeta_j = (\ol z-\ol y_j)/|\ol z-\ol y_j|$. To apply Proposition \ref{jammed1} we need to bound $\psi_{\theta_0\zeta_j}$; for this we will use
\begin{equation}\label{psis}
  \psi_{\theta_0\zeta_j} \leq \psi_{\theta_0,\alpha_{uv}} + \psi_{\alpha_{uv},\alpha_{u0}} + \psi_{\alpha_{u0},\alpha_{zy}}
    + \psi_{\alpha_{zy},\alpha_{\ol z\ol y_j}}
\end{equation}
The first two angles on the right are bounded by an assumption of the theorem and by \eqref{closeline}, so we will bound the last two.  From the bounds on $|z-x|$ and $|y-(x+u)|$ in the definition of $\tilde W_u$, provided $|u|$ is large we have $\psi_{\alpha_{u0},\alpha_{zy}} \leq c_{11}(\log |u|)^{2/(1-\chi_2)}/|u| \leq \ep_{\min}/8$. Since the pairs $y,\ol y_j$ and $z,\ol z$ each lie in the same (forwards or backwards) fattened block, we have
$\psi_{\alpha_{zy},\alpha_{\ol z\ol y_j}} \leq c_{12}n^{-\beta_0}\Delta(R)/R < \ep_{\min}/8$, again provided $|u|$ (and hence $R$) is large.  Provided we take $\ep_4\leq\ep_{\min}/8$ we thus obtain from \eqref{psis} that $\psi_{\theta_0\zeta_j} < \ep_4 + 3\ep_{\min}/8 < \ep_{\min}/2$, and then from Proposition \ref{jammed1} and \eqref{Rncond} that, provided $K$ is large enough,
\[
  P(|\tilde W_u\cap\hat B^{\rm fat}| \geq N_dn) \leq N_d\exp\left(-C_{51}\frac{n^{2\beta_1}}{\log R} \right)
    \leq e^{-c_{13}(\log |u|)^2},
\]
and therefore, recalling $R,|u|$ are of the same order,
\[
  E\Big( |\tilde W_u\cap\hat B^{\rm fat}| \Big) \leq N_dn + c_{14}(n^{-\beta_0}\Delta(R))^{d-1}e^{-c_{13}(\log |u|)^2} \leq 2N_dn
    =2N_d(\pi(u)\log |u|)^K.
\]
Now $x\in X_{uv}$ implies $z\in \tilde W_u$ for some $z$ with $|z-x| \leq c_7(\log |u|)^{2/(1-\chi_2)}$, so we have 
\begin{equation}\label{Xuvsize}
  E\Big( |X_{uv}\cap\hat B^{\rm fat}| \Big) \leq c_{15}(\log |u|)^{2(d-1)/(1-\chi_2)}E\Big( |\tilde W_u\cap\hat B^{\rm fat}| \Big)
   \leq c_{16}(\pi(u)\log |u|)^{K+2(d-1)/(1-\chi_2)}.
\end{equation}
Observe that $P(x\in X_{uv}) = P(F_x)$ has period $\Lambda_{\bdz}$, in the sense that it takes the same value at $x$ and $x+b_j$ for all $x$ and all $j\leq d-1$.  Therefore, letting $q=|\{\bdn: \Lambda_{\bdn} \subset \hat B\}|$, we have from \eqref{Xuvsize}
\begin{equation}\label{PFxbound}
  \sup_x P(F_x) \leq \sum_{x\in\Lambda_{\bdz}} P(F_x) = E\Big( |X_{uv}\cap\Lambda_{\bdz}^{\rm fat}| \Big) 
    \leq \frac 1q E\Big( |X_{uv}\cap\hat B^{\rm fat}| \Big) \leq \frac{c_{16}}{q} (\pi(u)\log |u|)^{K+2(d-1)/(1-\chi_2)}.
\end{equation}
Provided $|u|$ (and hence $\hat B$) is large, we have $q|\Lambda_{\bdz}| \geq |\hat B|/2$ so 
\[
  q \geq \frac{c_{17}}{|\Lambda_{\bdz}|} n^{-(d-1)\beta_0} \Delta(R)^{d-1} \geq c_{18} (\pi(u)\log |u|)^{-(d-1)\beta_0K}\Delta(|u|)^{d-1}
\]
which with \eqref{PFxbound} shows that, provided $K$ is large,
\begin{equation}\label{PFx2}
  \sup_x P(F_x) \leq \frac{c_{19}(\pi(u)\log |u|)^{dK}}{\Delta(|u|)^{d-1}}.
\end{equation}
This and \eqref{zeroin} complete the proof.

\section{Nonexistence of bigeodesics--proof}\label{nobigeo}

In this section we prove parts (iii) and (iv)(c) of Theorem \ref{alldim}. We begin with (iii); the main idea is that all bigeodesics as in (iii) are $\theta$--rays in one direction and $(-\theta)$--rays in the other, for some $\theta\in J(\theta_0,\ep_2)$, and the crossing density of such bigeodesics is bounded above by $\ol\rho_{J(\theta_0,\ep_2),R}$ for all $R$, up to a small error term.

We may assume $\theta_0$ is rationally oriented.  Suppose A2, A3 hold for some $\theta_0,\ep_0$, fix $\ep<\ep_0$ to be specified, and suppose 
\begin{equation}\label{biexist}
  P\left(\text{there exists a bigeodesic containing a subsequential $\theta$--ray for some $\theta\in J(\theta_0,\ep)$} \right) > 0.
\end{equation}
Let $\Gamma = (x_i,i\in \ZZ)$ be such a bigeodesic; 
we may assume the labeling is such that subsequential $\theta$--ray is $(x_i,i\geq 0)$.  By Proposition \ref{rayfluct}(iii), $(x_i,i\geq 0)$ is a $\theta$--ray.  We write $\Gamma[x_j,\infty)$ for the $\theta$--ray $(x_i,i\geq j)$ and $\Gamma(-\infty,x_j]$ for the geodesic ray $(x_i,i\leq j)$.

We claim $(x_{-i},i\geq 0)$ is a $(-\theta)$--ray for every such $\theta$ and $\Gamma$, a.s.  If not, some such $\theta$--ray is a subsequential $\varphi$--ray for some $\varphi\neq-\theta$, so there exists $i_k\to\infty$ for which $x_{-i_k}/|x_{-i_k}|\to\varphi$.  Letting $r_k = |x_0-x_{-i_k}|$, the distance from $x_0$ to the ray $\{x_{-i_k}+t\theta:t\geq 0\}$ is then of order $\psi_{\varphi,-\theta}r_k$, and
it is easily checked that we therefore have for all sufficiently large $k$
\begin{equation}\label{Dmin}
  \sup_{u\in \Gamma[x_{-i_k},\infty)} D_\theta(u-x_{-i_k}) \geq D_\theta(x_0-x_{-i_k}) 
    \geq c_1\frac{(\psi_{\varphi,-\theta}r_k)^2}{\Xi(r_k)^2} \geq c_2\psi_{\varphi,-\theta}^2\frac{r_k}{\sigma(r_k)\log r_k}.
\end{equation}
But for the events
\begin{align*}
  F_{\delta,j}: &\text{ for some $x\in \ZZ^d$ with $2^{j-1}<|x|\leq 2^j$ and some $\theta\in J(\theta_0,\ep)$ with } \\
  &\qquad\text{ $D_\theta(-x)\geq c_3\delta^2\frac{|x|}{\sigma(|x|)\log |x|}$, there is a $\theta$--ray from $x$ containing 0,  }
\end{align*}
we have from Proposition \ref{rayfluct} that
\begin{equation}\label{nobend2}
  P(F_{\delta,j}) \leq c_42^{dj} e^{-c_5\delta^22^j/j\sigma(2^j)} \quad\text{so}\quad P(F_{\delta,j} \text{ i.o.~in }j) = 0.
\end{equation}
The same is true if we translate the events by $x_0$, replacing 0 with $x_0$ and $|x|$ with $|x-x_0|$, so this proves the claim that $(x_{-i},i\geq 0)$ is a $(-\theta)$--ray.
$\Gamma$ is thus a $\theta$--bigeodesic, by which we mean a  bigeodesic which is a $\theta$--ray in one direction, and a $(-\theta)$--ray in the opposite direction.  We have shown that with probability 1, every bigeodesic $\Gamma$ containing a subsequential $\theta$--ray for some $\theta\in J(\theta_0,\ep)$ is actually a $\theta$--bigeodesic, so it has a well-defined entry point $x_{\theta_0,0}''(\Gamma)$ in $H_{\theta_0,0}^+$. Let 
\[
  \mC_{J(\theta_0,\ep),0}^{\rm bi}(A) = \Big\{x\in A: x = x_{\theta_0,0}''(\Gamma) \text{ for some $\theta\in J(\theta_0,\ep)$ and 
    $\theta$--bigeodesic } \Gamma \Big\}.
\]
We may consider the ``largest $\theta_0$--backtrack after $x_{\theta_0,0}''(\Gamma)$'', or more precisely, the value
\[
  R_0(\Gamma) = \min\{u_1^{\theta_0}: u\in \Gamma[x_{\theta_0,0}''(\Gamma),\infty)\} \wedge 0,
\]
which by the preceding is finite for every $\theta$--bigeodesic $\Gamma$ with $\theta \in J(\theta_0,\ep)$.  

Before continuing let us recall that the mean (combined) $H_{\theta_0,R}$--crossing density is defined in \eqref{rhobar} and \eqref{rhobarJ} by counting entry points in $H_{\theta_0,R}^+$ of geodesic rays having only their initial site in $H_{\theta_0,0}^-$. We could equally well consider a ``shift by $R$'': entry points in $H_{\theta_0,0}^+$ of geodesic rays having only their initial site in $H_{\theta_0,-R}^-$. It is enough to consider $R$ for which $H_{\theta_0,-R}$ (and therefore also $H_{\theta_0,R}$) contains a lattice point; by stationarity, for such $R$ the shift by $R$ does not alter the crossing density.
For $R>0$ we split $\mC_{J(\theta_0,\ep),0}^{\rm bi}(A)$ into a ``large--backtrack'' set
\[
  \mC_{J(\theta_0,\ep),0}^{{\rm bi},R+}(A) = \Big\{x\in A: x = x_{\theta_0,0}''(\Gamma) \text{ for some $\theta\in J(\theta_0,\ep)$ and 
    $\theta$--bigeodesic $\Gamma$ with $R_0(\Gamma)\geq R$} \Big\}
\]
and a small--backtrack set $\mC_{J(\theta_0,\ep),0}^{{\rm bi},R-}(A) = \mC_{J(\theta_0,\ep),0}^{\rm bi}(A) \bs \mC_{J(\theta_0,\ep),0}^{{\rm bi},R+}(A)$.
For $x\in\mC_{J(\theta_0,\ep),0}^{{\rm bi},R-}(A)$, there exists a $\theta$--bigeodesic $\Gamma$ with
$R>|R_0(\Gamma)|$ and with $H_{\theta_0,0}^+$--entry point $x$, and letting $y_{-R}(\Gamma)$ be the last point of $\Gamma$ in $H_{\theta_0,-R}^-$, we have that $\Gamma[y_{-R}(\Gamma),\infty)$ is a halfspace $\theta$--ray from $H_{\theta_0,-R}^-$ with the same $H_{\theta_0,0}^+$--entry point $x$. 
Now the mean density of bigeodesic entry points, given by
\[
  \ol\rho_{J(\theta_0,\ep),0}^{\rm bi} = \limsup_{r\to\infty} \frac{E\big(| \mC_{J(\theta_0,\ep),0}^{\rm bi}(H_{\theta_0,0}^{\rm fat} \cap B_r(0))|\big)} 
    {\Vol_{d-1}(H_{\theta_0,0} \cap B_r(0)) }
\]
satisfies 
\[
  \ol\rho_{J(\theta_0,\ep),0}^{\rm bi} \leq \limsup_{r\to\infty} 
    \frac{E\big(| \mC_{J(\theta_0,\ep),0}^{{\rm bi},R-}(H_{\theta_0,0}^{\rm fat} \cap B_r(0))|\big)} {\Vol_{d-1}(H_{\theta_0,0} \cap B_r(0)) }
    + \limsup_{r\to\infty} 
    \frac{E\big(| \mC_{J(\theta_0,\ep),0}^{{\rm bi},R+}(H_{\theta_0,0}^{\rm fat} \cap B_r(0))|\big)} {\Vol_{d-1}(H_{\theta_0,0} \cap B_r(0)) },
\]
for all $R>0$, and, by the preceding remark about $x\in\mC_{J(\theta_0,\ep),0}^{{\rm bi},R-}(A)$, the first lim sup on the right is bounded above by $\ol\rho_{J(\theta_0,\ep),R}$. The second lim sup is bounded by
\[
  c_6P\Big(\text{for some $\theta\in J(\theta_0,\ep)$ there exists a $\theta$--ray from 0 which intersects $H_{\theta_0,-R}$}\Big),
\]
which (cf.~Lemma \ref{badbehavG2G7}) is readily shown to approach 0 as $R\to\infty$. Since $R$ is arbitrary, provided $\ep\leq\ep_2$ it then follows from Theorem \ref{alldim}(ii) that $ \ol\rho_{J(\theta_0,\ep),0}^{\rm bi}=0$.  Since $\theta_0$ is rationally oriented, periodicity of $P(x \in \mC_{J(\theta_0,\ep),0}^{\rm bi}(H_{\theta_0,0}^{\rm fat}))$ then means that we have $P(x \in \mC_{J(\theta_0,\ep),0}^{\rm bi}(H_{\theta_0,0}^{\rm fat})) = 0$ for all sites $x \in H_{\theta_0,0}^{\rm fat}$, which proves Theorem \ref{alldim}(iii).  Then (iv)(c) follows from (iii) and compactness of $S^{d-1}$.

\section{Coalescence time bounds--proof}\label{coal}
In this section we prove Theorem \ref{coalesce}.  We start with the upper bound on $P( (U_{xy}^\theta)_1^\theta \geq r )$, as the lower bound is much simpler.

Let $\ep_3 = \min(\ep_0/2,\ep_2/2,\ep_6$), where $\ep_2$ is from Theorem \ref{alldim} and $\ep_6$ from Lemma \ref{coordchg}. Let $\theta\in J(\theta_0,\ep_3)$ and let $x,y$ be $\theta$--start sites with second coordinates $x_2<y_2$.  Now $x,y$ do not necessarily lie in the hyperplane $H_{\theta,0}$, and our first task is to effectively reduce to the case where they do so lie, for technical convenience.
Provided $|y-x|$ is large, there exists $\tth$ with $\psi_{\theta\tth}<\ep_3$ (and thus $\psi_{\theta_0\tth}<2\ep_3$) for which $y-x\in H_{\tth,0}$, so $y-x$ and 0 are both $\tilde\theta$--start sites.
We may assume $y-x$ makes an angle of at least $\pi/4$ with the horizontal axis; then there is exactly one $\tth$--start site $z_k$ at each integer height $k$.  We may also assume $(y-x)_2^\theta>0$. We have $x_1^\theta,y_1^\theta \in [-\mu\sqrt{d},0]$, and we observe that provided $\ep_3$ is small, if $(U_{0,y-x}^\theta)_1^{\theta} \geq r$ and $(U_{0,y-x}^\theta)_2^\theta < (U_{0,y-x}^\theta)_1^\theta$, then $(U_{0,y-x}^\theta)_1^{\tth} \geq r/2$.  Using \eqref{minwhich} and Proposition \ref{rayfluct}(ii) we obtain from this that for large $r$,
\begin{align}\label{tvstilde}
  P( (U_{xy}^\theta)_1^\theta \geq r ) &= P( (U_{0,y-x}^\theta)_1^\theta \geq r-x_1^\theta ) \notag\\
  &\leq P\left( (U_{0,y-x}^\theta)_1^{\tth} \geq \frac r2 \right) + 
    P\Big( (U_{0,y-x}^\theta)_2^\theta \geq (U_{0,y-x}^\theta)_1^\theta \geq r \Big) \notag\\
  &\leq P\left( (U_{0,y-x}^\theta)_1^{\tth} \geq \frac r2 \right) + 
    P\Big( \sup_{u\in \Gamma_0^\theta} D_\theta(u) \geq \Phi(r) \Big) \notag\\
  &\leq P\left( (U_{0,y-x}^\theta)_1^{\tth} \geq \frac r2 \right) + c_1e^{-c_2\Phi(r)}.
\end{align}
It follows that we may assume $x=0$ and consider $(U_{0y}^\theta)_1^{\tth}$ with $y\in H_{\tth,0},\,y_2>0$, and $C_{52}\leq |y|\leq \Delta(r)=r^\xi\nu_\Delta(r)$.

For points $u,v\in H_{\tth,0}$ we use notation $[u,v]$ for the interval from $u$ to $v$ in $H_{\tth,0}$; we call such an interval an $H$--\emph{interval}.
For each $\tth$--start site $z$ let $\bar z$ be its projection horizontally into $H_{\tth,0}$. 
(Throughout this section, ``projection'' will mean horizontal projection into $H_{\tth,0}$, unless stated otherwise.)

Recall $\theta$--sources as defined after \eqref{thdecomp}, and (since $d=2$ here) the unique $\theta$--ray $\Gamma_z^\theta$ from a site $z$. Let $\mZ_{\tth}$ denote the set of all $\tth$--start sites, and $\mS_{\tth,\theta} \subset \mZ_{\tth}$ the (random) subset which are $\theta$--sources. Let $r>1$, and for each $z\in \mZ_{\tth}$ let $V_z$ be the last site of $\Gamma_z^\theta$ in $H_{\tth,0}^-$ 
(so necessarily $V_z\in\mS_{\tth,\theta}$)
and let $W_z$ be the $H_{\tth,r}^+$--entry point of $\Gamma_{V_z}^\theta$. (See Figure \ref{fig1-7A}.)  A $(\tth,\theta,r)$--\emph{gap} is an open $H$--interval $I$ in $H_{\tth,0}$ with the properties that (i) $I$ contains no projected $\theta$--source $\ol V_z$, and (ii) the endpoints $\ol v,\ol w$ of $I$ are projections of sources $v,w\in \mS_{\tth,\theta}$ with $W_v\neq W_w$.  A $(\tth,\theta,r)$--\emph{entry interval} is the closed interval between two successive $(\tth,\theta,r)$--gaps. It then follows from planarity of $\ZZ^2$ that any two $\theta$--sources $v,w$ satisfy $W_v=W_w$ if and only if $\ol v, \ol w$ lie in the same $(\tth,\theta,r)$--entry interval; thus the gaps separate those groups of halfspace $\theta$--rays from $H_{\tth,0}^-$ which coalesce before crossing $H_{\tth,r}$.  Equivalently,  
\begin{equation}\label{gapmeaning}
  v,w \in \mS_{\tth,\theta},\ (U_{vw}^\theta)_1^{\tth} \geq r \implies \text{ there is a $(\tth,\theta,r)$--gap between $v$ and $w$}.
\end{equation}
(It should be emphasized that this is only true for $\theta$--sources, not general $\tth$--start sites.)

\begin{figure}
\includegraphics[width=14cm]{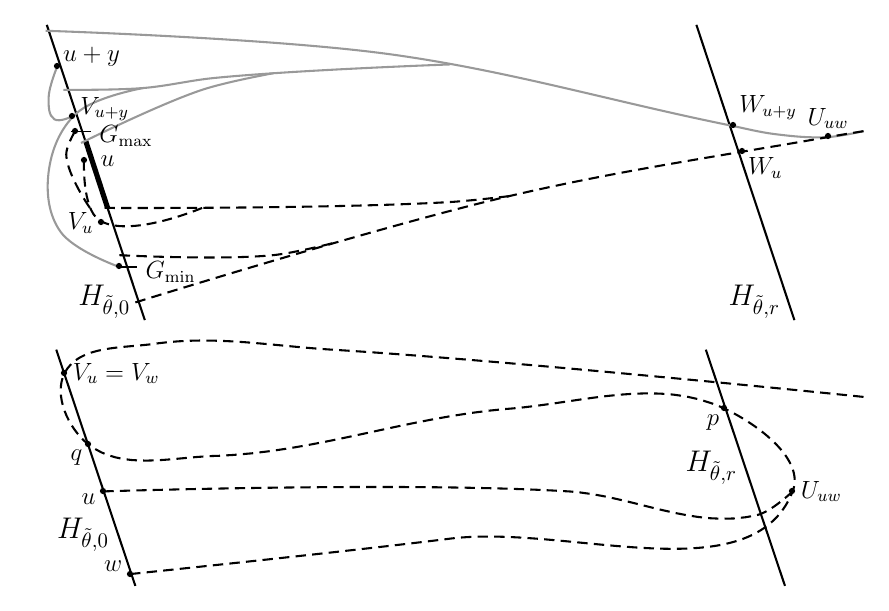}
\caption{ {\it Top}: The gray $\theta$--rays share $W_{u+y}$ as their common $H_{\tilde\theta,r}^+$--entry point; the dashed ones similarly share $W_u$. The thick segment in $H_{\tilde\theta,0}$ is the $(\tilde\theta,\theta,r)$--gap $G_{u,u+y}$, separating those gray and dashed $\theta$--rays which are halfspace $\theta$--rays. The hash marks on $H_{\tilde\theta,0}$ show the corresponding enlarged $(\tilde\theta,\theta,r)$--gap $(G_{\min},G_{\max})$; the lowest start point of a gray geodesic is at $G_{\min}$, and the highest start point of a dashed one is at $G_{\max}$.  {\it Bottom}: The event $A_{uw}$.  The $H_{\tilde\theta,r}^+$--entry points from $u$ and $w$ are different, but $V_u=V_w$. }
\label{fig1-7A}
\end{figure}

We note that $\tth$--start sites are periodic in the sense that $u$ is a $\tth$--start site if and only if $u+y$ is one. (This is why we arranged to have $y\in H_{\tth,0}$.)  We now consider translates of the events in \eqref{tvstilde} corresponding to $\tth$--start sites $u,u+y$ in place of $0,y$. We have for all $u\in \mZ_{\tth}$:
\[
  P\Big( (U_{u,u+y}^\theta)_1^{\tth} \geq r \Big) = P\Big( (U_{0y}^\theta)_1^{\tth} \geq r-u_1^{\tth} \Big)
    \geq P\Big( (U_{0y}^\theta)_1^{\tth} \geq r + \mu\sqrt{d} \Big),
\]
and therefore, averaging over a period,
\begin{equation}\label{peravg}
  P\Big( (U_{0y}^\theta)_1^{\tth} \geq r + \mu\sqrt{d} \Big) \leq \frac{1}{y_2}\sum_{k=0}^{y_2-1} 
    P\Big( (U_{z_k,z_k+y}^\theta)_1^{\tth} \geq r \Big).
\end{equation}

For $\tth$--start sites $u,w$, it is possible that $(U_{uw}^\theta)_1^{\tth} \geq r$ and the continuation $\Gamma_u^\theta\cap\Gamma_w^\theta = \Gamma_{U_{uw}}^\theta$ after coalescence backtracks to visit $H_{\tth,0}^-$, so that $V_u=V_w$; let us call this event $A_{uw}$.  See again Figure \ref{fig1-7A}. Given $\tau\in A_{uw}$ let $q$ be first point of $\Gamma_{U_{uw}}^\theta$ in $H_{\tth,0}^-$, and $p$ the last point of $\Gamma_{U_{uw}}^\theta$ before $q$ with $p\in H_{\tth,r}^+$.  Then $(p-u)_1^{\tth}\in [r,r+2\mu\sqrt{d}]$.  If $|(p-u)_2^{\tth}|\geq 2r$ then by Lemma \ref{coordchg} we also have $|(p-u)_2^\theta|\geq |(p-u)_1^\theta|\geq r/2$ so $D_\theta(p-u) \geq \Phi(r/2)$.  If instead $|(p-u)_2^{\tth}|< 2r$ then we use the readily-verified fact that for such $p$ we have $D_\theta(q-p) \geq \Phi(r/2)$ for all $q\in H_{\tth,0}^-$. 
Let $K_r = \lfloor \log_2 r \rfloor$ and define corresponding events for these two situations:
\[
  A_u^\prime: \text{ there exist $q,p$ with $|p-u|\leq (3+|y_{\tth}|)r,\ q\in\Gamma_p^\theta,$ and $D_\theta(q-p) \geq \Phi(r/2)$},
\]
\[
  A_u^{\prime\prime}: \text{ there exists $p\in\Gamma_u^\theta$ with $D_\theta(p-u) \geq \Phi(r/2)$},
\]
so that from Proposition \ref{rayfluct}(ii), taking $w=u+y$,
\begin{align}\label{bigback}
  P(A_{u,u+y}) &\leq P(A_u^\prime) +P(A_u^{\prime\prime}) \leq c_3r^{2d} e^{-C_{27}\Phi(r/2)} + C_{26}e^{-C_{27}\Phi(r/2)}
    \leq e^{-C_{27}\Phi(r/2)/2}.
\end{align}
It remains to consider the ``main'' event that $(U_{u,u+y}^\theta)_1^{\tth} \geq r$ but $\tau\notin A_{u,u+y}$.  In this case, for $\theta$--sources $V_u,V_{u+y}$, coalescence of $\Gamma_{V_u}^\theta,\Gamma_{V_{u+y}}^\theta$ occurs in $H_{\tth,r}^+$, so by \eqref{gapmeaning} there must be a $(\tth,\theta,r)$--gap between $\ol V_u$ and $\ol V_{u+y}$; let $G_{u,u+y}$ be longest such gap, breaking ties arbitrarily.  
There are 2  cases to consider:
\begin{itemize}
\item[(1)] $|\ol u - \ol V_u| \geq (\log r)^{c_4}$ or $|\ol{u+y} - \ol V_{u+y}| \geq (\log r)^{c_4}$
\item[(2)]  $\max(d(\ol u,G_{u,u+y}),d(\ol{u+y},G_{u,u+y})) \leq |y| + 
  2\max(|\ol u - \ol V_u|,|\ol{u+y} - \ol V_{u+y}|) \leq |y| + 2(\log r)^{c_4}$,
\end{itemize}
where $c_4$ is chosen so $\Phi((\log r)^{c_4}) \geq (\log r)^2$.
In case (1) if $|\ol u - \ol V_u| \geq (\log r)^{c_4}$ then we have $V_u\in\Gamma_u^\theta$ and $D_\theta(V_u-u)\geq \frac 12 \Phi((\log r)^{c_4})$, and similarly for $u+y$ in place of $u$. Therefore by Proposition \ref{rayfluct}(ii) we have
\begin{equation}\label{noback1}
  P\Big( (U_{u,u+y}^\theta)_1^{\tth} \geq r, \tau\notin A_{u,u+y}, \text{ and Case (1) holds}\Big) 
    \leq C_{26}e^{-C_{27}\Phi((\log r)^{c_4})} \leq C_{26}e^{-C_{27}(\log r)^2}.
\end{equation}
Case (2) is more complicated.  We have
\[
  |G_{u,u+y}| \leq |\ol V_u - \ol V_{u+y}| \leq |y| + 2(\log r)^{c_4};
\]
we call any $(\tth,\theta,r)$--gap $G$ \emph{short} if $|G|\leq |y| + 2(\log r)^{c_4}$.
Then
\begin{align}\label{noback2}
  P\Big( (U_{u,u+y}^\theta)_1^{\tth} &\geq r, \tau\notin A_{u,u+y}, \text{ and Case (2) holds}\Big) \notag\\
  &\leq P\Big( d(\ol u,G) \leq |y| + 2(\log r)^{c_4} \text{ for some short $(\tth,\theta,r)$--gap } G \Big).
\end{align}
We now take $u=z_k$ and consider the average as in \eqref{peravg}.
Define events
\[
  Q_k: (U_{z_k,z_k+y}^\theta)_1^{\tth} \geq r, \tau\notin A_{z_k,z_k+y},
\]
\[
  R_k: d(\ol z_k,G) \leq |y| + 2(\log r)^{c_4} \text{ for some short $(\tth,\theta,r)$--gap $G$ in $H_{\tth,0}$}.
\]
From \eqref{bigback}, \eqref{noback1}, and \eqref{noback2},
\begin{align}\label{average}
  P\Big( (U_{0y}^\theta)_1^{\tth} \geq r + \mu\sqrt{d} \Big) \leq \frac{1}{y_2}&\sum_{k=0}^{y_2-1} [P(Q_k) + P(A_{z_k,z_k+y})] \leq \frac{1}{y_2}\sum_{k=0}^{y_2-1} P(R_k) + 2C_{26}e^{-C_{27}(\log r)^2}.
\end{align}

We need to bound the average on the right in \eqref{average}. Let $J_t=\{y\in H_{\tth,0}: |y|\leq t\}$ and let $N_t$ be the number of short $(\tth,\theta,r)$--gaps $G$ intersecting $J_t$.  Each corresponding $(\tth,\theta,r)$--entry interval (between two such gaps) contains the projection of a $\theta$--source, and the halfspace $\theta$--rays from these sources have different $H_{\tth,r}^+$--entry points for each $(\tth,\theta,r)$--entry interval.  It then follows using Theorem \ref{alldim}(ii) that for $\rho_\theta(r)$ from \eqref{rho}, 
\begin{equation}\label{Ntrate}
  \limsup_{t\to\infty} \frac{N_t}{2t} \leq \rho_\theta(r) \leq \frac{\nu_2(r)}{r^\xi }.
\end{equation}
We now use periodicity as in the proof of Theorem \ref{hitpoint}.  By the multidimensional ergodic theorem (see \cite{Ge88}, Appendix 14.A), we have 
\begin{equation}\label{ergodic}
  \frac{1}{y_2}\sum_{k=0}^{y_2-1} P(R_k) = \lim_{m\to\infty} \frac{1}{2m+1}\sum_{k=-m}^m 1_{R_k},\ \  \text{a.s.}
\end{equation}
Now assuming $|y|$ is large,
\begin{align}
  \sum_{k=-m}^m 1_{R_k} &\leq 
    \left| \big\{ u\in \mZ_{\tth}: |u_2|\leq m, d(\ol u,G) \leq |y| + 2(\log r)^{c_4} 
    \text{ for some short $(\tth,\theta,r)$--gap } G \big\} \right| \notag\\
  &\leq N_{m|y|/y_2}(3|y| + 6(\log r)^{c_4}) \leq N_{m|y|/y_2}|y|(\log r)^{c_4} \notag
\end{align}
so
\[
  \frac{1}{2m+1}\sum_{k=-m}^m 1_{R_k} \leq \frac{|y|}{y_2}\, \frac{N_{m|y|/y_2}}{2m|y|/y_2}\, \frac{2m}{2m+1}
    |y| (\log r)^{c_4}.
\]
With \eqref{Ntrate} and \eqref{ergodic} this yields
\[
  \frac{1}{y_2}\sum_{k=0}^{y_2-1} P(R_k) 
    \leq c_5 \frac{\nu_2(r)(\log r)^{c_4}|y| } {r^\xi}.
\]
Combining this with \eqref{average} we obtain
\[
  P( (U_{0y}^{\tth})_1^\theta \geq r + \mu\sqrt{d} )\leq c_6 \frac{\nu_2(r)(\log r)^{c_4}|y| } {r^\xi},
\]
which establishes the upper bound in \eqref{coaltime}.

Turning to the lower bound, we need to consider the fact that $z$ and $V_z$ may lie on opposite sides of a gap.  Given a $(\tth,\theta,r)$--gap $G$, we let $G_{\max},G_{\min}$ denote respectively the highest (lowest) projected $\tth$--start site $\ol z$ for which $\ol V_z$ lies below (above) $G$. It is not necessarily true that $G_{\min}$ is below $G_{\max}$, but $G_{\max}$ is in or above $G$, $G_{\min}$ is in or below $G$, and $G_{\max}$ is at most one vertical unit below $G_{\min}$.  It is easily seen that for every $\tth$--start site $z$ we have 
\begin{equation}\label{zVz}
  D_\theta(V_z-z) \geq c_7\Phi(|\ol z - \ol V_z|);
\end{equation} 
it then follows readily from Proposition \ref{rayfluct}(ii) that $G_{\min}, G_{\max}$ always exist for all $G$, a.s.
We call $G^+ := G\cup (G_{\min},G_{\max})$ an \emph{enlarged $(\tth,\theta,r)$--gap}.  We say an enlarged $(\tth,\theta,r)$--gap $G^+$ is \emph{semi-short} if 
$|G^+|\leq |y| + (\log r)^{c_4}$, and \emph{very long} otherwise. A key observation is that 
\begin{equation}\label{jump}
  \text{one of $z= G_{\min},G_{\max}$ must satisfy $|\ol z - \ol V_z| \geq \frac 12 |G^+|-1$.}
\end{equation}

\begin{figure}
\includegraphics[width=14cm]{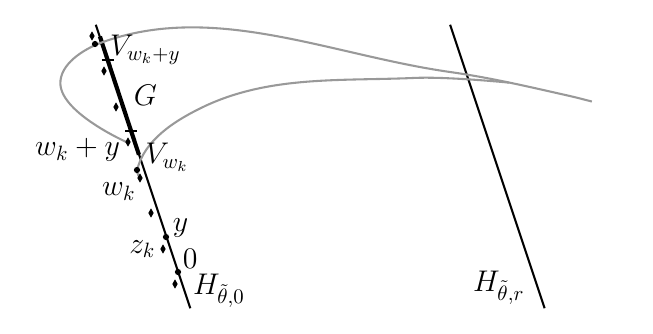}
\caption{ The gray curves are $\theta$--rays. The gap $G$ is marked by the hash marks on $H_{\tilde\theta,0}$; the enlarged gap $G^+$ is the thickened part of $H_{\tilde\theta,0}$. We fix a height $k\in[0,y_2)$; the row of diamonds to the left of $H_{\tilde\theta,0}$ are the sites $z_k+iy, i\in\ZZ$. $w_k$ is the lowest such site for which $V_{w_k}$ and $V_{w_k+y}$ lie on opposite sides of $G$. There is such a site $w_k$ for each $k\in[0,y_2)$, creating $y_2$ occurrences of events $F_j$.  Necessarily at least one of $\ol w_k,\ol{w_k+y}$ lies in $G^+$.  }
\label{fig1-7C}
\end{figure}

Let $a_r=\Delta(r)\log r$ and define overlapping intervals (much longer than a semi--short gap) in $H_{\tth,0}$:
\[
  I_j = \left[ ja_r, (j+2)a_r \right],
\]
and define events
\[
  Y_j: \text{ $I_j$ intersects a semi-short enlarged $(\tth,\theta,r)$--gap $G^+$}.
\]
Suppose $I_j$ intersects a semi-short enlarged $(\tth,\theta,r)$--gap $G^+$, for some gap $G$.  For each $0\leq k<y_2$, consider the points $\{z_k+iy:i\in\ZZ\}$; let $w_k$ be the lowest such point (i.e.~with smallest $i$) for which $V_{z_k+(i-1)y}$ and $V_{z_k+iy}$ are on opposite sides of $G$, noting that no point $V_{z_k+iy}$ can lie inside $G$. Then $w_k\in G^+$, and the points $\{w_k: 0\leq k<y_2\}$ are all distinct.  This shows that
\[
  \Big| \{u\in \mZ_{\tth}: \ol u \in G^+, V_u \text{ and $V_{u+y}$ are on opposite sides of $G$}\} \Big| \geq y_2;
\]
see Figure \ref{fig1-7C}. Now, any given semi-short enlarged $(\tth,\theta,r)$--gap $G^+$ intersects at least one and at most two $H$-intervals $I_j$. It follows that for the events
\[
  F_k: \text{ $V_{z_k}$ and $V_{z_k+y}$ are on opposite sides of some $(\tth,\theta,r)$--gap $G$ for which $G^+$ is semi-short}
\]
we have
\[
  \frac{y_2}{2} \sum_{j=-\ell}^\ell 1_{Y_j} \leq \sum_{k=-\ell\Delta(r)\log r}^{(\ell+2)\Delta(r)\log r} 1_{F_k},
\]
and hence by the ergodic theorem, dividing by $(2\ell+1)\Delta(r)\log r$ and letting $\ell\to\infty$ gives
\begin{align}\label{Y0Fk}
  \frac{y_2}{2\Delta(r)\log r} P(Y_0) &\leq \frac{1}{y_2} \sum_{k=0}^{y_2-1} P(F_k) \notag\\
  &\leq \frac{1}{y_2}\sum_{k=0}^{y_2-1} \left[ P\left( (U_{z_k,z_k+y}^\theta)_1^{\tth} \geq \frac r2 \right)
    + P\left( \Gamma_{W_{z_k}}^\theta \cap H_{\tth,r/2}^- \neq \emptyset \right) \right] \notag\\
  &\leq P\left( (U_{0y}^\theta)_1^{\tth} \geq \frac r2 \right) + c_8c^{-c_9\Phi(r)}.
\end{align}
Here the second inequality reflects that when $\tau\in F_k$, the $H_{\tth,r}^+$--entry points $W_{z_k} \neq W_{z_k+y}$, so either coalescence occurs in $H_{\tth,r/2}^+$ or both geodesics $\Gamma_{z_k}^\theta,\Gamma_{z_k+y}^\theta$ backtrack to $H_{\tth,r/2}^-$ after entering $H_{\tth,r}^+$.  The third inequality reflects that (i) the first probability on the second line is maximized over $k$ when $z_k,z_k+y$ lie in $H_{\tth,0}$, and (ii) the second probability on the second line can be bounded as in \eqref{bigback}.
(Also in \eqref{Y0Fk}, for technical convenience in applying the ergodic theorem, we have assumed $\Delta(r)\log r$ is an integer multiple of $y_2$, ensuring $P(Y_j)$ is the same for all $j$. The added technicalities without this assumption are tedious but straightforward, using our assumption $|y|\leq\Delta(r)$.)

\begin{figure}
\includegraphics[width=14cm]{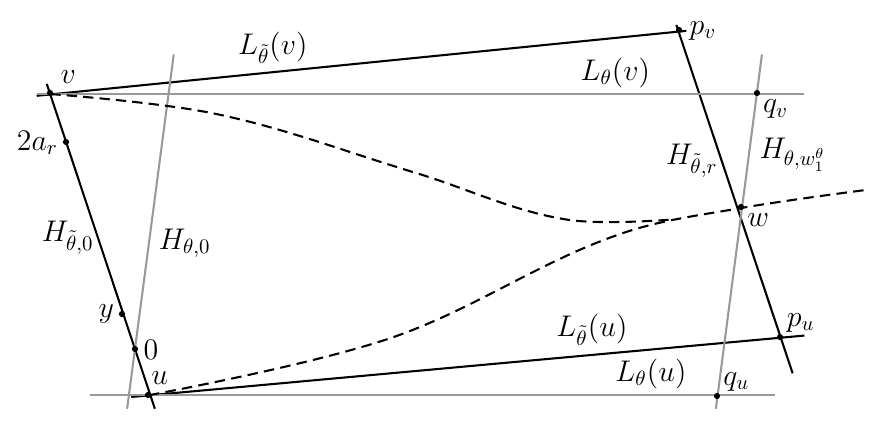}
\caption{ The event that $L_0=[0,2a_r]$ intersects no enlarged $(\tth,\theta,r)$--gap. Because $|v-u|\approx |p_v-p_u|$ is large, the common $H_{\tilde\theta,r}^+$--entry point $w$ from $u$ and $v$ must be far from either $p_u$ or $p_v$. This remains true when we shift slightly from $\tilde\theta$--coordinates (black lines) to $\theta$--coordinates (gray lines), so $\Gamma_u^\theta$ or $\Gamma_v^\theta$ must make a large transverse fluctuation to pass through $w$.  }
\label{fig1-7B}
\end{figure}

Next we show $P(Y_0)$ is near 1; with \eqref{Y0Fk} this will complete the proof of the lower bound in \eqref{coaltime}.  We have
\begin{align}\label{gaptype}
  P(Y_0^c) &\leq P(\text{$I_0$ intersects no enlarged $(\tth,\theta,r)$--gap}) \notag\\
  &\qquad + P(\text{$I_0$ intersects a very long enlarged $(\tth,\theta,r)$--gap}).
\end{align}
Let us consider the first probability on the right of \eqref{gaptype}.
Suppose $I_0$ intersects no enlarged $(\tth,\theta,r)$--gap; then $I_0$ is contained in some $(\tth,\theta,r)$--entry interval $[\ol u,\ol v]$.  See Figure \ref{fig1-7B}. This means $u,v$ are $\theta$--sources with $|\ol u-\ol v| \geq 2\Delta(r)\log r$, and  with $W_u=W_v=w$ for some $w$. Now the angle between $\tth$ and $H_{\tth,r}$ is at least $\pi/4$ by \eqref{yzangle}, and it follows by straightforward geometry that the points $p_u = L_{\tth}(u)\cap H_{\tth,r}$ and $p_v = L_{\tth}(v)\cap H_{\tth,r}$ satisfy
\[
  \Big| |p_u-p_v| - |\ol u-\ol v| \Big| \leq \sqrt{2}|u-\ol u| + \sqrt{2}|v-\ol v| \leq 2\sqrt{2}.
\]
Provided $\psi_{\theta\tth}$ is small, when we change the angle to $\theta$ and consider $q_u = L_{\theta}(u)\cap H_{\theta,w_1^\theta}$ and $q_v = L_{\theta}(v)\cap H_{\theta,w_1^\theta}$, we have via straightforward use of \eqref{linegap2} and \eqref{Vgap} that
\[
  w_1^\theta \geq \frac r2, \quad |q_u - q_v| \geq \frac 34 |p_u-p_v| \geq \frac 12 |\ol u - \ol v|.
\]
Since $w,q_u,q_v$ all lie in $H_{\theta,w_1^\theta}$ follows that 
\[
  \max((w-u)_2^\theta,(w-v)_2^\theta) = \max(|w-q_u|,|w-q_v|) \geq \frac 14 |\ol u - \ol v|. 
\]
We may assume the first entry in the maximum is the larger one.  Then using \eqref{minwhich}, for some $c_{10},c_{11}$,
\[
  D_\theta(w-u) \geq D_*(|\ol u - \ol v|) := \begin{cases} 
    \dfrac{c_{10}|\ol u - \ol v|}{\sigma^*(|\ol u-\ol v|)\log |\ol u - \ol v|} &\text{if } |\ol u - \ol v| \geq r,\\
    \hskip 0.4in \dfrac{c_{11}|\ol u-\ol v|^2}{r\sigma(r)\log r} &\text{if } |\ol u - \ol v|<r. \end{cases}
\]
Let $\hat K_r = \lfloor \log_2( 2\Delta(r)\log r ) \rfloor$, and define events
\begin{align}\label{nogap}
  E_k: &\text{ for some $u,v\in \mZ_{\tth}$ with $I_0\subset [\ol u, \ol v]$ and $2^k < |\ol u - \ol v| \leq 2^{k+1}$, } \sup_{w\in \Gamma_u^\theta} D_\theta(w-u) \geq D_*(2^k).
\end{align}
The preceding together with Proposition \ref{rayfluct}(ii) then shows that
\begin{align}\label{nogap2}
  P(&\text{$I_0$ intersects no enlarged $(\tth,\theta,r)$--gap}) \leq \sum_{k=\hat K_r}^\infty P(E_k) \notag\\
  &\hskip 0.5in \leq \sum_{k=\hat K_r}^\infty c_{12} 2^{2k} \exp\left( -C_{36}D_*(2^k)\log D_*(2^k) \right)
    \leq e^{-c_{13}(\log r)(\log\log r)}.
\end{align}

We now turn to the last probability in \eqref{gaptype}.  Suppose $I_0$ intersects a very long enlarged $(\tth,\theta,r)$--gap $(\ol f,\ol g)$. Then by \eqref{jump}, one of $z=f$ or $g$ satisfies $|\ol z - \ol V_z| \geq \frac 13 |\ol f - \ol g| \geq \frac 13 (|y| + (\log r)^{c_4})$
and $D_\theta(V_z-z) \geq c_{14}\Phi(|\ol z - \ol V_z|)$.  Letting $K(r,y)=\lfloor \log_2(\frac 13 (|y| + (\log r)^{c_4})) \rfloor$ and defining events
\[
  M_k: \text{ for some $\tth$--start site $z$ with $d(\ol z,I_0) \leq 2^k$ we have 
    $D_\theta(V_z-z) \geq c_{14}\Phi(2^{k-1}/3)$},
\]
we see that if $2^{k-1}<|\ol f - \ol g| \leq 2^k$ then $\tau\in M_k$.  It follows using Proposition \ref{rayfluct}(ii) that 
\begin{align}\label{sidestep}
  P(\text{$I_0$ intersects a very long enlarged $(\tth,\theta,r)$--gap}) &\leq \sum_{k=K(r,y)}^\infty P(M_k) \notag\\
  &\leq c_{15} \sum_{k=K(r,y)}^\infty (\Delta(r)\log r + 2^{k+1})e^{-c_{16}\Phi(2^k)\log \Phi(2^k)} \notag\\
  &\leq c_{17} \Delta(r) (\log r) e^{-c_{18}\Phi((\log r)^{c_4})\log \Phi((\log r)^{c_4})} \notag \\
  &\leq e^{-c_{18}(\log r)(\log\log r)/2}.
\end{align}
With \eqref{gaptype} and \eqref{nogap} this shows that $P(Y_0)\geq 1/2$; with \eqref{Y0Fk} this completes the proof of the lower bound in \eqref{coaltime}.

The general outline of this proof, and in particular the use of gaps, is analogous to (\cite{BSS19}, Section 6), with added complications due to the undirected nature of paths here, which means not all start sites are sources, and backtracking may occur after coalescence.

\begin{appendices} 
\section{Appendix. Proofs--basic bad geodesic behavior}\label{basic}
We prove Propositions \ref{transfluct2} and \ref{transTincr}.

\subsection{Proof of Proposition \ref{transfluct2}} \label{3.6pf}
It is enough to consider $t$ sufficiently large (not depending on $r,\theta$.)
Let $U=(U_1^\theta,U_2^\theta)$ be the site which maximizes $D_{\theta,r}(\cdot)$ over $\Gamma_{0,r\yt}$, with ties broken arbitrarily, and let $C=D_{\theta,r}(U)\geq t$ be the corresponding maximum value.  By monotonicity of $\Phi$, $U$ must lie on the boundary of $\{u:D_{\theta,r}(u)\leq C\}$.  From symmetry we may assume $U_1^\theta\leq r/2$.  
We show there exists $W\in\Gamma_{0,r\yt}$ such that $0,U,W$ form a $\delta$-fat triangle for appropriate $\delta$.  
Fix $\kappa$ large enough so that, for $C_3$ from \eqref{powerlike},
\begin{align}\label{kappa}
  \kappa \geq &\frac{2\sqrt{d}(1+|y_\theta|)}{|y_\theta|\wedge 1},
    \quad \frac{C_3}{\kappa^{(1-\chi_2)/2}} \frac{\log(2+\kappa s)}{\log(2+s)} \leq 1 \ \text{for all } s\geq 0, \notag\\
  &\quad\text{and}\quad \kappa^{-(1-\chi)/4} \leq \left( \frac{\ep_0}{4} \wedge \frac{1}{4\sqrt{d}(1+|\yt|)C_3^{1/2}} \right)|\yt|.
\end{align}
Note that from \eqref{sigmaineq}, \eqref{powerlike}, and \eqref{kappa}, given $\delta>0$, provided we take $\kappa$ sufficiently large,
\begin{equation}\label{Xikappa}
  \left( \frac{\Xi(\kappa s)}{\kappa} \right)^2 = 
    \frac{s}{\kappa} \sigma(\kappa s)\log(2+\kappa s) \leq \frac{C_3}{\kappa^{1-\chi_2}}s\sigma(s)\log(2+\kappa s)
    \leq \delta \Xi(s)^2.
\end{equation}

There are four cases; see Figure \ref{fig2-4}.

{\it Case 1:} Suppose that $U_1^\theta \leq \Phi^{-1}(C)\leq r/2\kappa$. Then $U$ lies on the boundary of the 0-cylinder $\{u: \Phi(|u|_{\theta,\infty})\leq C\}$.  Let $\ol W$ be the first point of $\Gamma_{0,r\yt}$ after $U$ with $\ol W_1^\theta = \kappa\Phi^{-1}(C)$, and let $W$ be the first site in $\Gamma_{0,r\yt}$ after $\ol W$.  Then $\ol W,W$ must lie in the ``tube'' portion of $E_{\theta,r,C}$, meaning $|W_2^\theta|\leq C^{1/2}\Xi(W_1^\theta)$. Provided $t$ (and hence $C$ and $W_1^\theta$) are large, from \eqref{XiPhi}, \eqref{XiPhi2}, and \eqref{kappa}, the $\theta$--ratio of $\ol W$ satisfies 
\begin{equation}\label{maxslope}
  \frac{|\ol W_2^\theta|}{\ol W_1^\theta} \leq \frac{C^{1/2}\Xi(\ol W_1^\theta)}{\ol W_1^\theta} \leq \left( \frac{C}{\Phi(\ol W_1^\theta)} \right)^{1/2} 
    = \left( \frac{C}{\Phi(\kappa\Phi^{-1}(c))} \right)^{1/2} \leq \kappa^{-(1-\chi)/4} \leq \frac{\ep_0|y_\theta|}{4},
\end{equation}
and hence by \eqref{tanvsratio} and \eqref{kappa},
\begin{equation}\label{maxslope2}
  \tan \psi_{\theta \ol W} \leq \frac{2}{|y_\theta|} \kappa^{-(1-\chi)/4} \leq \frac{\ep_0}{2},
\end{equation}
so $\psi_{\theta\ol W} < \ep_0$.
Now $\min\{d_\theta(u,\Pi_{0,r\yt}): u_1^\theta \leq \Phi^{-1}(C), D_{\theta,r}(u)=C\}$ is achieved on $\mathbb{S}_\theta(C)$ so is equal to $C^{1/2}\Xi(\Phi^{-1}(C))$. Hence by \eqref{dvsdth},
\[
  d(U,\Pi_{0,r\yt}) \geq \frac{1}{d^{1/2}} C^{1/2}\Xi(\Phi^{-1}(C)).
\]
On the other hand, letting $V$ denote the closest point to $U$ in $\Pi_{0\ol W}$, we have $|V|\leq |U|\leq (1+|y_\theta|)\Phi^{-1}(C)$, so from \eqref{XiPhi}, \eqref{kappa}, and \eqref{maxslope2}, 
\begin{align*}
  d(V,\Pi_{0,r\yt}) &\leq |V|\tan \psi_{\theta \ol W} \leq \frac{2(1+|\yt|)}{|y_\theta|} \kappa^{-(1-\chi)/4}\Phi^{-1}(C) \\
  &\leq \frac{2(1+|\yt|)C_3^{1/2}}{|y_\theta|} \kappa^{-(1-\chi)/4}C^{1/2}\Xi(\Phi^{-1}(C)) \leq \frac{1}{2d^{1/2}} C^{1/2}\Xi(\Phi^{-1}(C)),
\end{align*}
so using \eqref{XiPhi} again, we must have
\begin{equation}\label{dUPi}
  d(U,\Pi_{0\ol W}) = d(U,V) \geq \frac{1}{2d^{1/2}} C^{1/2}\Xi(\Phi^{-1}(C)) \geq \frac{1}{2d^{1/2}} \Phi^{-1}(C).
\end{equation}
Now in $\theta$--coordinates $\ol W/\kappa=(\Phi^{-1}(C),\kappa^{-1}W_2^\theta)_\theta$, and from \eqref{Xikappa} and \eqref{XiPhi},
\begin{equation}\label{lineheight}
  \frac{|\ol W_2^\theta|}{\kappa} \leq \frac{C^{1/2}\Xi(\kappa\Phi^{-1}(C))}{\kappa} \leq C^{1/2}\Xi(\Phi^{-1}(C)) \leq \Phi^{-1}(C),
\end{equation}
so $\ol W/\kappa$ lies in the boundary (inside end) of the 0-cylinder of $E_{\theta,r,C}$, and hence by \eqref{cylbdry},
\begin{equation}\label{Wsize}
  \frac{|y_\theta| \wedge 1}{\sqrt{d}} \Phi^{-1}(C) \leq \frac{|\ol W|}{\kappa} \leq (1+|y_\theta|)\Phi^{-1}(C).
  \end{equation}
\begin{figure}
\includegraphics[width=16cm]{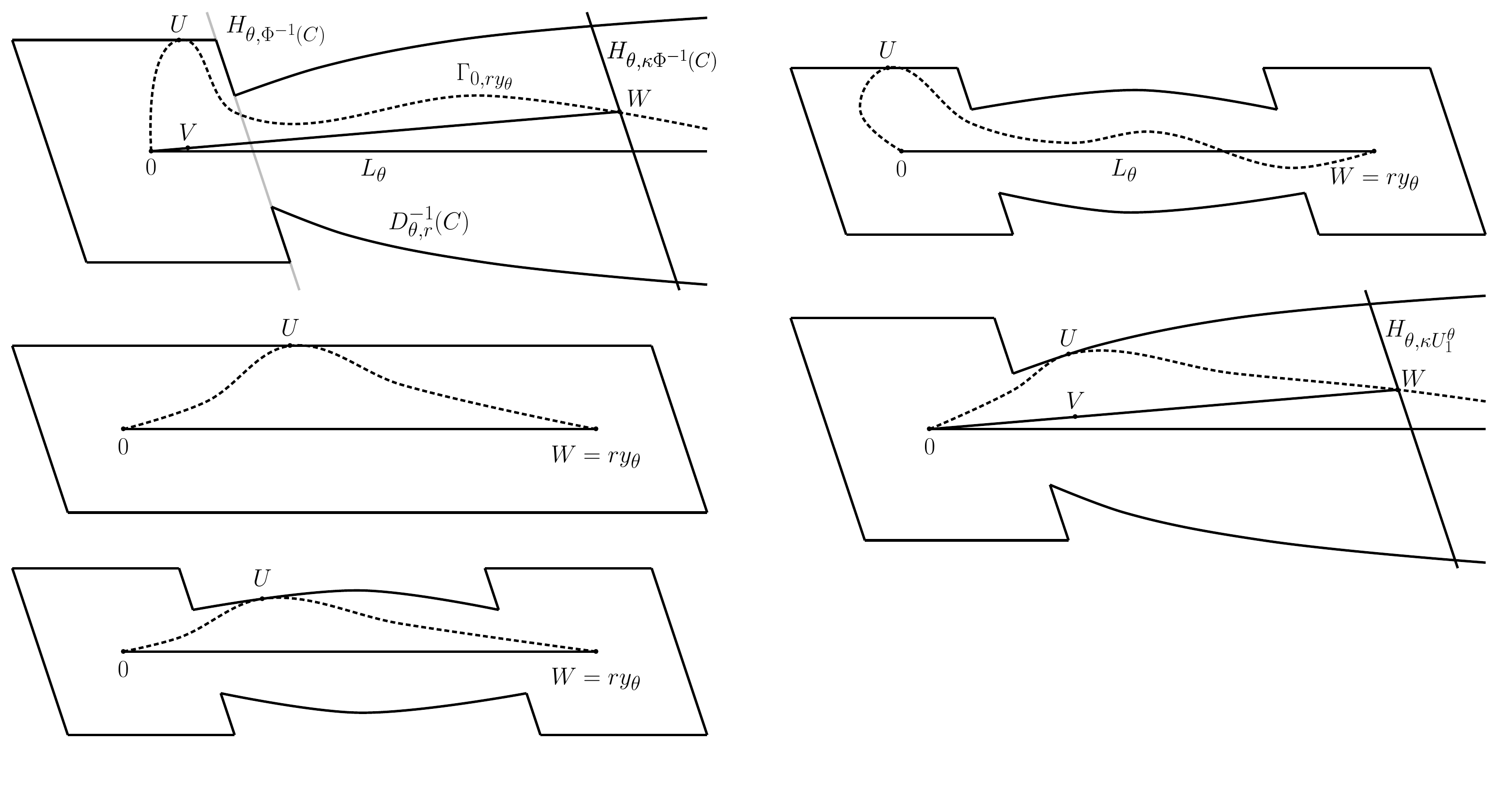}
\caption{ Illustrations for the proof of Proposition \ref{transfluct2}.  Top row: Cases 1 and 2a.  Second row: Cases 2b and 3.  Bottom row: Case 4. }
\label{fig2-4}
\end{figure}
From \eqref{dUPi} and the second inequality in \eqref{Wsize} we obtain
\[
  d(U,\Pi_{0\ol W}) \geq \frac{1}{2\kappa d^{1/2}(1+|y_\theta|)} |\ol W|.
\]
Since $|W-\ol W|\leq 1$ it is then straightforward that  
\[
  d(U,\Pi_{0,W}) \geq \frac{1}{4\kappa d^{1/2}(1+|y_\theta|)} |W|,
\]
meaning $0,U,W$ form a $\delta$--fat triangle for $\delta = (4\kappa d^{1/2}(1+|y_\theta|))^{-1}$.
Then from Proposition \ref{hmu} and Lemma \ref{gtri},
\begin{equation}\label{extradist2}
  h(U) + h(W-U) - h(W) \geq g(U) + g(W-U) - g(W) - C_{16}\sigma(|w|)\log |w| \geq c_1|W|.
\end{equation}
Let $K_{t,1}$ satisfy
\begin{equation}\label{Kt}
  2^{K_{t,1}-2} \leq \frac{\kappa (|y_\theta|\wedge 1)\Phi^{-1}(t)}{\sqrt{d}} 
    \leq \frac 32 \frac{\kappa (|y_\theta|\wedge 1)\Phi^{-1}(t)}{\sqrt{d}} \leq 2^{K_{t,1}},
\end{equation}
so from \eqref{Wsize}, since $|W-\ol W|\leq 1$,
\[
  2^{K_{t,1}-3} \leq  \frac{\kappa (|y_\theta|\wedge 1)\Phi^{-1}(t)}{2\sqrt{d}} 
    \leq \frac{\kappa (|y_\theta|\wedge 1)|\Phi^{-1}(C)}{2\sqrt{d}} \leq \frac{|\ol W|}{2} \leq |W|. 
\]
Since $U$ lies in the boundary of the 0-cylinder of $E_{\theta,r,C}$, as in \eqref{Wsize} we then have using \eqref{kappa}
\[
  |U| \leq (1+|y_\theta|)\Phi^{-1}(C) \leq \frac{2\sqrt{d}(1+|y_\theta|)}{\kappa (|y_\theta|\wedge 1)}|W| \leq |W|.
\]

Consider now the events
\begin{align*}
  A_k: &\text{ there exist $u,w\in \ZZ^d$ with } 2^{k-3}<|w|\leq 2^k, |u|\leq |w|, \notag\\
  &\qquad h(u) + h(w-u) - h(w) \geq c_1|w|, \text{ and } T(0,u)+T(u,w)=T(0,w).
\end{align*}
We have by \eqref{extradist2} that
\begin{equation}\label{sumkc1}
  P\left( \sup_{u\in\Gamma_{0,ry_\theta}} D_{\theta,r}(u) \geq t \text{ and Case 1 holds} \right) \leq 2\sum_{k=K_{t,1}}^\infty P(A_k).
\end{equation}
Here the factor of 2 accounts for the fact we assumed $U_1^\theta \leq r/2$.  Let $A_k(u,w)$ denote the event that one of the following holds:
\begin{align}\label{split3}
  h(u) - T(0,u) \geq \frac{c_1}{3}|w|,\quad h(w-u) - T(u,w)\geq \frac{c_1}{3}|w|,\quad T(0,w) - h(w) \geq \frac{c_1}{3}|w|.
\end{align}
For $u,w$ as in the event $A_k$, one of these inequalities must hold, so $A_k \subset \cup_{u,w} A_k(u,w)$, where the union is over $u,w$ as in the definition of $A_k$.  For each such $u,w$ we have from \eqref{expbd} that 
\[
  P(A_k(u,w)) \leq 12\exp\Big( -c_1|w|/3\sigma\big(\max(|u|,|w-u|,|w|)\big) \Big)
    \leq 12\exp\left( -c_1 2^{k-3}/3\sigma(2^{k+1}) \right).
\]
Summing the last bound over $u,w,k$ and using \eqref{Kt}, \eqref{sumkc1} yields
\begin{align}\label{Aksum}
  P\left( \sup_{u\in\Gamma_{0,ry_\theta}} D_{\theta,r}(u) \geq t \text{ and Case 1 holds} \right) &\leq 2\sum_{k=K_{t,1}}^\infty P(A_k) \notag\\
  &\leq \sum_{k=K_{t,1}}^\infty c_2 2^{2dk}\exp\left( -c_3 2^k/\sigma(2^k) \right) \notag\\
  &\leq c_4 \exp\Big( -c_5 \Phi^{-1}(t)/\sigma(\Phi^{-1}(t)) \Big) \notag\\
  &\leq c_4 e^{-c_6 t\log t}.
\end{align}

{\it Case 2:} Suppose $U_1^\theta\leq \Phi^{-1}(C)$ and $\Phi^{-1}(C) > r/2\kappa$. (The latter means ``the cylinder is not small relative to the tube.'') Then $U$ again lies on the boundary of the 0-cylinder in $E_{\theta,r,C}$, but this time we take $W=ry_\theta$.  We consider two subcases.

{\it Case 2a:} Suppose Case 2 holds with $\Phi^{-1}(C) \leq r/2$.  This means the tube is not completely contained inside the two cylinders; this can only occur when $\Phi^{-1}(t)\leq r/2$. Using \eqref{cylbdry} we then have
\[
  \frac{1\wedge |y_\theta|}{2\kappa\sqrt{d}} r < \frac{1\wedge |y_\theta|}{\sqrt{d}} \Phi^{-1}(C) \leq |U| 
    \leq (1 + |y_\theta|)\Phi^{-1}(C) \leq \frac{1+|y_\theta|}{2}r,
\]
and as in Case 1, using \eqref{XiPhi}, since here $\Pi_{0W}\subset L_\theta$,
\[
  d(U,\Pi_{0W}) \geq d(U,L_\theta) \geq \frac{1}{\sqrt{d}} C^{1/2}\Xi(\Phi^{-1}(C)) \geq \frac{1}{(C_3d)^{1/2}}\Phi^{-1}(C) 
    \geq c_7 |y_\theta| r = c_7 |W|.
\]
Thus $0,U,W$ form a $c_7 $-fat triangle.
Define the event
\begin{align*}
  B: &\text{ there exists $u\in \ZZ^d$ with } \frac{1\wedge |y_\theta|}{2\kappa\sqrt{d}} r < |u| \leq \frac{1+|y_\theta|}{2}r, \notag\\
  &\qquad h(u) + h(ry_\theta - u) - h(ry_\theta) \geq c_8 r, \text{ and } T(0,u)+T(u,ry_\theta)=T(0,ry_\theta).
\end{align*}
Similarly to Case 1 we have using $\Phi^{-1}(t)\leq r/2$ that
\begin{equation}\label{sumkc2a}
  P\left( \sup_{u\in\Gamma_{0,ry_\theta}} D_{\theta,r}(u) \geq t \text{ and Case 2a holds} \right) 
    \leq 2P(B) \leq c_9 r^d e^{-c_{10}r/\sigma(r)}
    \leq c_9 e^{-c_{11}t\log t}.
\end{equation}

{\it Case 2b:} Suppose Case 2 holds with $\Phi^{-1}(C) > r/2$.  This means the two cylinders contain the entire tube, and here we need only consider $\Phi^{-1}(t)\geq r/2$.  This is generally similar to Case 1, except that we do not know $|U|\leq |W|=r|y_\theta|$.  

Analogously to \eqref{cylbdry} we do have $d(U,\Pi_{0,ry_\theta}) \geq (|y_\theta|\wedge 1)\Phi^{-1}(t)/\sqrt{d} \geq c_{12}r$, so as in \eqref{extradist2},
\[
  h(U) + h(ry_\theta-U) - h(ry_\theta) \geq c_{13}|U|.
\]
We also know from \eqref{cylbdry} that $|U| \geq c_{14}\Phi^{-1}(t)$.
Instead of the events $A_k$ we use
\begin{align*}
  B_k: &\text{ there exists $u\in\ZZ^d$ with } 2^{k-1}<|u|\leq 2^k, \notag\\
  &\qquad h(u) + h(ry_\theta-u) - h(ry_\theta) \geq c_{13}|u|, \text{ and } T(0,u)+T(u,ry_\theta)=T(0,ry_\theta),
\end{align*}
and we define $K_{t,2}$ by
\[
  2^{K_{t,2}-1} < c_{14}\Phi^{-1}(t) \leq 2^{K_{t,2}},
\]
so that similarly to \eqref{Aksum},
\begin{align}\label{sumkc2b}
  P\left( \sup_{u\in\Gamma_{0,re_1}} D_r(u) \geq t \text{ and Case 2b holds} \right) &\leq 2\sum_{k=\hat K_{t,2}}^\infty P(B_k) \notag\\
  &\leq 2\sum_{k=\hat K_{t,2}}^\infty c_{15} 2^{dk} \exp\left( -c_{16}2^k/\sigma(2^k) \right) \notag\\
  &\leq c_{17}e^{-c_{18}t\log t}.
\end{align}

{\it Case 3:} Suppose $\Phi^{-1}(C)<U_1^\theta \leq r/2\kappa$, meaning that $U$ lies on the tube boundary $\{u:|u_2^\theta|=C^{1/2}\Xi(u_1^\theta)\}$ near the 0 end (but outside the 0-cylinder.)  Similarly to Case 1, let $\ol W$ be the first point of $\Gamma_{0,ry_\theta}$ after $U$ with $\ol W_1^\theta = \kappa U_1^\theta$, and let $W$ be the first site in $\Gamma_{0,ry_\theta}$ after $\ol W$.  Then using \eqref{XiPhi} and \eqref{Xikappa},
\begin{align}\label{Wmin}
  |\ol W| &\geq |\ol W_1^\theta y_\theta| - |\ol W_2^\theta| \geq \kappa |y_\theta| U_1^\theta - C^{1/2}\Xi(\kappa U_1^\theta) \notag\\
  &\geq \kappa |y_\theta| U_1^\theta - \frac{\kappa |y_\theta|}{2}C^{1/2}\Xi(U_1^\theta)
  \geq \frac{\kappa |y_\theta|}{2} U_1^\theta > \frac{\kappa |y_\theta|}{2} \Phi^{-1}(C),
\end{align}
and the $\theta$-ratio of $U$ satisfies
\begin{equation}\label{Uratio}
  \frac{|U_2^\theta|}{U_1^\theta} = \frac{C^{1/2}\Xi(U_1^\theta)}{U_1^\theta} \leq \frac{C^{1/2}}{\Phi(U_1^\theta)^{1/2}} <1
\end{equation}
so using \eqref{Wmin} without the last inequality, provided $\kappa$ is large,
\begin{equation}\label{UW}
  |U| \leq (1+|y_\theta|)U_1^\theta \leq \frac{|\ol W|}{2} \leq |W|.
\end{equation}
As in Case 1 let $V$ be the closest point to $U$ in $\Pi_{0\ol W}$, so $|V|\leq |U|$.
From \eqref{kappa} and \eqref{Uratio}, the $\theta$--ratio of $\ol W$ satisfies
\begin{equation}\label{Wratio}
  \frac{|\ol W_2^\theta|}{\ol W_1^\theta} \leq \frac{C^{1/2}\Xi(\kappa U_1^\theta)}{\kappa U_1^\theta} 
    \leq \kappa^{-(1-\chi)/4} \frac{C^{1/2}\Xi(U_1^\theta)}{U_1^\theta} 
    \leq \kappa^{-(1-\chi)/4} < \frac{|y_\theta|}{2},
\end{equation}
so by \eqref{tanvsratio}, \eqref{Xikappa}, \eqref{UW}, and the first inequality in \eqref{Wratio},
\begin{equation}\label{dVL}
  d(V,L_\theta) \leq |V| \tan \psi_{\theta\ol W} \leq |U| \tan \psi_{\theta\ol W} 
    \leq \frac{2}{|y_\theta|}\frac{C^{1/2}\Xi(\kappa U_1^\theta)|U|}{\kappa U_1^\theta}
     \leq \frac{1}{2\sqrt{d}} C^{1/2}\Xi(U_1^\theta).
\end{equation}
On the other hand, we have from \eqref{dvsdth} that
\[
  d_\theta(U,L_\theta) = C^{1/2}\Xi(U_1^\theta) \quad\text{so}\quad d(U,L_\theta) \geq \frac{1}{\sqrt{d}} C^{1/2}\Xi(U_1^\theta),
\]
which with \eqref{dVL} shows
\[
  d(U,\Pi_{0\ol W}) = |U-V| \geq \frac{1}{2\sqrt{d}} C^{1/2}\Xi(U_1^\theta).
\]

Thus $0,U,\ol W$ form a $\delta$-fat triangle with 
\[
  \delta = \frac{C^{1/2}\Xi(U_1^\theta)}{2\sqrt{d}|\ol W|}.
\]
From \eqref{Wratio} and \eqref{magsize} we have
\begin{equation}\label{WvsW1}
    \frac 12 |y_\theta|\kappa U_1^\theta \leq \frac 12 |y_\theta|W_1^\theta \leq |\ol W| \leq 2|y_\theta|W_1^\theta,
\end{equation}
so from \eqref{XiPhi}, provided $\kappa$ is large this $\delta$ satisfies
\[
  \delta \leq \frac{\Phi(U_1^\theta)^{1/2}\Xi(U_1^\theta)}{\kappa|y_\theta|\sqrt{d}U_1^\theta} \leq \frac{1}{\kappa|y_\theta|\sqrt{d}} < 1.
\]
Hence from Lemma \ref{gtri}, \eqref{Wratio}, and \eqref{XiPhi} we have
\begin{align*}
  g(U) + g(\ol W-U) - g(\ol W) &\geq C_{25}\frac{C\Xi(\ol W_1^\theta/\kappa)^2}{4d|\ol W|}  \notag\\
  &\geq \frac{C_{25}C}{4d\kappa} \frac{\ol W_1^\theta}{|\ol W|}\ \sigma\left( \frac{\ol W_1^\theta}{\kappa} \right) 
    \log\left( 2 + \frac{\ol W_1^\theta}{\kappa} \right) \\
   &\geq c_{19}C\sigma(|\ol W|) \log(2+|\ol W|),
\end{align*}
and since $|W-\ol W|\leq 1$, the same holds for $W$ in place of $\ol W$, with a smaller $c_{19}$.
Defining the events
\begin{align*}
  F_k: &\text{ there exist $u,w\in \ZZ^d$ with } 2^{k-1}<|w|\leq 2^k, |u|\leq |w|, \notag\\
  &\qquad g(u) + g(w-u) - g(w) \geq c_{19}t\sigma(|w|) \log(2+|w|), \text{ and } T(0,u)+T(u,w)=T(0,w),
\end{align*}
and defining $K_{t,3}$ by
\[
  2^{K_{t,3}-1} < \frac{\kappa |y_\theta|}{2}\Phi^{-1}(t) \leq 2^{K_{t,3}},
\]
in view of \eqref{Wmin} and \eqref{UW} we have similarly to \eqref{Aksum} and \eqref{sumkc2b}, provided $t$ is large:
\begin{align}\label{sumkc3}
  P\left( \sup_{u\in\Gamma_{0,re_1}} D_r(u) \geq t \text{ and Case 3 holds} \right) &\leq 2\sum_{k=K_{t,3}}^\infty P(F_k) \notag\\
  &\leq 2\sum_{k=K_{t,3}}^\infty c_{20} 2^{2dk} \exp\left( -c_{21}tk \right) \notag\\
  &\leq c_{22}e^{-c_{23}t\log t}.  
\end{align}

{\it Case 4:} Suppose $\max(\Phi^{-1}(C),r/2\kappa) <U_1^\theta \leq r/2$, meaning that $U$ lies on the tube boundary but not near an end.  This can only occur when $\Phi^{-1}(t)\leq r/2$, and this time we take $W=ry_\theta$. Since $\Pi_{0W}\subset L_\theta$, we have using \eqref{dvsdth} that
\[
  d(U,\Pi_{0W})  \geq d(U,L_\theta) \geq \frac{d_\theta(U,L_\theta)}{\sqrt{d}} = \frac{|U_2^\theta|}{\sqrt{d}} 
    = \frac{C^{1/2}\Xi(U_1^\theta)}{\sqrt{d}} \geq \frac{C^{1/2}\Xi(r/2\kappa)}{\sqrt{d}}
\]
so $0,U,ry_\theta$ form a $\delta$-fat triangle with 
\[
  \delta = \frac{C^{1/2}\Xi(r/2\kappa)}{\sqrt{d} |y_\theta| r}.
\]
From the definition \eqref{Ddef} we have
\[
  \delta^2 r \geq \frac{C\Xi(r/2\kappa)^2}{d|y_\theta|^2 r} 
    = \frac{C}{2\kappa d|y_\theta|^2} \sigma\left( \frac{r}{2\kappa} \right)\log\left(2 + \frac{r}{2\kappa} \right).
\]
Similarly to Case 3 we have $\delta<1$. From \eqref{magsize} and \eqref{XiPhi} we obtain
\[
  |U| \leq |y_\theta|U_1^\theta + C^{1/2}\Xi(U_1^\theta) \leq |y_\theta|U_1^\theta + \Phi(U_1^\theta)^{1/2}\Xi(U_1^\theta)
    \leq (1+|y_\theta|) U_1^\theta \leq (1+|y_\theta|)r.
\]
Defining the event 
\begin{align*}
  F: &\text{ there exists $u\in \ZZ^d$ with } |u| \leq (1+|y_\theta|)r, T(0,u) +T(u,ry_\theta)=T(0,ry_\theta), \notag\\
  &\qquad \text{ and }  g(u) + g(ry_\theta - u) - g(ry_\theta) \geq 
    C_{25}\frac{t}{2\kappa d|y_\theta|^2} \sigma\left( \frac{r}{2\kappa} \right)\log\left(2 + \frac{r}{2\kappa} \right),  
\end{align*}
the rest is similar to Case 2a, and we obtain
\begin{align}\label{sumkc4}
  P\left( \sup_{u\in\Gamma_{0,ry_\theta}} D_{\theta,r}(u) \geq t \text{ and Case 4 holds} \right) \leq 2P(F) \leq c_{24}e^{-c_{25}t\log t}.
\end{align} 
Putting the 4 cases together, \eqref{Aksum}, \eqref{sumkc2a}, \eqref{sumkc2b}, \eqref{sumkc3}, and \eqref{sumkc4} complete the proof.

\subsection{Proof of Proposition \ref{transTincr}} \label{3.7pf}
Let 
\[
  \theta = \frac{u}{|u|},\quad \ell=c_1\Delta^{-1}(|u-v|), \quad\text{and}\quad t=c_2\lambda\log |u-v|,
\]
with $c_1,c_2$ to be specified; note $\log \ell$ and $\log |u-v|$ are of the same order.  Provided $C_{31}$ is taken small (depending on $c_1$), we have $\ell \leq |u|/2|\yt| = u_1^\theta/2=g(u)/2$.  
We consider first the case of ``moderately large $\lambda$'':
\begin{equation}\label{reglam}
  \lambda\sigma(\Delta^{-1}(|u-v|))\log |u-v| \leq 2h(u-v).
\end{equation}
We intersect a tube-and-cyliners region with a fattened hyperplane to get
\[
  \Upsilon_\ell = H_{\theta,u_1^\theta - \ell}^{\rm rfat} \cap E_{\theta,u_1^\theta,t}
\]
so that every path crossing $H_{\theta,u_1^\theta - \ell}$ inside $E_{\theta,u_1^\theta,t}$ must include a site in $\Upsilon_\ell$.
Therefore either $\Gamma_{u0} \not\subset E_{\theta,u_1^\theta,t}$ or there exists a first site $X$ of $\Gamma_{u0}$ in $\Upsilon_\ell$, in which case
\[
  T(u,0) = T(u,X) + T(X,0) \quad\text{so}\quad T(v,0) - T(u,0) \leq T(v,X)- T(u,X). 
\]
It follows using Proposition \ref{transfluct2} that
\begin{align}\label{Upspoints}
  P\Big(T&(v,0) - T(u,0) \geq \lambda\sigma(\Delta^{-1}(|u-v|))\log |u-v|\Big) \notag\\
  &\leq P\left( \sup_{x\in \Gamma_{u0}} D_{\theta,u_1^\theta}(x) > t \right)
    + \sum_{x\in\Upsilon_\ell\cap\ZZ^d} P\Big( T(v,x) - T(u,x) \geq \lambda\sigma(\Delta^{-1}(|u-v|))\log |u-v| \Big) \notag\\
  &\leq C_{26}e^{-C_{27}t\log t} + c_3 \ell^{d-1} \max_{x\in\Upsilon_\ell\cap\ZZ^d} 
    P\Big( T(v,x) - T(u,x) \geq \lambda\sigma(\Delta^{-1}(|u-v|))\log |u-v| \Big).
\end{align}

\begin{figure}
\includegraphics[width=14cm]{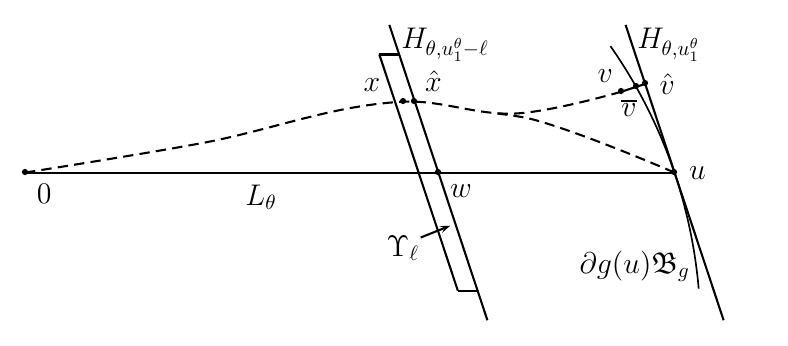}
\caption{ Illustration for the proof of Proposition \ref{transTincr}. $T(0,u)$ and $T(0,v)$ are likely to be close because $\Gamma_{0u}$ has the option to pass through the same site $x\in\Upsilon_\ell$ that $\Gamma_{0v}$ passes through. }
\label{fig2-6}
\end{figure}

Let $x\in\Upsilon_\ell\cap\ZZ^d$. We claim that
\begin{equation}\label{ggapclaim}
  g(v-x) - g(u-x) \leq c_4 (t\log |u-v|)^{1/2}\sigma(\Delta^{-1}(|u-v|)).
\end{equation}
To prove this, let $w=(u_1^\theta-\ell)\yt \in \Pi_{0u}$. We first consider $x$ replaced by its $\theta$--projection $\hat x= (w_1^\theta,x_2^\theta)$ into $H_{\theta,u_1^\theta-\ell}$. From the definition of $\Upsilon_\ell$,
\begin{equation}\label{xhatx}
  g(x-\hat x) = |x_1^\theta - w_1^\theta| \leq \mu\sqrt{d}.
\end{equation}
Let $\hat v$ be the closest point to $v$ in $H_{\theta,u_1^\theta}$.   
Since $|g(v)- g(u)| \leq 4\mu d$, provided $|u|$ is large (so by \eqref{uvassump} $\psi_{uv}$ is small) there exists a point $\ol v$ on the line through $v$ and $\hat v$ satisfying $g(\ol v) = g(u)$ and $g(v-\ol v)\leq 5\mu d$.
In order to bound $g(v-\hat v)$, we first observe that
\begin{equation}\label{uvratio}
  |u-\hat v| \leq |u-v| = \Delta(\ell/c_1) \quad\text{and hence}\quad
  \frac{|u-\hat v|}{g(u)} \leq \frac{\Delta(\ell/c_1)}{\ell},
\end{equation}
and the last fraction can be made small by taking $c_1$ large in the definition of $\ell$ (so $\ell$ itself is large), and it then follows using A3' and \eqref{gsize} that
\begin{align}\label{vhatv}
  g(v-\hat v) &\leq \mu\sqrt{d} |\ol v-\hat v| + g(v-\ol v) = \mu\sqrt{d}\, d(\ol v,H_{\theta,u_1^\theta}) + 5\mu d 
     \leq c_5 \frac{|u-\hat v|^2}{u_1^\theta} + 5\mu d \notag\\
  &\leq c_5 \frac{\Delta(\ell/c_1)^2}{\ell} + 5\mu d \leq c_6 \sigma(\Delta^{-1}(|u-v|)).
\end{align}
In view of \eqref{xhatx} and \eqref{vhatv}, to prove \eqref{ggapclaim} it remains to bound $g(\hat v - \hat x) - g(u-\hat x)$, for which we use Lemma \ref{gdiff1} with the origin shifted to $\hat x$.  First observe that provided $c_1$ (and hence $\ell$) is large, using \eqref{reglam} we have 
\begin{equation}\label{tbound}
  t = c_2\lambda\log |u-v| \leq c_7\frac{\Delta(\ell)}{\sigma(\ell)} = c_7 \left( \frac{\ell}{\sigma(\ell)} \right)^{1/2} < \Phi(\ell).
\end{equation}
Therefore $\ell>\Phi^{-1}(t)$, meaning that $w,x$ lie in the tube part of the tube-and-cylinders region $E_{\theta,u_1^\theta,t}$. Therefore
\begin{equation}\label{shortx}
  |w-\hat x| = |x_2^\theta| \leq t^{1/2}\Xi((x-u)_1^\theta) = t^{1/2}\Xi(\ell),
\end{equation}
and hence from the first inequality in \eqref{tbound}, provided $c_1$ (and therefore $\ell$) is large,
\[
  \frac{|w-\hat x|}{\ell} \leq \frac{t^{1/2}\Xi(\ell)}{\ell} \leq \left( \frac{2c_7\Delta(\ell) \log \ell}{\ell} \right)^{1/2} < \ep_7,
\]
for $\ep_7$ from Lemma  \ref{gdiff1}. From \eqref{uvratio} we also have
\[
  \frac{|u-\hat v|}{\ell} \leq \frac{\Delta(\ell/c_1)}{\ell} < \ep_7,
\]
so Lemma \ref{gdiff1} applies, giving 
\begin{equation}\label{shiftedg}
  g(\hat v - \hat x) - g(u-\hat x) \leq C_{29}\left( \frac{|\hat v - u|\, |w-\hat x|}{\ell} + \frac{|\hat v - u|^2}{\ell}\right).
\end{equation}
Using again $|\hat v - u| \leq \Delta(\ell/c_1)$ along with \eqref{shortx} and \eqref{shiftedg} lets us conclude
\[
  g(\hat v - \hat x) - g(u-\hat x) \leq c_8(t\log \ell)^{1/2}\sigma(\ell).
\]
With \eqref{xhatx} and \eqref{vhatv} this proves \eqref{ggapclaim}.

From \eqref{ggapclaim} and Proposition \ref{hmu}, provided $\lambda$ is large,
\begin{align}\label{hdiff}
  h(v-x) - h(u-x) &\leq \left(c_9(t\log\ell)^{1/2}+ c_{10}\log \ell\right)\sigma(\ell) \notag\\
  &\leq c_{11}\lambda^{1/2}\sigma(\Delta^{-1}(|u-v|))\log |u-v| \notag\\
  &\leq \frac \lambda 2 \sigma(\Delta^{-1}(|u-v|))\log |u-v|.
\end{align}
It follows using \eqref{expbd} that the probability in \eqref{Upspoints} satisfies
\begin{align}\label{Tgap}
  P\Big( &T(v,x) - T(u,x) \geq \lambda\sigma(\Delta^{-1}(|u-v|))\log |u-v| \Big) \notag\\
  &\leq P\left( T(v,x) - h(v-x) \geq \frac\lambda 4 \sigma(\Delta^{-1}(|u-v|))\log |u-v| \right) \notag\\
  &\qquad + P\left( T(u,x) - h(u-x) \leq -\frac\lambda 4 \sigma(\Delta^{-1}(|u-v|))\log |u-v| \right) \notag\\
  &\leq 4\exp\left( -\frac{\lambda}{4}\, \frac{\sigma(\Delta^{-1}(|u-v|))}{\sigma(|v-x|)} \log |u-v| \right) 
    +  4\exp\left( -\frac{\lambda}{4}\, \frac{\sigma(\Delta^{-1}(|u-v|))}{\sigma(|u-x|)} \log |u-v| \right).
\end{align} 
From \eqref{tbound} (without the last inequality) and \eqref{XiPhi} we get $t\Xi(\ell)^2/\ell^2 \leq c_{12}(\sigma(\ell)/\ell)^{1/2}\log \ell\leq (|\yt|/2)^2$.
Using this with \eqref{gsize}, \eqref{closeproj}, \eqref{xhatx}, and \eqref{shortx} we get
\begin{align}\label{vx}
  |v-x| &\leq |v-u| + |u-w| + |w-\hat x| + |\hat x -x| \notag\\
  &\leq \Delta(\ell/c_1) + \ell |\yt| + |x_2^\theta| + d \notag\\
  &\leq 2|\yt|\ell
\end{align}
and similarly for $|u-x|$. Hence the right side of \eqref{Tgap} is bounded above by $24e^{-c_{13}\lambda\log |u-v|}$, which with \eqref{Upspoints} yields that for all $\lambda \geq c_{14}/2$ satisfying \eqref{reglam},
\begin{align}\label{Tgap2}
  P\Big(T&(v,0) - T(u,0) \geq \lambda\sigma(\Delta^{-1}(|u-v|))\log |u-v|\Big) \notag\\
  &\leq C_{26}e^{-C_{27}t\log t} + 24c_3 \ell^{d-1}e^{-c_{13}\lambda\log |u-v|} \notag\\
  &\leq c_{15}e^{-c_{16}\lambda\log |u-v|}.
\end{align}

It remains to consider ``very large $\lambda$,'' meaning \eqref{reglam} does not hold:
\begin{equation}\label{biglam}
  \lambda\sigma(\Delta^{-1}(|u-v|))\log |u-v| > 2h(u-v).
\end{equation}
Here we have using \eqref{expbd} that
\begin{align}\label{Tgap3}
  P\Big(&T(v,0) - T(u,0) \geq \lambda\sigma(\Delta^{-1}(|u-v|))\log |u-v|\Big) \notag\\
  &\leq P\left(T(u,v) \geq h(v-u) + \frac \lambda 2 \sigma(\Delta^{-1}(|u-v|))\log |u-v|\right) \notag\\
  &\leq 4\exp\left( -\frac{\lambda}{2}\, \frac{\sigma(\Delta^{-1}(|u-v|))}{\sigma(|u-v|)} \log |u-v|\right) \notag\\
  &\leq 4e^{-c_{17}\lambda\log |u-v|},
\end{align}
where the last inequality is similar to the bound for \eqref{Tgap}. 
With \eqref{Upspoints} and \eqref{Tgap2} this proves \eqref{Tchange}.

\section{Appendix. Proofs---Special types of bad geodesic behavior} \label{spec}

\subsection{Proof of Lemma \ref{fastseg}}  
We need a slightly modified definition to take into account that $\Gamma_{vw}$ might not be a slab geodesic, i.e.~ we might not have $x_{\theta,0}'(\Gamma_{vw})=v$ and $x_{\theta,R}''(\Gamma_{vw}) = w$:
\[
  \hat x_{\theta,i\ell}''(\Gamma) = \begin{cases} v &\text{if } i=0,\\
    x_{\theta,i\ell}''(\Gamma) &\text{if } 1\leq i \leq k-1\\ w &\text{if } i=k, \end{cases} 
    \quad\text{and}\quad
    \hat x_{\theta,i\ell}'(\Gamma) = \begin{cases} v &\text{if } i=0,\\ x_{\theta,i\ell}'(\Gamma) &\text{if } 1\leq i \leq k-1\\ w &\text{if } i=k, \end{cases}
\]
and define
\[
  Y_i(\Gamma) = \frac{T(\hat x_{\theta,(i-1)\ell}''(\Gamma),\hat x_{\theta,i\ell}''(\Gamma)) - k^{-1}ET(0,R\yt)}{\sigma(\ell)}, 
\]
\[
  S_k(\Gamma) 
    = \sum_{i=1}^k Y_i(\Gamma) = \frac{T(v,w) - ET(0,R\yt)}{\sigma(\ell)},
\]
\[
  \tilde Y_i(\Gamma) = \frac{T(\hat x_{\theta,(i-1)\ell}''(\Gamma),\hat x_{\theta,i\ell}''(\Gamma)) 
    - h(\hat x_{\theta,i\ell}''(\Gamma) - \hat x_{\theta,(i-1)\ell}''(\Gamma))}{\sigma(\ell)}.
\]
For technical convenience we will assume $\ell\yt$ is a lattice point; the adjustments when this is false are minor.

It follows from \eqref{powerlike} and \eqref{Rell} that provided $C_{38}$ is large enough in \eqref{Rell}, we have  
\begin{equation}\label{ellvsR1}
  \frac{\sigma(R)}{k\sigma(\ell)} \leq \frac{C_3}{k^{1-\chi_2}} \leq \frac{\eta}{16c_1\log R},
\end{equation}
with $c_1$ chosen so that $h(R\yt) \leq g(R\yt) + c_1\sigma(R)\log R$, from Proposition \ref{hmu}. With this we obtain that provided $C_{39}$ is small (so $\ell$ is large), for some $c_1$, for all $1\leq i\leq k$,
\begin{align}\label{hbig}
  h\Big(\hat x_{\theta,i\ell}''(\Gamma_{vw}) - \hat x_{\theta,(i-1)\ell}''(\Gamma_{vw})\Big) 
    &\geq g\Big(\hat x_{\theta,i\ell}''(\Gamma_{vw}) - \hat x_{\theta,(i-1)\ell}'(\Gamma_{vw})\Big) - c_2 \notag\\
  &= \frac 1k g\Big(k(\hat x_{\theta,i\ell}''(\Gamma_{vw}) - \hat x_{\theta,(i-1)\ell}'(\Gamma_{vw}))\Big) - c_2 \notag\\
  &\geq \frac 1k g(R\yt) - c_2 \notag\\
  &\geq \frac 1k \Big[ h(R\yt) - c_1\sigma(R) \log R \Big] - c_2\notag\\
  &\geq \frac 1k ET(0,R\yt) - \frac \eta 8 \sigma(\ell),
\end{align}
where the second inequality follows from $k(\hat x_{\theta,i\ell}''(\Gamma_{vw}) - \hat x_{\theta,(i-1)\ell}'(\Gamma_{vw})) \in H_{\theta,R}^+$.  Therefore
\begin{equation}\label{Ytilde}
  Y_i(\Gamma_{vw}) \geq \tilde Y_i(\Gamma_{vw}) - \frac \eta 8.
\end{equation}
The key point is that
\[
  S_k(\Gamma_{vw}) \geq (k-N_\theta(\Gamma_{vw}))\frac{\eta}{8} + N_\theta(\Gamma_{vw})\min_{i\leq k} Y_i(\Gamma_{vw})
    = k \frac\eta8+ N_\theta(\Gamma_{vw})\left(\min_{i\leq k} Y_i(\Gamma_{vw}) - \frac{\eta}{8}\right),
\]
which yields
%Equation \eqref{powerlike} gives $\sigma(\ell)/\sigma(r) \geq C_3^{-1}k^{-\chi_2}$, so using also \eqref{expbd} and \eqref{Ytilde}, it follows that
\begin{align}\label{fastseg3}
  P&\left(N_\theta(\Gamma_{vw})\leq \frac{\eta}{32} k^{1-\lambda} \right) \notag\\
  &\leq P\left( S_k(\Gamma_{vw}) \geq \frac{\eta}{16}k \right) 
    + P\left( \min_{i\leq k} Y_i(\Gamma_{vw}) \leq \frac{\eta}{8} -2k^\lambda \right)   \notag\\
  &\leq P\left( T(v,w) - ET(0,R\yt) \geq \frac{k\eta\sigma(\ell)}{16} \right) 
    + P\left( \min_{i\leq k} Y_i(\Gamma_{vw}) \leq \frac{\eta}{8} -2k^\lambda \right).
\end{align}
To control the next-to-last probability we need to bound $h(w-v)-h(R\yt)$.  Let $\hat w,\hat v$ be the $\theta$--projections of $w,v$ into $H_{\theta,R}$ and $H_{\theta,0}$, respectively, so by \eqref{closeproj} we have 
$|w-\hat w|\leq d$ and $|v-\hat v| \leq d$. From \eqref{powerlike} and \eqref{wclose} we have
\begin{equation}\label{curvgap}
  \max\left( \frac{|v|^2}{R},\frac{|w - R\yt|^2}{R} \right) \leq C_{40}^2\eta k^{1-\chi_2}\sigma(R) \leq C_3C_{40}^2\eta k\sigma(\ell),
\end{equation}
and hence provided $C_{39}$ is small (so $\ell$ is large),
\begin{align*}
  \left( \frac{|(\hat w -\hat v) - R\yt|}{R} \right)^2 &\leq 4\left( \frac{|w - R\yt|}{R} \right)^2 + 4\left( \frac{|v|}{R} \right)^2 
    + 4\left( \frac{|w-\hat w|}{R} \right)^2 + 4\left( \frac{|v-\hat v|}{R} \right)^2\\
  &\leq 8C_3C_{40}^2\eta \frac{\sigma(\ell)}{\ell} + \frac{8d^2}{R^2} \leq 9C_3C_{40}^2\eta \frac{\sigma(\ell)}{\ell} < \ep_0^2.
\end{align*}
Therefore A3 applies and, provided we take $C_{40}$ small in \eqref{wclose}, we get
\begin{equation}\label{gwvhat}
  g(\hat w-\hat v) \leq g(R\yt) + c_3\frac{|(\hat w-\hat v) - R\yt|^2}{R} \leq g(R\yt) + 9c_3C_3C_{40}^2\eta k\sigma(\ell) 
    \leq g(R\yt) + \frac{\eta k\sigma(\ell)}{64}.
\end{equation}
From \eqref{curvgap},
\begin{equation}\label{wsize}
  |w-v| \leq |w-R\yt| + R|\yt| + |v| \leq 2\left( \frac{C_3C_{40}^2\eta \sigma(\ell)}{\ell} \right)^{1/2} R + R|\yt| \leq 2|\yt|R.
\end{equation}
With Proposition \ref{hmu}, \eqref{ellvsR1}, and \eqref{gwvhat} this gives the desired bound
\begin{align*}
  h(w-v) &\leq g(w-v) + C_{16}\sigma(2|\yt|R)\log(2|\yt|R) \leq g(\hat w-\hat v) + 2\mu\sqrt{d} + c_4 \sigma(R)\log R \\
  &\leq g(R\yt) + \frac{\eta k\sigma(\ell)}{64} + 2\mu\sqrt{d} + \frac{\eta k\sigma(\ell)}{64} \leq h(R\yt) + \frac{3\eta k\sigma(\ell)}{64}.
\end{align*}
Therefore in \eqref{fastseg3}, using \eqref{powerlike}, \eqref{expbd}, and \eqref{wsize} we have
\begin{align}\label{totalT}
  P\left( T(v,w) - ET(0,R\yt) \geq \frac{k\eta\sigma(\ell)}{16} \right) &\leq P\left( T(v,w) - ET(v,w) \geq \frac{\eta k\sigma(\ell)}{64} \right) \notag\\
  &\leq 4\exp\left( -\frac{\eta k\sigma(\ell)}{64\sigma(|w-v|)} \right) \notag\\
  &\leq 4\exp\left( -c_5\eta k^{1-\chi_2} \right).
\end{align}

It remains to bound the last probability in \eqref{fastseg3}. To do this, for each $i$ we split out the (unlikely) case in which the increment of the $i$th $\ell$--segment of $\Gamma_{vw}$ is so far from direction $\theta$ that its $g$--value exceds $3\ell$.  Write $\Lambda_i$ for $H_{\theta,i\ell}^{\rm fat}$,
which must contain $x_{\theta,i\ell}''(\Gamma_{vw})$.
For $t=k^{1-\chi_2}$ we thereby have using Proposition \ref{transfluct2} and \eqref{Ytilde} that
\begin{align}\label{veryfast}
  P&\left( \min_{i\leq k} Y_i(\Gamma_{vw}) \leq \frac{\eta}{8} -2k^\lambda \right) \notag\\
  &\leq P(\Gamma_{vw}\not\subset E_{\alpha,w_1^\alpha,t}) \notag\\
  &\qquad +\sum_{i=1}^k P\left( \Gamma_{vw}\subset E_{\alpha,w_1^\alpha,t}, 
    g\Big(\hat x_{\theta,i\ell}''(\Gamma_{vw}) - \hat x_{\theta,(i-1)\ell}''(\Gamma_{vw})\Big) 
    > 3\ell, Y_i(\Gamma_{vw}) \leq \frac{\eta}{8} -2k^\lambda \right) \notag\\
  &\qquad + \sum_{i=1}^k P\left( \Gamma_{vw}\subset E_{\alpha,w_1^\alpha,t}, 
    g\Big(\hat x_{\theta,i\ell}''(\Gamma_{vw}) - \hat x_{\theta,(i-1)\ell}''(\Gamma_{vw})\Big) 
    \leq 3\ell, \tilde Y_i(\Gamma_{vw}) \leq \frac{\eta}{4} -2k^\lambda \right) \notag\\
  &\leq C_{26}e^{-C_{27}t} + \sum_{i=1}^k\ \sum_{a \in \Lambda_{i-1} \cap E_{\alpha,w_1^\alpha,t} \cap \ZZ^d}\ 
    \sum_{{b \in \Lambda_i \cap E_{\alpha,w_1^\alpha,t} \cap \ZZ^d}\atop{g(b-a)>3\ell}}
    P\left( T(a,b) - \frac 1k ET(0,R\yt)\leq -k^\lambda \sigma(\ell) \right) \notag\\
  &\qquad + \sum_{i=1}^k\ \sum_{a \in \Lambda_{i-1} \cap E_{\alpha,w_1^\alpha,t} \cap \ZZ^d}\ 
    \sum_{{b \in \Lambda_i \cap E_{\alpha,w_1^\alpha,t} \cap \ZZ^d}\atop{g(b-a)\leq 3\ell}}
    P\left( T(a,b) - ET(a,b)\leq -k^\lambda \sigma(\ell) \right).
\end{align}
%Note that the lower bound of $3\ell$ in the third line here means the $i$th $\ell$--segment of $\Gamma_{vw}$ has direction quite far from $\theta$.
For the last probability we have from \eqref{expbd}
\[
  P\left( T(a,b) - ET(a,b)\leq -k^\lambda \sigma(\ell) \right) \leq 4\exp\left( -k^\lambda \frac{\sigma(\ell)}{\sigma(|b-a|)} \right)
    \leq 4e^{-c_6k^\lambda}.
\]
For the next-to-last probability in \eqref{veryfast} we have from subadditivity and Proposition \ref{hmu} that for all $a,b$ in the double sum, 
\[
  \frac 1k ET(0,R\yt) \leq ET(0,\ell\yt) \leq \ell + c_7\sigma(\ell) \log \ell \leq h(b-a) - \frac 12 g(b-a)
\]
so from \eqref{expbd} again,
\begin{align}\label{bigba}
  P\left( T(a,b) - \frac 1k ET(0,R\yt)\leq -k^\lambda \sigma(\ell) \right)
    &\leq P\left( T(a,b) - ET(a,b) \leq -\frac 12 g(b-a) \right) \notag\\
  &\leq 4e^{-g(b-a)/2\sigma(|b-a|)} \notag\\
  &\leq 4e^{-c_8\ell/\sigma(\ell)} \notag\\
  &\leq 4e^{-c_9k^\lambda}.
\end{align}
Here the last inequality follows from the fact that for large $\ell$, by \eqref{powerlike} and \eqref{Rell},
\[
  \frac{\ell}{\sigma(\ell)} \geq \left( \frac Rk \right)^{1-\chi_2} \geq c_{10}k^{1-\chi_2}.
\]
The number of $a$ or $b$ in the sums in \eqref{veryfast} is at most of order $R^d$, so provided $C_{38}$ is large enough in \eqref{Rell}, the right side of \eqref{veryfast} is bounded above by
\[
  C_{26}e^{-C_{27}k^{1-\chi_2}} + c_{11}kR^{2d} e^{-c_6k^\lambda} + c_{12}kR^{2d} e^{-c_8k^{1-\chi_2}}
    \leq c_{13}e^{-c_{14}k^\lambda}.
\]
With \eqref{fastseg3} and \eqref{totalT} this proves \eqref{fastseg2}.

\subsection{Proof of Lemma \ref{badbehavG5}}
We must deal with the inconvenience that we may have $\theta\neq\theta_0$.  
Suppose $\tau\in G_1$, and let $\Gamma$ be a $\theta_0$--slab geodesic from $x\in B_{\theta_0,{\rm home}}^{\rm rfat}$ to $z\in H_{\theta,2R}^{\rm fat}$ passing through $y=x_R''(\Gamma)\in B_{\theta_0,{\rm cross}}^{\rm fat}$, and let $u$ be the first site of $\Gamma$ with $u\notin Q_{R,n,\theta}$.  See Figure \ref{fig4-5}, particularly the upper right diagram. 
Then $\hat\Gamma_R:= \Gamma[x,y]$ is the pre--$H_{\theta_0,R}$ segment of $\Gamma$, and we recall that $\ol y$ is the center of $B_{\theta_0,{\rm cross}}$.  Using $\psi_{\theta\theta_0}\leq \ep_{\min}$ we obtain straightforwardly that 
\begin{equation}\label{ybaru}
  |\ol y| \leq 2|\ytz|R, \quad |u| \leq 3|\ytz|R.
\end{equation}
We now consider four cases.

{\it Case 1.} $u\in \Gamma[x,y]$, so that $|(u-x)_1^{\theta_0}| \leq R+2\sqrt{d}\mu$. Here we find a lower bound for $d(u,\Pi_{xy})$, establishing the presence of a $\delta$--fat triangle. 
Let $\varphi=(y-x)/|y-x|$.  Define intersection points 
\[
  v = \Pi_{xy}^\infty \cap H_{\theta_0,u_1^{\theta_0}},\ \ \hat v = L_\theta \cap H_{\theta_0,u_1^{\theta_0}},\ \ 
    w = \Pi_{xy}^\infty \cap H_{\theta,u_1^{\theta}},\ \ \hat w = L_\theta \cap H_{\theta,u_1^{\theta}},
\]
\[
    q = \Pi_{xy}^\infty \cap H_{\varphi,u_1^{\varphi}},\ \ \hat q = L_\theta \cap H_{\varphi,u_1^{\varphi}}.
\]
and note that all 3 hyperplanes referenced here pass through $u$, and $L_\theta=\Pi_{0\ol y}^\infty$. See Figure \ref{fig4-5}, upper left. By \eqref{dvsdth},
\begin{equation}\label{ddphi}
  \sqrt{d}d(u,\Pi_{xy}^\infty) \geq d_\varphi(u,\Pi_{xy}^\infty) = |u-q|,
\end{equation}
and from the definition of $u$,
\begin{equation}\label{udist}
  2n^{\beta_1}\Delta(R) \leq d_\theta(u,L_\theta) = |u-\hat w| \leq 3\sqrt{d-1}n^{\beta_1}\Delta(R).
\end{equation}
Further, $x,y$ being in fattened $\theta_0$--blocks centered on $L_\theta$ tells us that, first, provided $R$ is large the $\theta$--ratio of $y-x$ is at most $2\sqrt{d-1}n^{\beta_1}\Delta(R)/|\ytz|R<\ep_{\min}|\yt|/4$, so by \eqref{tanvsratio} we have $\psi_{\varphi\theta} <\ep_{\min}/2$ and hence also $\psi_{\varphi\theta_0}<\ep_{\min}$.
Second, since $v\in\Pi_{xy}$,
\begin{equation}\label{vvhat}
  |v-\hat v| = d_{\theta_0}(v,L_\theta) \leq \max(d_{\theta_0}(x,L_\theta),d_{\theta_0}(y,L_\theta)) \leq \sqrt{d-1}n^{-\beta_0}\Delta(R).
\end{equation}
Now by \eqref{linegap2},
\[
  |u-q| \geq |u-v| - |v-q| \geq |u-v| - C_{22}\psi_{\varphi\theta_0}|u-q| 
\]
so
\begin{equation}\label{uquv}
  |u-q| \geq \frac{|u-v|}{1+C_{22}\psi_{\varphi\theta_0}}.
\end{equation}
Combining \eqref{linegap2}, \eqref{ddphi}, \eqref{udist}, \eqref{vvhat}, \eqref{uquv} we get
\begin{align}\label{utoline}
  (1+C_{22}\psi_{\varphi\theta_0}) \sqrt{d}d(u,\Pi_{xy}^\infty) &\geq (1+C_{22}\psi_{\varphi\theta_0}) |u-q| \geq
    |u-v| \geq |u-\hat w| - |\hat w - \hat v| - |\hat v - v| \notag \\
  &\geq (1 - C_{22}\psi_{\theta\theta_0})|u-\hat w| - |\hat v - v| \notag \\
  &\geq 2(1 - C_{22}\psi_{\theta\theta_0})n^{\beta_1}\Delta(R) - \sqrt{d-1}n^{-\beta_0}\Delta(R) \geq \frac 32 n^{\beta_1}\Delta(R).
\end{align}
By shrinking some $\ep_i$'s we may assume $C_{22}\ep_{\min}<1/2$, so since $\psi_{\varphi\theta_0} \leq \ep_{\min}$, \eqref{utoline} yields
\[
  d(u,\Pi_{xy}) \geq d(u,\Pi_{xy}^\infty) \geq \frac{1}{\sqrt{d}} n^{\beta_1}\Delta(R),
\]
while from \eqref{ybaru},
\[
  |y-x| \leq|\ol y| + |x| + |y-\ol y| \leq 2|\ytz|R + 2\sqrt{d-1}n^{-\beta_0}\Delta(R) \leq 3|\ytz|R.
\]
By Lemma \ref{gtri} with $\delta = n^{\beta_1}\Delta(R)/\sqrt{d}|y-x|$ we then have
\[
  g(u-x) + g(y-u) - g(y-x) \geq \frac{C_{25}n^{2\beta_1}\Delta(R)^2}{|y-x|} \geq c_1n^{2\beta_1}\sigma(R).
\]

\begin{figure}
\includegraphics[width=16cm]{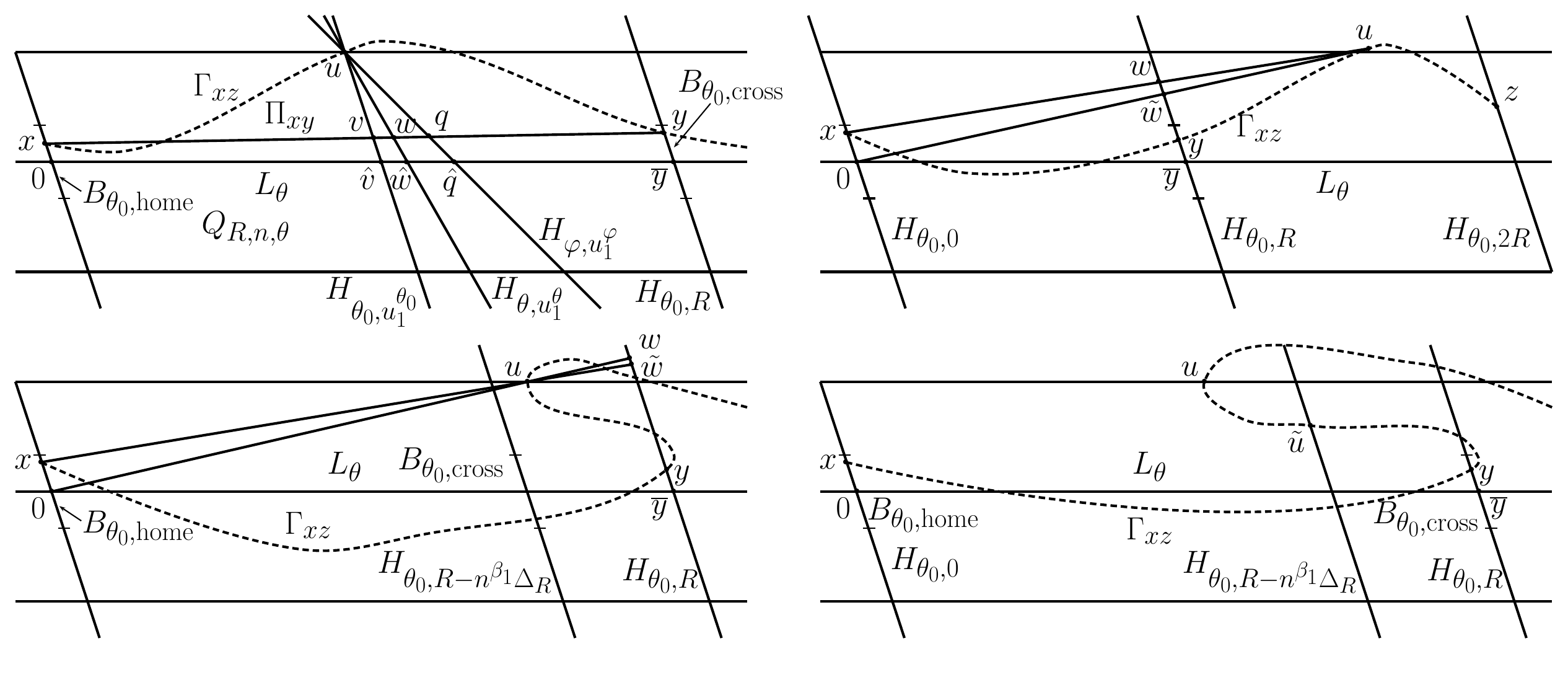}
\caption{ The four cases in Lemma 4.5. The top row shows Cases 1 and 2; the bottom shows 3 and 4. Note that the Case 2 diagram spans from $H_{\theta_0,0}$ to $H_{\theta_0,2R}$, whereas the other three show only the left half. Hash marks show the blocks $B_{\theta_0,{\rm home}}$ and $B_{\theta_0,{\rm cross}}$. }
\label{fig4-5}
\end{figure}

{\it Case 2.} $u\in \Gamma[y,z] \cap H_{\theta_0,R}^+$. This time we want a lower bound for $d(y,\Pi_{xu}^\infty)$. Let $\tilde w = \Pi_{0u}^\infty \cap H_{\theta_0,R}$ and $w = \Pi_{xu}^\infty \cap H_{\theta_0,R}$.  
Then $u_1^{\theta_0} \leq 2R+\sqrt{d}\mu\leq 3R$ so from \eqref{linegap2}
\begin{equation}\label{twby}
  |\tilde w - \ol y| = \frac{R}{u_1^{\theta_0}}d_{\theta_0}(u,L_\theta) 
    \geq \frac 13 d_{\theta_0}(u,L_\theta) \geq \frac 13 \left( 1 - C_{22}\psi_{\theta\theta_0} \right) d_{\theta}(u,L_\theta)
    \geq \frac 12 n^{\beta_1}\Delta(R).
  \end{equation}
Therefore, letting $\hat y$ denote the $\theta_0$--projection of $y$ into $H_{\theta_0,R}$, we get
\begin{equation}\label{twu}
  |\tilde w - \hat y| \geq |\tilde w - \ol y| - |\ol y - \hat y| 
    \geq \frac 12 n^{\beta_1}\Delta(R) - \sqrt{d-1}n^{-\beta_0}\Delta(R) \geq \frac 14 n^{\beta_1}\Delta(R),
\end{equation}
where the second inequality uses $\hat y\in B_{\theta_0,{\rm cross}}$.  Further, we have $|\tilde w-w| \leq |x| \leq \sqrt{d-1}n^{-\beta_0}\Delta(R)$ so using \eqref{twu},
\begin{equation}\label{dupi}
  \sqrt{d}d(\hat y,\Pi_{xu}) \geq d_{\theta_0}(\hat y,\Pi_{xu}^\infty) = |w-\hat y| \geq |\tilde w - \hat y| - |\tilde w-w| \geq 
    \frac 14 n^{\beta_1}\Delta(R).
\end{equation}
Thanks to \eqref{ybaru} we can apply
Lemma \ref{gtri} with $\delta = n^{\beta_1}\Delta(R)/4\sqrt{d}|u|$ and obtain
\begin{equation}\label{extrafar}
  g(y) + g(u-y) - g(u) \geq 
  g(\hat y) + g(u-\hat y) - g(u) - c_2 \geq \frac{C_{25}n^{2\beta_1}\Delta(R)^2}{16d|u|} \geq c_3n^{2\beta_1}\sigma(R),
\end{equation}
where the last inequality uses the readily-checked fact that $|u|\leq c_4R$.

{\it Case 3.} $u\in \Gamma[y,z]\cap \Omega_{\theta_0}(R-n^{\beta_1}\Delta(R),R)$, meaning there is a backtrack from $y$ to $u$ but not a large one.  Again let $\tilde w = \Pi_{0u}^\infty \cap H_{\theta_0,R}$ and $w = \Pi_{xu}^\infty \cap H_{\theta_0,R}$. 
(These intersections exist provided $\ep_{\min}$ is small.) 
Then $w,\tilde w\notin Q_{R,n,\theta}$ so using \eqref{linegap2},
\[
  |\tilde w - \ol y| = d_{\theta_0}(\tilde w,L_\theta) \geq (1 - C_{22}\psi_{\theta\theta_0})d_{\theta}(\tilde w,L_\theta) \geq n^{\beta_1}\Delta(R).
 \]
Then as in \eqref{twu} we have $|\tilde w - \hat y| \geq \frac 12 n^{\beta_1}\Delta(R)$.  Using $u\in \Omega_{\theta_0}(R-n^{\beta_1}\Delta(R),R)$ and $\psi_{\theta\theta_0}\leq \ep_{\min}$ we obtain readily that  $|u-w| \leq 2|\yt| n^{\beta_1}\Delta(R)$. Since $u\in H_{\theta_0,R-n^{\beta_1}\Delta(R)}^+$ we have $g(u) \geq R-n^{\beta_1}\Delta(R) \geq R/2$ so $|u| \geq \mu R/2\sqrt{d}$.
We can therefore conclude,  
using also similarity of the triangles $\Delta 0xu$ and $\Delta \tilde wwu$, that
\[
  |w-\tilde w| = \frac{|u-w|}{|u|} |x| 
    \leq \frac{4\sqrt{d}|\yt| n^{\beta_1}\Delta(R)}{\mu R} \sqrt{d-1}n^{-\beta_0}\Delta(R) \leq c_5n^{\beta_1-\beta_0}\sigma(R).
\]
Hence similarly to \eqref{dupi} we get $\sqrt{d}d(\hat y,\Pi_{xu}) \geq \sqrt{d}d(\hat y,\Pi_{xu}^\infty) \geq \frac 12 n^{\beta_1}\Delta(R)$.  As with \eqref{extrafar} we then get from \eqref{ybaru} and Lemma \ref{gtri} with $\delta = n^{\beta_1}\Delta(R)/2\sqrt{d}|u|$ that
\begin{equation}\label{extrafar2}
  g(y) + g(u - y) - g(u) \geq \frac{C_{25}n^{2\beta_1}\Delta(R)^2}{4d|u|} \geq c_6n^{2\beta_1}\sigma(R).
\end{equation}

{\it Case 4.} $u \in \Gamma[y,z] \cap H_{\theta_0,R-n^{\beta_1}\Delta(R)}^-$, meaning there is a large backtrack from $y$ to $u$.  Here we let $\tilde u$ be the first site of $\Gamma$ after $y$ which lies in $H_{\theta_0,R-n^{\beta_1}\Delta(R)}^-$. Then similarly to \eqref{ybaru} we have $|\tilde u| \leq 2R|\ytz|$,
while using \eqref{gsize} gives
\[
  \mu\sqrt{d}d(y,\Pi_{x\tilde u}) \geq \mu\sqrt{d}d(y,H_{\theta_0,R-n^{\beta_1}\Delta(R)}^-) 
    \geq d_g(y,H_{\theta_0,R-n^{\beta_1}\Delta(R)}^-) \geq n^{\beta_1}\Delta(R),
\]
so similarly to \eqref{extrafar} and \eqref{extrafar2} we get
\begin{equation}\label{extrafar3}
  g(y) + g(\tilde u - y) - g(\tilde u) \geq c_7n^{2\beta_1}\sigma(R).
\end{equation}

Thus in all of Cases 1--4 there are points $a$ preceding $b$ in $\Gamma[x,z]$, with $(a,b)=(u,y), (y,u)$, or $(y,\tilde u)$, satisfying $\max(|a-x|,|b-x|)\leq 3R|\ytz|$ and
\[
  g(a-x) + g(b-a) - g(b-x) \geq 2c_8n^{2\beta_1}\sigma(R).
\]
It follows from Proposition \ref{hmu} and \eqref{Rncond} that 
\[
  h(a-x) + h(b-a) - h(b-x) \geq c_8n^{2\beta_1}\sigma(R).
\]
With this we can proceed as in Case 1 of Proposition \ref{transfluct2} and sum over all $O(R^d)$ choices for each of $a,b$ to obtain
\begin{equation}\label{pg5a}
  P(G_1) \leq c_9R^{2d}\exp\left( -c_{10}n^{2\beta_1} \right) 
    \leq \exp\left( -\frac 12 c_{10}n^{2\beta_1} \right),
\end{equation}
which proves \eqref{allbound}.

\subsection{Proof of Lemma \ref{badbehavG15}}
Suppose $\tau\in G_2$ and $\Gamma = \Gamma[x,y]$ is a geodesic as described in $G_2$, from some $x\in B_{\theta_0,{\rm home}}^{\rm rfat}$ to $y \in B_{\theta_0,{\rm cross}}^{\rm fat}$, containing a site $u\in H_{\theta,s}^{\rm rfat} \bs B_{s,\theta,{\rm home},+}^{\rm rfat}$.  See Figure \ref{fig4-6}. Let $\hat u = (s,u_2^\theta)_\theta$ be the $\theta$--projection of $u$ into $H_{\theta,s}$ and $\hat x = (0,x_2^{\theta_0})_{\theta_0}$ the $\theta_0$--projection of $x$ into $H_{\theta_0,0}$, so $|u-\hat u| \leq d$ and $|x-\hat x| \leq d$, by \eqref{closeproj}. Let $\alpha= (y-x)/|y-x|$; we want to show that
\[
  D_{ \alpha,|(y-x)_1^\alpha| }(u-x) \geq n^{2\beta_1},
\]
so Proposition \ref{transfluct2} can be applied. To get this we need to convert information about $\theta$-- and $\theta_0$--coordinates to information about $\alpha$--coordinates.  Let 
\[
  w=(s,\hat x_2^\theta)_\theta = L_\theta(\hat x) \cap H_{\theta,s}, \quad z=(s,0)_\theta = L_\theta(0) \cap H_{\theta,s}, \quad
    q=(x_1^\theta,0)_\theta = L_\theta \cap H_{\theta,x_1^\theta},
\]
so
\[
  |w-\hat x| = (\hat u - \hat x)_1^\theta|\yt|, \quad |\hat u-w| = |(\hat u - \hat x)_2^\theta|, \quad \hat x - q = w-z.
\]
Since $\hat u\notin B_{s,\theta,{\rm home},+}$ we have $|\hat u-z|\geq 2\sqrt{d}n^{-\beta_0}\Delta(R)$.  Since $\hat x \in B_{\theta_0,{\rm home}}$ we have $|\hat x|\leq \sqrt{d}n^{-\beta_0}\Delta(R)$, and therefore  from \eqref{linegap2} we obtain
\[
  |w-z| = |\hat x - q| \leq |\hat x| + |q| \leq |\hat x| + C_{22}\psi_{\theta_0\theta} |\hat x-q| \quad\text{so}\quad  |w-z| \leq 
    (1 - C_{22}\ep_{\min})^{-1}\sqrt{d}n^{-\beta_0}\Delta(R)
\]
and then
\begin{equation}\label{hatux}
  |(\hat u - \hat x)_2^\theta| = |\hat u-w| \geq |\hat u-z| - |w-z| \geq (2 - (1 - C_{22}\ep_{\min})^{-1})\sqrt{d}n^{-\beta_0}\Delta(R) 
    \geq \frac 12 \sqrt{d}n^{-\beta_0}\Delta(R).
\end{equation}
Hence using \eqref{dvsdth} and \eqref{thetanorm},
\[
  |\hat u- \hat x|_{\alpha,\infty} \geq \frac{|(\hat u - \hat x)_2^\theta|}{\sqrt{d-1}(1+|y_\alpha|)} \geq c_1n^{-\beta_0}\Delta(R),
\] 
so using \eqref{nvsR2} and the last inequality in \eqref{nvsR},
\begin{equation}\label{Philow}
  \Phi(|\hat u- \hat x|_{\alpha,\infty}) \geq c_2\Phi(n^{-\beta_0}\Delta(R)) \geq c_2\Phi(R^{1/2}) \geq 2R^{(1-\chi_2)/2} \geq 2n^{2\beta_1}.
\end{equation}

\begin{figure}
\includegraphics[width=13cm]{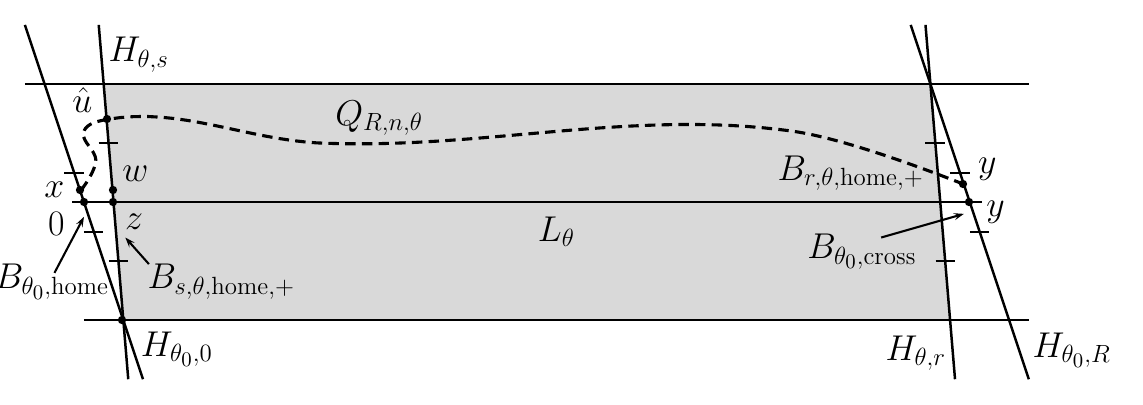}
\caption{ Illustration for the proof of Lemma \ref{badbehavG15}. The geodesic $\Gamma_{xy}$ evades the enlarged $\theta$--block $B_{s,\theta,{\rm home},+}$. $q$ (not shown) lies just left of 0. }
\label{fig4-6}
\end{figure}

Having \eqref{Philow}, by \eqref{minwhich}, to obtain a lower bound for $D_{\alpha,|(y-x)_1^\alpha|}(u-x)$ we need only obtain a lower bound for $|(\hat u - \hat x)_2^\alpha|^2/\Xi(|(\hat u - \hat x)_1^\alpha|)^2$, under the added condition
\begin{equation}\label{small2}
  |(\hat u - \hat x)_2^\alpha| \leq (\hat u - \hat x)_1^\alpha.
\end{equation}
From the definition of $s$, there must exist a point $v\in H_{\theta_0,0} \cap H_{\theta,s} \cap Q_{R,n,\theta}$, 
so using \eqref{linegap2} we have 
\begin{equation}\label{wxdist}
  |w-\hat x| \leq C_{22}\psi_{\theta_0\theta}|w-v|\leq 4C_{22}\psi_{\theta_0\theta}\sqrt{d}n^{\beta_1}\Delta(R).
\end{equation}
Now in view of \eqref{rslower} and Lemma \ref{coordchg} the $\theta$--ratio of $y-x$ satisfies
\[
  \frac{|y_2^\theta - x_2^\theta|}{|y_1^\theta - x_1^\theta|}  \leq \frac{|(y-\ol y)_2^\theta|+|x_2^\theta|}{r-s} 
    \leq \frac{2|(y-\ol y)_2^{\theta_0}|+2|x_2^{\theta_0}|}{r-s} \leq  \frac{5n^{-\beta_0}\Delta(R)}{R}
\]
so from \eqref{tanvsratio},
\begin{equation}\label{psisize}
  \psi_{\alpha\theta} \leq \frac{10n^{-\beta_0}\Delta(R)}{|\yt| R}.
\end{equation}
From Lemma \ref{coordchg}, \eqref{magsize}, and \eqref{small2} we obtain that, after reducing $\ep_{\min}$ if necessary,
\begin{align*}
  |(\hat u - \hat x)_1^\alpha| &\leq |(\hat u - \hat x)_1^\theta| + C_{23}\psi_{\alpha\theta}|\hat u - \hat x| \\
  &\leq |(\hat u - \hat x)_1^\theta| + \frac 12 |(\hat u - \hat x)_1^\alpha| 
\end{align*}
so from \eqref{wxdist},
\begin{equation}\label{ux1}
  |\yt|\,|(\hat u - \hat x)_1^\alpha| \leq 2|\yt|\,|(\hat u - \hat x)_1^\theta| = 2|w-\hat x| \leq c_3\ep_1 n^{\beta_1}\Delta(R).
\end{equation}
From \eqref{nRell} we have $R/\Delta(R) \geq 2c_4n^{\beta_1}$.
From this along with Lemma \ref{coordchg}, \eqref{hatux}, \eqref{small2}, \eqref{psisize} and \eqref{ux1}, we get
\begin{align*}
  |(\hat u - \hat x)_2^\alpha| &\geq |(\hat u - \hat x)_2^\theta| - C_{23}\psi_{\alpha\theta}|\hat u - \hat x| \\
  &\geq |(\hat u - \hat x)_2^\theta| - C_{23}\psi_{\alpha\theta} (1 + |y_\alpha|) |(\hat u - \hat x)_1^\alpha| \\
  &\geq  \frac{\sqrt{d}}{2} n^{-\beta_0}\Delta(R) \left( 1 - c_4\frac{n^{\beta_1}\Delta(R)}{R} \right) \\
  &\geq  \frac{\sqrt{d}}{4} n^{-\beta_0}\Delta(R).
\end{align*}
From \eqref{Rncond} and the first inequalities in \eqref{betas1} and \eqref{betas4} we have
\[
  \Delta(R)^{2(1-\chi_2)} \geq \Delta(R)^{1-\chi_2} \geq C_{46}^{-1/(d-1)}n^{(1-\chi_2)/(d-1)} \geq 
     2n^{(3+\chi_2)\beta_1+2\beta_0}
\]
and hence using \eqref{powerlike} and \eqref{ux1},
\begin{equation}\label{Xilow}
  \frac{|(\hat u - \hat x)_2^\alpha|^2}{\Xi(|(\hat u - \hat x)_1^\alpha|)^2} \geq 
    c_4\frac{ n^{-2\beta_0}\Delta(R)^2}{\Delta(n^{\beta_1}\Delta(R))^2\log R} 
    \geq \frac{\Delta(R)^{2(1-\chi_2)}}{n^{\beta_1(1+\chi_2)+2\beta_0}} \geq 2n^{2\beta_1}.
\end{equation}
Replacing $\hat u, \hat x$ with $u,x$ on the left sides of \eqref{Philow} and \eqref{Xilow} changes each by at most a factor of 2, so those two inequalities show that
\begin{equation}\label{Dlower3}
  \tau\in G_2 \implies D_{ \alpha,|(y-x)_1^\alpha| }(u-x) \geq n^{2\beta_1},
\end{equation}
and then summing \eqref{transfluct1} over $x\in B_{\theta_0,{\rm home}}^{\rm rfat}$ and $y \in B_{\theta_0,{\rm cross}}^{\rm fat}$ shows
\begin{equation}\label{pg15}
  P(G_2) \leq c_5R^{2(d-1)}\exp\left( -C_{27}n^{2\beta_1} \right) \leq \exp\left( -\frac 12 C_{27}n^{2\beta_1} \right).
\end{equation}

\subsection{Proof of Lemma \ref{badbehavG2G7}}
Since $\theta=\theta_0$ we'll simply call it $\theta$.
We can handle $G_3 \cup G_4$ as one, as follows.  Suppose $\tau\in G_3$.  This means there exists an $(\ell,\theta)$--interval geodesic $\Gamma_{xy}\subset Q_{R,n,\theta}$ containing sites $u$ preceding $v$ with $u_1^\theta - v_1^\theta \geq n^{-\beta_4}\ell/2$.  By choosing a different $v\in\Gamma$ we may assume also $n^{-\beta_4}\ell/2 \leq u_1^\theta - v_1^\theta \leq n^{-\beta_4}\ell$.  We have using \eqref{ellsigma} that
\[
  h(v-u) \geq g(v-u) \geq g((v_1^\theta - u_1^\theta)\yt) \geq \frac{n^{-\beta_4}\ell}{2} \geq \frac \eta 8 \sigma(\ell).
\]
This shows that for the event
\begin{align*}
  G_{13}: &\text{ ({\it bad-direction segment in a geodesic}) There exists an $(\ell,\theta)$--interval geodesic $\Gamma\subset Q_{R,n,\theta}$} \\
  &\qquad \text{and sites $u$ preceding $v$ in $\Gamma$  with $|(v-u)_1^{\theta}| \leq n^{-\beta_4}\ell$ and } \\
   &\qquad h(v-u) - h(|(v-u)_1^{\theta}|\yt)1_{\{(v-u)_1^{\theta}\geq 0\}} \geq \frac \eta 8 \sigma(\ell), 
\end{align*}
(which clearly satisfies $G_4\subset G_{13}$) we have $G_3 \cup G_4 \subset G_{13}$.

To bound $P(G_{13})$, suppose $\Gamma_{xy}$ is a target-directed $(\ell,\theta)$--interval geodesic from $H_{\theta,(i-1)\ell}^{\rm fat}$ to $H_{\theta,i\ell}^{\rm fat}$ for some $i\leq R/\ell$, containing sites $u$ preceding $v$ as in $G_{13}$, that is,
\begin{equation}\label{hgap}
  |(u-v)_1^{\theta}| \leq n^{-\beta_4}\ell,\qquad h(u-v) - h(|(u-v)_1^{\theta}|\yt)1_{\{(v-u)_1^{\theta}\geq 0\}} \geq \frac \eta 8 \sigma(\ell).
\end{equation}
Let $\alpha = (y-u)/|y-u|$; we want a lower bound for $D_{\alpha,|(y-u)_1^\alpha|}(v-u)$ so we can use Proposition \ref{transfluct2}; the complications come from having hypotheses in terms of $\theta$-- and $\theta_0$--coordinates and desired conclusions in terms of $\alpha$--coordinates.  We may assume $u_1^\theta\leq (i-1/2)\ell$, as the other case is symmetric. 
Then $u,y \in Q_{R,n,\theta}$ with $(y-u)_1^\theta\geq\ell/2$, so from \eqref{nRell}, provided $R$ is large the $\theta$--ratio of $y-u$ is at most $8\sqrt{d-1}n^{\beta_1}\Delta(R)/\ell \leq 8\sqrt{d-1}n^{-\beta_4}<|\yt|\ep_{\min}/2$, so by \eqref{tanvsratio}, 
\begin{equation}\label{psimax}
  \psi_{\alpha\theta}\leq\tan \psi_{\alpha\theta} \leq \frac{16n^{\beta_1}\Delta(R)}{|\yt|\ell} < \ep_{\min}.
\end{equation}
Let
\[
  \lambda = \frac{1}{2|\yt|(1+(\inf_\varphi |y_\varphi|)^{-1})}, \quad t_0 = 
    \frac{\lambda\mu\eta\sigma(\ell)}{32\sqrt{d}}, \quad t = \max\left(v_1^\theta - u_1^\theta, t_0 \right), \quad q = u+t\yt.
\]
From \eqref{ellsigma} and \eqref{hgap} we have
\begin{equation}\label{tvsell}
  t\leq n^{-\beta_4}\ell.
\end{equation}
Define the intersection points 
\[
   p = L_\alpha(u) \cap H_{\theta,q_1^\theta}, \quad w = L_\alpha(u) \cap H_{\alpha,v_1^\alpha}, \quad
     x = L_\alpha(u) \cap H_{\theta,v_1^\theta},
\]
so that $v-u = (w-u) + (v-w)$ is the decomposition of $v-u$ into $\alpha$--components. See Figure \ref{fig4-7case23}.
From \eqref{gsize} and \eqref{hgap}, provided $R$ (and hence $\ell$ and $|u-v|$) is large we have
\begin{equation}\label{vusize}
  |v-u| \geq \frac{\mu}{\sqrt{d}} g(v-u) \geq \frac{\mu}{2\sqrt{d}} h(v-u) \geq \frac{\mu\eta\sigma(\ell)}{16\sqrt{d}} 
    = \frac{2t_0}{\lambda}.
\end{equation}
From \eqref{linegap2} and \eqref{psimax} we have
\begin{equation}\label{wxgap}
  |w-x| \leq C_{22}\psi_{\alpha\theta}|v-w| \leq \frac{|y_\alpha|}{2}|v-w|
\end{equation}
while from \eqref{Vgap},
\begin{equation}\label{pqgap}
  |p-q| \leq C_{22}\psi_{\alpha\theta}|q-u| = C_{22}\psi_{\alpha\theta}|\yt|t.
\end{equation}
Using \eqref{thetanorm}, \eqref{ell-lower}, and \eqref{vusize} we then get
\begin{equation}\label{Dlower}
  \Phi(|v-u|_{\alpha,\infty}) \geq c_1\Phi(\sigma(\ell)) \geq \ell^{\chi_1(1-\chi_2)} \geq n^{4\beta_1}.
\end{equation}

As a shorthand we say a point $y$ is \emph{strictly behind} a hyperplane $H_{\varphi,t}$ if $y$ lies in the interior of $H_{\varphi,t}^-$, and $y$ is $\varphi$--\emph{behind} a point $z$ if $z_1^\varphi - y_1^\varphi$ is nonnegative.  
We now consider 3 cases; see Figure \ref{fig4-8}.

{\it Case 1.} $w \in H_{\alpha,u_1^\alpha}^-$, meaning $v$ (hence also $w$) is $\alpha$--behind $u$. Here using \eqref{minwhich} and \eqref{Dlower} we get
\begin{equation}\label{Dlower2}
  D_{\alpha,|(y-u)_1^\alpha|}(v-u) = \Phi(|v-u|_{\alpha,\infty}) \geq n^{4\beta_1},
\end{equation}
so as with \eqref{Dlower3} and \eqref{pg15}, from Proposition \ref{transfluct2},
\begin{equation}\label{C1}
  P(G_{13} \text{ and Case 1 holds}) \leq C_{26}e^{-C_{27}n^{2\beta_1}}.
\end{equation}

{\it Case 2.} $w \in H_{\alpha,u_1^\alpha}^+, v_1^\theta - u_1^\theta < t_0$. This means $t=t_0$, $v_1^\alpha = w_1^\alpha \geq u_1^\alpha$, and $v$ is strictly behind $H_{\theta,u_1^\alpha+t}$. (See Figure \ref{fig4-7case23}.) We may view this as the ``strongly sidestepping'' case in which the increment of $\Gamma_{xy}[u,v]$, when projected onto the $\alpha$ direction, is very short.
Now $p,u,w,x$ lie on the line $L_\alpha(u)$, with $u,x$ strictly behind $H_{\theta,u_1^\theta+t}$ and $\theta$--behind $p\in H_{\theta,u_1^\theta+t}$, and with $u$ $\alpha$--behind $w$.  If $|v_2^\alpha - u_2^\alpha| \geq v_1^\alpha - u_1^\alpha$ 
%(equivalently, $|v-w|\geq |w-u|/|y_\alpha|$) 
then by \eqref{minwhich}, \eqref{Dlower2} applies.  We will show by contradiction that the opposite case does not occur.  Thus we assume
\begin{equation}\label{bigwu}
  |w-u| = |y_\alpha|(v_1^\alpha - u_1^\alpha) > |y_\alpha| |v_2^\alpha - u_2^\alpha|  = |y_\alpha| |v-w|.
\end{equation}
If $w$ is $\alpha$--behind $p$ then $w\in H_{\theta,t}^-$ and $|w-u|\leq |p-u|$; if $w$ is not $\alpha$--behind $p$ then $w\in H_{\theta,u_1^\alpha+t}^+$ and $|w-p|\leq |w-x|$.  Either way, since $|q-u|=|\yt|t$ we have using \eqref{psimax}, \eqref{vusize}, \eqref{wxgap}, \eqref{pqgap}, \eqref{bigwu} that
\begin{align}\label{halfarg}
  |w-u| &\leq |p-u| + |w-p|1_{\{w\in H_{\theta,u_1^\theta+t}^+\}} \leq |q-u| + |p-q| + |w-x| \notag\\
  &\leq 2|\yt| t + \frac 12 |w-u| \leq \lambda |\yt| |v-u| +  \frac 12 |w-u|
\end{align}
so $|w-u| \leq 2\lambda |\yt| |v-u|$, which with \eqref{bigwu} yields
\[
  |v-u| \leq |v-w| + |w-u| \leq (1+|y_\alpha|^{-1})|w-u| \leq 2 \lambda |\yt|(1+|y_\alpha|^{-1}) |v-u| < |v-u|,
\]
which is the desired contradiction.  This means \eqref{bigwu} cannot hold, so \eqref{Dlower2} always applies in Case 2, and therefore as with \eqref{C1},
\begin{equation}\label{C2}
  P(G_{13} \text{ and Case 2 holds}) \leq C_{26}e^{-C_{27}n^{2\beta_1}}.
\end{equation}

\begin{figure}
\includegraphics[width=14cm]{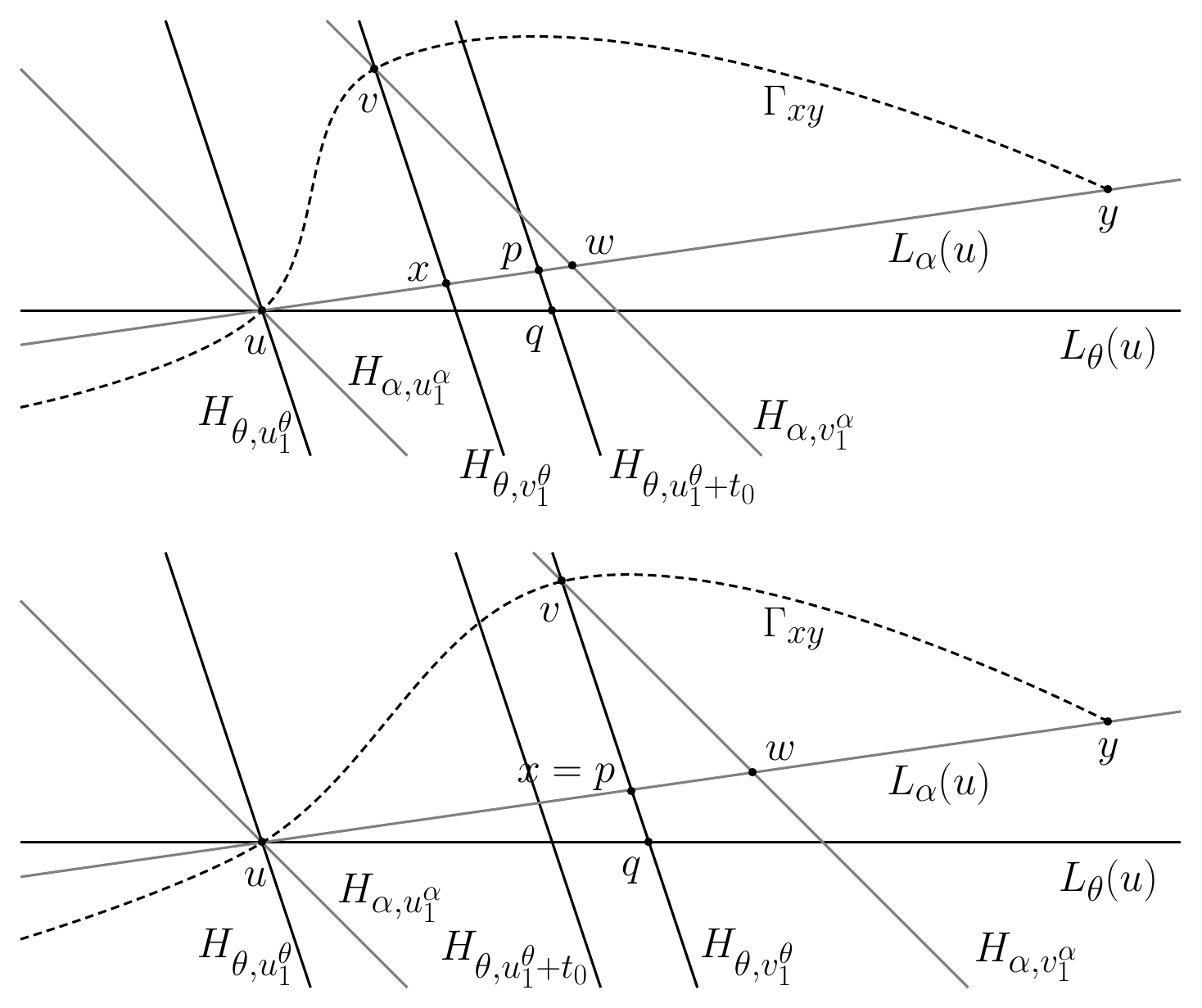}
\caption{ Illustrations for Lemma 4.7. Top: Case 2, in which $v$ lies strictly behind $H_{\theta,u_1^\theta+t_0}$ but ahead of $H_{\alpha,u_1^\alpha}$, and $t=t_0$. Bottom:  Case 3, in which $v$ lies ahead of both $H_{\theta,u_1^\theta+t_0}$ and $H_{\alpha,u_1^\alpha}$, and $t\geq t_0$.
In both cases $w$ may lie on either side of $x$ or $p$. In Case 1 (not shown) there is an $\alpha$--backtrack: $v$ lies behind $H_{\alpha,u_1^\alpha}$. }
\label{fig4-7case23}
\end{figure}

{\it Case 3.} $w \in H_{\alpha,u_1^\alpha}^+,\, v_1^\theta - u_1^\theta \geq t_0$. This is the ``weakly sidestepping'' case. From the definition of $G_{13}$, we have 
\begin{equation}\label{C3facts}
  t_0 \leq t = v_1^\theta - u_1^\theta \leq n^{-\beta_4}\ell, \quad v\in H_{\theta,u_1^\theta+t}, \quad x=p,
\end{equation}
and the indicator function in \eqref{hgap} is 1.
As in Case 2 we may assume \eqref{bigwu}.  As in \eqref{halfarg} we have
%From \eqref{wxgap}, \eqref{pqgap}, \eqref{psimax}, and \eqref{bigwu},
\[
  |w-u| \leq |q-u| + |p-q| +|w-x| \leq 2|\yt|t + \frac{|y_\alpha|}{2} |v-w| \leq 2|\yt|t + \frac 12 |w-u|,
\]
so 
\begin{equation}\label{wubound}
  |y_\alpha| |v_1^\alpha - u_1^\alpha| = |w-u| \leq 4|\yt|t.
\end{equation}

We claim that
\begin{equation}\label{vqclaim}
  |(v-u)_2^\theta| = |v-q| \geq n^{\beta_1}\Delta(t).
\end{equation}
Suppose not; we will show that the second inequality in \eqref{hgap} is contradicted. From \eqref{ell-lower}  and the fact that $t\geq t_0$, the $\theta$--ratio of $v-u$ satisfies
\begin{equation}\label{vuratio}
  \frac{|v_2^\theta - u_2^\theta|}{|v_1^\theta - u_1^\theta|} =\frac{|v-q|}{t} < \frac{n^{\beta_1}\Delta(t)}{t} 
    \leq c_2 \frac{n^{\beta_1}\Delta(\ell)}{\ell}
    \leq c_3n^{\beta_1}\ell^{-\chi_1(1-\chi_2)/2} < n^{-\beta_1}.
\end{equation}
Since $v$ lies in the tangent plane $H_{\theta,u_1^\theta+t}$ to $u+t\mkB_g$ at $u+t\yt=q$, it follows
from \eqref{curv}, using the first inequality in \eqref{vuratio}, that
\[
  d(v,u+t\mkB_g) \leq C_6n^{2\beta_1}\sigma(t).
\]
Hence by \eqref{gsize},
\begin{equation}\label{gdiff}
  g(v-u) \leq t + C_6 \mu\sqrt{d} n^{2\beta_1}\sigma(t)
    = g((v_1^\theta - u_1^\theta)\yt) + C_6 \mu\sqrt{d} n^{2\beta_1}\sigma(t).
\end{equation}
From \eqref{bigwu} and \eqref{wubound} we have
\begin{equation}\label{vut}
  |v-u| \leq (1 + |y_\alpha|^{-1}) |w-u| \leq 4|\yt|(1 + |y_\alpha|^{-1})t.
\end{equation}
From \eqref{Rncond} we have $\log t \leq n^{2\beta_1}$. 
With \eqref{powerlike}, \eqref{tvsell}, \eqref{C3facts}, \eqref{gdiff}, \eqref{vut}, the last inequality in \eqref{betas1}, and Proposition \ref{hmu} this shows that
\begin{align}
  h(v-u) - h((v_1^\theta - u_1^\theta)\yt) &\leq g(v-u) - g((v_1^\theta - u_1^\theta)\yt) + C_{16}\sigma(|v-u|)\log |v-u| \notag\\
  &\leq \left( C_6 \mu\sqrt{d} n^{2\beta_1} + c_4\log t \right) \sigma(t) \notag\\
  &\leq c_5n^{2\beta_1} \sigma(t) \notag\\
  &\leq c_6n^{2\beta_1 - \chi_1\beta_4} \sigma(\ell) \notag\\
  &< \frac \eta 8 \sigma(\ell),
\end{align}
which contradicts \eqref{hgap}; this proves our claim \eqref{vqclaim}. Note that \eqref{vut} is not affected by the contradiction established.

We then have using Lemma \ref{coordchg}, \eqref{psimax}, \eqref{vqclaim}, and \eqref{vut} that since $ |(v-u)_2^\theta|=|v-q|$,
\begin{equation}\label{vwlower2}
  |(v-u)_2^\alpha| \geq |v-q| - C_{23}\psi_{\alpha\theta}|v-u| \geq n^{\beta_1}\Delta(t) - c_7\frac{n^{\beta_1}\Delta(R)}{\ell}t
    \geq n^{\beta_1}\left( \Delta(t) - c_8\frac{t\Delta(R)}{\ell} \right).
\end{equation}
It follows from \eqref{powerlike}, the first inequality in \eqref{betas2}, and \eqref{C3facts} that
\[
  \left( \frac{\ell\Delta(t)}{t}\right)^2 = \frac{\ell^2\sigma(t)}{t} \geq C_3^{-1} \frac{\ell^{2-\chi_2}\sigma(\ell)}{t^{1-\chi_2}}
    \geq C_3^{-1} n^{(1-\chi_2)\beta_4} \ell\sigma(\ell) \geq C_3^{-2} n^{(1-\chi_2)\beta_4-(1+\chi_2)\beta_2} \Delta(R)^2
    \geq n^{\beta_2}\Delta(R)^2,
\]
which with \eqref{vwlower2} shows that
\[
  |(v-u)_2^\alpha| \geq \frac 12 n^{\beta_1}\Delta(t).
\]
From this and \eqref{wubound},
\[
   \frac{|v_2^\alpha -u_2^\alpha|^2}{\Xi(|v_1^\alpha -u_1^\alpha|)^2} \geq \frac{n^{2\beta_1}\Delta(t)^2}{4\Xi(8t)^2}
     \geq c_9\frac{n^{2\beta_1}}{\log t} \geq c_9\frac{n^{2\beta_1}}{\log R},
\]
where the first inequality uses $|\yt|\leq 2|y_\alpha|$, from \eqref{zangle}. 
Then in view of \eqref{Dlower} we conclude
\begin{equation}\label{Dlower4}
  D_{\alpha,|(y-u)_1^\alpha|}(v-u) \geq c_9\frac{n^{2\beta_1}}{\log R}.
\end{equation}
Therefore as with \eqref{C1},
\begin{equation}\label{C3}
  P(G_{13} \text{ and Case 3 holds}) \leq C_{26}\exp\left(-c_{10}\frac{n^{2\beta_1}}{\log R} \right),
\end{equation}
which with \eqref{C1} and \eqref{C2} yields
\begin{equation}\label{G7bound}
  P(G_3\cup G_4) \leq P(G_{13}) \leq 3C_{26}\exp\left(-c_{10}\frac{n^{2\beta_1}}{\log R} \right).
\end{equation}

\subsection{Proof of Lemma \ref{badbehavG9}}
Let $i\leq R/\ell$ and suppose $u,w\in H_{\theta,(i-1)\ell}^{\rm fat} \cap Q_{R,n,\theta}\cap\ZZ^d$ and $v,x \in H_{\theta,i\ell}^{\rm fat} \cap Q_{R,n,\theta}\cap\ZZ^d$ with
\begin{equation}\label{uvwxgaps}
  |u-w| \leq 2\sqrt{d}n^{-\beta_0}\Delta(R), \quad |v-x| \leq 2\sqrt{d}n^{-\beta_0}\Delta(R).
\end{equation}
We have 
\begin{equation}\label{Tcomp}
  |T(u,v) - T(w,x)| \leq |T(u,v) - T(u,x)| + |T(u,x) - T(w,x)|,
\end{equation}
and we want to use Proposition \ref{transTincr} to bound the probability that either difference on the right exceeds $\eta\sigma(\ell)/16$.  Let $\hat u,\hat v, \hat w,\hat x$ be the $\theta$--projections of $u,w,v,x$, with $\hat u,\hat w$ into $H_{\theta,(i-1)\ell}$ and $\hat v,\hat x$ into $H_{\theta,i\ell}$.  See Figure \ref{fig4-8}. 
Since $\hat u, \hat v,\hat x \in Q_{R,n,\theta}$ we have using \eqref{nRell}
\[
  |(\hat v-\hat u) - \ell\yt| \leq |\hat v - i\ell\yt|+ |\hat u - (i-1)\ell\yt| \leq 2n^{\beta_1}\Delta(R) \leq \ep_{\min}\ell,
\]
and using \eqref{closeproj}
\begin{equation}\label{vxgap}
  |\hat v - \hat x| \leq |v-x| + 2d \leq 3\sqrt{d} n^{-\beta_0}\Delta(R) < \ep_{\min}\ell,
\end{equation}
so Lemma \ref{gdiff1} gives
\[
  |g(\hat v - \hat u) - g(\hat x - \hat u)| \leq \frac{C_{29}(6\sqrt{d}n^{\beta_1-\beta_0}
    +9dn^{-2\beta_0})\Delta(R)^2}{n^{-\beta_2}R} \leq c_1n^{\beta_1+\beta_2-\beta_0}\sigma(R).
\]
Note that by the second inequality in \eqref{betas2}, the exponent on the right here is negative.
We may assume the points are labeled so that $g(\hat x - \hat u) \leq g(\hat v - \hat u)$; there then exists a point $\hat z \in \Pi_{\hat u\hat v}$ (close to $\hat v$) with $g(\hat z-\hat u) = g(\hat x - \hat u)$, and $z\in \ZZ^d$ with $|z-\hat z| \leq \sqrt{d}$.  We then have
\begin{equation}\label{gvz}
  g(\hat v-\hat z) = g(\hat v - \hat u) - g(\hat z - \hat u) \leq c_1n^{\beta_1+\beta_2-\beta_0}\sigma(R)
\end{equation}
and
\[
  |T(u,v) - T(u,x)| \leq |T(u,z) - T(u,x)| + T(z,v),
\]
and the latter shows that
\begin{equation}\label{splitbit}
  P\left( |T(u,v) - T(u,x)| \geq \frac{\eta}{16} \sigma(\ell) \right) \leq P\left( |T(u,z) - T(u,x)| \geq \frac{\eta}{32} \sigma(\ell) \right)
    + P\left( T(z,v) \geq \frac{\eta}{32} \sigma(\ell) \right).
\end{equation}
Checking the conditions of Proposition \ref{transTincr} for the first probability on the right of \eqref{splitbit}, we note first that from \eqref{closeproj},
\[
  |g(z-u) - g(x-u)| \leq g(z-\hat z) + 2g(u-\hat u) + g(x-\hat x) \leq 4\mu\sqrt{d}.
\]
Further, from \eqref{gsize},
\[
  |v-u| \geq |\hat v-\hat u| - |v-\hat v| - |u-\hat u| \geq \frac{1}{\mu\sqrt{d}}g(\hat v - \hat u) - 2d
    \geq \frac{\ell}{2\mu\sqrt{d}}  
\]
so using \eqref{Deltaratio}, \eqref{uvwxgaps}, and the second inequality in \eqref{betas2}, for $C_{40}$ from Proposition \ref{transTincr},
\[
  \Delta(|v-u|) \geq c_2\Delta(\ell) \geq c_3 n^{-\beta_2(1+\chi_2)/2} \Delta(R) 
    \geq 2C_{31}^{-1}\sqrt{d} n^{-\beta_0}\Delta(R) \geq C_{31}^{-1} |v-x|.
\]
In applying Proposition \ref{transTincr} to the first probability on the right of \eqref{splitbit} we take $\lambda = n^{2\beta_1}$, so we need to verify that for this $\lambda$,
\begin{equation}\label{OKlambda}
  \frac{\eta}{32} \sigma(\ell) \geq \lambda\sigma(\Delta^{-1}(|v-x|))\log |v-x|.
\end{equation}
Let
\[
  A = \left( \frac{32\lambda}{C_2\eta} \log |v-x| \right)^{1/\chi_1}.
\]
\begin{figure}
\includegraphics[width=14cm]{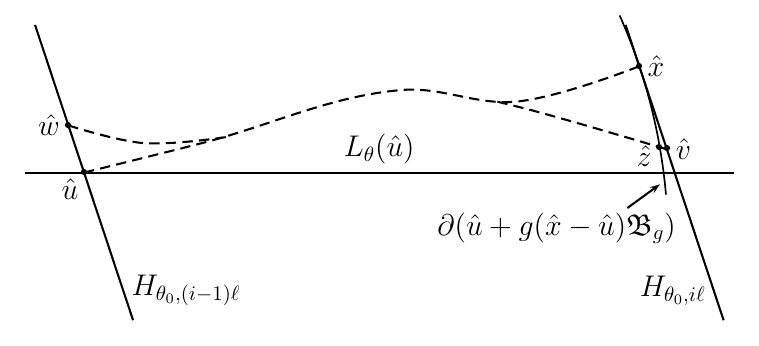}
\caption{ Illustration for Lemma 4.8. The arc containing $\hat x$ and $\hat z$ is the boundary of a $g$--ball centered at $\hat u$. The dashed lines are the geodesics $\Gamma_{uv}$ and $\Gamma_{wx}$, which are unlikely to have highly dissimilar passage times because they can follow the same path, once away from their endpoints.}
\label{fig4-8}
\end{figure}
If we can show that
\begin{equation}\label{OKlam2}
  \Delta(\ell) \geq C_3^{1/2}A^{(1+\chi_2)/2}|v-x|,
\end{equation}
then using \eqref{powerlike},
\[
  \Delta\left( \frac \ell A \right) \geq C_3^{-1/2} A^{-(1+\chi_2)/2}\Delta(\ell) \geq |v-x| \quad\text{and hence} \quad
    C_2A^{\chi_1} \leq C_2\left( \frac{\ell}{\Delta^{-1}(|v-x|)} \right)^{\chi_1} \leq \frac{\sigma(\ell)}{\sigma(\Delta^{-1}(|v-x|))}
\]
which is equivalent to \eqref{OKlambda}. In fact, using the second inequality in \eqref{betas2}, along with \eqref{Deltaratio}, \eqref{uvwxgaps}, and the fact that (by \eqref{Rncond}) $\log |v-x| \leq \log R \leq n^{2\beta_1}$, we obtain that provided $n$ is large (depending on $\eta$),
\begin{align*}
  C_3^{1/2}A^{(1+\chi_2)/2}|v-x| 
    &\leq c_4n^{-\beta_0}\Delta(R)\left( n^{4\beta_1} \right)^{(1+\chi_2)/2\chi_1}
    \leq c_5n^{-\beta_0+2\beta_1(1+\chi_2)/\chi_1}\Delta(R) \\
  &\leq c_6n^{-\beta_0+2\beta_1(1+\chi_2)/\chi_1+\beta_2(1+\chi_2)/2}\Delta(\ell) \leq \Delta(\ell),
\end{align*}
proving \eqref{OKlam2} and hence also \eqref{OKlambda}.
Now \eqref{OKlambda} and Proposition \ref{transTincr} give
\begin{equation}\label{firstbit}
  P\left( |T(u,z) - T(u,x)| \geq \frac{\eta}{32} \sigma(\ell) \right) \leq C_{33}\exp\left(-C_{34}n^{2\beta_1}\right).
\end{equation}

Turning now to the last probability in \eqref{splitbit}, we have using \eqref{powerlike}, the second inequality in \eqref{betas2}, and \eqref{gvz} that
\[
  h(v-z) \leq c_7 + 2g(v-z) \leq c_8 + 2g(\hat v - \hat z) \leq 2c_1n^{\beta_1+\beta_2-\beta_0}\sigma(R)
    \leq n^{\beta_1+\beta_2(1+\chi_2)-\beta_0}\sigma(\ell) \leq \frac{\eta}{64}\sigma(\ell)
\]
and similarly
\[
  |v-z| \leq n^{\beta_1+\beta_2(1+\chi_2)-\beta_0}\sigma(\ell),
\]
so from \eqref{powerlike}, \eqref{expbd}, and \eqref{ell-lower}, 
\begin{align*}
  P\left( T(z,v) \geq \frac{\eta}{32} \sigma(\ell) \right) &\leq P\left( T(z,v) - ET(z,v) \geq \frac{\eta}{64} \sigma(\ell) \right)
    \leq 4\exp\left( -\frac{\eta}{64} \frac{\sigma(\ell)}{\sigma( \sigma(\ell))} \right) \\
  &\leq \exp\left( -\ell^{\chi_1(1-\chi_2)} \right) \leq \exp\left( -n^{2\beta_1} \right).
\end{align*}
Combining this with \eqref{Tcomp}, \eqref{splitbit}, and \eqref{firstbit}, along with a similar computation for the second term on the right in \eqref{Tcomp} in place of the first term, we get 
\[
  P\left( |T(u,v) - T(w,x)| \geq \frac{\eta}{4} \sigma(\ell) \right) \leq c_9\exp\left( -c_{10}n^{2\beta_1} \right).
\]
Summing this over the $O(R^{4(d-1)})$ possible values of $(u,v,w,x)$ we get as in \eqref{pg5a} that
\begin{equation}\label{PG9}
  P(G_5) \leq c_9\exp\left( -\frac 12 c_{10}n^{2\beta_1} \right).
\end{equation}

\section{Table of notation}  \label{tableapp}
These are the most commonly used notations in the paper, with description and page of first appearance.  Many notations only used in a single proof are omitted.\\

\begin{longtable}{ l l l }
$H_{\theta,s},H_{\theta,s}^\pm$ & Hyperplane tangent to $g$--ball at direction $\theta$; resulting halfspaces & \pageref{Hhyp} \\
$\EE^d$ & Bonds of $\ZZ^d$ & \pageref{EEd} \\
$g(\cdot),\mkB_g$ & Norm based on asymptotic passage times, and its unit ball & \pageref{gx} \\
$\yt,z_\theta$ & Vector in $\pa \mkB_g$; vector $\perp$ tangent plane at $\yt$ & \pageref{yth} \\
$\nu_\bullet(\cdot)$ & Subpolynomial correction factors (various subscripts $\bullet$) & \pageref{nui} \\
$\chi^\pm,\chi$ & Upper and lower fluctuation exponents; common value & \pageref{chis} \\
$\lambda^\pm(r)$ & Upper and lower fluctuation scales for passage times & \pageref{lams} \\
$J(\cdot,\cdot)$ & Cone of angles & \pageref{jth} \\
$L_\theta(u),L_\theta$ & Lines through $u$ and 0 at angle $\theta$ & \pageref{lth} \\
$\rho_\bullet,\bar\rho_\bullet$ & Crossing densities (various subscripts $\bullet$) & \pageref{rhos} \\
$\Delta(r),\xi$ & Scale of transverse wandering over distance $r$; growth exponent & \pageref{del} \\
$(u_1^\theta,u_2^\theta)_\theta$ & $\theta$--coordinates, using tangent plane in direction $\theta$ & \pageref{coord} \\
$\chi_1,\chi_2$ & Sandwiching values $\chi_1 < \chi<\chi_2$ & \pageref{chii} \\
$\sigma(\cdot),\sigma_-(\cdot)$ & Regular upper and lower bounds for $\lambda^\pm(\cdot)$ &\pageref{sigs} \\
$\pi(\cdot)$ & The subpolynomial function $\sigma(\cdot)/\sigma_-(\cdot)$ & \pageref{sigs} \\
$h(x)$ & $ET(0,x)$ & \pageref{hofx} \\
$|\cdot|_{\theta,\infty}$ & Sup norm based on $\theta$--coordinates & \pageref{tnorm} \\
$d_\theta(\cdot,\cdot)$ & Distance via hyperplane $H_{\theta,\cdot}$ & \pageref{hyp} \\
$\psi_{ab}$ & Angle in $[0,\pi]$ between vectors $a,b$ & \pageref{ang} \\
$\mu$ & Time constant $g(e_1)$ & \pageref{mug} \\
$\Omega_\theta(\cdot,\cdot)$ & Slab & \pageref{oth} \\
$H_{\theta,s}^{\rm fat},H_{\theta,s}^{\rm rfat}$ & Fattened hyperplanes & \pageref{fat} \\
$\Phi(s)$ & Cost of geodesic deviation by $s$, with macroscopic direction error & \pageref{cphi} \\
$\Xi(s)$ & Scale of geodesic deviation of cost $\log s$, for small direction error & \pageref{cxi} \\
$D_\theta(u),D_{\theta,r}(u)$ & Overall cost of geodesic deviating through $u$; symmetrized version & \pageref{cxi} \\
$\Pi_{ab},\Pi_{ab}^\infty$ & Line segment $a$ to $b$, line through $a,b$ & \pageref{pil}, \pageref{piline} \\
$E_{\theta,r,c}$ & Tube--and--cylinders region, level set of $D_{\theta,r}$ & \pageref{tcyl} \\
$\mathbb{S}_\theta(\cdot)$ & Intersection of tube and cylinder, in tube--and--cylinders region & \pageref{zcyl} \\
$\CC_{\theta,c}$ & 0--cylinder, one cylinder of tube--and--cylinders region & \pageref{zcyl} \\
$\gamma[u,v]$ & Segment of path $\gamma$ between $u$ and $v$ & \pageref{gab} \\
$x_{\theta,s}''(\Gamma),x_{\theta,s}'(\Gamma)$ & Hyperplane entry point and preceding site & \pageref{xpri} \\
$S_{\theta,i}(\Gamma)$ & $\ell$--segment (length $\ell$) of geodesic $\Gamma$ & \pageref{seg} \\
$N_\theta(\Gamma)$ & Number of fast $\ell$--segments in geodesic $\Gamma$ & \pageref{nth} \\
$\beta_i$ & Powers applied to number $n$ of geodesics under consideration & \pageref{betas} \\
$B_{\theta,{\rm home}}$ & Home $\theta$--block in $H_{\theta,0}$, of side $2n^{-\beta_0}\Delta(R)$, centered on $L_\theta$ & \pageref{hblk} \\
$Q_{R,n,\theta}$ & Target $\theta$--box, a square tube around line $L_\theta$ & \pageref{tgt} \\
$B_{\theta_0,{\rm cross}},\,\ol y$ & $\theta_0$--block in $H_{\theta_0,R}$ (midway to $H_{\theta_0,2R}$), center $\ol y$ where $L_\theta$ crosses & \pageref{blkc} \\
$B_{r,\theta,{\rm home},+}$ & Enlarged home $\theta$--block in $H_{\theta,r}$& \pageref{blkr} \\
\end{longtable}

\noindent The following notation appears in the proof of Proposition \ref{jammed1}.

\begin{longtable}{ l l l }
$\ol y,y^*,\theta^*$ & Centers of $B_{\theta_0,{\rm cross}}$ and $B_{2R,m}$; angle of $y^*-\ol y$ & \pageref{yb},\pageref{yst} \\
$B_{2R,j}$ & $j$th $\theta_0$--block intersecting $H_{\theta_0,2R}\cap Q_{R,n,\theta}$ & \pageref{jblk} \\
$\mathfrak{G}$  & Crowded set: geodesics from $B_{\theta_0,{\rm home}}^{\rm rfat}$ to some $B_{2R,m}$ via $B_{\theta_0,{\rm cross}}$ & \pageref{crs} \\
$g_n$ & Minimum number of geodesics in a crowded set & \pageref{crs} \\
$Q_{R,n,\theta^*}^*$ & Square tube around $L_{\theta^*}(\ol y) = \Pi_{\ol yy^*}^\infty$ & \pageref{qst} \\
$\Gamma^{\rm pri},\Gamma^{\rm sec}$ & Segments of $\Gamma$ before and after ``midway'' hyperplane $H_{\theta_0,R}$ & \pageref{gpr} \\
$\mathfrak{G}_i$ & Subset of $\mathfrak{G}$ based on certain midway entry points & \pageref{Gi} \\
$\tilde B_{s^*,\theta^*,{\rm home},+}$ & Enlarged home $\theta^*$--block in $H_{\theta^*,s^*}$ & \pageref{tilb} \\
$\mT_i(j,k)$ & Set of $\ell$--segments transitioning block $j$ to $k$ in $i$th interval & \pageref{Tijk} \\
$\mO(S_{\theta,i}(\Gamma),S_{\theta,i}(\hat\Gamma))$ & Projected overlap interval of two $\ell$--segments & \pageref{oss} \\
$N_L,N_R$ & Number of lattice points in $B_{\theta,{\rm home}}^{\rm rfat}$ and $B_{\theta,{\rm cross}}^{\rm fat}$ & \pageref{nlnr} \\
$\Omega_i$ & Slab (typically) containing $i$th $\ell$--segment & \pageref{nlnr} \\
$\mathfrak{P}_i(x,y,\gamma_0)$ & Paths $x\to y$ with no or small intersection with $\gamma_0$ & \pageref{lowover} \\
$T^{\dis,i}(\gamma,\gamma_0)$ & Time $T(\gamma)$ with value for $\gamma\cap\gamma_0$ replaced by ``projected mean'' & \pageref{Td} \\
$T^{\dis,i}(x,y,\gamma_0)$ & Minimum of $T^{\dis,i}(\cdot,\gamma_0)$ over paths $x\to y$ & \pageref{Tdis} \\
$a_j,b_k$ & Lattice sites in starting and midway blocks $B_{\theta,{\rm home}}^{\rm rfat}$ and $B_{\theta,{\rm cross}}^{\rm fat}$ & \pageref{ajb} \\
$n_0$ & Required minimum number of fast $\ell$--segments & \pageref{Hjk} \\
$H_{jk}$ & Event that $\Gamma_{a_jb_k}$ is well--behaved regarding fast $\ell$--segments & \pageref{Hjk} \\
$\mI(\gamma,\tau_\gamma)$ & Set of indices of fast $\ell$--segments & \pageref{Iset} \\
$H_\gamma,H_{\gamma,I}$ & Events that certain $\ell$--segments have certain partners & \pageref{Hev} \\
$\mM(\gamma)$ & Set of triples corresponding to the transitions of $\gamma$ & \pageref{Mga} \\
$F_{M,I}$ & Event: certain transitions in $M$ made by some ``nice'' geodesic & \pageref{fmi} \\
$\ol F_{M,I}$ & Event: certain transitions in $M$ made by specific ``nice'' geodesics & \pageref{fmi} \\
$F_{M,i}^*$ & Event: specific geodesic making $i$th transition in $M$ is fast & \pageref{Fst} \\
\end{longtable}

\end{appendices}

\vskip 2mm


\begin{thebibliography}{99}

\bibitem{AH16} Ahlberg, D. and Hoffman, C. (2016). Random coalescing geodesics in first-passage percolation. arXiv:1609.02447 [math.PR] 

\bibitem{Al97} Alexander, K.S. (1997). Approximation of subadditive functions and rates of convergence in limiting shape results.  \emph{Ann. Probab.} \textbf{24} 30--55.

\bibitem{Al20a} Alexander, K. S. (2020). Uniform fluctuation and wandering bounds in first passage percolation. arXiv:2011.07223[math.PR] 

\bibitem{AOF14} Alves, S. G., Oliveira, T. J., and Ferreira, S. C. (2018). Universality of fluctuations in the Kardar-Parisi-Zhang class in high dimensions and its upper critical dimension. \emph{Phys. Rev. E} \textbf{90} 020103.  arXiv:1405.0974 [cond-mat.stat-mech]

\bibitem{BFP14} Baik, J., Ferrari, P. L., and P\'ech\'e, S. (2014). Convergence of the two-point function of the stationary TASEP. \emph{Singular phenomena and scalingin mathematical models}, 91--110, Springer, Cham.  arXiv:1209.0116 [math-ph]

\bibitem{BBS19} Bal\'azs, M., Busani, O., and Sepp\"al\"ainen, T. (2020).  Nonexistence of bi-infinite geodesics in the exponential corner growth model.  \emph{Forum Math. Sigma} \textbf{8}, Paper No.~e46, 34 pp. arXiv:1909.06883 [math.PR] 

\bibitem{BHS18} Basu, R., Hoffman, C., and Sly, A. (2018). Nonexistence of bigeodesics in integrable models of last passage percolation.  arXiv:1811.04908 [math.PR]

\bibitem{BSS19} Basu, R., Sarkar, S., and Sly, A. (2019).  Coalescence of geodesics in exactly solvable models of last passage percolation.  \emph{J. Math. Phys.} \textbf{60} 093301, 22 pp.  arXiv:1704.05219 [math.PR] 

\bibitem{BSS16} Basu, R., Sidoravicius, V., and Sly, A. (2016).  Last passage percolation with a defect line and the solution of the slow bond problem.  arXiv:1408.3464 [math.PR] 

\bibitem{BR08} Bena\"im, M. and Rossignol, R. (2008).  Exponential concentration for first passage percolation through modified Poincar\'e inequalities.    \emph{Ann. Inst. Henri Poincar\'e Probab. Stat.} \textbf{44} 544--573.  arXiv:math/0609730 [math.PR]

\bibitem{BKS03} Benjamini, I., Kalai, G., and Schramm, O. (2003). First passage percolation has sublinear distance variance. \emph{Ann. Probab.} \textbf{31} 1970--1978.  arXiv:math/0203262 [math.PR] 

\bibitem{BS20} Busani, O. and Sepp\"al\"ainen, T. (2020). Non-existence of bi-infinite polymer Gibbs measures. arXiv:2010.11279 [math.PR]

\bibitem{C13} Chatterjee, S. (2013).  The universal relation between scaling exponents in first-passage percolation.  \emph{Ann. of Math. (2)} {\bf 127}, no.~2, 663--697.  arXiv:1105.4566 [math.PR]

\bibitem{CLW16} Corwin, I., Liu, Z., and Wang, D. (2016). Fluctuations of TASEP and LPP with general initial data. \emph{Ann. Appl. Probab.} {\bf 26}, 2030--2082.  arXiv:1412.5087 [math.PR]

\bibitem{CD81} Cox, J. T. and Durrett, R. (1981).  Some limit theorems for percolation processes with necessary and suficient conditions.  \emph{Ann. Probab.} {\bf 4}, 583--603.

\bibitem{DH14} Damron, M. and Hanson, J. (2014). Busemann functions and infinite geodesics in two-dimensional first-passage percolation. \textit{Comm. Math. Phys.} {\bf 325}, no. 3, pp.~917--963.  arXiv:1209.3036 [math.PR]

\bibitem{DH17} Damron, M. and Hanson, J. (2017). Bigeodesics in first-passage percolation. \textit{Comm. Math. Phys.} {\bf 349}, no. 2, pp.~753--776.  arXiv:1512.00804 [math.PR]

\bibitem{DK14} Damron, M. and Kubota, N. (2014). Gaussian concentration for the lower tail in first-passage percolation under low moments. \emph{Stoch. Proc. Appl.} {\bf 126}, 3065--3076.  arXiv:1406.3105 [math.PR]

\bibitem{Fo08}  Fogedby, H. C. (2006). Kardar-Parisi-Zhang equation in the weak noise limit: Pattern formation and upper critical dimension. \emph{Phys. Rev. E} \textbf{73} 031104.  arXiv:cond-mat/0510268 [cond-mat.stat-mech]

\bibitem{Ga19} Gangopadhyay, U. (2020).  Fluctuations of transverse increments in two--dimensional first passage percolation. arXiv:2011.14686 [math.PR]

\bibitem{Ge88} Georgii, H. O. (1998).  \emph{Gibbs Measures and Phase Transitions}.  de Gruyter Studies in Mathematics \textbf{9}, de Gruyter, Berlin.

\bibitem{GRS17} Georgiou, N., Rassoul-Agha, F., and Sepp\"al\"ainen, T. (2017). Geodesics and the competition interface for the corner growth model.  \emph{Prob. Theory Rel. Fields} {\bf 169} 223--255.  arXiv:1510.00860 [math.PR]

\bibitem{Ke93} Kesten, H. (1993).  On the speed of convergence in first-passage percolation.  \emph{Ann. Appl. Probab.} \textbf{3} 296--338.

\bibitem{KK14} Kim, S.-W. and Kim, J. M. (2014). A restricted solid-on-solid model in higher dimensions. \emph{J. Stat. Mech.} \textbf{2014} P07005.  

\bibitem{LW05} Le Doussal, P. and Wiese, K. J. (2005). Two-loop functional renormalization for elastic manifolds pinned by disorder in $N$ dimensions. \emph{Phys. Rev. E} \textbf{72} 035101.  arXiv:cond-mat/0501315 [cond-mat.dis-nn]

\bibitem{LR10} Ledoux, M. and Rider, B. (2010)  Small deviations for beta ensembles. \emph{Electron. J. Probab.} {\bf 15}, 1319--1343.  arXiv:0912.5040 [math.PR]

\bibitem{LN96} Licea, C. and Newman, C. M. (1996). Geodesics in two-dimensional first-passage percolation.  \emph{Ann. Probab.} {\bf 24}, 399--410.

\bibitem{LNP96} Licea, C., Newman, C. M. and Piza, M. S. T., Superdiffusivity in first-passage percolation, \textit{Probab. Theory Rel. Fields} \textbf{106} (1996), 559--591.

\bibitem{LM01} Lo\"we, M. and Merkl, F. (2001). Moderate deviations for longest increasing subsequences: The upper tail. \emph{Comm. Pure Appl. Math.} {\bf 54}, 1488--1519.

\bibitem{LMR02} Lo\"we, M., Merkl, F., and Rolles, S. (2002).  Moderate deviations for longest increasing subsequences: The lower tail. \emph{J. Theor. Probab.} {\bf 15}, 1031--1047.

\bibitem{MPPR02} Marinari, E., Pagnani, A., Parisi, G., R\'acz, Z. (2002). Width distributions and the upper critical dimension of Kardar-Parisi-Zhang interfaces. \emph{Phys. Rev. E} \textbf{65} 026136.  arXiv:cond-mat/0105158 [cond-mat.stat-mech]

\bibitem{N95} Newman, C. M., A surface view of first passage percolation.  \emph{Proceedings of the International Congress of Mathematicians}, Vol.~1, 2 (Z\"urich, 1994), 1047--1023, Birkh\"auser, Basel (1995).

\bibitem{NP95} Newman, C. M. and Piza, M. S. T. (1995). Divergence of shape fluctuations in two dimensions. \textit{Ann. Probab.} \textbf{23}, 977--1005.

\bibitem{P15} Pimentel, L. (2015). Duality between coalescence times and exit points in last-passage percolation models.  \emph{Ann. Probab.} {\bf 44}, 3187--3206. arXiv:1307.7769 [math.PR]

\bibitem{ROM15} Rodrigues, E. A., Oliveira, F. A., and Mello, B. A. (2015). On the existence of an upper critical dimension for systems within the KPZ universality class.  \emph{Acta. Phys. Pol. B} \textbf{46}, 1231--1234.  arXiv:cond-mat/1502.06121 [cond-mat.stat-mech]

\bibitem{SS19} Sepp\"al\"ainen, T. and Shen, X. (2020). Coalescence estimates for the corner growth model with exponential weights. \emph{Electron. J. Probab.} \textbf{25}, Paper No.~85, 31 pp.  arXiv:1911.03792 [math.PR]  

\bibitem{SS21} Sepp\"al\"ainen, T. and Sorensen, E. (2021). Busemann process and semi-infinite geodesics in Brownian last-passage percolation. arXiv:2103:01172 [math.PR]

\bibitem{Ta95} Talagrand, M. (1995).  Concentration of measure and isoperimetric inequalities in product spaces.  \emph{Publications Math\'ematiques de l'Institut des hautes Etudes Scientifiques} {\bf 81}(1), 73--205.  

\bibitem{Te18}  Tessera, R. (2018). Speed of convergence in first passage percolation and geodesicity of the average distance. \emph{Ann. Inst. Henri Poincar\'e Probab. Stat.} {\bf 54}, 569--586.  arXiv:1410.1701 [math.PR]

\bibitem{Zh20} Zhang, L. (2020). Optimal exponent for coalescence of finite geodesics in exponential last passage percolation.  arXiv:1912.07733 [math.PR]

\end{thebibliography}
\end{document}